\newcommand{\PP}{{\mathbb{P}}}
\newcommand{\EE}{{\mathbb{E}}}
\newcommand{\eps}{{\epsilon}}
\newcommand{\HH}{{\mathbb{H}}}
\newtheorem{theorem}{Theorem}[section]
\newtheorem{lemma}[theorem]{Lemma}
\newtheorem{proposition}[theorem]{Proposition}
\newtheorem{claim}[theorem]{Claim}
\theoremstyle{definition}
\theoremstyle{remark}
\newtheorem{remark}[theorem]{Remark}
\theoremstyle{remark}
\title{Massive SLE$_4$ and the scaling limit of the massive harmonic explorer}
\author{L\'eonie Papon \thanks{Durham University}}
\date{\today}
\begin{document}

\maketitle

\begin{abstract}
The massive harmonic explorer is a model of random discrete path on the hexagonal lattice that was proposed by Makarov and Smirnov as a massive perturbation of the harmonic explorer. They argued that the scaling limit of the massive harmonic explorer in a bounded domain is a massive version of chordal SLE$_4$, called massive SLE$_4$, which is conformally covariant and absolutely continuous with respect to chordal SLE$_4$. In this paper, we provide a full and rigorous proof of this statement. Moreover, we show that a massive SLE$_4$ curve can be coupled with a massive Gaussian free field as its level line, when the field has appropriate boundary conditions.
\end{abstract}

\section{Introduction}

Chordal SLE$_\kappa$ curves are a one-parameter family of planar curves, characterized by conformal invariance and a Markovian property, that were introduced by Schramm \cite{Schramm_SLE}. They have been shown to arise as the scaling limits of interfaces in many planar statistical mechanics models at criticality when the boundary conditions are chosen appropriately \cite{percolation, LERW, HE, DGFF, Ising}. From a conformal field theory perspective, this can be understood as a consequence of the conformal invariance of the limiting field that formally describes these models in the continuum and of the locality of the associated action functional. However, many interesting questions also arise when looking at massive perturbations of the models, obtained by sending some of their parameters to their critical values at an appropriate rate. In the scaling limit, these perturbations break the conformal invariance of some of the observables of these models. In \cite{off_SLE}, Makarov and Smirnov asked whether SLE-type curves could nevertheless describe the scaling limits of interfaces in such massively perturbed models. The idea is that, while these interfaces should only be conformally covariant, they should still enjoy a Markovian property similar to that of the interfaces at criticality. Makarov and Smirnov observed that these properties could be captured by requiring that the massive version of an SLE$_\kappa$ martingale observable becomes a martingale. This in turn should be realised by adding an appropriate drift to the driving function of an SLE$_\kappa$ curve. 

This approach has been particularly successful in the study of the massive loop-erased random walk. In \cite{mLERW}, Chelkak and Wan have shown that the scaling limit of massive loop-erased random walk on a subset of the square grid is a massive version of chordal SLE$_2$, called mSLE$_2$, for which the drift term in the driving function can be explicitly identified. This result was then extended to the radial case in connection to the height function associated with a near-critical dimer model \cite{off_dimers}. mSLE$_2$ is absolutely continuous with respect to SLE$_2$ and this absolute continuity also holds at the discrete level, which, to some extent, simplifies the proof of the convergence and the analysis in the continuum. When absolute continuity with respect to SLE$_{\kappa}$ is not conjectured to hold, the problem is more challenging. For instance, scaling limits of interfaces that could be described by massive versions of SLE$_\kappa$ seem to arise in near-critical percolation \cite{near_percolation} and in the massive Fortuin-Kasteleyn model \cite{massive_FK}. However, in these examples, the limiting interface is expected to be singular with respect to SLE$_\kappa$, see for example \cite{Nolin}, which makes the appropriate massive version of SLE$_\kappa$ harder to define or characterize.

\subsection{Main results}

Here, we are interested in another example of such interface: the massive harmonic explorer. This model is a massive version of the harmonic explorer studied by Schramm and Sheffield \cite{HE} that was proposed by Makarov and Smirnov in \cite{off_SLE}. To define this model, let us first recall the definition of the harmonic explorer. It is a random discrete path defined on the hexagonal lattice. To construct it, one considers a subset $\Omega_{\delta}$ of the triangular lattice $\delta \mathbb{T}$ with meshsize $\delta > 0$ together with two marked points $a_{\delta}$ and $b_{\delta}$ on the boundary of $\Omega_{\delta}$. The vertices on the clockwise oriented boundary arc $(a_{\delta}b_{\delta})$ are assigned the sign $+$ while the vertices on the counter-clockwise oriented boundary arc $(a_{\delta}b_{\delta})$ are assigned the sign $-$, $a_{\delta}$ and $b_{\delta}$ being assigned an arbitrary sign. The path starts in the middle of the edge joining $a_{\delta}$ to the vertex on the boundary of $\Omega_{\delta}$ with opposite sign. This singles out a vertex $v$ of $\Omega_{\delta}$ which is linked by an edge to $a_{\delta}$ and to this other boundary vertex. Let $h_{\delta}$ be the unique discrete harmonic function in $\Omega_{\delta}$ with boundary conditions $+1/2$ on the boundary vertices with sign $+$ and  $-1/2$ on the boundary vertices with sign $-$. Then, with probability $1/2+h_{\delta}(v)$, the path turns right, that is follows the edges of the hexagonal lattice linking its starting point to the middle of the edge of $\Omega_{\delta}$ on its left, and the vertex $v$ is assigned the sign $+$. With complementary probability, the path turns left and in this case, the vertex $v$ is assigned the sign $-$. In both cases, the vertex $v$ becomes a boundary vertex and this defines a new graph, with its associated discrete harmonic function $h_{\delta,1}$. One can then repeat the above procedure, with respect to the harmonic function corresponding to the new graph, to continue tracing the path. This gives rise to a sequence of discrete harmonic functions $(h_{\delta, n})_n$ corresponding to the sequence of graphs obtained while constructing the path. The procedure terminates when the path reaches the edge linking $b_{\delta}$ to a boundary vertex with opposite sign. See Figure \ref{fig_mHE} for a dual perspective on the hexagonal lattice.

To define a massive perturbation of this model, which we call the massive harmonic explorer as in \cite{off_SLE}, we assign a weight $1-cm^2\delta^2$ to each edge of the graph. Here, $m^2>0$ and $c>0$ is a constant depending on the lattice, but not on $\delta$. We let $h_{\delta}^{m}$ be the unique discrete massive harmonic function in $\Omega_{\delta}$ with boundary conditions $1/2$ on the boundary vertices with sign $+$ and $-1/2$ on the boundary vertices with sign $-$. The path is then constructed by following the same procedure as above, except that we now consider the function $h_{\delta}^m$ instead of $h_{\delta}$, thus obtaining a sequence of discrete massive harmonic functions $(h_{\delta,n}^m)_n$. See again Figure \ref{fig_mHE} for a dual perspective on the hexagonal lattice.

\begin{figure}
    \centering
    \begin{subfigure}[b]{0.45\textwidth}
    \centering
    \includegraphics[width=\textwidth]{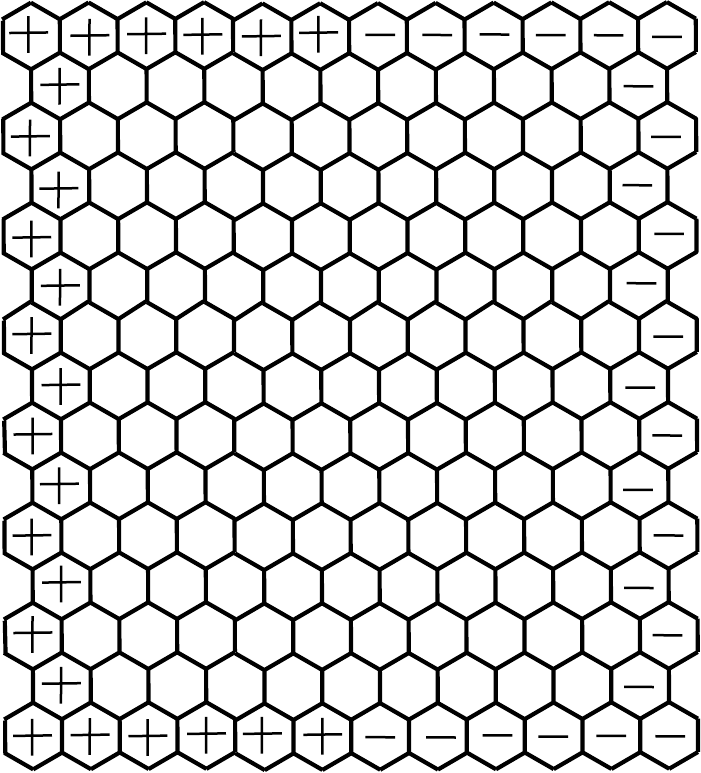}
    \caption{}
    \label{fig_1}
    \end{subfigure}
    \hfill
    \begin{subfigure}[b]{0.45\textwidth}
    \centering
    \includegraphics[width=\textwidth]{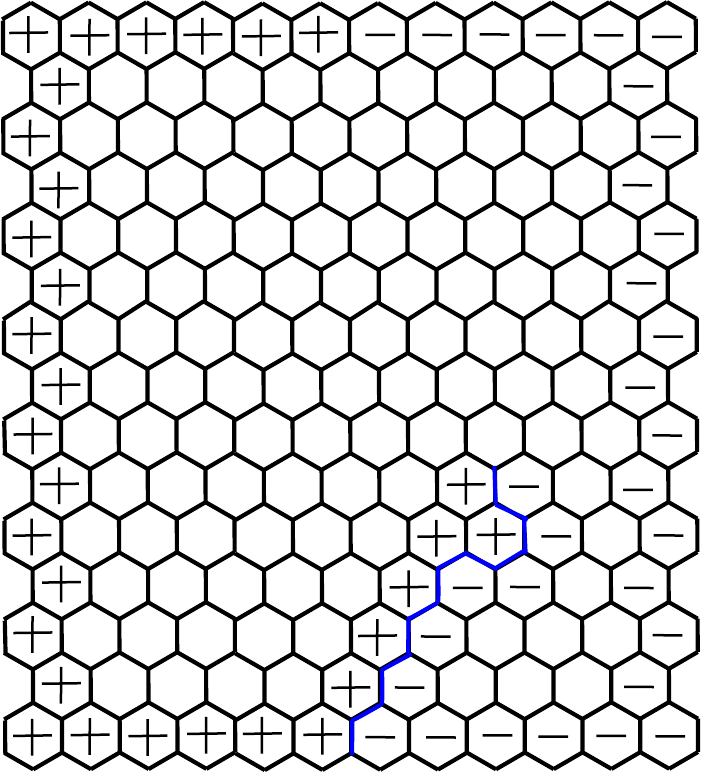}
    \caption{}
    \label{fig_2}
    \end{subfigure}
  \caption{(a) Initial configuration: the hexagons on the right-hand side have sign $-$ while those on the left hand-side have sign $+$. (b) The blue path is a possible path followed by the first steps of the massive harmonic explorer. The hexagons on the right, respectively left, of the path have been assigned the sign $-$, respectively $+$.}
  \label{fig_mHE}
\end{figure}

The harmonic explorer is known to converge to chordal SLE$_4$ in an appropriate topology \cite{HE}. Makarov and Smirnov provided arguments in \cite{off_SLE} to support their assertion that the massive harmonic explorer in turn converges to a massive version of chordal SLE$_4$, which is absolutely continuous with respect to SLE$_4$. This was further investigated in an unpublished manuscript \cite{Ch_mHE}; however, a conclusive argument was not reached. Our main result is a fully detailed and rigorous proof of the statement of Makarov and Smirnov, and can informally be written as follows.

\begin{theorem} \label{theorem_intro}
Let $\Omega \subset \mathbb{C}$ be a bounded, open and simply connected domain with two marked boundary points $a$ and $b$. Let $(\Omega_{\delta}, a_{\delta}, b_{\delta})_{\delta}$ be discrete approximations of $(\Omega, a, b)$, where for each $\delta > 0$, $\Omega_{\delta}$ is a subset of the triangular lattice $\delta \mathbb{T}$. Let $m>0$. Then, as $\delta \to 0$, the scaling limit of the massive harmonic explorer from $a_{\delta}$ to $b_{\delta}$ in $\Omega_{\delta}$ is a random curve $\gamma$ whose law is that of massive SLE$_4$ with mass $m$ in $\Omega$ from $a$ to $b$.
\end{theorem}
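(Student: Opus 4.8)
The plan is to follow the now-standard strategy for proving convergence of discrete interfaces to SLE-type curves, adapted to the massive (off-critical) setting where absolute continuity with respect to SLE$_4$ is expected. The overall scheme has three parts: (i) establish tightness of the law of the massive harmonic explorer in the space of curves (with a suitable topology, e.g. the metric on curves modulo reparametrization), so that subsequential limits exist; (ii) identify the driving function of any subsequential limit via a discrete martingale observable; (iii) show that this characterizes the limit uniquely as massive SLE$_4$, hence the full sequence converges. For tightness, I would use the fact that the massive weights $1-cm^2\delta^2$ are a bounded multiplicative perturbation of the critical model, so the a priori crossing/equicontinuity estimates of Schramm--Sheffield for the harmonic explorer (Aizenman--Burchard-type bounds) carry over with constants depending only on $m$ and the domain; this is where one must be slightly careful that the perturbation does not destroy the needed regularity, but since $cm^2\delta^2 \to 0$ the estimates are asymptotically those of the critical model.

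The heart of the argument is the martingale observable. For the critical harmonic explorer, the key fact is that the discrete harmonic function $h_{\delta,n}$ evaluated at a fixed point is a martingale in $n$; in the scaling limit this becomes the statement that the harmonic measure observable is a martingale for the Loewner evolution, which forces the driving function to be $\sqrt{4}\,B_t$. In the massive case, one replaces $h_{\delta,n}$ by the discrete massive harmonic function $h_{\delta,n}^m$; the construction of the explorer (turn right with probability $1/2 + h_{\delta,n}^m(v)$) is precisely designed so that $h_{\delta,n}^m(z)$ remains a martingale in $n$ for each fixed $z$. I would then show that, as $\delta \to 0$, the discrete massive harmonic functions $h_{\delta,n}^m$ converge (uniformly on compacts away from the tip and boundary) to the continuum massive harmonic function $h^m_{\Omega \setminus \gamma[0,t]}$ with the appropriate $\pm 1/2$ boundary data --- this convergence of discrete massive harmonic measure to its continuum counterpart is a result one should either cite or prove via the convergence of the massive (killed) random walk to killed Brownian motion. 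Passing the martingale property to the limit, and combining with the Loewner representation of the curve (valid by tightness and the trace property of SLE$_4$-like curves), one obtains that $h^m_{W_t \setminus \gamma[0,t]}(z)$ is a local martingale, where $W_t$ is the Loewner map. Writing $h^m = h^0 + (\text{correction})$, where $h^0$ is the critical harmonic function whose martingale property already pins down a Brownian driving term, one extracts from the finite-variation part of the correction an explicit drift: the driving function satisfies $dW_t = 2\,dB_t + \lambda_t\,dt$ where $\lambda_t$ is a conformally covariant functional of the domain, the mass, and the current tip, given by the derivative of the massive Poisson kernel / massive harmonic measure. This is exactly the defining property of massive SLE$_4$, and by Girsanov's theorem the resulting measure on curves is absolutely continuous with respect to SLE$_4$, confirming it is a genuine (well-posed) object.

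The final step is uniqueness of the characterization: one shows that a Loewner chain whose driving function is a semimartingale with quadratic variation $4t$ and with this particular drift is unique in law, which follows because the drift $\lambda_t$ is a bounded (the massive Poisson kernel gradient is controlled on the domain) and sufficiently regular functional of the past, so the associated SDE for $W_t$ has a unique strong (or at least weak) solution; equivalently, the Radon--Nikodym derivative with respect to SLE$_4$ is an explicit positive martingale, so the law is determined. Combining uniqueness with tightness gives convergence of the whole family, and conformal covariance of massive SLE$_4$ follows from the conformal covariance of the massive harmonic measure (mass transforms with the conformal factor as $m^2 \mapsto m^2 |\phi'|^2$), completing the proof. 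I expect the main obstacle to be step (ii): proving a sufficiently strong, uniform convergence of the discrete massive harmonic functions $h_{\delta,n}^m$ along the explored domain --- in particular getting good control near the moving tip $a_{\delta}$ and a uniform Russo--Seymour--Welsh-type / regularity bound that survives the massive perturbation --- and then carefully justifying the interchange of the $\delta \to 0$ limit with the martingale (stopping-time) argument so that the drift is identified with no spurious terms. This is the point where the unpublished manuscript \cite{Ch_mHE} apparently stalled, so the technical care here is the crux of the paper.
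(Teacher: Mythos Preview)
Your outline captures the correct three-step architecture (tightness, martingale observable, characterization), but there are two genuine gaps where the paper does substantially more work than you suggest.

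\textbf{Tightness.} You argue that the crossing estimates of Schramm--Sheffield ``carry over'' because the edge weights $1-cm^2\delta^2$ are an $O(\delta^2)$ perturbation of the critical ones. This reasoning is insufficient, and the paper explicitly rejects it: the law of the massive explorer depends on the \emph{global} discrete massive harmonic function $h_\delta^m$, which differs macroscopically from $h_\delta$ even though the local weights are close; moreover, absolute continuity of the massive explorer with respect to the critical one at the discrete level is not established (the paper says ``it is unclear whether absolute continuity holds at the level of the discrete curves''). Instead, the paper proves the Kemppainen--Smirnov annulus crossing bound (Condition G.3) directly, by adapting the Schramm--Sheffield martingale argument. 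The crucial new ingredient is that the relevant martingale now decomposes into the usual excursion-measure term \emph{plus} an additional term accounting for the killing probability $\mathbb{P}_w^{(m)}(\tau^\star \le \tau_{\partial\Omega_\delta})$. Controlling this killing term (Lemmas~\ref{lemma_upper_killing} and~\ref{lemma_upper_killing2}) requires a separate argument: one splits excursions according to whether they exit a ball of carefully chosen intermediate radius, and balances the excursion mass against the killing contribution via Lemma~\ref{lemma_visits}. This is the main technical novelty of the tightness section and is not a routine perturbation.

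\textbf{Characterization.} You propose to write $h_t^m = h_t + (\text{correction})$ and read off the drift from the finite-variation part of the correction. The paper follows this decomposition in spirit, but points out a real obstruction: to compute the It\^o derivative of $\int_{\Omega_t} m^2(w) G_t^m(z,w) h_t(w)\,dw$ one would naturally apply It\^o to the integrand and then swap the stochastic integral with the spatial integral via stochastic Fubini. But at this stage one has \emph{no a priori information} on the local martingale part $M_t$ of $W_t$ (that is precisely what one is trying to identify), so the integrability hypotheses of stochastic Fubini cannot be verified. The paper circumvents this by first proving $W_t$ is a semimartingale (Claim~\ref{claim_semimartingale}), then computing $dh_t^m(z)$ via a different route (expressing $h_t^m$ through the massive Poisson kernel and differentiating carefully, Claim~\ref{claim_SDE}), and finally identifying both $\langle M\rangle_t$ and $V_t$ simultaneously by evaluating the resulting identity at a sequence of points $z_n=\phi^{-1}(in)$ tending to $b$ and taking a subsequential limit (Claim~\ref{claim_independence}). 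Your sketch does not anticipate this difficulty, which the paper singles out as the reason the argument differs from the massive LERW case.

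Your remarks on uniqueness via Girsanov/Novikov and on conformal covariance are essentially in line with the paper.
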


Let $\phi: \Omega \to \HH$ be a conformal map from $\Omega$ to the upper half-plane $\HH=\{z \in \mathbb{C}: \Im(z) > 0 \}$ such that $\phi(a)=0$ and $\phi(b)=\infty$. A curve $\gamma$ in $\Omega$ from $a$ to $b$ is said to have the law of massive SLE$_4$ with mass $m$ in $\Omega$ from $a$ to $b$ if $\phi(\gamma)$, when parametrized by half-plane capacity, is a chordal stochastic Loewner evolution whose driving function satisfies the SDE
\begin{equation} \label{SDE_intro}
    dW_t = 2dB_t - 2\pi \bigg(\int_{\Omega_t}m^2P_t^m(w)h_t(w)dw\bigg)dt, \quad W_0=0,
\end{equation}
where $(B_t, t \geq 0)$ is a standard one-dimensional Brownian motion. Above, $\Omega_t$ is defined as $\Omega \setminus K_t$, with $K_t$ being the hull generated by $\phi(\gamma)([0,t])$, $P_t^m$ is related to the massive Poisson kernel with mass $m$ in $\Omega_t = \Omega \setminus K_t$ evaluated at the growth point $\gamma(t)$ and $h_t$ is the unique harmonic function in $\Omega_t$ with boundary conditions $-1/2$ on the counterclockwise oriented boundary arc $(ab)$ and the right side of $\gamma([0,t])$ and $1/2$ on the clockwise oriented boundary arc $(ab)$ and the left side of $\gamma([0,t])$. 

All the quantities appearing in the SDE \eqref{SDE_intro} will be defined precisely in Section \ref{sec_charac}. We will see in Section \ref{subsec_abs_cov} that this SDE has a unique weak solution whose law is absolutely continuous with respect to $(2B_t, t \geq 0)$. This implies that massive SLE$_4$ with mass $m$ in $\Omega$ from $a$ to $b$ is absolutely continuous with respect to SLE$_4$ in $\Omega$ from $a$ to $b$.

To make the statement of Theorem \ref{theorem_intro} precise, we must detail the assumptions on the domain $\Omega$, the boundary points $a$ and $b$ and the discrete approximations $(\Omega_{\delta},a_{\delta}, b_{\delta})_{\delta}$ as well as define the topologies in which convergence holds. This will be done in Section \ref{sec_setup} and Section \ref{sec_annulus_cross} respectively. Theorem \ref{theorem_intro} will be shown under slightly weaker assumptions on the mass $m$: we will establish the result for a space-dependent mass $m: \Omega \to \mathbb{R}_{+}$ and its appropriate discretizations $(m_{\delta}: \Omega_{\delta} \to \mathbb{R}_{+})_{\delta}$, provided that the function $m$ is continuous and bounded. Defining massive SLE$_4$ with space-dependent mass also enables us to show that massive SLE$_4$ is conformally covariant, in a sense made precise in Section \ref{subsec_abs_cov}.

SLE$_4$ has a rich interplay with the planar continuum Gaussian free field (GFF). The prime example of this is the existence of a level line coupling between an SLE$_4$ curve and a GFF with appropriate boundary conditions \cite{Dub_SLE, IG_1}. One may wonder whether the massive version of SLE$_4$ defined via the SDE \eqref{SDE_intro} can be coupled in the same way to a massive GFF. The answer to this question turns out to be positive. Let $\Omega \subset \mathbb{C}$ be a bounded, open and simply connected domain and let $m >0$. The massive GFF in $\Omega$ with mass $m$ and Dirichlet boundary conditions is the centered Gaussian process $\Gamma^m$ indexed by smooth and compactly supported functions whose covariance is, for $f$ and $g$ two such functions,
\begin{equation*}
    \EE[(\Gamma^m,f)(\Gamma^m,g)] = \int_{\Omega} f(x)G_{\Omega}^m(x,y)g(y) dydx.
\end{equation*}
Above, $G_{\Omega}^m$ is the massive Green function in $\Omega$ with mass $m$ and Dirichlet boundary conditions, that is $G_{\Omega}^m$ is the inverse in the sense of the distributions of the operator $-\Delta+m^2$ with Dirichlet boundary conditions. As the GFF, the massive GFF is not defined pointwise but is only a generalized function. For a function $f: \partial \Omega \to \mathbb{R}$ with finitely many discontinuity points, we say that a massive GFF $\Gamma^m$ in $\Omega$ with mass $m$ has boundary conditions $f$ if $\Gamma^m$ has the same law as $\Gamma_0^m + \phi_{f}^m$, where $\Gamma_0^m$ is a massive GFF in $\Omega$ with mass $m$ and Dirichlet boundary conditions and $\phi_f^m$ is the massive harmonic extension of $f$ in $\Omega$. The existence of a coupling between a massive GFF with appropriate boundary conditions and a massive SLE$_4$ curve then reads as follows.

\begin{theorem} \label{coupling_intro}
Set $\lambda := \sqrt{\pi/8}$. Let $\Omega \subset \mathbb{C}$ be a bounded, open and simply connected domain and let $a, b \in \partial \Omega$. Denote by $\partial \Omega^{+}$, respectively $\partial \Omega^{-}$, the clockwise, respectively counterclockwise, oriented boundary arc $(ab)$. Let $m > 0$. Then there exists a coupling $(\Gamma^m, \gamma)$ where $\Gamma^m$ is a massive GFF in $\Omega$ with mass $m$ and boundary conditions $-\lambda$ on $\partial \Omega^{-}$ and $\lambda$ on $\partial \Omega^{+}$ and $\gamma$ is a massive SLE$_4$ in $\Omega$ from $a$ to $b$. In this coupling, for any stopping time $\tau$ for the filtration generated by $\gamma$, conditionally on $\gamma([0,\tau])$,
\begin{equation*}
    \Gamma^m = \Gamma^m_{\tau} + \phi_{\tau}^m
\end{equation*}
where $\Gamma_{\tau}^m$ is a massive GFF in $\Omega \setminus \gamma([0,\tau])$ with mass $m$ and Dirichlet boundary conditions and $\phi_{\tau}^m$ is the massive harmonic function in $\Omega \setminus \gamma([0,\tau])$ with boundary conditions $-\lambda$ on $\partial \Omega^{-}$ and the right side of $\gamma([0,\tau])$ and $\lambda$ on $\partial \Omega^{+}$ and the left side of $\gamma([0,\tau])$. Moreover, $\Gamma_t^m$ and $\phi_{\tau}^m$ are independent.   
\end{theorem}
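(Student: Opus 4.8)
The plan is to follow the classical strategy for constructing the SLE$_4$/GFF level line coupling (as in \cite{Dub_SLE, IG_1}), but carried out in the massive setting using the absolute continuity of massive SLE$_4$ with respect to SLE$_4$ together with the SDE \eqref{SDE_intro}. The key object is the conditional expectation martingale: for a fixed test function $\rho$, one considers $M_t := \EE[(\Gamma^m, \rho) \mid \gamma([0,t])]$, which should equal $(\phi_t^m, \rho)$, where $\phi_t^m$ is the massive harmonic extension of the boundary data $\pm\lambda$ (with the jump now also along the traced curve) in the slit domain $\Omega_t = \Omega \setminus \gamma([0,t])$. The coupling is then built by the standard device: define $\Gamma^m$ as the sum of $\phi_\tau^m$ and an independent massive GFF $\Gamma_\tau^m$ in $\Omega_\tau$, and verify that the resulting law does not depend on $\tau$; since the law of a generalized Gaussian field is determined by its one-dimensional marginals $(\cdot, \rho)$, it suffices to check that the mean $(\phi_t^m, \rho)$ evolves as a martingale and that the conditional variance $\int (\text{mass-}m\ \text{Green function of }\Omega_t) $ decreases by exactly the right amount — equivalently, that $t \mapsto (\phi_t^m,\rho)$ has the correct quadratic variation matching the loss of Green's function mass.

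The main computation is therefore an Itô-calculus verification that, under the law of the driving function $W_t$ solving \eqref{SDE_intro}, the process $t \mapsto (\phi_t^m, \rho)$ is a continuous local martingale (hence a martingale after a uniform-integrability check) and that $d\langle (\phi^m,\rho)\rangle_t = \big(\text{change in massive Green's function}\big)\,dt$. Concretely, I would map to $\HH$ via $\phi$, push the mass forward to a space-dependent mass (the paper already develops massive SLE$_4$ with space-dependent mass precisely for such conformal manipulations, see Section \ref{subsec_abs_cov}), and write $\phi_t^m$ in the slit half-plane in terms of the Loewner maps $g_t$ and the massive Poisson kernel $P_t^m$. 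The harmonic (non-massive) function $h_t$ appearing in the drift of \eqref{SDE_intro} is exactly the boundary-data extension up to the multiplicative constant $\lambda = \sqrt{\pi/8}$ (indeed $2\lambda/\pi \cdot \pi = 2\lambda$ and the factor $2\pi$ in \eqref{SDE_intro} is tuned so that $\kappa=4$ gives the level-line constant $\lambda$), so the drift term in $W_t$ is designed to cancel precisely the "bad" non-martingale term that a naive massive harmonic extension would produce. The computation splits $\phi_t^m$ as massive harmonic extension $=$ (ordinary harmonic extension) $+$ (massive correction), applies the known non-massive level-line Itô computation to the first piece, and handles the massive correction via the representation of $G_\Omega^m - G_\Omega$ and $P_t^m$ through the resolvent identity for $-\Delta + m^2$.

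The hard part will be the regularity and exchange-of-limits issues rather than the formal Itô computation: one must justify differentiating $(\phi_t^m, \rho)$ in $t$, control the massive Poisson kernel $P_t^m$ and its derivatives uniformly as the slit grows (including as $\gamma$ approaches $\partial\Omega$ or develops near-double-points), and establish enough continuity of $t \mapsto G_{\Omega_t}^m$ in a suitable operator sense to identify the conditional variance. I would handle this by working up to stopping times $\tau_n$ at which $\gamma$ stays at definite distance from $\partial\Omega$ and has capacity at most $n$, proving the coupling identity for each $\tau_n$, and then passing to the limit using that massive quantities are uniformly comparable to their non-massive counterparts on compact subsets (from the resolvent bounds established earlier) together with a.s.\ continuity of $\gamma$. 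A secondary subtlety is upgrading the local-martingale property to a genuine martingale and verifying the asserted independence of $\Gamma_\tau^m$ and $\phi_\tau^m$: the former follows from boundedness of $(\phi_t^m,\rho)$ (the boundary data is bounded by $\lambda$ and the massive harmonic extension obeys the maximum principle), and the latter is automatic from the construction, since $\Gamma_\tau^m$ is sampled independently given $\gamma([0,\tau])$ and the martingale property guarantees consistency of these conditional laws across $\tau$.
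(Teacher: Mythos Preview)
Your proposal is correct and follows essentially the same strategy as the paper: show that $t\mapsto (\phi_t^m,\rho)$ is a bounded continuous martingale whose quadratic variation equals the loss of massive Green's function, and then construct the coupling by adding an independent massive GFF in the slit domain. The paper's execution is somewhat more streamlined in two respects. First, rather than splitting $\phi_t^m$ into (harmonic extension) $+$ (massive correction) and redoing the It\^o calculus from scratch, the paper simply reuses the SDE for $h_t^m(z)$ already computed in Claim~\ref{claim_SDE}: plugging in the massive SLE$_4$ drift makes all finite-variation terms cancel and leaves $d\eta_t^m(z)=\sqrt{2\pi}\,P_t^m(z)\,dB_t$ directly. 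Second, the quadratic variation identity is read off in one line from the massive Hadamard formula $\partial_t G_t^m(z,w)=-2\pi P_t^m(z)P_t^m(w)$ of Lemma~\ref{lemma_massive_Hadamard}, which is exactly the ingredient you allude to but do not name. Finally, the regularity and exchange-of-limits worries you raise (localizing stopping times $\tau_n$, behaviour near $\partial\Omega$) are bypassed in the paper by the global bound $|\eta_t^m|\le\lambda$ from the maximum principle together with monotonicity of $t\mapsto G_t^m(z,w)$; these suffice to apply Fubini and to pass from local martingale to martingale without any localization.
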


The existence of such a coupling was already observed in the physics literature \cite{BBC} assuming absolute continuity of massive SLE$_4$ with respect to SLE$_4$ in the upper half-plane. We emphasize that here, Theorem \ref{coupling_intro} is only stated in bounded domains. Its proof, given in Section \ref{sec_coupling}, is analogous to that of the existence of a coupling between a GFF and an SLE$_4$ curve. We will actually establish the result in the case of a space-dependent mass $m: \Omega \to \mathbb{R}_{+}$, provided that $m$ is a bounded and continuous function. Conformal covariance of the massive GFF and of massive SLE$_4$ can then be used to extend this result to unbounded domains with appropriate space-dependent masses, that is masses which are inherited from a bounded domain via conformal mapping, see Section \ref{sec_coupling} for details.

\subsection{Outline of the proof of Theorem \ref{theorem_intro}}

Let us say a few words about the proof of Theorem \ref{theorem_intro}. Its strategy can be decomposed into three main steps. The first one is to show tightness of the sequence of massive harmonic explorer paths $(\gamma_{\delta})_{\delta}$ in an appropriate topology. One natural approach would be to show that the massive harmonic explorer is absolutely continuous with respect to the harmonic explorer and that the Radon-Nikodym derivative is well-behaved in the limit $\delta \to 0$. However, it is unclear whether absolute continuity holds at the level of the discrete curves and we therefore adopt a different approach relying on \cite{Smirnov} and \cite{Karrila}. Thanks to these results, to prove tightness of $(\gamma_{\delta})_{\delta}$, it suffices to show a suitable bound on the probability that the massive harmonic explorer crosses an annulus intersecting the boundary of $\Omega_{\delta}$. This is what we will establish in Section \ref{sec_tightness}.

Tightness of the sequence $(\gamma_{\delta})_{\delta}$ then implies the existence of subsequential limits. Characterizing these subsequential limits thus obtained is the aim of the next two steps of the proof. We will first see in Section \ref{subsec:def_mHE} that, for fixed $\delta >0$, the sequence of discrete massive harmonic functions $(h_{\delta, n}^m)_n$ is a martingale. We will then show that the continuum limit as $\delta \to 0$ of $h_{\delta}^{m}$ is the unique massive harmonic function in $\Omega$ with mass $m$ and boundary conditions $1/2$ on the clockwise oriented boundary arc $(ab)$ and $-1/2$ on the counter-clockwise oriented boundary arc $(ab)$. This result will in fact be shown for each $h_{\delta,n}^m$ under precise assumptions on the convergence of the domain at time $n$ to a continumm domain. These assumptions will hold thanks to the tightness of the sequence $(\gamma_{\delta})_{\delta}$ proved in the previous step. Convergence of these discrete massive harmonic functions is established in Section \ref{sec_martobs} by adapting some of the arguments of \cite{Discrete_analysis} to the massive setting. 

Finally, we will show that massive SLE$_4$ in $\Omega$ from $a$ to $b$ is the unique non-self-crossing curve $\gamma:[0,\infty) \to \overline{\Omega}$ such that the massive harmonic function $h^m_t$ with mass $m$ and boundary conditions $1/2$ on the clockwise oriented boundary arc $(ab)$ and the left side of $\gamma([0,t])$ and $-1/2$ on the counter-clockwise oriented boundary arc $(ab)$ and the right side of $\gamma([0,t])$ is a martingale for the filtration generated by $\gamma$. This characterization of massive SLE$_4$ is reminiscent of the characterization of SLE$_4$ by the martingale property of a certain (massless) harmonic function \cite{Dub_SLE, IG_1}. In the massive case, the proof follows the same strategy but involves some technicalities due to the presence of a mass. It is given in Section \ref{sec_charac}.

For the convenience of the reader, in Subsection \ref{subsec_ccl}, we state a more rigorous version of Theorem \ref{theorem_intro} and show how to prove it by combining the results of Section \ref{sec_tightness}, Section \ref{sec_martobs} and Section \ref{sec_charac}. 

\subsection*{Acknowledgements} The author is grateful to Ellen Powell for her guidance and her careful reading of an earlier version of the manuscript. The author also thanks Tyler Helmuth for his encouragements and discussions about statistical mechanics and Eveliina Peltola for discussions about SLE and massive SLE. This work was supported by an EPSRC
doctoral studentship.

\section{Setup} \label{sec_setup}

\subsection{Assumptions on the domain and Carath\'eodory convergence} \label{subsec:domain}

We consider an open, bounded and simply connected subset $\Omega$ of the complex plane $\mathbb{C}$. We assume that $0 \in \Omega$ and that there exists $R>0$ such that $\Omega \subset B(0,R)$. We fix two marked boundary points $a, b \in \partial \Omega$ and we assume that both $a$ and $b$ are degenerate prime ends of $\Omega$. That is, if $f: \mathbb{D} \to \Omega$ is a conformal map from the unit disc $\mathbb{D}$ to $\Omega$ and $\hat f $ is a bijective mapping from $\partial \mathbb{D}$ to $\partial \Omega$ induced by $f$, see \cite[Theorem~2.15]{Pommeranke}, then $f$ can be extended continuously at $z_a, z_b \in \partial \mathbb{D}$ by taking radial limits, where $z_a \in \partial \mathbb{D}$, respectively $z_b \in \partial \mathbb{D}$, is the preimage of $a$, respectively $b$, by $\hat f$. Here, the radial limit of $f$ at $\upzeta \in \partial \mathbb{D}$ is defined as $\lim_{\eps \to 0} f \circ P_{\eps}(\upzeta)$, whenever this limit exists and where for $\eps > 0$ and $z \in \mathbb{C}$, $P_{\eps}(z) = (z / \vert z \vert) \min \{ 1- \eps, \vert z \vert \}$. For a more detailed discussion on degenerate prime ends, the reader can consult \cite[Section~2.5]{Pommeranke}. These assumptions on $\Omega$ and the boundary points $a$ and $b$ will in particular allow us to use \cite[Theorem~4.2]{Karrila}.

We assume that $(\Omega_{\delta})_{\delta}$ is a sequence of graphs approximating $\Omega$ in a sense that we will now explain. For each $\delta > 0$, $\Omega_{\delta}$ is a simply connected subgraph of the triangular lattice $\delta \mathbb{T}$, so that every edge of $\Omega_{\delta}$ has length $\delta$. We denote by $V(\Omega_{\delta})$ the set of vertices of $\Omega_{\delta}$ and define the boundary $\partial \Omega_{\delta}$ of $\Omega_{\delta}$ as
\begin{equation*}
    \partial \Omega_{\delta} := \{ w \in \delta \mathbb{T} \setminus V(\Omega_{\delta}): \text{there exists $v \in V(\Omega_{\delta})$ such that $v \sim w$}\}
\end{equation*}
where $v \sim w$ means that there is an edge of $\delta \mathbb{T}$ connecting $v$ and $w$. With this definition of $\partial \Omega_{\delta}$, it is legitimate to set $\operatorname{Int}(\Omega_{\delta}):=V(\Omega_{\delta})$.

We associate to each $\Omega_{\delta}$ an open and simply connected polygonal domain
$\hat \Omega_{\delta} \subset \mathbb{C}$ by taking the union of all open hexagons with side length $\delta$ centered at vertices in $V(\Omega_{\delta})$. The marked boundary points $a$ and $b$ of $\partial \Omega$ are then approximated by sequences $(a_{\delta})_{\delta}$ and $(b_{\delta})_{\delta}$ where, for each $\delta >0$, $a_{\delta}$ and $b_{\delta}$ belong to $\partial \hat \Omega_{\delta}$. We assume that for all $\delta >0$, $0$ belongs to $\hat \Omega_{\delta}$, which is necessary to apply \cite[Theorem~4.2]{Karrila}. More importantly, we assume that the approximations $(\hat \Omega_{\delta}; a_{\delta}, b_{\delta})_{\delta}$ converge to $(\Omega; a, b)$ in the Carath\'eodory sense. That is,
\begin{itemize}
    \item each inner point $z \in \Omega$ belongs to $\hat \Omega_{\delta}$ for $\delta$ small enough;
    \item for every boundary point $\zeta \in \partial \Omega$, there exist $\upzeta_{\delta} \in \partial \hat \Omega_{\delta}$ such that $\upzeta_{\delta} \to \upzeta$ as $\delta \to 0$.
\end{itemize}
This can be rephrased in terms of conformal maps. Let $\psi: \Omega \to \mathbb{D}$ be a conformal map such that $\psi(a)=1$ and $\psi(0)=0$. Similarly, for each $\delta >0$, let $\psi_{\delta}: \hat \Omega_{\delta} \to \mathbb{D}$ be a conformal map such that $\psi_{\delta}(a_{\delta})=1$ and $\psi_{\delta}(0)=0$. Then, by \cite[Theorem~1.8]{Pommeranke}, the Carath\'eodory convergence of $(\hat \Omega_{\delta}; a_{\delta}, b_{\delta})_{\delta}$ to $(\Omega; a, b)$ is equivalent to
\begin{align*}
    & \psi_{\delta} \to \psi \quad \text{uniformly on compact subsets of $\Omega$ and}\\
    & \psi_{\delta}^{-1} \to \psi^{-1} \quad \text{uniformly on compact subsets of $\mathbb{D}$}.
\end{align*}
Furthermore, we assume that $a_{\delta}$, respectively $b_{\delta}$, is a close approximation of $a$, respectively $b$, as defined by Karrila in \cite[Section~4.3]{Karrila}. To lighten the notations, we identify the prime ends $a_{\delta}$ and $a$ with their corresponding radial limit points. For $r>0$, let $S_r$ be the arc of $\partial B(a,r) \cap \Omega$ disconnecting in $\Omega$ the prime end $a$ from $0$ and that is closest to $a$. In other words, $S_r$ is the last arc from the (possibly countable) collection $\partial B(a,r) \cap \Omega$ of arcs that a path running from $0$ to $a$ inside $\Omega$ must cross. Such an arc exists by \cite[Lemma~A.1]{Karrila} and approximation by radial limits. $a_{\delta}$ is then said to be a close approximation of $a$ if
\begin{itemize}
    \item $a_{\delta} \to a$ as $\delta \to 0$; and
    \item for each $r$ small enough and for all sufficiently (depending on $r$) small $\delta$, the boundary point $a_{\delta}$ of $\hat \Omega_{\delta}$ is connected to the midpoint of $S_r$ inside $\hat \Omega_{\delta} \cap B(a,r)$. 
\end{itemize}

\subsection{Definition of the discrete model} \label{subsec:def_mHE}

The massive harmonic explorer is a massive version of the harmonic explorer studied in \cite{HE}. Let $m: \Omega \to \mathbb{R}_{+}$ be a continuous function bounded above by some constant $\overline{m} > 0$. For each $\delta > 0$, we assign a weight to the edges of the graph $\Omega_{\delta}$ as follows: if $z \in V(\Omega_{\delta})$ and $w \in V(\Omega_{\delta})$ is connected to $z$ in $\Omega_{\delta}$, then the oriented edge $(zw)$ from $z$ to $w$ has weight $1-m_{d}^2(z)\delta^2$, where we have set, for $v \in \Omega_{\delta}$,
\begin{equation} \label{def_discrete_mass}
    m_d^2(v) := \frac{cm^2(v)}{6\tan(\theta)}.
\end{equation}
Above, the constant $c>0$ is such that the faces of the hexagonal lattice dual to $\delta \mathbb{T}$ have area $A_{\delta} = c\delta^2$, that is $c=9\sqrt{3}/8$. $\theta$ is defined as \cite[Figure~1.B]{Discrete_analysis}, that is $\theta = \pi/6$ for the triangular lattice. If $v \notin \Omega \cap \Omega_{\delta}$, we set $m_d^2(v) = 0$. Notice that the weight of an edge depends on its orientation: the oriented edge $(wz)$ in $\Omega_{\delta}$ has weight $1-m_d^2(w)\delta^2$. Observe also that for any $v \in V(\Omega_{\delta})$,
\begin{equation*}
    m_d^2(v) \leq \frac{c\overline{m}^2}{6\tan(\theta)} =: \overline{m}_{d}^{2}.
\end{equation*}

For each $\delta > 0$ such that $\delta < \overline{m}_d^{-1}$, these edge weights naturally give rise to a massive random walk $X^m$ on $\Omega_{\delta}$. More precisely, if the walk $X^m$ is at a vertex $v \in \Omega_{\delta}$, then at the next step, it jumps to one of its neighbors with probability $1-m^2_d(v)\delta^2$ or is killed with probability $m_d^2(v)\delta^2$. The jumps to its neighbors are uniform, that is if $w$ is connected to $v$ by an edge, then the walk has probability $(1-m^2_d(v)\delta^2)/6$ to jump to $w$ at its next step. We denote by $\tau^{\star}$ the killing time of the walk and by $\tau_{\partial \Omega_{\delta}}$ the hitting time of $\partial \Omega_{\delta}$ by the walk, when it started in $\operatorname{Int}(\Omega_{\delta})$. Accordingly, we denote by $\PP^{(m)}_{w}(\tau^{\star} \leq \tau_{\partial \Omega_{\delta}})$ the probability that the walk is killed before reaching the boundary $\partial \Omega_{\delta}$ when it started at $w \in \operatorname{Int}(\Omega_{\delta})$. Such a massive random walk is intimately connected to discrete massive harmonic functions with mass $m$, in the same way as random walk is connected to discrete harmonic functions. Discrete massive harmonic functions are defined as follows: a function $h: \Omega_{\delta} \to \mathbb{R}$ is said to be discrete massive harmonic with mass $m$ if for any $v \in \operatorname{Int}(\Omega_{\delta})$,
\begin{equation} \label{def_mharm}
    h(v) = \frac{1-m_d^2(v)}{6} \sum_{v \sim w} h(w).
\end{equation}
As in the non-massive case, one can also define the massive harmonic measure $H^{(m)}(\cdot, E_1)$ of a subset $E_1$ of $\partial \Omega_{\delta}$. This is the unique discrete massive harmonic function with mass $m$ in $\Omega_{\delta}$ and boundary value $1$ on $E_1$ and $0$ on $\partial \Omega_{\delta} \setminus E_1.$ For $v \in \operatorname{Int}(\Omega_{\delta})$, $H^{(m)}(v,E_1)$ can be interpreted as the probability that a massive random walk with mass $m$ started at $v$ is not killed before leaving $\Omega_{\delta}$ and exits $\Omega_{\delta}$ through $E_1$. Observe that $H^{(m)}(v, \partial \Omega_{\delta}) = 1 - \PP_v^{(m)}(\tau^\star \leq \tau_{\partial \Omega_{\delta}})$.

For $0<\delta < \overline{m}_d^{-1}$, the massive harmonic explorer with mass $m$ is a random path $\gamma_{\delta}$ on the dual of $\Omega_{\delta}$ defined as follows. On $\partial \Omega$, we assign the sign $+$ to the clockwise oriented boundary arc $(ab)$, denoted $\partial \Omega^{+}$, and the sign $-$ to the counter-clockwise oriented boundary arc $(ab)$, denoted $\partial \Omega^{-}$. Correspondingly, for each $\delta >0$, the boundary $\partial \hat \Omega_{\delta}$ of $\hat \Omega_{\delta}$ is split into two parts: the clockwise oriented boundary arc $(a_{\delta}b_{\delta})$ has sign $+$ while the counter-clockwise oriented boundary arc $(a_{\delta}b_{\delta})$ has sign $-$. This naturally defines a partition of $\partial \Omega_{\delta}$ into two sets of vertices: the vertices of  $\partial \Omega_{\delta}$ that belong to the clockwise oriented boundary arc $(a_{\delta}b_{\delta})$ have sign $+$ while the vertices that belong to counter-clockwise oriented boundary arc $(a_{\delta}b_{\delta})$ have sign $-$. The vertices $a_{\delta}$ and $b_{\delta}$, in case they belong to $\partial \Omega_{\delta}$, are assigned an arbitrary sign. We denote the set of vertices with sign $+$, respectively $-$, by $\partial \Omega_{\delta}^{+}$, respectively $\partial \Omega_{\delta}^{-}$.

Using this partition of $\partial \Omega_{\delta}$, we then let $h_{\delta}^{m}$ be the unique discrete massive harmonic function with mass $m$ in $\Omega_{\delta}$ and boundary values $-1/2$ on $\partial \Omega_{\delta}^{-}$ and $1/2$ on $\partial \Omega_{\delta}^{+}$. Let $m_0$ be the middle point of the unique edge $e_0$ connecting two vertices of opposite sign to which $a_{\delta}$ belongs; $m_0$ is the starting point of the massive harmonic explorer $\gamma_{\delta}$. The edge $e_0$ bounds a triangle $f_0$ of $\Omega_{\delta}$ and we denote by $v_1$ the vertex of $\Omega_{\delta} \cup \partial \Omega_{\delta}$ that is not an endpoint of $e_0$. The explorer then turns right with probability $h_{\delta}^m(v_1)+1/2$, that is, with probability $h_{\delta}^m(v_1)+1/2$, $\gamma_{\delta}$ traces the broken line from $m_0$ to the center of $f_0$ and then from the center of $f_0$ to the middle point $m_1$ of the right edge of $f_0$. In that case, $v_1$ becomes a boundary vertex with sign $+$ and we set $\Omega_{\delta, 1}:= \Omega_{\delta} \setminus \{v_1\}$, $\partial \Omega_{\delta, 1}^{+} := \partial \Omega_{\delta}^{+} \cup \{ v_1 \}$ and $\partial \Omega_{\delta, 1}^{-}:= \partial \Omega_{\delta}^{-}$. With complementary probability, the explorer turns left, that is $\gamma_{\delta}$ traces the broken line from $m_0$ to the center of $f_0$ and then from the center of $f_0$ to the middle point $m_1$ of the left edge of $f_0$. In that case, $v_1$ becomes a boundary vertex with sign $-$ and we set $\Omega_{\delta, 1}:= \Omega_{\delta} \setminus \{v_1\}$, $\partial \Omega_{\delta, 1}^{-} := \partial \Omega_{\delta}^{-} \cup \{ v_1 \}$ and $\partial \Omega_{\delta, 1}^{+}:= \partial \Omega_{\delta}^{+}$. In both cases, on $\Omega_{\delta,1}$, we let $h_{\delta, 1}^m$ be the unique discrete massive harmonic function with mass $m$ and boundary value $-1/2$ on $\partial \Omega_{\delta,1}^{-}$ and $1/2$ on $\partial \Omega_{\delta, 1}^{+}$. For the second step, we repeat the same procedure but with respect to the vertex $v_2$, defined analogously to $v_1$, and using the function $h_{\delta, 1}^{m}$. We continue until the path $\gamma_{\delta}$ hits the edge connecting two vertices of opposite sign to which $b_{\delta}$ belongs, which almost surely happens in finite time. We update the graphs and their boundary at each step to obtain random sequences $(\Omega_{\delta,n})_n$, $(\partial \Omega_{\delta,n}^{+})_n$ and $(\partial \Omega_{\delta,n}^{-})_n$ and the corresponding sequence of discrete massive harmonic functions $(h_{\delta,n}^m)_n$. See again Figure \ref{fig_mHE} for a dual perspective on the hexagonal lattice.

This discrete model is well-defined, in the sense that each step of the massive harmonic explorer is chosen according to a probability measure. Indeed, notice that almost surely, for any $n \in \mathbb{N}$ and any $w \in \operatorname{Int}(\Omega_{\delta,n}) \cup \partial \Omega_{\delta,n}$,
\begin{equation} \label{h_harm_meas}
    h_{\delta,n-1}^m(w) = \frac{1}{2} \bigg( H_{\delta,n-1}^{(m)}(w) - \tilde H_{\delta,n-1}^{(m)}(w)\bigg)
\end{equation}
where $H_{\delta,n-1}^{(m)}(w)$, respectively $\tilde H_{\delta,n-1}^{(m)}(w)$, is the discrete massive harmonic measure with mass $m$ of $\partial \Omega_{\delta,n-1}^{+}$, respectively $\partial \Omega_{\delta,n-1}^{-}$, seen from $w$. This equality is a consequence of the uniqueness of discrete massive harmonic functions with prescribed boundary conditions: the functions on each side of the equality are discrete massive harmonic with mass $m$ and boundary values $1/2$ on $\partial \Omega_{\delta,n-1}^{+}$ and $-1/2$ on $\partial \Omega_{\delta,n-1}^{-}$. This shows that almost surely, for any $n \in \mathbb{N}$,
\begin{equation*}
    h_{\delta,n-1}^m(v_n) + \frac{1}{2} = \frac{1}{2} \bigg( H_{\delta,n-1}^{(m)}(v_n) - \tilde H_{\delta,n-1}^{(m)}(v_n)\bigg) + \frac{1}{2}
\end{equation*}
and the right-hand side almost surely takes values in $[0,1]$ as both $H_{\delta,n-1}^{(m)}(v_n)$ and $\tilde H_{\delta,n-1}^{(m)}(v_n)$ almost surely take values in $[0,1]$. Observe also that the equality \eqref{h_harm_meas} implies that this model has the following symmetry: almost surely, for all $n \in \mathbb{N}$ and all $v \in \operatorname{Int}(\Omega_{\delta}) \cup \partial \Omega_{\delta}$,
\begin{equation*}
    1- (h_{\delta,n}^m(v)+1/2) = \tilde h_{\delta,n}^m(v)+1/2
\end{equation*}
where $\tilde h_{\delta, n}^m$ is the unique discrete massive harmonic function with mass $m$ in $\Omega_{\delta, n}$ and boundary value $-1/2$ on $\partial \Omega_{\delta,n}^{+}$ and $1/2$ on $\partial \Omega_{\delta,n}^{-}$. 

Throughout the text, for $0 < \delta < \overline{m}_{d}^{-1}$, we denote by $\PP_{\delta}^{(\Omega,a,b,m)}$ the probability measure on paths on the dual of $\Omega_{\delta}$ induced by the massive harmonic explorer from $a$ to $b$ in $\Omega$ with mass $m$. $\EE_{\delta}^{(\Omega,a,b,m)}$ denotes the corresponding expectation. For ease of notations, when $\delta > 0$ and it is clear from the context that the path $\gamma_{\delta}$ is distributed according to $\PP_{\delta}^{(\Omega,a,b,m)}$, we simply write $\gamma$ instead of $\gamma_{\delta}$. Notice also that the family of probability measures $(\PP_{\delta}^{(\Omega,a,b,m)})_{\delta}$ implicitly depends on the sequence $(\hat \Omega_{\delta},a_{\delta},b_{\delta})_{\delta}$ approximating $(\Omega,a,b)$. 

Observe that for $0<\delta < \overline{m}_d^{-1}$, $\PP_{\delta}^{(\Omega,a,b,m)}$ satisfies the following domain Markov property. Let $\tau$ be a stopping time for the filtration generated by $\gamma$. Then, almost surely,
\begin{equation} \label{discrete_DMP}
    \PP_{\delta}^{(\Omega,a,b,m)}(\cdot \, \vert \, \gamma([0,\tau])) = \PP_{\delta}^{(\Omega_{\tau}, \gamma(\tau), b, m)}(\cdot)
\end{equation}
where, with a slight abuse of notation, $\PP_{\delta}^{(\Omega_{\tau}, \gamma(\tau), b, m)}$ denotes the probability measure induced on paths by the massive harmonic explorer with mass $m$ from $\gamma(\tau)$ to $b_\delta$ in $\Omega_{\tau} = \Omega_{\delta,\tau}$ and exploration starting with respect to $h_{\delta,\tau}^{m}(v_{\tau+1}) +1/2$. That is, conditionally on $\gamma([0,\tau])$, at step $\tau+1$, the explorer turns right with probability $h_{\delta,\tau}^{m}(v_{\tau+1}) +1/2$ and, with complementary probability, it turns left and the rest of the path is traced following the same procedure as the one described above.

Finally, as in the case of the non-massive harmonic explorer, for each $v \in \operatorname{Int}(\Omega_{\delta}) \cup \partial \Omega_{\delta}$, we notice that for each $\delta$, under $\mathbb{P}^{(\Omega, a, b, m)}_\delta$, the process $(h_{\delta, n}^m(v), n \geq 0)$ is a martingale with respect to the filtration $(\mathcal{F}_{\delta, n})_n$ generated $\gamma_{\delta}$. That is, for $n \in \mathbb{N}$, $\mathcal{F}_{\delta,n} = \sigma(\gamma_{\delta}([0,n]))$. We record this fact in the proposition below.

\begin{proposition} \label{prop_martingale_obs}
    Let $\delta >0$. In the above setting and using the same notations, for each $v \in \operatorname{Int}(\Omega_{\delta}) \cup \partial \Omega_{\delta}$, the process $(h_{\delta, n}^m(v), n \geq 0)$ is a martingale with respect to the filtration $(\mathcal{F}_{\delta, n})_n$.
\end{proposition}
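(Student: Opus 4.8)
The plan is to reduce everything to the one-step identity $\EE_\delta^{(\Omega,a,b,m)}[h_{\delta,n}^m(v)\mid\mathcal F_{\delta,n-1}]=h_{\delta,n-1}^m(v)$, valid for every $v\in\operatorname{Int}(\Omega_\delta)\cup\partial\Omega_\delta$ and every $n\ge 1$, where all objects are read as constant in $n$ once the exploration has terminated; integrability is automatic since $h_{\delta,n}^m$ takes values in $[-\tfrac12,\tfrac12]$. First I would condition on $\gamma_\delta([0,n-1])$. This determines the graph $\Omega_{\delta,n-1}$, its signed boundary partition, the vertex $v_n$, the function $h_{\delta,n-1}^m$, and hence the turning probability $p:=h_{\delta,n-1}^m(v_n)+\tfrac12\in[0,1]$. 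Conditionally on $\mathcal F_{\delta,n-1}$ the vertex set at time $n$ is deterministic, $V(\Omega_{\delta,n})=V(\Omega_{\delta,n-1})\setminus\{v_n\}$, while the signed boundary is random: with probability $p$ one has $v_n\in\partial\Omega_{\delta,n}^{+}$ and then $h_{\delta,n}^m$ is the discrete massive harmonic function $h^{m,+}$ on $\Omega_{\delta,n}$ with that boundary data, and with probability $1-p$ one has $v_n\in\partial\Omega_{\delta,n}^{-}$ and $h_{\delta,n}^m=h^{m,-}$. Therefore $\EE_\delta^{(\Omega,a,b,m)}[h_{\delta,n}^m(v)\mid\mathcal F_{\delta,n-1}]=g(v)$ with $g:=p\,h^{m,+}+(1-p)\,h^{m,-}$, and the whole statement reduces to proving $g\equiv h_{\delta,n-1}^m$ on $\operatorname{Int}(\Omega_{\delta,n})\cup\partial\Omega_{\delta,n}$.

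This identity I would deduce from the uniqueness of discrete massive harmonic functions with prescribed boundary values (the same principle already invoked around \eqref{h_harm_meas}). On one hand, $g$ is discrete massive harmonic with mass $m$ in $\Omega_{\delta,n}$, since \eqref{def_mharm} is linear in $h$ with coefficients $m_d^2(\cdot)$ depending only on the vertex, and $h^{m,\pm}$ are such functions. On the other hand, the restriction of $h_{\delta,n-1}^m$ to $\Omega_{\delta,n}$ is also discrete massive harmonic there: every vertex of $\operatorname{Int}(\Omega_{\delta,n})=V(\Omega_{\delta,n})$ is a vertex of $\operatorname{Int}(\Omega_{\delta,n-1})$ at which \eqref{def_mharm} already holds, and the six lattice neighbours entering that relation are unchanged — if one of them is $v_n$, then $v_n\in\partial\Omega_{\delta,n}$, where $h_{\delta,n-1}^m$ is still defined and takes the same value. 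It then remains to match boundary data on $\partial\Omega_{\delta,n}$. At $w=v_n$ one has $g(v_n)=p\cdot\tfrac12+(1-p)\cdot(-\tfrac12)=p-\tfrac12=h_{\delta,n-1}^m(v_n)$, which is exactly the defining property of $p$. For $w\in\partial\Omega_{\delta,n}\setminus\{v_n\}$, such a $w$ lies in $\delta\mathbb T\setminus V(\Omega_{\delta,n-1})$ and is adjacent to $V(\Omega_{\delta,n})\subseteq V(\Omega_{\delta,n-1})$, hence $w\in\partial\Omega_{\delta,n-1}$ and carries the same sign in both partitions; consequently $h^{m,+}(w)=h^{m,-}(w)=h_{\delta,n-1}^m(w)=\pm\tfrac12$ and $g(w)=h_{\delta,n-1}^m(w)$. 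By uniqueness, $g=h_{\delta,n-1}^m$ everywhere on $\operatorname{Int}(\Omega_{\delta,n})\cup\partial\Omega_{\delta,n}$; evaluating at $v$ yields the one-step identity. Finally, if $v\in\partial\Omega_\delta$, or once $v$ has been reached by the exploration (i.e. $v=v_k$ for some $k\le n$), the value $h_{\delta,n}^m(v)$ equals the constant $\pm\tfrac12$ attached to the fixed sign of $v$, so the martingale property is trivial for those indices; combining this with the one-step identity for the remaining indices completes the proof.

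I do not expect a genuine obstacle here: once the conditioning is done the statement is a finite, deterministic computation, the massive analogue of the classical calculation in \cite{HE}. The only point requiring care is the bookkeeping around $\partial\Omega_{\delta,n}$, namely verifying that deleting the single interior vertex $v_n$ neither destroys the massive harmonicity of $h_{\delta,n-1}^m$ on the smaller graph nor introduces boundary vertices other than $v_n$ whose sign is not already fixed — this is what makes the uniqueness principle applicable and reduces the proposition to the identity $p\,h^{m,+}+(1-p)\,h^{m,-}=h_{\delta,n-1}^m$.
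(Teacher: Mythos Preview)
Your proposal is correct and follows essentially the same approach as the paper: both reduce to the one-step identity, verify it first at the newly created boundary vertex $v_n$ (where the computation $p\cdot\tfrac12+(1-p)\cdot(-\tfrac12)=h_{\delta,n-1}^m(v_n)$ is immediate from the definition of the turning probability), and then extend to all interior vertices by the linearity/uniqueness of the discrete massive harmonic extension. Your write-up is somewhat more explicit about the boundary bookkeeping (checking that $\partial\Omega_{\delta,n}\setminus\{v_n\}\subset\partial\Omega_{\delta,n-1}$ with matching signs, and that $h_{\delta,n-1}^m$ remains massive harmonic on the smaller graph), but this is exactly what the paper's one-line appeal to linearity of the extension operator encodes.
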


\begin{proof}
    Let $\delta >0$ and fix $n \in \mathbb{N}$. We first consider the case $v=v_{n+1}$. Then, almost surely,
    \begin{align*}
        \EE_{\delta}^{(\Omega,a,b,m)}[h_{\delta, n+1}^m(v_{n+1}) \vert \mathcal{F}_{\delta, n}] &= \frac{1}{2}\times \PP_{\delta}^{(\Omega,a,b,m)}(h_{\delta, n+1}^m(v_{n+1})=1/2 \, \vert \, \mathcal{F}_{\delta, n}) \\
        &- \frac{1}{2} \times (1-\PP_{\delta}^{(\Omega,a,b,m)}(h_{\delta, n+1}^m(v_{n+1})=1/2 \, \vert \, \mathcal{F}_{\delta, n})) \\
        &= \frac{1}{2} \times \bigg(h_{\delta, n}^m(v_{n+1}) + \frac{1}{2} \bigg) + \frac{-1}{2} \times \bigg( \frac{1}{2} -h_{\delta, n}^m(v_{n+1}) \bigg) \\
        &= h_{\delta, n}^m(v_{n+1}).
    \end{align*}
    $h_{\delta, n}^m$ is also the discrete massive harmonic extension of its restriction to $\partial \Omega_{\delta, n+1}$ and similarly for $h_{\delta, n+1}^{m}$. Since taking the massive harmonic extension of a function defined on the boundary of a domain is a linear operation and almost surely $\EE_{\delta}^{(\Omega,a,b,m)}[h_{\delta, n+1}^m(v) \vert \mathcal{F}_{\delta, n}] = h_{\delta, n}(v)$ for all $v \in \partial \Omega_{\delta, n+1}$, the same relation holds almost surely for every $v \in \operatorname{Int}(\Omega_{\delta, n+1})$. Thus, for each $v \in \operatorname{Int}(\Omega_{\delta}) \cup \partial \Omega_{\delta}$, $(h_{\delta,n}^m(v), n \geq 0)$ is a martingale.
\end{proof}

In view of Proposition \ref{prop_martingale_obs}, we call the functions $(h_{\delta,n}^m)_n$ the martingale observables of the massive harmonic explorer.

\section{Tightness} \label{sec_tightness}

In this section, we establish tightness of the sequence of massive harmonic explorer paths $(\gamma_{\delta})_{\delta}$, where for each $\delta > 0$, $\gamma_{\delta}$ is distributed according to $\PP_{\delta}^{(\Omega,a,b,m)}$. Tightness is shown is three different topologies, using the approach laid out in \cite{Smirnov}. This approach applies to families of probability measures supported on a metric space of curves, whose construction is explained in Section \ref{sec_curves}. Under a condition on the probability of a certain crossing event, tightness of a sequence of random curves supported on this space then follows from \cite[Theorem~1.7]{Smirnov}. This theorem is phrased in terms of Loewner chains and therefore, before recalling it, we provide some background on the Loewner equation in Section \ref{sec_Loewner}. \cite[Theorem~1.7]{Smirnov} is then discussed in Section \ref{sec_annulus_cross}, where we also explain the condition on the probability of the crossing event and describe the topologies in which tightness is obtained. Finally, in Section \ref{sec_proof_crossing}, we show that the condition on the probability of the crossing event is satisfied by the sequence $(\gamma_{\delta})_{\delta}$ distributed according to $(\PP_{\delta}^{(\Omega,a,b,m)})_{\delta}$, thus proving tightness of $(\gamma_{\delta})_{\delta}$ in the aforementioned topologies.

\subsection{The space of curves} \label{sec_curves}

Following \cite{Smirnov}, the space of curves that we will consider is a subspace of the space of continuous mappings from $[0,1]$ to $\mathbb{C}$ modulo reparametrization. More precisely, let
\begin{equation*}
    \mathcal{C}':= \left \{
    f \in \mathcal{C}([0,1], \mathbb{C}): 
    \begin{aligned}
    &\text{ either $f$ is not constant on any subinterval of $[0,1]$} \\ &\text{or $f$ is constant on $[0,1]$} 
    \end{aligned}
    \right \}
\end{equation*}
and let $f_1, f_2 \in \mathcal{C}'$ be equivalent if there exists an increasing homeomorphism $\psi: [0,1] \to [0,1]$ with $f_2 = f_1 \circ \psi$. We denote by $[f]$ the equivalence class of $f$ under this equivalence relation and set
\begin{equation*}
    X(\mathbb{C}):= \{ [f]: f \in \mathcal{C}'\}.
\end{equation*}
$X(\mathbb{C})$ is called the space of curves. We turn $X(\mathbb{C})$ into a metric space by equipping it with the metric
\begin{equation*}
    d_X(f,g) := \inf \{ \| f_0 - g_0 \|_{\infty}: \, f_0 \in [f], g_0 \in [g] \}.
\end{equation*}
$(X(\mathbb{C}), d_X)$ is a separable and complete metric space, but it is not compact. Given $\Omega$ a subset of $\mathbb{C}$ with $\partial \Omega \neq \emptyset$ and two marked boundary points $a$ and $b$, we define the space of simple curves from $a$ to $b$ in $\Omega$ as
\begin{equation*}
    X_{\text{simple}}(\Omega, a, b) := \{ [f]: f \in \mathcal{C}', f((0,1)) \subset \Omega, f(0)=a, f(1)=b, \, \text{$f$ injective} \}.
\end{equation*}
We then let $X_0(\Omega, a, b)$ be the closure of $X_{\text{simple}}(\Omega, a, b)$ in $X(\mathbb{C})$ with respect to the metric $d_X$. Curves in $X_0(\Omega, a, b)$ run from $a$ to $b$, may touch $\partial \Omega$ elsewhere than at their endpoints, may touch themselves and have multiple points but they can have no transversal self-crossings. Notice that if $(\PP_n)_n$ is a sequence of probability measures supported on $X_{\text{simple}}(\Omega, a, b)$ that converges weakly to a probability measure $\PP^{*}$, then a priori $\PP^{*}$ is supported on $X_0(\Omega, a, b)$.

\subsection{Loewner chains} \label{sec_Loewner}

Denote by $\HH$ the complex upper-half plane $\{ z \in \mathbb{C}: \Im(z) > 0 \}$ and let $\gamma: [0,\infty) \to \overline{\mathbb{H}}$ be a non-self-crossing curve targeting $\infty$ and such that $\gamma(0)=0$. For $t \geq 0$, let $K_t$ be the hull generated by $\gamma([0,t])$, that is $\HH \setminus K_t$ is the unbounded connected component of $\HH \setminus \gamma([0,t])$. In the case where $\gamma([0,t])$ is non-self-touching, $K_t$ is simply given by $\gamma([0,t])$. For each $t \geq 0$, it is easy to see that there exists a unique conformal $g_t: \HH \setminus K_t \to \HH$ satisfying the normalization $g_t(\infty) = \infty$ and such that $\lim_{z \to \infty} (g_t(z) - z) = 0$. It can then be proved that $g_t$ satisfies the asymptotic
\begin{equation*}
    g_t(z) = z + \frac{a_1(t)}{z} + O(\vert z \vert^{-2}) \quad \text{as } \vert z \vert \to \infty.
\end{equation*}
The coefficient $a_1(t)$ is equal to $\text{hcap}(K_t)$, the half-plane capacity of $K_t$, which, roughly speaking, is a measure of the size of $K_t$ seen from $\infty$. Moreover, one can show that $a_1(0)=0$ and that $t \mapsto a_1(t)$ is continuous and strictly increasing. Therefore, the curve $\gamma$ can be reparametrized in such a way that at each time $t$, $a_1(t) = 2t$. $\gamma$ is then said to be parameterized by half-plane capacity.

In this time-reparametrization and with the normalization of $g_t$ just described, it is known that there exists a unique real-valued function $t \mapsto W_t$, called the driving function, such that the following equation, called the Loewner equation, is satisfied:
\begin{equation} \label{eq_Loewner}
    \partial_t g_t(z) = \frac{2}{g_t(z) - W_t},  \quad g_0(z)=z, \quad \text{for all $z \in \HH \setminus K_t$}.
\end{equation}
Indeed, it can be shown that $g_t$ extends continuously to $\gamma(t)$ and setting $W_t = g_t(\gamma(t))$ yields the above equation, see e.g. \cite[Chapter~4]{book_Lawler} and \cite[Chapter~4]{book_SLE}.

Conversely, given a continuous and real-valued function $t \mapsto W_t$, one can construct a locally growing family of hulls $(K_t)_t$ by solving the equation \eqref{eq_Loewner}. Under additional assumptions on the function $t \mapsto W_t$, the family of hulls obtained using \eqref{eq_Loewner} is generated by a curve, in the sense explained above \cite{slit_Loewner}.

Schramm-Loewner evolutions, or SLE for short, are random Loewner chains introduced by Schramm \cite{Schramm_SLE}. For $\kappa \geq 0$, SLE$_\kappa$ is the Loewner chain obtained by considering the Loewner equation \eqref{eq_Loewner} with driving function $W_t = \sqrt{\kappa}B_t$, where $(B_t, t \geq 0)$ is a standard one-dimensional Brownian motion. As such, SLE$_{\kappa}$ is defined in $\HH$ but, thanks to the conformal invariance of the Loewner equation, SLE$_{\kappa}$ can be defined in any simply connected domain $\Omega \subset \mathbb{C}$ with two marked boundary points $a, b \in \partial \Omega$ by considering a conformal map $\phi: \Omega \to \HH$ with $\phi(a)=0$ and $\phi(b) = \infty$ and taking the image of SLE$_{\kappa}$ in $\HH$ by $\phi^{-1}$. In particular, SLE$_{\kappa}$ is conformally invariant and it turns out that this conformal invariance property together with a certain domain Markov property characterize the family (SLE$_{\kappa}, \kappa \geq 0)$. In what follows, we will be interested in the special case $\kappa =4$. SLE$_4$ can be shown to be almost surely generated by a simple continuous curve that is transient and whose Hausdorff dimension is $3/2$. For a proof of these facts, we refer the reader to \cite[Chapter~5]{book_SLE} and references therein.

\subsection{Annulus crossing estimate and tightness} \label{sec_annulus_cross}

To show tightness, we rely on the framework developed by Kemppainen and Smirnov in \cite{Smirnov}. According to these results, tightness of the sequence of massive harmonic explorers $(\gamma_{\delta})_{\delta}$ can be established in three different topologies if, under $\PP_{\delta}^{(\Omega, a, b, m)}$, an appropriate and uniform in $\delta$ upper bound on the probability of a certain crossing event holds. In our case, we will prove that under $\PP_{\delta}^{(\Omega, a, b, m)}$, the probability that the massive harmonic explorer $\gamma$ crosses a so-called avoidable annulus of modulus $r/R$, defined just below, decays geometrically in $r/R$, with constants independent of $\delta$. Essentially, we will show that the family $(\PP_{\delta}^{(\Omega,a,b,m)})_{\delta < \overline{m}_d^{-1}}$ satisfies Condition G.3 of \cite{Smirnov}. Let us now describe in more detail this condition and its consequences regarding tightness of the family $(\PP_{\delta}^{(\Omega,a,b,m)})_{\delta < \overline{m}_d^{-1}}$.

Let $z_0 \in \mathbb{C}$ and $0 < r \leq R$ and denote by $A(z_0, r, R)$ the annulus centered at $z_0$ with inner radius $r$ and outer radius $R$, that is
\begin{equation*}
    A(z_0,r,R) := \{ z \in \mathbb{C}: r < \vert z_0-z \vert < R \}.
\end{equation*}
Let $\tau$ be a stopping time for $\gamma$ and set $\Omega_{\tau}:= \Omega \setminus \gamma([0,\tau])$. The avoidable component $A_{\tau}^{\Omega}$ of an annulus $A(z_0,r,R)$ at time $\tau$ in $\Omega$ is defined as follows. If $\partial B(z_0,r) \cap \partial \Omega_{\tau} = \emptyset$, then $A_{\tau}^{\Omega} = \emptyset$. Otherwise,
\begin{equation*}
    A_{\tau}^{\Omega} := \left \{ z \in \Omega_{\tau} \cap A(z_0,r,R): 
    \begin{aligned}
    &\text{the connected component of $z$ in $\Omega_{\tau} \cap A(z_0,r,R)$} \\ 
    &\text{does not disconnect $\gamma(\tau)$ from $b$ in $\Omega_{\tau}$}
    \end{aligned}
    \right \}.
\end{equation*}
If an annulus $A(z_0,r,R)$ is such that $A^{\Omega}_{\tau} \neq \emptyset$, we say that $A(z_0,r,R)$ is an avoidable annulus at time $\tau$. Furthermore, if $\gamma([0,\tau])$ crosses $A(z_0,r,R)$ in one of the connected components of $A_{\tau}^{\Omega}$, $\gamma$ is said to make an unforced crossing of $A(z_0,r,R)$ in $\Omega_{\tau}$. For a family $(\PP_{n})_{n}$ of probability measures supported on $X_0(\Omega,a,b)$, Condition G.3 of \cite{Smirnov} then reads as follows. Here, curves are parametrized from $0$ to $1$.

\paragraph{Condition G.3}\label{condG3} The family $(\PP_{n})_{n}$ is said to satisfy a geometric power-law bound on an unforced crossing if there exist $K > 0$ and $\Delta > 0$ such that for any $n$, for any stopping time $0 \leq \tau \leq 1$ and for any annulus $A = A(z_0, r, R)$ where $0 < r \leq R$,
\begin{equation*}
    \PP_{n}(\gamma([\tau,1]) \text{ makes a crossing of $A$ which is contained in $A_{\tau}^{\Omega}$} \vert \, \gamma([0,\tau])) \leq K\bigg(\frac{r}{R}\bigg)^{\Delta}.
\end{equation*}
By \cite[Theorem~1.7]{Smirnov}, verifying this condition for the family $(\PP_n)_n$ allows one to establish tightness of the curves $\gamma$ under $(\PP_n)_n$ in three different topologies (see below for the details in our setting).

We wish to apply this result to the family of massive harmonic explorers under $\PP_\delta^{(\Omega,a,b,m)}$. However, note that under this law, $\gamma=\gamma_\delta$ is an element of $X_0(\hat \Omega_\delta, a_\delta, b_\delta)$ for each $\delta$, whereas Condition G.3 is stated for a family of probability measures all supported on the same $X_0(\Omega, a, b)$. Thus we first need to uniformize the picture. To this end, we let $\phi: \Omega \to \mathbb{H}$ be a conformal map such that $\phi(a) = 0$ and $\phi(b)=\infty$ and similarly, for $\delta > 0$, let $\phi_{\delta}: \hat \Omega_{\delta} \to \mathbb{H}$ be a conformal map such that $\phi_{\delta}(a_{\delta}) = 0$ and $\phi_{\delta}(b_{\delta})=\infty$. For $\delta > 0$, we denote by $W_{\delta}$ the driving function of $\gamma_{\delta}^{\HH}:= \phi_{\delta}(\gamma_{\delta})$, when the curve is parametrized by half-plane capacity. 

In this setting \cite[Theorem~1.7]{Smirnov} yields the following. Suppose that the laws of $(\gamma_\delta^{\HH})_{0<\delta<\overline{m}_\delta^{-1}}$  satisfy Condition \hyperref[condG3]{G.3}, (where $\gamma_\delta$ has law  $(\PP_{\delta}^{(\Omega,a,b,m)})$ for each $\delta$, and $\phi_\delta$ is as above). Then 
\begin{enumerate}[label=\text{T.\arabic*},ref=T.\arabic*]
    \item \label{topo_1} $(\gamma_{\delta}^{\HH})_{\delta}$ is tight in the space of curves $X_0(\HH,0,\infty)$ equipped with the metric $d_X$;
    \item \label{topo_2} $(\gamma_{\delta}^{\HH})_{\delta}$ is tight in the metrizable space of continuous function on $[0, \infty)$ with the topology of uniform convergence on compact subsets of $[0, \infty)$;
    \item \label{topo_3} $(W_{\delta})_{\delta}$ is tight in the metrizable space of continuous function on $[0, \infty)$ with the topology of uniform convergence on compact subsets of $[0, \infty)$.
\end{enumerate}
Moreover, under the assumption that $(\hat \Omega_{\delta}; a_{\delta}, b_{\delta})_{\delta}$ converges in the Carath\'eodory sense to $(\Omega;a,b)$ and that $(a_{\delta})$ and $(b_{\delta})_{\delta}$ are close approximations of the degenerate prime ends $a$ and $b$, \cite[Corollary~1.8]{Smirnov} and \cite[Theorem~4.2]{Karrila} also imply that
\begin{itemize}
    \item the family $(\gamma_{\delta})_{\delta}$ is tight and if $\gamma^{\HH}$ denotes the weak limit of a subsequence $(\gamma_{\delta_k}^{\HH})_{\delta_k}$ of $(\gamma_{\delta}^{\HH})_{\delta}$ (recall that these curves are parametrized by half-plane capacity) in one of the topologies \eqref{topo_1}--\eqref{topo_3}, then the subsequence $(\gamma_{\delta_k})_{\delta_k}$ converges weakly in the space $X(\mathbb{C})$ equipped with the metric $d_X$ to a random curve $\gamma$ that is almost surely supported on $\overline{\Omega}$ and has the same law as $\phi_{\Omega}^{-1}(\gamma^{\HH})$.
\end{itemize}

\subsection{Proof of the annulus crossing estimate} \label{sec_proof_crossing}

As explained above, to establish tightness of the massive harmonic explorers $(\gamma_{\delta})_{\delta}$ in the topologies \eqref{topo_1} -- \eqref{topo_3}, we show that the family $(\PP_{\delta}^{(\Omega,a,b,m)})_{\delta}$ satisfies Condition \hyperref[condG3]{G.3}. This is the content of the following proposition.

\begin{proposition} \label{prop_crossing}
There exist constants $K, \alpha>0$ such that for any $0<\delta<\overline{m}_d^{-1}$, for any stopping time $\tau$ and any annulus $A=A(z_0,r,R)$,
\begin{equation*}
    \PP_{\delta}^{(\Omega,a,b,m)}(\gamma([\tau,1]) \, \text{makes a crossing of $A$ which is contained in $A_{\tau}^{\hat \Omega_{\delta}}$} \vert \, \gamma([0,\tau])) \leq K \bigg (\frac{r}{R}\bigg)^{\alpha}.
\end{equation*}
This implies in particular that the family $(\PP_{\delta}^{(\Omega,a,b,m)})_{\delta < \overline{m}_d^{-1}}$ satisfies Condition \hyperref[condG3]{G.3}.
\end{proposition}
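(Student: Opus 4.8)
The plan is to reduce the claimed geometric bound to a single-scale estimate and then establish that estimate from the structure of the model. First, using the domain Markov property \eqref{discrete_DMP} to restart the explorer, it suffices to exhibit $\Lambda>1$ and $c\in(0,1)$, both independent of $\delta$, such that for any stopping time $\sigma$ and any \emph{avoidable} annulus $A(z_0,\rho,\Lambda\rho)$, the conditional probability given $\gamma([0,\sigma])$ that $\gamma$ makes an unforced crossing of $A(z_0,\rho,\Lambda\rho)$ after time $\sigma$ is at most $1-c$ (by \eqref{discrete_DMP} it is enough to treat $\sigma=0$, for the massive harmonic explorer in an arbitrary domain arising from the exploration). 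Indeed, an unforced crossing of an annulus of modulus $R/r$ forces crossings of about $\log_{\Lambda}(R/r)$ nested annuli of modulus $\Lambda$, and the passage from the single-scale bound to the power law $K(r/R)^{\alpha}$ — which requires a careful accounting of which of these nested crossings are themselves unforced — is exactly the model-independent part of the argument and is supplied by \cite{Smirnov}.

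The heart of the matter is the single-scale estimate, and this is where I would use the structure recorded in Section \ref{subsec:def_mHE}: at step $n+1$ the explorer turns towards the $+$ side with probability
\begin{equation*}
h_{\delta,n}^m(v_{n+1})+\frac{1}{2}=\frac{1}{2}\big(H_{\delta,n}^{(m)}(v_{n+1})-\tilde H_{\delta,n}^{(m)}(v_{n+1})\big)+\frac{1}{2},
\end{equation*}
a functional of the discrete massive harmonic measures of the two boundary arcs seen from the tip, cf.\ \eqref{h_harm_meas}. The key analytic input is a Beurling-type estimate for discrete massive harmonic measure, uniform in $\delta$; this follows from the classical discrete Beurling estimate, since massive harmonic measure is dominated by the non-massive one (so upper bounds are inherited directly), while the matching lower bounds in the near-boundary situations occurring here lose only a multiplicative constant bounded below uniformly in $\delta$ (because $0\le m_d^2\le\overline m_d^2$ and $\Omega\subset B(0,R)$, so the massive random walk traverses the relevant distances and survives with uniformly positive probability). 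Hence, whenever the tip $v_{n+1}$ lies deep inside a sub-domain whose visible boundary is overwhelmingly of one colour, the turning direction is nearly deterministic. Feeding this into the analysis of the explorer as it attempts to traverse an avoidable component of the annulus — following the treatment of the harmonic explorer in \cite{HE}, where one shows that with probability bounded below the explorer is driven towards a prescribed arc in a manner incompatible with completing the traverse — yields the bound $1-c$, with $c$ and $\Lambda$ uniform in $\delta$ since on the bounded domain $\Omega$ the massive random walk on $\delta\mathbb{T}$ is comparable to simple random walk up to bounded multiplicative factors.

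I expect the single-scale estimate to be the main obstacle. Since absolute continuity with respect to the harmonic explorer is not available at the discrete level, every step must be carried out directly for the massive model; the mass is benign for the harmonic-measure comparison (killing only helps boundedness) but it has to be tracked through the Beurling estimate and the repeated conditionings, and making the lower bounds on massive harmonic measure genuinely uniform in $\delta$ requires care. The more substantial difficulty is implementing the ``deep inside $\Rightarrow$ nearly forced $\Rightarrow$ traverse unlikely'' mechanism with all constants — the Beurling exponent and constant, the ratio $\Lambda$, and $c$ — uniform in $\delta$. The remaining, bookkeeping-heavy difficulty, namely amplifying the one-scale bound to all scales while controlling forced versus unforced crossings, is not special to our model and is handled by \cite{Smirnov}; once Condition \hyperref[condG3]{G.3} is verified in this way, the tightness statements \eqref{topo_1}--\eqref{topo_3} follow from \cite[Theorem~1.7]{Smirnov} as explained above.
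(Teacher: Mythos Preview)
Your overall route differs from the paper's. The paper does \emph{not} reduce to a single-scale estimate and amplify via \cite{Smirnov}; it proves the geometric bound of Condition~\hyperref[condG3]{G.3} directly. Concretely, it builds a nonnegative martingale from the massive observable (a boundary excursion-mass functional $\|\tilde\nu_{\delta,n}^{(m)}\|$ or $\|\mathcal V_{\delta,n}^{(m)}\|$, depending on whether the avoidable component is entered from the outer or inner circle), applies optional stopping between time $\tau$ and the first completed crossing $\sigma$, upper-bounds the value at time $\tau$ by $C(r/R)^{\alpha}$, and lower-bounds the value at $\sigma$ by a universal constant via the arguments of \cite[Proposition~6.3]{HE}. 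The power law drops out in one stroke. The genuinely new work compared to \cite{HE} is that the massive martingale acquires a killing contribution $\frac12\PP_w^{(m)}(\tau^\star\le\tau_{\partial\Omega_{\delta,\tau}})$, and bounding its total contribution at the initial time requires a separate excursion-measure computation (Lemmas~\ref{lemma_upper_killing} and~\ref{lemma_upper_killing2}): one splits excursions according to whether they leave a ball of an intermediate radius, uses Lemma~\ref{lemma_visits} on the ``short'' ones and the \cite{HE} estimate on the ``long'' ones.

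Your alternative — prove a one-scale bound $1-c$ and let \cite{Smirnov} upgrade it — is in principle legitimate, and the comparison $H^{(m)}\le H^{(0)}$ (Lemma~\ref{lemma_mHmeasure}) together with a uniform survival-probability lower bound does carry Beurling estimates over. But your sketch of the single-scale step has a gap. The mechanism you describe (``tip deep inside a monochromatic region $\Rightarrow$ turning nearly deterministic $\Rightarrow$ traverse unlikely'') is \emph{not} how \cite{HE} argues: \cite[Proposition~6.3]{HE} is precisely an optional-stopping computation on an excursion-mass martingale, not a step-by-step control of turning probabilities. Turning near-deterministic individual steps into a macroscopic crossing bound without a martingale is not obvious, and you have not supplied that argument. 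If instead you mean to run the \cite{HE} martingale and use the massive/massless comparison only to control constants, you will meet the same killing term the paper isolates, and that term needs its own estimate — it does not follow from the Beurling comparison alone.
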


The proof of Proposition \ref{prop_crossing} relies on a martingale argument similar to that used in the proof of \cite[Proposition~6.3]{HE}. Our martingale is a sum of two terms. One of them is the massive version of the martingale used in the proof of \cite[Proposition~6.3]{HE} or, in other words, the total mass of massive random-walk excursions from a well-chosen set of boundary vertices to boundary vertices with sign $+$. However, this term is by itself not a martingale because in our setting, the massive harmonic measure of $\partial \Omega_{\delta,n}$ seen for a vertex $v \in \Omega_{\delta,n}$ is not a martingale. To compensate the drift that arises from it, we must add another term, which is the second term in our martingale. This term accounts for the probability that excursions get killed before leaving the domain. To control the first term, using simple inequalities for the massive harmonic measure and up to some minor modifications, one can argue as in the proof of \cite[Propostion~6.3]{HE}. However, controlling the second term requires new ideas but makes use of several lemmas proved in \cite{HE}.

We remark that it was already observed in \cite[Section~4.4]{Smirnov} that the proof of \cite[Proposition~6.3]{HE} could be used to deduce Proposition \ref{prop_crossing} when $m \equiv 0$, that is to show that the harmonic explorer satisfies Condition \hyperref[condG3]{G.3}. However, in \cite[Section~4.4]{Smirnov}, the argument did not take into account a certain type of geometric configuration for $A_{\tau}^{\hat \Omega_{\delta}}$, corresponding to the collection $\mathcal{A}_2$ in our proof of Proposition \ref{prop_crossing}, thus making the proof incomplete. This gap is filled below.

Before turning to the proof of Proposition \ref{prop_crossing}, we recall \cite[Lemma~6.1]{HE} and its corollary \cite[Corollary~6.2]{HE} as we will repeatedly use them. In order to do so, we need to introduce the discrete excursion measures, which are the discrete analogues of the Brownian excursion measures. For $\delta>0$, let $G_{\delta} \subset \delta\mathbb{T}$ be a graph with boundary $\partial G_{\delta}$. For an oriented edge $(vw)$ of $\delta \mathbb{T}$, we denote by $\text{rev}(vw)$ the same edge with reverse orientation. We let $E=E(G_{\delta})$ denote the set of edges of $G_{\delta}$ whose inital vertex is in $\partial G_{\delta}$ and whose terminal vertex is in $\operatorname{Int}(G_{\delta})$. Let $E_1 \subset E$ and $E_2 \subset \text{rev}(E)$. For every $v \in \partial G_{\delta}$, let $X^v$ be a simple random walk on $\delta \mathbb{T}$ that starts at $v$ and is stopped at the first time $n \geq 1$ such that $X^v_n \notin G_{\delta}$. Let $\nu^v$ denote the restriction of the law of $X^v$ to those walks that use an edge of $E_1$ as their first step and an edge of $E_2$ as their last step. Finally, define
\begin{equation*}
    \nu_{(G_{\delta}, E_1, E_2)} := \sum_{v \in \partial G_{\delta}} \nu^{v}.
\end{equation*}
$\nu_{(G_{\delta}, E_1, E_2)}$ is called the discrete excursion measure from $E_1$ to $E_2$ in $G_{\delta}$: this is a measure on paths starting with an edge of $E_1$ and ending with an edge of $E_2$ and staying in $G_{\delta}$ in between. When $E_2 = \text{rev}(E)$, we simply write $\nu_{(G_{\delta},E_1,E_2)} = \nu_{(G_{\delta},E_1)}$. The first result about the measure $\nu_{(G_{\delta},E_1)}$ that will be instrumental in the proof of Proposition \ref{prop_crossing} is a relation between the expected number of visits to a vertex $x \in \operatorname{Int}(G_{\delta})$ under $\nu_{(G_{\delta},E_1)}$ and the probability that a walk started from $x$ exits $G_{\delta}$ using an edge of $\text{rev}(E_1)$. This is \cite[Lemma~6.1]{HE}.

\begin{lemma} \label{lemma_visits}
Let $G_{\delta}$ be as above and let $E_1 \subset E$. Fix $x \in \operatorname{Int}(G_{\delta})$ and for a path $\omega$, let $n_x(\omega)$ be the number of times $\omega$ visits $x$. Then
\begin{equation*}
    \int n_x(\omega) \nu_{(G_{\delta},E_1)}(d\omega) = H_{G_{\delta}}(x,\text{rev}(E_1))
\end{equation*}
where $H_{G_{\delta}}(x, \text{rev}(E_1))$ is the probability that a simple random walk started at $x$ will first exit $G_{\delta}$ via an edge in $\text{rev}(E_1)$. In particular, $\int n_x(\omega) \nu_{(G_{\delta},E)}(d\omega) = 1$.
\end{lemma}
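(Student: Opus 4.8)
The plan is to prove Lemma \ref{lemma_visits} by a direct reversibility argument for simple random walk, exploiting the symmetry of the excursion measure $\nu_{(G_\delta,E_1)}$ under time-reversal. First I would recall that $\nu_{(G_\delta,E_1)} = \sum_{v\in\partial G_\delta}\nu^v$, where each $\nu^v$ weights a walk path $\omega = (v=\omega_0,\omega_1,\dots,\omega_n)$ with $\omega_n\notin G_\delta$, $\omega_{n-1}\in\operatorname{Int}(G_\delta)$, and first step $(\omega_0\omega_1)\in E_1$, by its SRW probability $\prod_{j=0}^{n-1} p(\omega_j,\omega_{j+1})$. Since SRW on $\delta\mathbb{T}$ has uniform transition probabilities of $1/6$ on each edge, this weight is $(1/6)^n$ and is therefore \emph{invariant under reversing the path}: the reversed path $\text{rev}(\omega) = (\omega_n,\omega_{n-1},\dots,\omega_0)$ carries the same weight $(1/6)^n$.

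The key step is then to reorganize the integral $\int n_x(\omega)\,\nu_{(G_\delta,E_1)}(d\omega)$ as a sum over individual visits. Writing $n_x(\omega) = \sum_{j} \mathbbm{1}\{\omega_j = x\}$ and using the Markov property to split a path at its $j$-th step, each excursion visiting $x$ decomposes into an initial segment from $\partial G_\delta$ to $x$ (with first step in $E_1$) and a terminal segment from $x$ out of $G_\delta$. Reversing the initial segment converts "paths from $\partial G_\delta$ to $x$ with first step in $E_1$" into "paths from $x$ to $\partial G_\delta$ with last step in $\text{rev}(E_1)$", and the weight is preserved by the observation above. Concatenating with the terminal segment and summing over $j$ and over the intermediate structure, the total collapses to $\sum_{\omega': \omega'_0 = x,\ \omega' \text{ exits via } \text{rev}(E_1)} (1/6)^{\text{len}(\omega')} = H_{G_\delta}(x,\text{rev}(E_1))$, which is exactly the SRW probability of exiting $G_\delta$ through an edge of $\text{rev}(E_1)$. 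Taking $E_1 = E$ gives $\text{rev}(E) = \text{rev}(E)$ covering all exit edges, so the probability is $1$.

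The main obstacle I anticipate is purely bookkeeping: being careful about the endpoint conventions (the walk is stopped at the first $n\geq 1$ with $\omega_n\notin G_\delta$, so the last vertex lies in $\partial G_\delta$ and the penultimate in $\operatorname{Int}(G_\delta)$), about the orientation conventions for $E_1$ versus $\text{rev}(E_1)$ under reversal, and about not double-counting when a path visits $x$ several times — this is why one must sum over the \emph{index} $j$ of the visit rather than over the set of visited configurations. None of this requires new ideas; it is the standard "last-exit decomposition / reversibility" computation for the Green's function of SRW, here phrased at the level of the excursion measure. Since this is a verbatim restatement of \cite[Lemma~6.1]{HE}, I would in fact simply cite that reference for the full details and include only the short reversibility sketch above for the reader's convenience.
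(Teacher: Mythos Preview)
Your proposal is correct and aligns with the paper's treatment: the paper does not prove this lemma at all but simply recalls it verbatim from \cite[Lemma~6.1]{HE}, exactly as you suggest doing. Your reversibility sketch is a valid proof of the cited result---the splitting at each visit index $j$, reversing the initial segment to convert ``boundary $\to x$ with first step in $E_1$'' into ``$x\to$ boundary with last step in $\text{rev}(E_1)$'', and noting that the terminal segment sums to total exit probability $1$, is precisely the standard argument.
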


This lemma can be used to estimate the $\nu_{(G_{\delta}, E_1)}$-mass of paths that visits a ball, provided the ball is sufficiently far away from $\partial G_{\delta}$. This is stated as \cite[Corollary~6.2]{HE} and since this result will be useful in the proof of Proposition \ref{prop_crossing}, let us recall it.

\begin{lemma} \label{lemma_massball}
Let $G_{\delta}$ and $E_1$ be as above and let $x \in \operatorname{Int}(G_{\delta})$. Denote by $\text{rad}_x(G_{\delta})$ the Euclidean distance between $x$ and the boundary of $G_{\delta}$. Assume that the boundary of $G_{\delta}$ is connected. Let $B$ be the ball centered at $x$ whose radius is $\frac{1}{2}\text{rad}_x(G_{\delta})$ and let $\Gamma_B$ be the set of paths that visit $B$. Then
\begin{equation*}
    c^{-1}H_{G_{\delta}}(x, \text{rev}(E_1)) < \nu_{(G_{\delta},E_1)}(\Gamma_B)< c H_{G_{\delta}}(x, \text{rev}(E_1))
\end{equation*}
for some absolute constant $c>0$.
\end{lemma}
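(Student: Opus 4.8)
The plan is to use Lemma~\ref{lemma_visits} as a bridge between the two quantities: it identifies $H_{G_{\delta}}(x,\text{rev}(E_1))$ with the $\nu_{(G_{\delta},E_1)}$-expectation of the number of visits $n_x$ to $x$, so it suffices to show that this expectation is comparable, up to absolute constants, to $\nu_{(G_{\delta},E_1)}(\Gamma_B)$. Since $x$ is the centre of $B$, a path contributes to $\int n_x\, d\nu_{(G_{\delta},E_1)}$ only if it visits $B$, hence $\int n_x\, d\nu_{(G_{\delta},E_1)} = \int_{\Gamma_B} n_x\, d\nu_{(G_{\delta},E_1)}$, and the whole question becomes one of controlling the average number of visits to $x$ made by an excursion that reaches $B$.

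To carry this out I would decompose each excursion at the first time $\sigma_B$ it enters $B$, which is finite on $\Gamma_B$ and strictly precedes the exit time of $G_{\delta}$ because $B\subset G_{\delta}$. Since $x\in B$, every visit to $x$ happens at or after $\sigma_B$, so applying the strong Markov property of the underlying walk (and linearity in the starting vertex, since $\nu_{(G_{\delta},E_1)}$ is a weighted sum of walk laws) gives
\[
   H_{G_{\delta}}(x,\text{rev}(E_1)) \;=\; \int_{\Gamma_B} G_{G_{\delta}}\!\big(X_{\sigma_B}(\omega),x\big)\, \nu_{(G_{\delta},E_1)}(d\omega),
\]
where $G_{G_{\delta}}$ is the discrete Green's function of $G_{\delta}$. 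The proof is thus reduced to showing that $G_{G_{\delta}}(y,x)$ is bounded above and below by absolute constants for $y$ ranging over the possible first-entrance points. Here the geometry of $B$ is crucial: its radius is $\tfrac12\text{rad}_x(G_{\delta})$ and all boundary vertices of $G_{\delta}$ lie at distance at least $\text{rad}_x(G_{\delta})$ from $x$, so the excursion enters $B$ from a vertex lying outside $B$, and therefore $X_{\sigma_B}$ lies in the annulus $\{\tfrac14\text{rad}_x(G_{\delta})\le |\cdot-x|\le \tfrac12\text{rad}_x(G_{\delta})\}$ as soon as $\text{rad}_x(G_{\delta})\ge 4\delta$ --- in particular at macroscopic distance from the logarithmic singularity of $G_{G_{\delta}}(\cdot,x)$.

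For the lower bound $G_{G_{\delta}}(y,x)\ge c$ on this annulus, I would use monotonicity of the Green's function in the domain together with $B(x,\text{rad}_x(G_{\delta}))\subset G_{\delta}$: then $G_{G_{\delta}}(y,x)\ge G_{B(x,\text{rad}_x(G_{\delta}))}(y,x)$, and the right-hand side is bounded below by an absolute constant by the standard estimate for the discrete Green's function of a ball evaluated at a point at distance comparable to the radius from the centre. For the upper bound $G_{G_{\delta}}(y,x)\le C$, I would write $G_{G_{\delta}}(y,x)=\mathbb{P}_y[\tau_x<\tau_{\partial G_{\delta}}]\,G_{G_{\delta}}(x,x)$ and estimate the two factors: $G_{G_{\delta}}(x,x)$ is the reciprocal of the probability that the walk escapes $G_{\delta}$ before returning to $x$ and is at most $O(\log(\text{rad}_x(G_{\delta})/\delta))$, while the hitting probability $\mathbb{P}_y[\tau_x<\tau_{\partial G_{\delta}}]$ is at most $O(1/\log(\text{rad}_x(G_{\delta})/\delta))$ precisely because $\partial G_{\delta}$ is connected and contains a point within distance $\text{rad}_x(G_{\delta})$ of $x$: this connected boundary acts as a barrier that a walk started at macroscopic distance from $x$ must avoid in order to reach $x$, which is a discrete Beurling-type estimate. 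The product is then an absolute constant. The degenerate range $\text{rad}_x(G_{\delta})<4\delta$ is dealt with directly: there $B$ contains only a bounded number of vertices, so $\nu_{(G_{\delta},E_1)}(\Gamma_B)$ and $\nu_{(G_{\delta},E_1)}(\{n_x\ge 1\})$ differ by at most a bounded factor, and one concludes from $\nu_{(G_{\delta},E_1)}(\{n_x\ge 1\})\le \int n_x\, d\nu_{(G_{\delta},E_1)}=H_{G_{\delta}}(x,\text{rev}(E_1))$ and an easy matching lower bound. Taking $c$ to be the larger of the constants produced by the two regimes finishes the proof; the argument parallels that of \cite[Corollary~6.2]{HE}.

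The step I expect to be the main obstacle is the upper bound $G_{G_{\delta}}(y,x)\le C$: it is the only place where connectedness of $\partial G_{\delta}$ is used in an essential way --- without it $G_{\delta}$ could be a large domain possessing an isolated boundary point near $x$, making $G_{G_{\delta}}(x,x)$ large without the compensating smallness of the hitting probability --- and it genuinely requires a quantitative discrete harmonic-measure (Beurling) estimate rather than mere domain monotonicity.
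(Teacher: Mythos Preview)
The paper does not give its own proof of this lemma: it is quoted verbatim as \cite[Corollary~6.2]{HE} and used as a black box. There is therefore nothing in the paper to compare your argument against line by line.

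That said, your reconstruction is the natural one and is essentially the argument behind \cite[Corollary~6.2]{HE}: reduce to Lemma~\ref{lemma_visits}, stop the excursion at its first entrance to $B$ by the strong Markov property, and then show that the discrete Green's function $G_{G_\delta}(y,x)$ is bounded between two absolute constants when $y$ is at the entrance point of $B$. Your identification of the connectedness hypothesis as being needed precisely for the upper bound (via a Beurling-type estimate on the hitting probability of $x$ versus $\partial G_\delta$) is correct, and your treatment of the small-radius regime $\text{rad}_x(G_\delta)<4\delta$ by a direct argument is appropriate. Since you explicitly note that the argument parallels \cite[Corollary~6.2]{HE}, there is no discrepancy to flag.
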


The last fact that will be useful in the course of the proof of Proposition \ref{prop_crossing} is the following simple inequality between discrete massive and massless harmonic measures.

\begin{lemma} \label{lemma_mHmeasure}
    Let $G_{\delta} \subset \delta\mathbb{T}$ be a finite graph with boundary $\partial G_{\delta}$ and let $\tilde \partial$ be a subset of $\partial G_{\delta}$. For $w \in \operatorname{Int}(G_{\delta})$, denote by $H_{\delta}^{(0)}(w)$, respectively $H_{\delta}^{(m)}(w)$, the massless, respectively the massive, discrete harmonic measure of $\tilde \partial$ seen from $w$. Then, for any $w \in \operatorname{Int}(G_{\delta})$,
    \begin{equation*}
        H_{\delta}^{(m)}(w) \leq H_{\delta}^{(0)}(w).
    \end{equation*}
\end{lemma}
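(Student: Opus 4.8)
The plan is to compare the two harmonic measures by a probabilistic coupling of the massive and massless random walks on $G_\delta$, exploiting the fact that the massive walk is simply the massless walk run with an independent killing mechanism. Concretely, I would realise both walks on the same probability space: let $(S_n)_{n\geq 0}$ be a simple random walk on $\delta\mathbb{T}$ started at $w$, and at each step $n$, independently of everything else, toss a coin that comes up ``kill'' with probability $m_d^2(S_n)\delta^2$ and ``survive'' otherwise. The massless walk is $(S_n)$ itself, stopped at $\tau_{\partial G_\delta}$; the massive walk is $(S_n)$ run until the minimum of $\tau_{\partial G_\delta}$ and the first time a ``kill'' coin comes up. By the probabilistic interpretation of the discrete massive harmonic measure recorded in Section \ref{subsec:def_mHE} (the paragraph following \eqref{def_mharm}), $H_\delta^{(m)}(w)$ is the probability that the massive walk reaches $\partial G_\delta$, is not killed first, and exits through $\tilde\partial$, while $H_\delta^{(0)}(w)$ is the probability that the massless walk exits through $\tilde\partial$.

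The key step is then the pointwise inclusion of events: on the event that the massive walk exits $G_\delta$ through $\tilde\partial$ without being killed, the walk $(S_n)$ in particular reaches $\partial G_\delta$ through $\tilde\partial$ at the same time, so the corresponding event for the massless walk occurs as well. Hence the event defining $H_\delta^{(m)}(w)$ is contained in the event defining $H_\delta^{(0)}(w)$ on this coupling, and taking probabilities gives $H_\delta^{(m)}(w)\leq H_\delta^{(0)}(w)$ for every $w\in\operatorname{Int}(G_\delta)$. Since $m_d^2\geq 0$ everywhere and $\delta<\overline m_d^{-1}$ ensures the killing probabilities $m_d^2(v)\delta^2$ lie in $[0,1)$, the construction is legitimate and no sign issues arise.

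An alternative, purely analytic route avoids probability: both $H_\delta^{(0)}$ and $H_\delta^{(m)}$ solve a linear system, and one can write $H_\delta^{(0)} - H_\delta^{(m)}$ and check it is a nonnegative superharmonic-type function. Precisely, $u := H_\delta^{(0)} - H_\delta^{(m)}$ vanishes on $\partial G_\delta$, and using that $H_\delta^{(0)}$ is harmonic and $H_\delta^{(m)}$ is massive harmonic (so $H_\delta^{(m)}(v) = \tfrac{1-m_d^2(v)\delta^2}{6}\sum_{v\sim w}H_\delta^{(m)}(w)$ up to matching the normalisation used in \eqref{def_mharm}), one gets at each interior vertex $v$ that $u(v) - \tfrac16\sum_{v\sim w}u(w) = \tfrac{m_d^2(v)\delta^2}{6}\sum_{v\sim w}H_\delta^{(m)}(w)\geq 0$, i.e. $u$ is discrete superharmonic with zero boundary data, hence $u\geq 0$ by the discrete maximum principle (the latter applying since $G_\delta$ is finite). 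I would present the probabilistic argument as the main proof since it is the shortest and most transparent.

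The main (and only real) obstacle is bookkeeping: one must be careful that the normalisation convention for ``discrete massive harmonic'' in \eqref{def_mharm} — which as written uses $m_d^2(v)$ rather than $m_d^2(v)\delta^2$, almost certainly a typo for $1 - m_d^2(v)\delta^2$ — is consistently matched with the killing-probability interpretation $\PP_v^{(m)}$ introduced just afterwards, where the walk is killed at $v$ with probability $m_d^2(v)\delta^2$. Once the conventions are aligned, the inequality is immediate from either viewpoint; there is no hard estimate to perform.
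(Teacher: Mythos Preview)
Your coupling argument is correct and is essentially the same idea as the paper's proof: the paper writes $H_\delta^{(m)}(w)$ as a sum over paths $\omega$ from $w$ to $\tilde\partial$ with weights $\prod_j \tfrac{1-m^2(\omega_j)\delta^2}{6}$, bounds each factor by $\tfrac{1}{6}$, and recognises the resulting sum as $H_\delta^{(0)}(w)$. Your coupling is the probabilistic repackaging of exactly this path-weight comparison; your alternative maximum-principle argument is a genuinely different (and also valid) route that the paper does not take.
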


\begin{proof}
Let $w \in \operatorname{Int}(G_{\delta})$. By definition,
\begin{equation*}
    H_{\delta}^{(m)}(w) = \PP_{\delta, w}^{(m)}(X_{\tau_{\partial G_{\delta}}} \in \tilde \partial)
\end{equation*}
where under $\PP_{\delta,w}^{(m)}$, $X$ is a simple massive random walk on $\delta\mathbb{T}$ started at $w$ and $\tau_{\partial G_{\delta}}$ denotes its first hitting time of $\partial G_{\delta}$. Therefore, we have that
\begin{equation*}
     H_{\delta}^{(m)}(w) = \sum_{\omega: w \to \tilde \partial} \prod_{j=1}^{\tau(\omega)} \frac{1-m^{2}(\omega_j)\delta^2}{6}
\end{equation*}
where the sum is over the set of paths $\omega$ on $\mathbb{T}_{\delta}$ starting at $w$ and ending at a vertex of $\tilde \partial$. For such a path $\omega$, $\omega_{j}$ denotes the $j$th vertex it visits and $\tau(\omega)$ is its length. The above equality then yields that
\begin{align*}
    H_{\delta}^{(m)}(w) &\leq \sum_{\omega: w \to \tilde \partial} \prod_{j=1}^{\tau(\omega)} \frac{1}{6} \\
    &= \PP_{\delta, w}^{(0)}(X_{\tau_{\partial G_{\delta}}} \in \tilde \partial) \\
    &= H_{\delta}^{(0)}(w)
\end{align*}
where under $\PP_{\delta,w}^{(0)}$, $X$ is a simple (massless) random walk started at $w$.
\end{proof}

With these lemmas in hand, let us now turn to the proof of Proposition \ref{prop_crossing}. We first prove the following proposition, which is a special case of Proposition \ref{prop_crossing} when the stopping time $\tau$ is almost surely equal to $0$. Thanks to the Markov property \eqref{discrete_DMP} of the massive harmonic explorer, the proof of Proposition \ref{prop_crossing} will follow the same strategy as that of the proof of this proposition, and we find it easier to first explain the arguments for the time $\tau=0$ and then show how to adapt them to the case of a general stopping time.

\begin{proposition} \label{prop_crossing_0}
There exist constants $K, \alpha>0$ such that for any $0<\delta<\overline{m}_d^{-1}$ and any annulus $A=A(z_0,r,R)$,
\begin{equation} \label{crossing_0}
    \PP_{\delta}^{(\Omega,a,b,m)}(\gamma \, \text{makes a crossing of $A$ which is contained in $A^{\hat \Omega_{\delta}}$}) \leq K \bigg (\frac{r}{R}\bigg)^{\alpha}.
\end{equation}
\end{proposition}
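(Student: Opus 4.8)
The plan is to adapt the martingale argument of \cite[Proposition~6.3]{HE} to the massive setting; the genuinely new ingredient is an additive correction term in the martingale that accounts for the killing of massive random walk excursions. We may assume that $A=A(z_0,r,R)$ is avoidable (otherwise the event in \eqref{crossing_0} is empty) and that $r/R$ is smaller than a fixed absolute constant (otherwise \eqref{crossing_0} is trivial after enlarging $K$). The first step is geometric: classify the avoidable component $A^{\hat\Omega_\delta}$ of $A$ in $\hat\Omega_\delta$ according to the signs carried by the arcs of $\partial\hat\Omega_\delta$ bordering it inside $B(z_0,r)$, into a collection $\mathcal A_1$ (all such arcs carry the same sign) and a collection $\mathcal A_2$ (both signs occur); it is precisely $\mathcal A_2$ that was overlooked in \cite[Section~4.4]{Smirnov}. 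In each case one fixes, according to the classification, a set $S$ of oriented edges of $\delta\mathbb T$ pointing from $\partial\Omega_\delta$ into $\operatorname{Int}(\Omega_\delta)$, all incident to $\partial\Omega_\delta\cap B(z_0,r)$, together with a sign $s\in\{+,-\}$, chosen so that: (i) before $\gamma$ crosses $A$, any excursion from $S$ to the $s$-boundary must traverse the annulus; and (ii) whenever $\gamma$ makes an unforced crossing of $A$ inside $A^{\hat\Omega_\delta}$, the $s$-side of that crossing becomes adjacent to $S$ inside $B(z_0,Cr)$. Exhibiting such $S$ and $s$ in every configuration is the combinatorial heart of the classification, and the configuration $\mathcal A_2$ forces $S$ onto a particular sub-arc. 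To lighten notation we take $s=+$ below; the case $s=-$ is identical after replacing $h_{\delta,n}^m$ by the conjugate observable $\tilde h_{\delta,n}^m$, which is a martingale by the argument of Proposition~\ref{prop_martingale_obs} together with the $+\leftrightarrow-$ symmetry recorded in Section~\ref{subsec:def_mHE}.

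With $S$ fixed, for $n\ge 0$ let $M_n$ be the total mass of the massive excursion measure from $S$ to the edges of $\operatorname{rev}(E(\Omega_{\delta,n}))$ terminating in $\partial\Omega_{\delta,n}^{+}$, where the excursion measure is the analogue of the one from Section~\ref{sec_proof_crossing} built from the massive random walk with mass $m$. Decomposing an excursion over its first edge gives $M_n=\sum_v c_v\,H_{\delta,n}^{(m)}(w_v)$, the sum over the interior endpoints $w_v$ of the edges of $S$, with $n$-independent weights $c_v>0$ (the massive first-step weights; $S\subset\partial\Omega_\delta$ is never swallowed) and $H_{\delta,n}^{(m)}(w_v)$ the massive harmonic measure of $\partial\Omega_{\delta,n}^{+}$ seen from $w_v$ as in \eqref{h_harm_meas}. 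By \eqref{h_harm_meas} and the identity $H_{\delta,n}^{(m)}(w)+\tilde H_{\delta,n}^{(m)}(w)=1-\PP_w^{(m)}(\tau^\star\le\tau_{\partial\Omega_{\delta,n}})$ from Section~\ref{subsec:def_mHE}, we have $H_{\delta,n}^{(m)}(w)=\bigl(h_{\delta,n}^m(w)+\tfrac12\bigr)-\tfrac12\,\PP_w^{(m)}(\tau^\star\le\tau_{\partial\Omega_{\delta,n}})$, so that
\[
L_n:=M_n+\tfrac12 N_n,\qquad N_n:=\sum_v c_v\,\PP_{w_v}^{(m)}(\tau^\star\le\tau_{\partial\Omega_{\delta,n}}),
\]
equals $\sum_v c_v\bigl(h_{\delta,n}^m(w_v)+\tfrac12\bigr)$, a non-negative linear combination with $n$-independent coefficients of the bounded martingales of Proposition~\ref{prop_martingale_obs}; hence $L$ is a bounded martingale for $(\mathcal F_{\delta,n})_n$, and since $\Omega_{\delta,n+1}\subset\Omega_{\delta,n}$ forces $\tau_{\partial\Omega_{\delta,n+1}}\le\tau_{\partial\Omega_{\delta,n}}$, the process $N_n$ is non-increasing, so $M_n=L_n-\tfrac12 N_n$ is a submartingale.

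The proof then reduces to three estimates, with constants independent of $\delta,r,R$: (a) $M_0\le K_1 (r/R)^{\alpha}$; (b) on the event $G$ that $\gamma$ makes an unforced crossing of $A$ inside $A^{\hat\Omega_\delta}$, one has $M_\sigma\ge c_0>0$, where $\sigma$ is the a.s.\ finite stopping time at which the crossing is completed; and (c) $N_0\le K_2 (r/R)^{\alpha}$. Granting these, optional stopping applied to the bounded martingale $L$ at times $0$ and $\sigma$ gives $M_0+\tfrac12 N_0=L_0=\EE[L_\sigma]\ge\EE[L_\sigma\mathbf 1_G]\ge c_0\,\PP_\delta^{(\Omega,a,b,m)}(G)$, which is \eqref{crossing_0} with $K=(K_1+\tfrac12 K_2)/c_0$ (enlarged to absorb the regime where $r/R$ is not small). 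Estimate (b) holds because, by construction of $S$ and $s=+$, once the crossing is drawn its $+$-side runs next to $S$ inside $B(z_0,Cr)$, so a massive walk from any $w_v$ reaches this fresh $+$-boundary and exits through it with probability at least $2c_0$, minus the killing probability, which is $O(\overline m_d^2 r^2)$ for a walk confined to $B(z_0,2Cr)$ and hence $\le c_0$ for $r$ small. Estimate (a) is proved as in \cite[Proposition~6.3]{HE}: by Lemma~\ref{lemma_mHmeasure}, $M_0\le\sum_v c_v H_{\delta,0}^{(0)}(w_v)$, and the massless right-hand side is bounded using Lemmas~\ref{lemma_visits} and \ref{lemma_massball} together with the classical geometric decay in $r/R$ of the massless excursion mass across an annulus of modulus $r/R$ — this is where property (i) of $S$ enters, and where $\mathcal A_2$ needs its own (analogous) choice of $S$.

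The main obstacle is estimate (c), which has no analogue in \cite{HE}. The idea is that before the crossing the excursions from $S$ contributing to $N_0$ are confined in a way governed by the same modulus $r/R$: such an excursion either stays inside $B(z_0,2r)$ until it returns to $\partial\Omega_\delta$, in which case it has $O((r/\delta)^2)$ steps and so is killed with probability $O(\overline m_d^2\delta^2\cdot(r/\delta)^2)=O(\overline m^2 r^2)$; or it penetrates the annulus, and then by the Beurling-type bound used for (a) the mass of excursions from $S$ reaching depth $\sqrt{rR}$ is only $O((r/R)^{\alpha'})$, so their total killing contribution is a fortiori $O((r/R)^{\alpha'})$. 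Since $R$ is bounded above (so $(r/R)^\alpha\ge c\,r^\alpha$) and $r$ is small, this yields $N_0=O(\overline m^2 r^2)+O((r/R)^{\alpha'})\le K_2(r/R)^\alpha$ for suitable $\alpha\in(0,\min(2,\alpha'))$. Turning this into a proof requires $\delta$-uniform, quantitative control of the length and spatial range of the relevant massive excursions, for which several combinatorial estimates of \cite{HE} beyond Lemmas~\ref{lemma_visits}--\ref{lemma_massball} are used, and the case split over $\mathcal A_1,\mathcal A_2$ reappears here. Finally, Proposition~\ref{prop_crossing} for a general stopping time $\tau$ follows by the same scheme: condition on $\gamma([0,\tau])$, use the domain Markov property \eqref{discrete_DMP} to replace $(\Omega,a,b)$ by $(\Omega_\tau,\gamma(\tau),b)$, and rerun the argument in $\Omega_{\delta,\tau}$ with $S$ anchored near $\partial\Omega_{\delta,\tau}\cap B(z_0,r)$, all constants depending only on $\overline m$ and on the fixed diameter bound for $\Omega$.
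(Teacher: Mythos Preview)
Your overall strategy --- the martingale $L_n=\sum_v c_v\bigl(h_{\delta,n}^m(w_v)+\tfrac12\bigr)$, its decomposition into the massive excursion mass $M_n$ plus the killing term $\tfrac12 N_n$, optional stopping, and the three estimates (a)--(c) --- is exactly the paper's approach. The paper likewise writes $h_{\delta,n}^m+\tfrac12=H_{\delta,n}^{(m)}+\tfrac12\,\PP^{(m)}(\tau^\star\le\tau_{\partial})$ and controls the two pieces separately, and your idea for (c) (split excursions according to whether they leave a ball of intermediate radius; bound the ones that stay via total visit counts and the ones that leave via the Beurling-type excursion bound) is precisely what is done in Lemmas~\ref{lemma_upper_killing} and~\ref{lemma_upper_killing2}.

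There is, however, a genuine gap in your geometric classification, and it propagates to the choice of $S$. The paper's split into $\mathcal A_1$ and $\mathcal A_2$ is \emph{not} sign-based: in both collections the $\partial\hat\Omega_\delta$-arcs bordering an avoidable component carry a single sign. The distinction is the direction from which $\gamma$ must enter to cross the component --- from $\partial B(z_0,R)$ for $\mathcal A_1$, from $\partial B(z_0,r)$ for $\mathcal A_2$ --- and the two cases require \emph{different placements of the vertex set}. For $\mathcal A_1$ the paper takes $V_-\subset\partial\Omega_\delta\cap B(z_0,3r)$, as you do; but for $\mathcal A_2$ the vertices are placed on $\partial\Omega_\delta\cap A\bigl(z_0,\tfrac13\sqrt{rR},\sqrt{rR}\bigr)$, at \emph{intermediate} radius. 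The reason is your property (i): in the $\mathcal A_2$ configuration the $s$-boundary can be reached from the inner disk by excursions going \emph{inward}, so placing $S$ at radius $\sim r$ does not force excursions to the $s$-boundary to traverse the annulus, and estimate (a) fails. Moving $S$ out to radius $\sim\sqrt{rR}$ restores (i) for the sub-annulus $A(z_0,r,\sqrt{rR})$, whose modulus $\sqrt{r/R}$ still gives a power of $r/R$; the lower bound (b) then uses that the crossing passes through this intermediate annulus.

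A smaller issue in (c): the per-excursion phrasing ``it has $O((r/\delta)^2)$ steps'' would, after summing over the $O(r/\delta)$ starting edges in $S$, give a bound that blows up as $\delta\to0$. The paper's argument is instead to bound $\int_{\mathcal P_{\mathrm{in}}}|\omega|\,d\nu$ by $\sum_{x\in B}\int n_x\,d\nu\le |B|$ via Lemma~\ref{lemma_visits} applied with $E_1=E$ (so that $\int n_x\,d\nu=1$ for each interior $x$, \emph{already summed over starting vertices}); this is what makes (c) uniform in $\delta$. Note also that the paper's splitting radius is tied to $\operatorname{diam}(\Omega)$ rather than to $R$, so the resulting constant carries a factor $\overline m^2\operatorname{diam}(\Omega)^2$ and depends on the fixed ambient domain.
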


\begin{proof}
Fix $0 < \delta < \overline{m}_d^{-1}$. For clarity, as $\delta$ is fixed, we write $A^{\Omega}$ for $A^{\hat \Omega_{\delta}}$. $A^{\Omega}$ is a collection $(A^{\Omega}_j)_j$ of connected components of $A(z_0,r,R) \cap \hat \Omega_{\delta}$. We are going to split it into two disjoint sub-collections $\mathcal{A}_1^{\Omega}$ and $\mathcal{A}_2^{\Omega}$ of connected components. These collections correspond to two different geometric configurations for the intersection between $\Omega_{\delta}$ and the annulus $A(z_0,r,R)$. We will then upper bound the probability of a crossing of a component of $\mathcal{A}_1^{\Omega}$ and that of a crossing of a component of $\mathcal{A}_2^{\Omega}$ separately. In both cases, the upper bound is established using a martingale argument. Indeed, we will see that thanks to the optional stopping theorem, upper bounding the probability of a crossing in $\mathcal{A}_1^{\Omega}$ or that of a crossing in $\mathcal{A}_2^{\Omega}$ amounts to upper bound a certain martingale at time $0$ and lower bound it at a well-chosen stopping time. However, because of the different geometric configurations reflected in the collections $\mathcal{A}_1^{\Omega}$ and $\mathcal{A}_2^{\Omega}$, we cannot use the same martingale in both cases and this is why we must distinguish between these two cases.

Let us now define the splitting of $A^{\Omega}$ into two disjoint sub-collections $\mathcal{A}_1^{\Omega}:=(A^{\Omega}_{1,j})_j$ and $\mathcal{A}^{\Omega}_2:=(A^{\Omega}_{2,j})_j$ as mentioned above. The collection $\mathcal{A}^{\Omega}_1$ is such the following holds. $A \subset A^{\Omega}$ is an element of $\mathcal{A}^{\Omega}_1$ if and only if $\gamma$ must first intersect $\partial B(z_0,R)$ to cross $A$. In turn, the collection $\mathcal{A}^{\Omega}_2$ is made of those components of $A^{\Omega}$ that do not satisfy this property. In other words, $A^{\Omega}_j$ belongs to $\mathcal{A}^{\Omega}_2$ if and only if $\gamma$ must first intersect $\partial B(z_0,r)$ to cross $A^{\Omega}_j$. See Figure \ref{fig_collection} for an illustration. Observe that $\mathcal{A}^{\Omega}_1$ or $\mathcal{A}^{\Omega}_2$ may be empty. Given this splitting, we define two collections of connected components of $\Omega \setminus A^{\Omega}$
\begin{align*}
    &B_1^{av} :=\big \{ D \subset B(z_0,r) \cap \Omega: \text{ $D$ connected such that } \exists A_{1,j}^{\Omega} \in \mathcal{A}_1^{\Omega} \, \text{such that } \partial D \cap \partial A_{1,j}^{\Omega} \neq \emptyset \big\} \\
    &B_2^{av} :=\big \{ D \subset \Omega \setminus B(z_0,R): \text{ $D$ connected such that } \exists A_{2,j}^{\Omega} \in \mathcal{A}_2^{\Omega} \, \text{such that } \partial D \cap \partial A_{2,j}^{\Omega} \neq \emptyset \big\}.
\end{align*}
Notice that $B_1^{av}$, respectively $B_2^{av}$, is chosen such that if $\gamma$ makes a crossing of $A_{1,j}^{\Omega}$, respectively of $A_{2,j}^{\Omega}$, for some $j$, then there exists $n \in \mathbb{N}$ such that $\gamma(n) \in B_{1}^{av}$, respectively $\gamma(n) \in B_{2}^{av}$.

\begin{figure}
    \centering\includegraphics[width=0.49\textwidth]{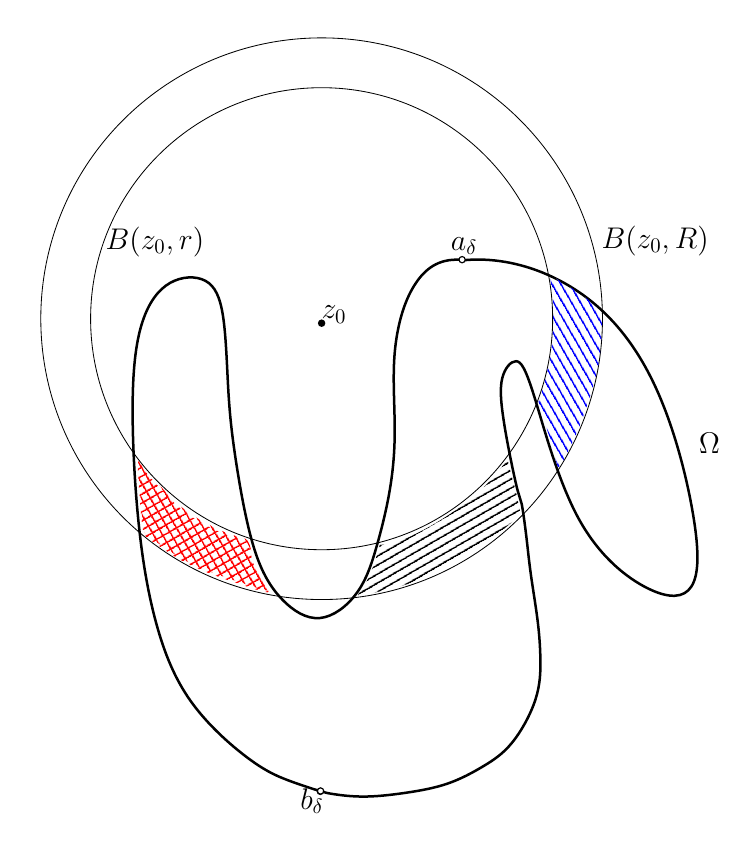}
  \caption{Example of a domain $\Omega$ together with an annulus $A(z_0,r,R)$. Here, the collection $\mathcal{A}_1$ consists of the connected component of $A(z_0,r,R) \cap \Omega$ shaded with red cross-hatch while the collection $\mathcal{A}_2$ consists of the connected component of $A(z_0,r,R) \cap \Omega$ with vertical blue lines. The connected component with horizontal black lines is an unavoidable component of $\Omega \cap A(z_0,r,R)$.}
  \label{fig_collection}
\end{figure}

Let $\mathcal{Q}$ be the event that there exists $j \in \mathbb{N}$ such that $v_j \in B_1^{av} \cup B_2^{av}$ and $\gamma([0,j])$ contains a crossing of $A^{\Omega}$. On $\mathcal{Q}$, denote by $\sigma$ the least such $j$. Observe that $\mathcal{Q}$ can be decomposed as a disjoint union $\mathcal{Q} = \mathcal{Q}_1 \sqcup \mathcal{Q}_2 $, where $\mathcal{Q}_1 := \mathcal{Q} \cap \{v_{\sigma} \in B_{1}^{av} \}$ and $\mathcal{Q}_2 := \mathcal{Q} \cap \{v_{\sigma} \in B_{2}^{av}\}$. Therefore, we see that to bound $\PP_{\delta}^{(\Omega,a,b,m)}(\mathcal{Q})$, it is enough to bound the probabilities $\PP_{\delta}^{(\Omega,a,b,m)}(\mathcal{Q}_1)$ and $\PP_{\delta}^{(\Omega,a,b,m)}(\mathcal{Q}_2)$ separately. We claim that the following bound on $\PP_{\delta}^{(\Omega,a,b,m)}(\mathcal{Q}_1)$ holds.

\begin{claim} \label{claim_Q1}
There exist universal constants $K_1, \alpha_1 > 0$ such that
\begin{equation} \label{bound_Q1}
    \PP_{\delta}^{(\Omega,a,b,m)}(\mathcal{Q}_1) \leq K_1\bigg( \frac{r}{R} \bigg)^{\alpha_1}.
\end{equation}
\end{claim}

As for $\PP_{\delta}^{(\Omega,a,b,m)}(\mathcal{Q}_2)$, we claim that it satisfies a similar bound.

\begin{claim} \label{claim_Q2}
There exist universal constants $K_2, \alpha_2 > 0$ such that
\begin{equation} \label{bound_Q2}
    \PP_{\delta}^{(\Omega,a,b,m)}(\mathcal{Q}_2) \leq K_2\bigg( \frac{r}{R} \bigg)^{\alpha_2}.
\end{equation}
\end{claim}

Claim \ref{claim_Q1} and Claim \ref{claim_Q2} together imply that the inequality \eqref{crossing_0} holds with $K=2\max(K_1,K_2)$ and $\alpha=\min(\alpha_1,\alpha_2)$, thus establishing Proposition \ref{prop_crossing_0}. Let us now turn to the proof of these claims. We start by showing Claim \ref{claim_Q1}.

\begin{proof}[Proof of Claim \ref{claim_Q1}]
Let us assume that $(z_0, r, R)$ is in the following subset of $\mathbb{C} \times \mathbb{R}_{+} \times \mathbb{R}_{+}$
\begin{equation*}
    \mathcal{G}_1:= \{ (z_0, r, R): \text{dist}(z_0,\partial \Omega) \leq \operatorname{diam}(\Omega), \, r \leq \frac{1}{100}\operatorname{diam}(\Omega), \, R \leq 2\operatorname{diam}(\Omega), \, 0< 10r \leq R \}.
\end{equation*}
In this case, we will see that there exist constants $\tilde K_1, c_1, \hat \alpha_1 >0$ such that
\begin{align} \label{crossing_good}
    \PP_{\delta}^{(\Omega,a,b,m)}(\mathcal{Q}_1) \leq \frac{2\tilde K_1}{c_1}\bigg(\frac{r}{R}\bigg)^{\hat \alpha_1}
    + \frac{2^{1+\hat \alpha_1 /2}\tilde K_1}{c_1}\bigg(\frac{r}{R} \bigg)^{\hat \alpha_1 /2}
    + 2\frac{\overline{m}_d^{2}\operatorname{diam}(\Omega)^2}{c_1}\frac{r}{R}.
\end{align}
Therefore, for $(z_0,r,R) \in \mathcal{G}_1$, the inequality \eqref{bound_Q1} is satisfied with
\begin{equation} \label{Kalpha_1}
    C_1:= K_1 = \frac{2\tilde K_1}{c_1} + \frac{2^{1+\hat \alpha_1 /2}\tilde K_1}{c_1} + \frac{2\overline{m}_d^2}{c_1}\operatorname{diam}(\Omega)^2, \quad \text{and} \quad  \alpha_1 = \frac{\hat \alpha_1}{2} \wedge 1.
\end{equation}
If $(z_0,r,R) \notin \mathcal{G}_1$, then notice that either there are no crossings of $A(z_0,r,R)$ that stay in $\Omega$ or the ratio $r/R$ is greater than or equal to a deterministic constant $c>0$. Indeed:
\begin{enumerate}
    \item if $\text{dist}(z_0,\partial \Omega) \geq (3/2)\operatorname{diam}(\Omega)$, then, in order to have a crossing of $A(z_0,r,R)$ that stay in $\Omega$, one must have $r\geq \text{dist}(z_0,\partial \Omega)$ and $R\leq \text{dist}(z_0,\partial \Omega) + \operatorname{diam}(\Omega)$. This implies that
    \begin{align*}
        \frac{R}{r} \leq \frac{\text{dist}(z_0,\partial \Omega)+\operatorname{diam}(\Omega)}{\text{dist}(z_0,\partial \Omega)} \leq 1+1
    \end{align*}
    Therefore, the ratio $r/R$ is lower bounded by $1/2$ in that case.
    \item If $10r \geq R$, then the ratio $r/R$ is lower bounded by $1/10$.
    \item If $\text{dist}(z_0, \partial \Omega) \leq \operatorname{diam}(\Omega)$ and $R \geq 2 \operatorname{diam}(\Omega)$, then there are no crossings of $A(z_0,r,R)$ that stay in $\Omega$.
    \item If $\text{dist}(z_0, \partial \Omega) \leq \operatorname{diam}(\Omega)$ and $r \geq (1/100)\operatorname{diam}(\Omega)$, then in order to have a crossing of $A(z_0,r,R)$, one must have $R \leq 2\operatorname{diam}(\Omega)$. This implies that the ratio $r/R$ is lower bounded by $1/200$.
\end{enumerate}
We thus see that if $(z_0,r,R) \notin \mathcal{G}_1$, then either there are no crossings of $A(z_0,r,R)$ that stay in $\Omega$ or the ratio $r/R$ is lower bounded by $1/200$. Taking $\alpha_1$ as in \eqref{Kalpha_1} and $C_{b1}>0$ such that $C_{b1}\times(1/200)^{\alpha_1}=1$, we trivially obtain that for $(z_0,r,R) \notin \mathcal{G}_1$,
\begin{equation*}
    \PP_{\delta}^{(\Omega,a,b,m)}(\mathcal{Q}_1) \leq C_{b1} \bigg( \frac{r}{R}\bigg)^{\alpha_1}
\end{equation*}
since the right-hand side is greater than $1$. Setting $K_1 = \max(C_1, C_{b1})$ where $C_1$ is the constant found in \eqref{Kalpha_1}, we then obtain the inequality \eqref{bound_Q1} for any $(z_0,r,R) \in \mathbb{C} \times \mathbb{R}_{+} \times \mathbb{R}_{+}$ with $r \leq R$.

Let us now turn to the proof of the inequality \eqref{crossing_good}. Let $(z_0,r,R)$ be in $\mathcal{G}_1$ and consider the annulus $A(z_0,r,R)$. By definition of the collection $A^{\Omega}$, each connected component of $B(z_0,3r) \cap \Omega_{\delta}$ which intersects some $A_{1,j}^{\Omega}$ for some $j$ has boundary entirely in $\partial \Omega_{\delta}^{-}$ or $\partial \Omega_{\delta}^{+}$. Recall that on the event $\mathcal{Q}_1$, the stopping time $\sigma$ is defined as the least $j$ such that $v_j \in B_1^{av}$ and $\gamma([0,j])$ contains a crossing of $A^{\Omega}$. On the event $\mathcal{Q}_1$, let $S$ be the connected component of $B(z,3r) \cap \Omega_{\delta}$ intersecting $\gamma([\sigma-1, \sigma])$ and let $\mathcal{Q}_{1}^{-} \subset \mathcal{Q}_1$, respectively $\mathcal{Q}_1^{+} \subset \mathcal{Q}_{1}$, be the event that $\partial S \subset \partial \Omega_{\delta}^{-}$, respectively $\partial S \subset \partial \Omega_{\delta}^{+}$. Then $\PP_{\delta}^{(\Omega,a,b,m)}(\mathcal{Q}_1) = \PP_{\delta}^{(\Omega,a,b,m)}(\mathcal{Q}_{1}^{-}) + \PP_{\delta}^{(\Omega,a,b,m)}(\mathcal{Q}_{1}^{+})$ and the inequality \eqref{crossing_good} will hold if we can show that there exist constants $\tilde K_1, c_1, \hat \alpha_1 > 0$ such that
\begin{equation*} \label{ineq_Q1}
    \PP_{\delta}^{(\Omega,a,b,m)}(\mathcal{Q}_{1}^{\pm}) \leq  \frac{\tilde K_1}{c_1}\bigg(\frac{r}{R}\bigg)^{\hat \alpha_1}+ \frac{2^{\hat \alpha_1}\tilde K_1}{c_1}\bigg(\frac{r}{R} \bigg)^{\hat \alpha_1 /2} + \frac{\overline{m}_d^{2}\operatorname{diam}(\Omega)^2}{c_1}\frac{r}{R}.
\end{equation*}
Let us prove this inequality for $\PP_{\delta}^{(\Omega,a,b,m)}(\mathcal{Q}_{1}^{-})$. By symmetry, the proof for $\PP_{\delta}^{(\Omega,a,b,m)}(\mathcal{Q}_{1}^{+})$ is virtually the same. Let $E_{-}$ denote the set of directed edges in $E=E(\Omega_{\delta})$ whose initial vertex is in $\partial \Omega_{\delta}^{-} \cap B(z_0,3r)$ and is disconnected from $b$ in $\Omega_{\delta}$ by a connected component of $\mathcal{A}_{1}^{\Omega}$. Denote by $V_{-}$ the set of initial vertices of the edges in $E_{-}$. Then, by Lemma \ref{prop_martingale_obs}, the process
\begin{equation*}
    \| \tilde \nu_{\delta, n}^{(m)} \| := \sum_{v \in V_{-}} \frac{1}{6} \sum_{v \sim w} \bigg( h_{\delta, n}^{m}(w) + \frac{1}{2} \bigg), \quad n \geq 0,
\end{equation*}
is a non-negative martingale for the filtration $(\mathcal{F}_{\delta, n})_n$. Therefore, by the optional stopping theorem,
\begin{equation} \label{OST_nu}
    \| \tilde \nu_{\delta, 0}^{(m)} \| = \EE_{\delta}^{(\Omega,a,b,m)}\big[\| \tilde \nu_{\delta, \sigma}^{(m)} \| \big] \geq \EE_{\delta}^{(\Omega,a,b,m)}\big[\mathbb{I}_{\mathcal{Q}_{1}^{-}}\| \tilde \nu_{\delta, \sigma}^{(m)} \|\big].
\end{equation}
We thus see that in order to bound $\PP_{\delta}^{(\Omega,a,b,m)}(\mathcal{Q}_{1}^{-})$, it is enough to exhibit an appropriate upper bound for $ \| \tilde \nu_{\delta, 0}^{(m)} \|$ and to show that $\| \tilde \nu_{\delta, \sigma}^{(m)} \|$ is bounded away from $0$ by a universal constant on the event $\mathcal{Q}_{1}^{-}$. Let us first focus on $ \| \tilde \nu_{\delta, 0}^{(m)} \|$.

Observe that almost surely, for any $n \in \mathbb{N}$ and any $w \in \operatorname{Int}(\Omega_{\delta, n}) \cup \partial \Omega_{\delta,n}$,
\begin{equation*}
    h_{\delta, n}^{m}(w) + \frac{1}{2} = H_{\delta, n}^{(m)}(w) + \frac{1}{2}\PP_{w}^{(m)}(\tau^{\star} \leq \tau_{\partial \Omega_{\delta, n}})
\end{equation*}
where, as explained in Section \ref{subsec:def_mHE}, $H_{\delta, n}^{(m)}(w)$ is the discrete massive harmonic measure of $\partial \Omega_{\delta, n}^{+}$ seen from $w$. By Lemma \ref{lemma_mHmeasure}, we then have that, almost surely, for any $n \in \mathbb{N}$,
\begin{equation*}
    \| \tilde \nu_{\delta, n}^{(m)} \| \leq \sum_{v \in V_{-}} \frac{1}{6} \sum_{v \sim w} \bigg( H_{\delta,n}^{(0)}(w) + \frac{1}{2} \PP_{w}^{(\overline{m}_{d})}(\tau^{\star} \leq \tau_{\partial \Omega_{\delta, n}}) \bigg),
\end{equation*}
where $H_{\delta, n}^{(0)}(w)$ is the discrete (massless) harmonic measure of $\partial \Omega_{\delta, n}^{+}$ seen from $w$. Taking $n=0$, this yields that
\begin{equation*}
    \| \tilde \nu_{\delta, 0}^{(m)} \| \leq \sum_{v \in V_{-}} \frac{1}{6} \sum_{v \sim w} \bigg( H_{\delta,0}^{(0)}(w) + \frac{1}{2} \PP_{w}^{(\overline{m}_{d})}(\tau^{\star} \leq \tau_{\partial \Omega_{\delta, 0}}) \bigg).
\end{equation*}
Our upper bound on $\| \tilde \nu_{\delta, 0}^{(m)} \|$ that will allow us to establish the inequality \eqref{crossing_good} is a consequence of the following two lemmas.

\begin{lemma} \label{lemma_upper_Hq1}
There exist universal constants $\tilde K_1 , \hat \alpha_1 > 0$ such that, for any $(z_0,r,R) \in \mathcal{G}_1$,
\begin{equation} \label{harmonic_part}
    \sum_{v \in V_{-}} \frac{1}{6} \sum_{v \sim w} H_{\delta,0}^{(0)}(w) \leq \tilde K_1 \bigg( \frac{r}{R}\bigg)^{\hat \alpha_1}.
\end{equation}
\end{lemma}

Lemma \ref{lemma_upper_Hq1} can be derived using the same arguments as those used in the first part of the proof of \cite[Proposition~6.3]{HE}, since we assume that $10r \leq R$ for $(z_0,r,R) \in \mathcal{G}_1$. The next lemma, which controls the term in $\| \tilde \nu_{\delta, 0}^{(m)} \|$ arising from the killing, will require a bit more work.

\begin{lemma} \label{lemma_upper_killing}
For the same constants $\tilde K_1$ and $\hat \alpha_1$ as in Lemma \ref{lemma_upper_Hq1} and still assuming that $(z_0,r,R) \in \mathcal{G}_1$,
\begin{equation*}
     \sum_{v \in V_{-}} \frac{1}{6} \sum_{v \sim w} \frac{1}{2} \PP_{w}^{(\overline{m}_{d})}(\tau^{\star} \leq \tau_{\partial \Omega_{\delta, 0}}) \leq 2^{\hat \alpha_1 /2} \tilde K_1 \bigg(\frac{r}{R}\bigg)^{\hat \alpha_1 /2} + \overline{m}_d^2\operatorname{diam}(\Omega)^2\times \frac{r}{R}.
\end{equation*}
\end{lemma}

We postpone the proof of Lemma \ref{lemma_upper_killing} to the end and show how to proceed from here. Observe that combined together, Lemma \ref{lemma_upper_Hq1} and Lemma \ref{lemma_upper_killing} yield that
\begin{equation} \label{bound_nu0}
     \| \tilde \nu_{\delta, 0}^{(m)} \| \leq \tilde K_{1} \bigg(\frac{r}{R}\bigg)^{\hat \alpha_1} + 2^{\hat \alpha_1 /2} \tilde K_1 \bigg(\frac{r}{R}\bigg)^{\hat \alpha_1 /2} + \frac{1}{2} \times 2\overline{m}_d^2\operatorname{diam}(\Omega)^2 \frac{r}{R}
\end{equation}
where $\tilde K_1 , \hat \alpha_1 > 0$ are universal constants.

Let us now exhibit a lower bound for $\| \tilde \nu_{\delta, \sigma}^{(m)} \|$ on the event $\mathcal{Q}_{1}^{-}$. For $n \geq 0$ and $w \in \operatorname{Int}(\Omega_{\delta, n})$, denote by $\tilde H_{\delta, n}^{(m)}(w)$, respectively $\tilde H_{\delta, n}^{(0)}(w)$, the discrete massive, respectively massless, harmonic measure of $\partial \Omega_{\delta, n}^{-}$ seen from $w$. Almost surely, it holds that, for any $n \in \mathbb{N}$ and $w \in \operatorname{Int}(\Omega_{\delta,n})$,
\begin{equation*}
    H_{\delta, n}^{(m)}(w) + \tilde H_{\delta, n}^{(m)}(w) + \PP_{w}^{(m)}(\tau^{\star} \leq \tau_{\partial \Omega_{\delta,n}}) = H_{\delta, n}^{(0)}(w) + \tilde H_{\delta, n}^{(0)}(w).
\end{equation*}
It follows that almost surely, for any $n \in \mathbb{N}$ and $w \in \operatorname{Int}(\Omega_{\delta,n})$,
\begin{equation} \label{ineq_Hmtau}
    H_{\delta, n}^{(m)}(w) + \PP_{w}^{(m)}(\tau^{\star} \leq \tau_{\partial \Omega_{\delta,n}}) \geq H_{\delta, n}^{(0)}(w).
\end{equation}
since almost surely, for any $n \in \mathbb{N}$ and $w \in \Omega_{\delta, n}$, $\tilde H_{\delta,n}^{(0)}(w) - \tilde H_{\delta,n}^{(m)}(w) \geq 0$. Therefore, almost surely,
\begin{align} \label{ineq_mart_sigma}
   \| \tilde \nu_{\delta, \sigma}^{(m)} \| &= \sum_{v \in V_{-}} \frac{1}{6} \sum_{v \sim w} \bigg(H_{\delta, \sigma}^{(m)}(w) + \frac{1}{2} \PP_{w}^{(m)}(\tau^{\star} \leq \tau_{\partial \Omega_{\delta,\sigma}}) \bigg) \nonumber \\
   &\geq \sum_{v \in V_{-}} \frac{1}{6} \sum_{v \sim w} \bigg( \frac{1}{2} H_{\delta, \sigma}^{(m)}(w) + \frac{1}{2} \PP_{w}^{(m)}(\tau^{\star} \leq \tau_{\partial \Omega_{\delta,\sigma}}) \bigg) \nonumber \\
    &\geq \sum_{v \in V_{-}} \frac{1}{6} \sum_{v \sim w} \frac{1}{2} H_{\delta, \sigma}^{(0)}(w)
\end{align}
where the inequality \eqref{ineq_mart_sigma} follows from the inequality \eqref{ineq_Hmtau} by multiplying both sides by $1/2$. The first inequality simply uses the fact that for any $n \in \mathbb{N}$ and any $w \in \operatorname{Int}(\Omega_{\delta,n})$, $H_{\delta,n}^{(m)}(w)$ is non-negative. The second part of the proof of \cite[Proposition~6.3]{HE} shows that
\begin{equation*}
    \EE_{\delta}^{(\Omega,a,b,m)} \bigg[ \mathbb{I}_{\mathcal{Q}_{1}^{-}}\bigg(\sum_{v \in V_{-}} \frac{1}{6} \sum_{v \sim w} \frac{1}{2} H_{\sigma, \delta}^{(0)}(w) \bigg) \bigg] \geq c_1\PP_{\delta}^{(\Omega,a,b,m)}(\mathcal{Q}_{1}^{-})
\end{equation*}
where $c_1>0$ is a universal constant. This inequality together with \eqref{bound_nu0} and the optional stopping theorem argument explained in \eqref{OST_nu} yield that
\begin{equation*}
    c_1 \PP_{\delta}^{(\Omega,a,b,m)}(\mathcal{Q}_{1}^{-}) \leq \tilde K_{1} \bigg(\frac{r}{R}\bigg)^{\hat \alpha_1} + 2^{\hat \alpha_1 /2} \tilde K_1 \bigg(\frac{r}{R}\bigg)^{\hat \alpha_1 /2} + \overline{m}_d^2\operatorname{diam}(\Omega)^2 \frac{r}{R},
\end{equation*}
which, as explained above, implies the inequality \eqref{crossing_good}.
\end{proof}

To complete the proof of Claim \ref{claim_Q1}, we must prove the auxiliary lemma that we used along the way.

\begin{proof}[Proof of Lemma \ref{lemma_upper_killing}]
Using the same notations as in the proof of Claim \ref{claim_Q1}, we want to upper bound the quantity
\begin{equation*}
    \| K_{\delta, 0}^{(\overline{m}_{d})} \| := \sum_{v \in V_{-}} \frac{1}{6} \sum_{v \sim w} \PP_{w}^{(\overline{m}_d)}(\tau^{\star} \leq \tau_{\partial \Omega_{\delta}}).
\end{equation*}
Let us express this quantity in terms of the integral of a functional with respect to the excursion measure $\nu_{(\Omega_{\delta}, E_{-}, E_{\delta})}$, where $E_{\delta}$ is the set of edges in $\Omega_{\delta}$ whose endpoint is in $\partial \Omega_{\delta}$. We have that
\begin{align*}
    \| K_{\delta, 0}^{(\overline{m}_d)} \| &= \sum_{v \in V_{-}} \frac{1}{6} \sum_{v \sim w} \EE_{w}^{(0)} \bigg[ \overline{m}_d^{2}\delta^2 \sum_{k=0}^{\tau_{\partial \Omega_{\delta}}} (1-\overline{m}_d^2\delta^2)^k \bigg] \\
    &= \sum_{v \in V_{-}} \frac{1}{6} \sum_{v \sim w} \sum_{\omega: w \to \partial \Omega_{\delta}} \overline{m}_d^{2}\delta^2 \bigg[ \sum_{k=0}^{\vert \omega \vert } (1-\overline{m}_d^2\delta^2)^k \bigg] \PP_{\delta}^{(0)}(\omega) 
\end{align*}
where $\PP_{\delta}^{(0)}(\omega)$ denotes the probability that a (non-massive) random walk on $\delta \mathbb{T}$ traces the path $\omega$, $\EE_w^{(0)}$ denotes the expectation with respect to (non-massive) random walk started at $w$ and where, for $w \in \Omega_{\delta}$, we write $\omega: w \to \partial \Omega_{\delta}$ to indicate that $\omega$ is a path from $w$ to $\partial \Omega_{\delta}$ in $\Omega_{\delta}$. We continue with this expansion on paths to obtain that
\begin{align*}    
    \| K_{\delta, 0}^{(\overline{m}_d)} \| &= \sum_{v \in V_{-}} \sum_{\omega: v \to \partial \Omega_{\delta}} \overline{m}_d^{2}\delta^2 \bigg[ \sum_{k=1}^{\vert \omega \vert } (1-\overline{m}_d^2\delta^2)^k \bigg] \frac{1}{6}\PP_{\delta}^{(0)}(\omega_{\vert \geq 1}) \\
    &= \sum_{\omega: E_{-} \to \partial \Omega_{\delta}} \overline{m}_d^{2}\delta^2 \bigg[ \sum_{k=1}^{\vert \omega \vert } (1-\overline{m}_d^2\delta^2)^k \bigg] \PP_{\delta}^{(0)}(\omega) \\
    &= \int \overline{m}_d^{2}\delta^2 \bigg[ \sum_{k=1}^{\vert \omega \vert } (1-\overline{m}_d^2\delta^2)^k \bigg] d\nu_{(\Omega_{\delta}, E_{-}, E_{\delta})}(\omega).
\end{align*}
where $\omega_{\vert \geq 1}$ denotes the path $\omega$ minus its first edge. The above representation of $\| K_{\delta, 0}^{(\overline{m}_d)} \|$ is useful as the discrete excursion measure $\nu_{(\Omega_{\delta}, E_{-}, E_{\delta})}$ is well-understood. To obtain a bound on this integral with respect to this excursion measure, we are going to split the set of excursions into two disjoint sets: the set of excursions that remain in a well-chosen ball $B(z_0, \tilde r)$ and the set of excursions which exit this ball. The radius $\tilde r$ of this ball is going to be chosen such that the total mass of excursions that exit in $B(z_0, \tilde r)$ can be well-controlled while excursions that stay in $B(z_0, \tilde r)$ are not long enough to have a macroscopic probability to be killed. To find the appropriate radius $\tilde r$, it is more convenient to first rescale $\Omega$. So, let us now rescale $\Omega$, and thus $\Omega_{\delta}$, by $(r\operatorname{diam}(\Omega))^{-1}$ and denote by $\Omega_{\delta}(r)$ the rescaled version of $\Omega_{\delta}$. $\Omega_{\delta}(r)$ is a piece of the triangular lattice with meshsize $\tilde \delta := \delta(r\operatorname{diam}(\Omega))^{-1}$. As we want the killing probabilities to agree on $\Omega_{\delta}$ and $\Omega_{\delta}(r)$, we must choose the mass $\tilde m^2 = \tilde m^2(r)$ on $\Omega_{\delta}(r)$ such that
\begin{equation*}
    \tilde m^2 \frac{\delta^2}{r^2\operatorname{diam}(\Omega)^2} = \overline{m}_d^2 \delta^2, \quad \text{that is} \quad \tilde m^2= \overline{m}_d^2r^2\operatorname{diam}(\Omega)^2.
\end{equation*}
Denote by $E_{-}(r)$, respectively $E_{\delta}(r)$, the image of $E_{-}$, respectively $E_{\delta}$, after the rescaling. Notice that for any path $\omega$ starting in $E_{-}$ and ending in $E_{\delta}(r)$, $\nu_{(\Omega_{\delta}, E_{-}, E_{\delta})}(\omega) = \nu_{(\Omega_{\delta}(r), E_{-}(r), E_{\delta}(r))}(\omega_r)$, where $\omega_r$  is the rescaled version of $\omega$. Therefore, we have that
\begin{equation*}
    \| K_{\delta, 0}^{(\overline{m}_d)} \| = \int \tilde {m}^{2} \tilde \delta^2 \bigg[ \sum_{k=1}^{\vert \omega \vert } (1-\tilde{m}^2 \tilde \delta^2)^k \bigg] d\nu_{(\Omega_{\delta}(r), E_{-}(r), E_{\delta}(r))}(\omega).
\end{equation*}
We now use a kind of restriction property of $\nu_{(\Omega_{\delta}(r), \text{rev}(E_{\delta}(r)), E_{\delta}(r))}$ to write
\begin{align*}
    \| K_{\delta, 0}^{(\overline{m}_d)} \| = \int_{(\omega_{0}\omega_{1}) \in E_{-}(r), \, e(\omega) \in E_{\delta}(r)} \tilde {m}^{2} \tilde \delta^2 \bigg[ \sum_{k=1}^{\vert \omega \vert } (1-\tilde{m}^2 \tilde \delta^2)^k \bigg] d\nu_{(\Omega_{\delta}(r), \text{rev}(E_{\delta}(r)), E_{\delta}(r))}(\omega)
\end{align*}
where for a path $\omega$, $(\omega_0\omega_1)$ is the first edge traversed by $\omega$ and $e(\omega)$ is the last one. Denote by $z_0'$ the image of $z_0$ by the rescaling $(r\operatorname{diam}(\Omega))^{-1}$ and define the following sets of paths
\begin{align*}
    &\mathcal{P}_{ext}(r) = \bigg \{\omega: (\omega_0\omega_1) \in E_{-}(r), \, e(\omega) \in E_{\delta}(r), \, \exists k \, \text{such that } \omega_{k} \notin B\big(z_0', \frac{1}{\sqrt{r\operatorname{diam}(\Omega)}} \big) \bigg \} \\
    &\mathcal{P}_{in}(r) = \bigg \{\omega: (\omega_0\omega_1) \in E_{-}(r), \, e(\omega) \in E_{\delta}(r), \, \forall k \, \omega_{k} \in B \big(z_0', \frac{1}{\sqrt{r\operatorname{diam}(\Omega)}} \big) \bigg \}.
\end{align*}
We then have that
\begin{align} \label{ineq_K0}
    \| K_{\delta, 0}^{(\overline{m}_d)} \| &= \tilde m^2 \tilde \delta^2 \int_{\mathcal{P}_{ext}(r)} \bigg[ \sum_{k=1}^{\vert \omega \vert } (1-\tilde{m}^2 \tilde \delta^2)^k \bigg] d\nu_{(\Omega_{\delta}(r), \text{rev}(E_{\delta}(r)), E_{\delta}(r))}(\omega) \nonumber \\
    &+ \tilde m^2 \tilde \delta^2 \int_{\mathcal{P}_{in}(r)} \bigg[ \sum_{k=1}^{\vert \omega \vert } (1-\tilde{m}^2 \tilde \delta^2)^k \bigg] d\nu_{(\Omega_{\delta}(r), \text{rev}(E_{\delta}(r)), E_{\delta}(r)})(\omega) \nonumber \\
    &\leq \nu_{(\Omega_{\delta}(r), \text{rev}(E_{\delta}(r)), E_{\delta}(r))}\big( \mathcal{P}_{ext}(r) \big) + \tilde m^2 \tilde \delta^2 \int_{\mathcal{P}_{in}(r)} \vert \omega \vert d\nu_{(\Omega_{\delta}(r), \text{rev}(E_{\delta}(r)), E_{\delta}(r))}(\omega).
\end{align}
Using this upper bound, we are going to show that,
\begin{equation*}
    \| K_{\delta, 0}^{\overline{m}_d} \| \leq 2^{\hat \alpha_1 /2} \tilde K_1 \bigg( \frac{r}{R}\bigg)^{\hat \alpha_1 /2} + 2\overline{m}_d^2\operatorname{diam}(\Omega)^2 \frac{r}{R},
\end{equation*}
where $\tilde K_1$ and $\hat \alpha_1$ are as in \eqref{harmonic_part}. First, since for $(z_0,r,R) \in \mathcal{G}_1$, $10/\operatorname{diam}(\Omega) \leq 1/\sqrt{r\operatorname{diam}(\Omega)}$ , the same arguments as those used in the first part of the proof of \cite[Proposition~6.3]{HE} show that
\begin{equation} \label{bound_ext}
    \nu_{(\Omega_{\delta}(r), \text{rev}(E_{\delta}(r)), E_{\delta}(r))}\big( \mathcal{P}_{ext}(r) \big) \leq \tilde K_1 \bigg( \frac{1/\operatorname{diam}(\Omega)}{1/\sqrt{r\operatorname{diam}(\Omega)}}\bigg)^{\hat \alpha_1} = \tilde K _1 \bigg( \frac{r}{\operatorname{diam}(\Omega)}\bigg)^{\hat \alpha_1 /2},
\end{equation}
where $\tilde K_1$ and $\hat \alpha_1$ are as in \eqref{harmonic_part}. Since for $(z_0,r,R) \in \mathcal{G}_1$, the ratio $r/\operatorname{diam}(\Omega)$ is upper bounded by $2r/R$, it thus only remains to upper bound the second term in the sum on the right-hand side of \eqref{ineq_K0}. For convenience, let us set $B_{\delta}(r) = B(z, (r\operatorname{diam}(\Omega)^{-1/2})) \cap \delta \mathbb{T}$. We first use the Fubini-Tonelli theorem to write
\begin{align*}
    \tilde m^2 \tilde \delta^2 \int_{\mathcal{P}_{in}(r)} \vert \omega \vert d\nu_{(\Omega_{\delta}(r), \text{rev}(E_{\delta}(r)), E_{\delta})(r)}(\omega) &= \tilde m^2 \tilde \delta^2 \int_{\mathcal{P}_{in}(r)} \bigg[ \sum_{x \in B_{\delta}(r)} n_x(\omega) \bigg] d\nu_{(\Omega_{\delta}(r), \text{rev}(E_{\delta}(r)), E_{\delta}(r))}(\omega) \\
    &= \tilde m^2 \tilde \delta^2 \sum_{x \in B_{\delta}(r)} \int_{\mathcal{P}_{in}(r)} n_x(\omega) d\nu_{(\Omega_{\delta}(r), \text{rev}(E_{\delta}(r)), E_{\delta}(r))}(\omega).
\end{align*}
By Lemma \ref{lemma_visits}, for any $x \in \Omega_{\delta}$,
\begin{equation*}
    \int n_x(\omega) d\nu_{(\Omega_{\delta}(r), \text{rev}(E_{\delta}(r)), E_{\delta})(r)}(\omega) = 1.
\end{equation*}
This yields that
\begin{align*}
    \tilde m^2 \tilde \delta^2 \int_{\mathcal{P}_{in}(r)} \vert \omega \vert d\nu_{(\Omega_{\delta}(r), \text{rev}(E_{\delta}(r)), E_{\delta})(r)}(\omega) &\leq \tilde m^2 \tilde \delta^2 \sum_{x \in B_{\delta}(r)} 1 \\
    &= \frac{\tilde m^2}{r\operatorname{diam}(\Omega)} \tilde \delta^2 \tilde \delta ^{-2}.
\end{align*}
Using that $\tilde m^2 = \overline{m}_d^2 r^2 \operatorname{diam}(\Omega)^2$ and the upper bound \eqref{bound_ext}, we thus obtain that
\begin{equation*}
    \| K_{\delta, 0}^{(\overline{m}_d)} \| \leq \tilde K_1 \bigg(\frac{r}{\operatorname{diam}(\Omega)}\bigg)^{\hat \alpha_1/2} + \overline{m}_d^2\operatorname{diam}(\Omega)r.
\end{equation*}
We observe that for $(z_0,r,R) \in \mathcal{G}_1$, the ratio $r/R$ is lower bounded by $r/2\operatorname{diam}(\Omega)$. Hence, it follows from the inequality above that
\begin{align*}
    \| K_{0}^{(\overline{m}_d)} \| &\leq \tilde K_1 \bigg(\frac{r}{\operatorname{diam}(\Omega)}\bigg)^{\hat \alpha_1 /2} + 2\overline{m}_d^2\operatorname{diam}(\Omega)^2\times \frac{r}{2\operatorname{diam}(\Omega)} \\
    &\leq 2^{\hat \alpha_1 /2} \tilde K_1 \bigg(\frac{r}{R}\bigg)^{\hat \alpha_1 /2} + 2\overline{m}_2^2\operatorname{diam}(\Omega)^2\times \frac{r}{R},
\end{align*}
which, after multiplying both sides by $1/2$, is exactly the claim of Lemma \ref{lemma_upper_killing}.
\end{proof}

We now turn to the proof of Claim \ref{claim_Q2}.

\begin{proof}[Proof of Claim \ref{claim_Q2}]
Let us assume that $(z_0, r, R)$ is in the following subset of $\mathbb{C} \times \mathbb{R}_{+} \times \mathbb{R}_{+}$
\begin{equation*}
    \mathcal{G}_2:= \{ (z_0, r, R): \text{dist}(z_0,\partial \Omega) \leq \operatorname{diam}(\Omega), \, R \leq 2\operatorname{diam}(\Omega), \, 0< 10^4r \leq R \}.
\end{equation*}
In this case, we will see that there exist constants $\tilde K_2, c_2, \hat \alpha_2 >0$ such that
\begin{equation} \label{crossing_good2}
    \PP_{\delta}^{(\Omega,a,b,m)}(\mathcal{Q}_2) \leq \frac{2\tilde K_2}{c_2}\bigg(\frac{r}{R}\bigg)^{\hat \alpha_2}
    + \frac{2\tilde K_2}{c_2}\bigg(\frac{r}{R} \bigg)^{\hat \alpha_2 /4}
    + 2\frac{\overline{m}_d^{2}\operatorname{diam}(\Omega)^2}{c_2}\sqrt{\frac{r}{R}}.
\end{equation}
Therefore, for $(z_0,r,R) \in \mathcal{G}_2$, the inequality \eqref{bound_Q2} is satisfied with
\begin{equation} \label{Kalpha_G}
    C_2:= K_2 = \frac{4\tilde K_2}{c_2} + \frac{4\overline{m}_d^2}{c_2}\operatorname{diam}(\Omega)^2, \quad \text{and} \quad \alpha_2 = \frac{\hat \alpha_2}{4} \wedge \frac{1}{2}.
\end{equation}
If $(z_0,r,R) \notin \mathcal{G}_2$, then notice that, as explained above for the set $\mathcal{G}_1$, either there are no crossings of $A^{\Omega}$ that stay in $\Omega$ or the ratio $r/R$ is lower bounded by $10^{-4}$. Taking $\alpha_2$ as in \eqref{Kalpha_G} and $C_{b2}>0$ such that $C_{b2}\times (10^{-4})^{\alpha_2}=1$, we trivially obtain that for $(z_0,r,R) \notin \mathcal{G}_2$,
\begin{equation*}
    \PP_{\delta}^{(\Omega,a,b,m)}(\mathcal{Q}_2) \leq C_{b2} \bigg( \frac{r}{R}\bigg)^{\alpha_2}
\end{equation*}
since the right-hand side is greater than $1$. Setting $K_2 = \max(C_2, C_{b2})$ where $C_2$ is the constant found in \eqref{Kalpha_G}, we then obtain the inequality \eqref{bound_Q2} for any $(z_0,r,R) \in \mathbb{C} \times \mathbb{R}_{+} \times \mathbb{R}_{+}$ with $r \leq R$.

Let us now turn to the proof of the inequality \eqref{crossing_good2}. Let $(z_0,r,R)$ be in $\mathcal{G}_2$ and consider the annulus $A(z_0,r,R)$. By definition of $A^{\Omega}$, the boundary arcs of the connected components in $A_2^{\Omega}$ that are also arcs of $\partial \Omega_{\delta}$ are either all contained in $\partial \Omega_{\delta}^{-}$ or all contained in $\partial \Omega_{\delta}^{+}$. Let us define the martingale that will plays the role of the martingale $(\| \tilde \nu^{(m)}_{\delta, n}\|, n \geq 0)$ that we used to estimate $\PP_{\delta}^{(\Omega,a,b,m)}(\mathcal{Q}_1)$. We start by rescaling $\delta \mathbb{T}$ by $1/R$ using the map $f_R: z \mapsto z/R$. We denote by $z_0'$ the image of $z_0$ by $f_R$. The image of the ball $B(z_0,r)$, respectively $B(z_0,R)$, under $f_R$ is $B(z_0',r/R)$, respectively $B(z_0',1)$. We also denote by $\tilde \Omega_{\delta}$ the image of $\Omega_{\delta}$ by $f_R$; this is a piece of the triangular lattice with meshsize $\delta_{a} := \delta R^{-1}$. For the killing probabilities to agree on $\tilde \Omega_{\delta}$ and $\Omega_{\delta}$, we must choose the mass $m_{a}^{2}$ on $\tilde \Omega_{\delta}$ such that
\begin{equation*}
    m_a^2 \frac{\delta^2}{R^2} = \overline{m}_d^2 \delta^2, \quad \text{that is} \quad m_a^2= \overline{m}_d^2R^2.
\end{equation*}
Observe that we have the inclusions
\begin{equation*}
    B\big(z_0', \frac{r}{R} \big) \subset B\big(z_0', \frac{1}{3}\sqrt{\frac{r}{R}}\big) \subset B \big(z_0', \sqrt{\frac{r}{R}} \big) \subset B(z_0',1)
\end{equation*}
where the first inclusion holds because for $(z_0,r,R) \in \mathcal{G}_2$, $10r \leq R$. Moreover, the boundary of a connected component $D$ of $A(z_0', (1/3)\sqrt{r/R}, \sqrt{r/R}) \cap f_R(A_2^{\Omega})$ is made of four arcs. One of them is an arc of $\partial B(z_0', (1/3)\sqrt{r/R})$ and its opposite boundary arc is an arc of $\partial B(z_0', \sqrt{r/R})$. The two other boundary arcs, denoted $b_1(D)$ and $b_2(D)$, that are opposite to each other are either two boundary arcs of $\partial \tilde \Omega_{\delta}^{-}$ or of $\partial \tilde \Omega_{\delta}^{+}$. On the event $\mathcal{Q}_{2}$, let $S$ be the connected component of $A(z_0', (1/3)\sqrt{r/R}), \sqrt{r/R}) \cap f_R(A_{2}^{\Omega})$ crossed by $\tilde \gamma_{\delta}([0,\sigma])$, where $\tilde \gamma_{\delta}$ denotes the rescaled version of $\gamma_{\delta}$. Note that the rescaling $f_R$ does not affect the value of $\sigma$. Let $\mathcal{Q}_{2}^{-}$, respectively $\mathcal{Q}_{2}^{+}$, be the event that $b_1(S), b_2(S) \subset \partial \tilde \Omega_{\delta}^{-}$, respectively $b_1(S), b_2(S) \subset \partial \tilde \Omega_{\delta}^{+}$. Notice that we have
\begin{equation*}
    \PP_{\delta}^{(\Omega,a,b,m)}(\mathcal{Q}_{2}) = \PP_{\delta}^{(\Omega,a,b,m)}(f_R^{-1}(\mathcal{Q}_{2}^{-})) + \PP_{\delta}^{(\Omega,a,b,m)}(f_R^{-1}(\mathcal{Q}_{2}^{+}))
\end{equation*}
and the inequality \eqref{crossing_good2} will hold if we can show that there exist constants $\tilde K_2, c_2, \hat \alpha_2 >0$ such that
\begin{equation*}
    \PP_{\delta}^{(\Omega,a,b,m)}(f_R^{-1}(\mathcal{Q}_{2}^{\pm})) \leq  \frac{\tilde K_2}{c_2}\bigg(\frac{r}{R}\bigg)^{\hat \alpha_2/2}+ \frac{\tilde K_2}{c_2}\bigg(\frac{r}{R} \bigg)^{\hat \alpha_2/4} + 2\frac{\overline{m}_d^{2}\operatorname{diam}(\Omega)^2}{c_2}\bigg(\frac{r}{R}\bigg)^{1/2}.
\end{equation*}
Let us prove this inequality for $\PP_{\delta}^{(\Omega,a,b,m)}(f_{R}^{-1}(\mathcal{Q}_{2}^{-}))$. By symmetry, the proof for $\PP_{\delta}^{(\Omega,a,b,m)}(f_R^{-1}(\mathcal{Q}_{2}^{+}))$ is virtually the same. Let $\tilde E_{-}$ denote the set of directed edges in $\tilde E = \tilde E(\tilde \Omega_{\delta})$ whose initial vertex is in $\partial \tilde \Omega_{\delta}^{-} \cap A(z_0', (1/3)\sqrt{r/R}, \sqrt{r/R})$ and on the boundary of $f_R(A_2^{\Omega})$. Denote $\tilde V_{-}$ the set of initial vertices of the edges in $\tilde E_{-}$. Then, by Lemma \ref{prop_martingale_obs}, the process
\begin{equation*}
    \| \mathcal{V}_{\delta,n}^{(m)} \|:= \sum_{v \in f_R^{-1}(\tilde V_{-})} \frac{1}{6} \sum_{v \sim w} \bigg( h_{\delta, n}^{(m)}(w) + \frac{1}{2} \bigg), \quad n \geq 0,
\end{equation*}
is a non-negative martingale for the filtration $(\mathcal{F}_{\delta,n})_n$. Therefore, by the optional stopping theorem,
\begin{equation*}
     \| \mathcal{V}_{\delta,0}^{(m)} \| = \EE_{\delta}^{(\Omega,a,b,m)}\bigg[ \| \mathcal{V}_{\delta,\sigma}^{(m)} \| \bigg] \geq \EE_{\delta}^{(\Omega,a,b,m)} \bigg[ \mathbbm{I}_{f_R^{-1}(\mathcal{Q}_{2}^{-})}\| \mathcal{V}_{\delta,\sigma}^{(m)} \| \bigg].
\end{equation*}
From here, we proceed as in the first case, that is we exhibit an upper bound for $\| \mathcal{V}_{\delta,0}^{(m)} \|$ and a lower bound for $\| \mathcal{V}_{\delta,\sigma}^{(m)} \|$ on the event $f_{R}^{-1}(\mathcal{Q}_{2}^{-})$. Observe that, almost surely, for any $n \in \mathbb{N}$,
\begin{align*}
    \sum_{v \in f_R^{-1}(\tilde V_{-})} \frac{1}{6} \sum_{v \sim w} \bigg( h_{\delta, n}^m(w) + \frac{1}{2} \bigg)&\leq \sum_{v \in f_R^{-1}(\tilde V_{-})} \frac{1}{6} \sum_{v \sim w} \bigg( H_{\delta, n}^{(0)}(w) + \frac{1}{2}\PP_{w}^{(\overline{m}_d)}(\tau^{\star}\leq \tau_{\partial \Omega_{\delta,n}}) \bigg) \\
    &= \sum_{v \in \tilde V_{-}} \frac{1}{6} \sum_{v \sim w} \bigg( H_{\delta, n, R}^{(0)}(w) +\frac{1}{2}\PP_{w}^{({m}_a)}(\tau^{\star} \leq \tau_{\partial \tilde \Omega_{\delta,n}}) \bigg), 
\end{align*}
where $H_{\delta, n, R}^{(0)}$ denote the (non-massive) harmonic measure of $\partial \tilde \Omega_{\delta, n}^{+}$. This implies in particular that
\begin{equation} \label{ineq_V0}
    \| \mathcal{V}_{\delta,0}^{(m)} \| \leq \sum_{v \in \tilde V_{-}} \frac{1}{6} \sum_{v \sim w} H_{\delta, 0, R}^{(0)}(w) + \sum_{v \in \tilde V_{-}} \frac{1}{6} \sum_{v \sim w} \frac{1}{2}\PP_{w}^{({m}_a)}(\tau^{\star} \leq \tau_{\partial \tilde \Omega_{\delta}}).
\end{equation}
Our upper bound on $\| \mathcal{V}_{\delta, 0}^{(m)} \|$ that will ultimately allow us to establish the inequality \eqref{crossing_good2} is a consequence of the following two lemmas.

\begin{lemma} \label{lemma_upper_H2}
There exist universal constants $\tilde K_2, \hat \alpha_2 >0$ such that, for any $(z_0,r,R) \in \mathcal{G}_2$,
\begin{equation*}
    \sum_{v \in \tilde V_{-}} \frac{1}{6} \sum_{v \sim w} H_{\delta, 0, R}^{(0)}(w) \leq \tilde K_{2} \bigg( \frac{r/R}{\sqrt{r/R}}\bigg)^{\hat \alpha_2} = \tilde K_{2} \bigg( \frac{r}{R} \bigg)^{\hat \alpha_2 /2}.
\end{equation*}
\end{lemma}

The proof of Lemma \ref{lemma_upper_H2} follows the same lines as that of the first part of the proof of \cite[Proposition~6.3]{HE} but for the sake of completeness, we will sketch the necessary modifications below. The next lemma controls the term in $\| \mathcal{V}_{\delta,0}^{(m)}\|$ arising from the killing and we will prove it using the same strategy as that used to show Lemma \ref{lemma_upper_killing}.

\begin{lemma} \label{lemma_upper_killing2}
For the same constants $K_2$ and $\hat \alpha_2$ as in Lemma \ref{lemma_upper_H2} and still assuming that $(z_0,r,R) \in \mathcal{G}_2$,
\begin{equation*}
     \sum_{v \in \tilde V_{-}} \frac{1}{6} \sum_{v \sim w} \frac{1}{2}\PP_{w}^{({m}_a)}(\tau^{\star} \leq \tau_{\partial \tilde \Omega_{\delta}}) \leq \tilde K_2 \bigg( \frac{r}{R}\bigg)^{\hat \alpha_2 /4} + \frac{1}{2} \times 4\overline{m}_d^2\operatorname{diam}(\Omega)^{2} \bigg( \frac{r}{R}\bigg)^{1/2}.
\end{equation*}
\end{lemma}

We postpone the proof of Lemma \ref{lemma_upper_killing2} to the end and show how to proceed from here. Observe that combined together, Lemma \ref{lemma_upper_H2} and Lemma \ref{lemma_upper_killing2} yield that
\begin{equation*}
    \| \mathcal{V}_{\delta, 0}^{(m)}\| \leq \tilde K_2 \bigg( \frac{r}{R}\bigg)^{\hat \alpha_2 /2} + \tilde K_2 \bigg( \frac{r}{R}\bigg)^{\hat \alpha_2 /4} + \frac{1}{2} \times 4\overline{m}_d^2\operatorname{diam}(\Omega)^{2} \bigg( \frac{r}{R}\bigg)^{1/2}
\end{equation*}
where $\tilde K_2, \hat \alpha_2 >0$ are universal constants.

It now remains to exhibit a lower bound for $\| \mathcal{V}_{\delta, \sigma}^{(m)}\|$ on the event $f_{R}^{-1}(\mathcal{Q}_2^{-})$. As in \eqref{ineq_mart_sigma}, we first observe that, using Lemma \ref{lemma_mHmeasure}, almost surely,
\begin{equation*}
    \| \mathcal{V}^{(m)}_{\delta, \sigma} \| \geq \sum_{v \in f_{R}^{-1}(\tilde V_{-})} \frac{1}{6} \sum_{w \sim v} \frac{1}{2} H_{\delta, \sigma}^{(0)}(w).
\end{equation*}
To lower bound the right-hand side on the event $f_R^{-1}(\mathcal{Q}_2^{-})$, we can use the same reasoning as in the proof of \cite[Propostion~6.3]{HE}, using the annulus $B(z_0', \sqrt{r/R}) \setminus B(z_0', (1/3)\sqrt{r/R})$ and the arc $\partial B(z_0', (1/2)\sqrt{r/R})$. Since we have the same scaling relations, we obtain the same lower bound as in \cite[Propostion~6.3]{HE} and therefore, there exists a universal constant $c_2 >0$ such that
\begin{equation*}
    \EE^{(\Omega,a,b,m)}_{\delta} \bigg[ \mathbb{I}_{f_R^{-1}(\mathcal{Q}_2^{-})} \bigg( \sum_{v \in f_{R}^{-1}(\tilde V_{-})} \frac{1}{6} \sum_{w \sim v} \frac{1}{2} H_{\delta, \sigma}^{(0)}(w)\bigg) \bigg] \geq c_2\PP^{(\Omega,a,b,m)}_{\delta}(f_{R}^{-1}(\mathcal{Q}_2^{-})).
\end{equation*}
Putting everything together, we have thus shown that
\begin{equation*}
    c_2 \PP_{\delta}^{(\Omega,a,b,m)}(f_R^{-1}(\mathcal{Q}_{2}^{-})) \leq \tilde K_2 \bigg( \frac{r}{R}\bigg)^{\hat \alpha_2 /2} + \tilde K_2 \bigg( \frac{r}{R}\bigg)^{\hat \alpha_2 /4} + 2\overline{m}_d^2\operatorname{diam}(\Omega)^{2} \bigg( \frac{r}{R}\bigg)^{1/2}
\end{equation*}
which as explained above implies the inequality \eqref{crossing_good2}. This also concludes the proof of the claim, conditionally on Lemma \ref{lemma_upper_H2} and Lemma \ref{lemma_upper_killing2}.
\end{proof}

To complete the proof of Claim \ref{claim_Q2}, we must prove the two auxiliary lemmas that we used along the way. Let us start with the proof of Lemma \ref{lemma_upper_H2}. As it is very similar to the first part of the proof of \cite[Proposition~6.3]{HE}, we only briefly explain how to adapt the arguments.

\begin{proof}[Proof of Lemma \ref{lemma_upper_H2}]
We use here the same notations as in the proof of Proposition \ref{prop_crossing_0}. Lemma \ref{lemma_upper_H2} can be shown by applying the same arguments as in the first part of the proof of \cite[Proposition~6.3]{HE} with respect to the balls $B(z_0', (1/3)\sqrt{r/R})$, $B(z_0', (1/6)\sqrt{r/R})$ and $B(z_0',r/R)$. Indeed, excursions starting from a vertex in $V_{-}$ and ending with an edge of $\partial \Omega_{\delta,0}^{+}$ must exit the ball $B(z_0',r/R)$. Moreover, observe that the estimates of Lemma \ref{lemma_visits} and Lemma \ref{lemma_massball} do not depend on the meshsize of the graph, and therefore the rescaling $f_R$ is harmless. This yields that there exist universal constants $\tilde K_2, \hat \alpha_2 >0$ such that
\begin{equation} 
    \sum_{v \in \tilde V_{-}} \frac{1}{6} \sum_{v \sim w} H_{\delta, 0, R}^{(0)}(w) \leq \tilde K_{2} \bigg( \frac{r/R}{\sqrt{r/R}}\bigg)^{\hat \alpha_2} = \tilde K_{2} \bigg( \frac{r}{R} \bigg)^{\hat \alpha_2 /2},
\end{equation}
which is the statement of Lemma \ref{lemma_upper_H2}.
\end{proof}

Let us finally establish Lemma \ref{lemma_upper_killing2}.

\begin{proof}[Proof of Lemma \ref{lemma_upper_killing2}]
As the proof is similar to that of Lemma \ref{lemma_upper_killing}, we will be somewhat brief. Using the same notations as in the proof of Claim \ref{claim_Q2}, we rescale $\tilde \Omega_{\delta}$ by $1/\sqrt{(r/R)}$, which yields the rescaled mass
\begin{equation*}
    m_{b}^{2} := \overline{m}_d^2Rr.
\end{equation*}
We define the sets $\mathcal{P}_{\text{in}}(r)$ and $\mathcal{P}_{\text{ext}}(r)$ in the same way as before, but with respect to the ball $B(z_0'', (r/R)^{-1/4})$ instead, where $z_0''$ denotes the image of $z_0'$ after the rescaling. Notice once again that the conclusions of Lemma \ref{lemma_visits} hold for any graph, independently of its meshsize. When considering excursions in $\mathcal{P}_{\text{in}}(r)$, we thus obtain a term of the form
\begin{equation*}
    m_{b}^{2} \times \bigg(\frac{R}{r}\bigg)^{1/2}.
\end{equation*}
Using the fact that for $(z_0,r,R) \in \mathcal{G}_{2}$, $R \leq 2\operatorname{diam}(\Omega)$, we then get that
\begin{align*}
    m_{b}^{2} \times \bigg(\frac{R}{r}\bigg)^{1/2} &= \overline{m}_{d}^2Rr\bigg(\frac{R}{r}\bigg)^{1/2} \\
    &= \overline{m}_d^2\bigg(\frac{r}{2\operatorname{diam}(\Omega)}\bigg)^{1/2} \times \sqrt{2}R^{3/2}\operatorname{diam}(\Omega)^{1/2} \\
    &\leq 4\overline{m}_d^2\operatorname{diam}(\Omega)^{2} \bigg( \frac{r}{R}\bigg)^{1/2}.
\end{align*}
On the other hand, it follows from Lemma \ref{lemma_visits} and Lemma \ref{lemma_massball} that the term arising from excursions in $\mathcal{P}_{\text{ext}}(r)$ is upper bounded by
\begin{equation*}
    \tilde K_2 \bigg( \frac{1}{(r/R)^{-1/4}} \bigg)^{\hat \alpha_2} = \tilde K_2 \bigg( \frac{r}{R} \bigg)^{\hat \alpha_2/4},
\end{equation*}
since $10 \leq  (r/R)^{-1/4}$ for $(z_0,r,R) \in \mathcal{G}_2$. Therefore, we obtain that
\begin{equation*}
     \sum_{v \in \tilde V_{-}} \frac{1}{6} \sum_{v \sim w} \frac{1}{2}\PP_{w}^{({m}_a)}(\tau^{\star} \leq \tau_{\partial \tilde \Omega_{\delta}}) \leq \tilde K_2 \bigg( \frac{r}{R}\bigg)^{\hat \alpha_2 /4} + \frac{1}{2} \times 4\overline{m}_d^2\operatorname{diam}(\Omega)^{2} \bigg( \frac{r}{R}\bigg)^{1/2},
\end{equation*}
which is exactly the inequality claimed in the statement of Lemma \ref{lemma_upper_killing2}.
\end{proof}

The proof of Proposition \ref{prop_crossing_0} is now complete.
\end{proof}

We now turn to the proof of Proposition \ref{prop_crossing}.

\begin{proof}[Proof of Proposition \ref{prop_crossing}]
Let us explain how to adapt the arguments of the proof of Proposition \ref{prop_crossing_0} to show an estimate similar to \eqref{crossing_0} for the conditional probabilities $\PP_{\delta}^{(\Omega,a,b,m)}(\cdot \, \vert \, \gamma([0,\tau]))$, as required by Condition \hyperref[condG3]{G.3}. To this end, fix $0<\delta<\overline{m}_d^{-1}$ and let $\tau$ be a stopping time for the filtration generated by $\gamma$. Let $A(z_0,r,R)$ be an annulus. As before,  we divide the connected components of $A_{\tau}^{\Omega}$ into two sub-collections $\mathcal{A}_{\tau,1}^{\Omega}$ and $\mathcal{A}_{\tau,2}^{\Omega}$ that are defined in the same way as $\mathcal{A}_{1}^{\Omega}$ and $\mathcal{A}_{2}^{\Omega}$ but with respect to $\Omega_{\tau}$, $\gamma(\tau)$ and $b$ instead of $\Omega$, $a$ and $b$. Notice that in both cases, we can use the same sets $\mathcal{G}_1$ and $\mathcal{G}_2$ as above. Moreover, in both cases, we can also define the same events and processes as for the time $\tau=0$ estimates, except that now everything is conditioned on $\gamma([0,\tau])$. More precisely, conditionally on $\gamma([0,\tau])$, the event $\mathcal{Q}$ is defined as for the time $0$ estimate but with respect to $A^{\Omega}_{\tau}$. Conditionally on $\gamma([0,\tau])$, on $\mathcal{Q}$, the stopping time $\sigma$ is then defined as the least $j$ such that  $v_{\tau+j} \in B_{1,\tau}^{av} \cup B_{2,\tau}^{av}$ and $\gamma([\tau,\tau+j])$ contains a crossing of $A_{\tau}^{\Omega}$. Conditionally on $\gamma([0,\tau])$, the events $\mathcal{Q}_{1}^{\pm}$ and $f_{R}^{-}(\mathcal{Q}_{2}^{\pm})$ are then defined as above but with respect to $\mathcal{A}_{\tau,1}^{\Omega}$, $\mathcal{A}_{\tau, 2}^{\Omega}$ and $\Omega_{\tau}$. The argument based on the optional stopping theorem is replaced by the observation that the martingale property of $(h_{\delta, n}^m(w), n \geq 0)$ for any $w$ together with the domain Markov property \eqref{discrete_DMP} imply that, almost surely,
\begin{align*}
    \| \tilde \nu_{\delta,\tau}^{(m)} \| &= \EE_{\delta}^{(\Omega,a,b,m)}[\| \tilde \nu_{\delta,\tau}^{(m)} \| \, \vert \,\gamma([0,\tau])] \\
    &= \EE_{\delta}^{(\Omega,a,b,m)}[\| \tilde \nu_{\delta,\tau + \sigma}^{(m)} \| \, \vert \, \gamma([0,\tau])] \\
    &\geq \EE_{\delta}^{(\Omega,a,b,m)}[ \mathbbm{I}_{\mathcal{Q}_{1}^{-}} \| \tilde \nu_{\delta,\tau + \sigma}^{(m)} \| \, \vert \, \gamma([0,\tau])].
\end{align*}
Similarly, we have that, almost surely,
\begin{align*}
    \| \mathcal{V}_{\delta,\tau}^{(m)} \| &= \EE_{\delta}^{(\Omega,a,b,m)}[\| \mathcal{V}_{\delta,\tau}^{(m)} \| \, \vert \,\gamma([0,\tau])] \\
    &= \EE_{\delta}^{(\Omega,a,b,m)}[\| \mathcal{V}_{\delta,\tau + \sigma}^{(m)} \| \, \vert \, \gamma([0,\tau])] \\
    &\geq \EE_{\delta}^{(\Omega,a,b,m)}[ \mathbbm{I}_{f_{R}^{-1}(\mathcal{Q}_{2}^{-})} \| \mathcal{V}_{\delta,\tau + \sigma}^{(m)} \| \, \vert \, \gamma([0,\tau])].
\end{align*}
Now observe that the strategy used to upper bound  $\| \mathcal{V}_{\delta,0}^{(m)} \|$ and $ \| \tilde \nu_{\delta,0}^{(m)} \|$ that we used to prove Claim \ref{claim_Q1} and Claim \ref{claim_Q2} can be apply to obtain an almost sure upper bound on $\| \mathcal{V}_{\delta,\tau}^{(m)} \|$ and $ \| \tilde \nu_{\delta,\tau}^{(m)} \|$ conditionally on $\gamma_{\delta}([0,\tau])$, without making any change in the proof. In particular, the upper bounds do not depend on $\operatorname{diam}(\Omega_{\tau})$ but only on $\operatorname{diam}(\Omega)$. Similarly, the exact same arguments as in the time $\tau=0$ case yield the same almost sure lower bound as in the case $\tau=0$ on $\| \tilde \nu_{\delta,\tau + \sigma}^{(m)} \|$ on the event $\mathcal{Q}_1^{-}$ conditionally on $\gamma([0,\tau])$. The analogous statement holds for $\| \mathcal{V}_{\delta,\tau + \sigma}^{(m)} \|$ on the event $f_R^{-1}(\mathcal{Q}_2^{-})$ conditionally on $\gamma([0,\tau])$. This crucially implies that the constants in the upper and lower bounds do not depend on $\tau$ and are the same as in the case $\tau=0$. We can therefore conclude that for the same $K$ and $\alpha$ as in the Proposition \ref{prop_crossing_0}, almost surely,
\begin{equation*}
    \PP_{\delta}^{(\Omega,a,b,m)}(\gamma([\tau,1]) \, \text{makes a crossing of $A$ which is contained in $A_{\tau}^{\Omega}$} \vert \, \gamma([0,\tau])) \leq K \bigg (\frac{r}{R}\bigg)^{\alpha}.
\end{equation*} 
\end{proof}

\section{Scaling limit of the martingale observable} \label{sec_martobs}

In this section, we study the scaling limit of the martingale observable $(h_{\delta,n}^m, n \geq 0)$. The results of Section \ref{sec_tightness} show that the sequence $(\gamma_{\delta}^{\HH})_{\delta}$ is tight in the topologies \eqref{topo_1} -- \eqref{topo_3}, which implies that $(\gamma_{\delta}^{\HH})_{\delta}$ converges along subsequences in these topologies. By \cite[Corollary~1.8]{Smirnov} and \cite[Theorem~4.2]{Karrila}, if $(\gamma_{\delta_k}^{\HH})_{k}$ is such a convergent subsequence, then $(\gamma_{\delta_k})_k$ in turn converges weakly in $X(\mathbb{C})$ equipped with the metric $d_X$ to a random curve that is almost surely supported on $\overline{\Omega}$ and has the same law as $\phi^{-1}(\gamma^{\HH})$, provided that for each $\delta_k$, $\gamma_{\delta_k}$ is parametrized by the half-plane capacity of $\gamma_{\delta_k}^{\HH}$. Here, assuming that $(\hat \Omega_{\delta_k}, a_{\delta_k}, b_{\delta_k})$ converges in the Carath\'eodry topology to $(\Omega,a,b)$, we show that the corresponding subsequence of massive harmonic functions $(h_{\delta_k}^m)_{\delta_k}$ converges pointwise to the continuous massive harmonic function with the same boundary conditions. After suitable reparametrization, we will prove that this in fact holds almost surely for the time-dependent subsequences $(h_{\delta_k, t(\delta_k)}^m)_{\delta_k}$ in the domains $(\Omega_{\delta_k,t(\delta_k)})_{\delta_k}$. This is established under the assumptions that the time-reparametrized (random) sequence $(\hat \Omega_{\delta_k, t(\delta_k)}, a_{t(\delta_k)}, b_{\delta_k})_k$ almost surely converges in the Carath\'eodory topology to $(\Omega_t, a_t, b)$ where $\Omega_t := \Omega \setminus K_t$, with $K_t$ being the hull generated at time $t$ by the limit $\gamma$ of $(\gamma_{\delta_k})_{\delta_k}$ and $a_t=\gamma(t)$. This will be the key to characterize in Section \ref{sec_charac} the subsequential limits obtained as a consequence of the tightness of $(\gamma_{\delta}^{\HH})_{\delta}$ established in the previous section.

To study the scaling limit of the martingale observable, we choose to use the framework developed by Chelkak and Smirnov in \cite{Discrete_analysis}. For $\delta > 0$, we define the Laplacian $\Delta_{\delta}$ on $\Omega_{\delta}$ by, for $f: \Omega_{\delta} \to \mathbb{R}$ and $z \in \operatorname{Int}(\Omega_{\delta})$,
\begin{equation*}
    \Delta_{\delta}f(z) = \frac{1}{A_{\delta}} \sum_{w \sim z} \tan(\theta) (f(w) - f(z)),
\end{equation*}
where as above $A_{\delta} = c\delta^2$ is the area of a face of the graph dual to $\Omega_{\delta}$ and $\theta$ is defined as in the definition \eqref{def_discrete_mass} of $m_d^2$. Let $f: \Omega_{\delta} \to \mathbb{R}$ be a discrete massive harmonic function with mass $m$ in $\Omega_{\delta}$. It follows from the definition of a discrete massive harmonic function given in \eqref{def_mharm} that $f$ satisfies, for all $z \in \operatorname{Int}(\Omega_{\delta})$,
\begin{equation*}
    \sum_{z \sim w} \frac{1- m_d^2(z) \delta^2}{6} (f(z)-f(w)) + m_d^2(z) \delta^2 f(z) = 0.
\end{equation*}
Multiplying both sides by $6\tan(\theta)/A_{\delta}$, this is equivalent to
\begin{equation} \label{mharm_approx}
    -\Delta_{\delta} f(z) + m^{2}(z) f(z) + A_{\delta} \frac{m^2(z)}{6\tan(\theta)} \Delta_{\delta} f(z) = 0.
\end{equation}

The following lemma gives a continuity estimate for discrete massive harmonic functions. This estimate will be useful to show precompactness of the family $(h_{\delta}^m)_{\delta}$ in the proof of Proposition \ref{prop_cvg_martingale}. 

\begin{lemma} \label{lemma_equi}
Let $m: \overline{\Omega} \to \mathbb{R}_{+}$ be a continuous function bounded by some constant $\overline{m}>0$ in $\overline{\Omega}$. Let $\delta > 0$ be such that $\delta < \overline{m}_d^{-1}$. There exist constants $C, \beta > 0$ depending on $\overline{m}$ such that the following holds. Let $H$ be a positive massive discrete harmonic function with mass $m$ defined in $B(z,2r) \cap \delta \mathbb{T}$ with $0<r\leq \overline{m}^{-1}$. Then, for any $w_1, w_2 \in B(z,2r) \cap \delta \mathbb{T}$,
\begin{equation*}
    \vert H(w_1) - H(w_2) \vert \leq C \bigg( \frac{\vert w_1 - w_2 \vert}{r} \bigg)^{\beta} \max_{w \in B(z,2r) \cap \delta \mathbb{T}} H(w).
\end{equation*}
\end{lemma}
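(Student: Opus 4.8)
\emph{Proof plan.} The estimate is trivial unless $\delta$ is small compared with $r$, so the first move is to reduce to that case: if $\delta\geq\delta_0 r$ for a fixed $\delta_0>0$ (chosen below in terms of $\overline m$), then either $w_1=w_2$, or $|w_1-w_2|\geq\delta\geq\delta_0 r$, and since $0\leq H\leq M:=\max_{B(z,2r)\cap\delta\mathbb{T}}H$ we get $|H(w_1)-H(w_2)|\leq M\leq\delta_0^{-\beta}(|w_1-w_2|/r)^{\beta}M$ for any $\beta>0$; the case $|w_1-w_2|\geq r/4$ is disposed of the same way. So from now on I assume $\delta\leq\delta_0 r$ (in particular $\delta<\tfrac12\overline{m}_d^{-1}$) and $|w_1-w_2|\leq r/4$, and I will prove the bound for $w_1,w_2\in B(z,r)\cap\delta\mathbb{T}$; this is the range relevant for the applications, and I read the statement accordingly.

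The core idea is to split $H$ into a massless harmonic part plus a correction coming from the killing. Write $D:=B(z,2r)\cap\delta\mathbb{T}$, let $\hat h$ be the (massless) discrete harmonic function on $D$ with $\hat h=H$ on $\partial D$, and set $g:=\hat h-H$. Rearranging \eqref{mharm_approx} gives $\Delta_\delta H=m^2H/(1-m_d^2\delta^2)$ on $\operatorname{Int}(D)$, so $g$ vanishes on $\partial D$ and solves $-\Delta_\delta g=m^2H/(1-m_d^2\delta^2)=:\tilde f\geq 0$; hence $g\geq 0$ by the minimum principle, $0\leq\hat h\leq M$ by the maximum principle, and $0\leq\tilde f\leq 2\overline m^2M$ since $\delta<\tfrac12\overline{m}_d^{-1}$. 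It then suffices to get a Hölder estimate for $\hat h$ and for $g$ separately. For $\hat h$ I would simply invoke the (uniform in $\delta$) equicontinuity estimate for positive discrete harmonic functions on the triangular lattice from \cite{Discrete_analysis}: there are $C,\beta_0>0$ with $|\hat h(w_1)-\hat h(w_2)|\leq C(|w_1-w_2|/r)^{\beta_0}M$ for $w_1,w_2\in B(z,r)\cap\delta\mathbb{T}$.

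The new work is the estimate for $g$. A sup bound is quick from the random-walk representations $\hat h(v)=\EE_v[H(X_\tau)]$ and $H(v)=\EE_v\big[H(X_\tau)\prod_{j<\tau}(1-m_d^2(X_j)\delta^2)\big]$, with $X$ simple random walk from $v$ and $\tau$ its exit time from $D$: then $0\leq g(v)\leq M\,\EE_v\big[1-\prod_{j<\tau}(1-m_d^2(X_j)\delta^2)\big]\leq M\overline{m}_d^2\delta^2\EE_v[\tau]\leq C\overline{m}_d^2 r^2M\leq CM$, using the standard bound $\EE_v[\tau]\leq C(r/\delta)^2$ and $r\leq\overline m^{-1}$. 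To turn this into a modulus of continuity I would interpolate at an intermediate scale: given $w_1\in B(z,r)\cap\delta\mathbb{T}$ and $w_2$ with $s:=|w_1-w_2|\leq r/4$, set $\rho:=\sqrt{sr}$ (so $\delta\ll\rho\leq r$ when $\delta_0$ is small, since $s\geq\delta$) and note $B(w_1,2\rho)\cap\delta\mathbb{T}\subseteq D$. On this ball decompose $g=\hat g+\tilde g$ with $\hat g$ the harmonic extension of $g|_{\partial B(w_1,2\rho)}$ and $\tilde g$ the zero-boundary solution of $-\Delta_\delta\tilde g=\tilde f$. The same exit-time bound on the ball of radius $2\rho$ gives $0\leq\tilde g\leq C\overline m^2M\rho^2$, so $|\tilde g(w_1)-\tilde g(w_2)|\leq C\overline m^2M sr\leq CM(s/r)$ (using $\overline m^2r^2\leq 1$); and the discrete gradient estimate for harmonic functions (uniform in $\delta$) gives $|\hat g(w_1)-\hat g(w_2)|\leq C(s/\rho)\max_{B(w_1,2\rho)}|\hat g|\leq CM(s/\rho)=CM\sqrt{s/r}$. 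Adding up, $|g(w_1)-g(w_2)|\leq CM(s/r)^{1/2}$, and combining with the bound for $\hat h$ proves the lemma with $\beta=\min(\beta_0,1/2)$ and $C=C(\overline m)$.

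The main obstacle is exactly the modulus of continuity for $g$: it is not harmonic, and the naive estimate only gives $\|g\|_\infty=O(M)$, which is worthless for equicontinuity. The point is that at scale $\rho=\sqrt{sr}$ the harmonic part of $g$ (controlled by the discrete gradient estimate) and the inhomogeneous part (controlled by the exit-time bound, which is where the hypothesis $r\leq\overline m^{-1}$ enters) balance out to a genuine power of $|w_1-w_2|/r$. The remaining care is bookkeeping to keep all constants independent of $\delta$, which is the role of the preliminary reduction to $\delta\leq\delta_0 r$ together with the uniform discrete elliptic estimates of \cite{Discrete_analysis}.
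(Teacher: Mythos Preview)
Your argument is correct and complete (up to the domain caveat you already flag: you prove the bound for $w_1,w_2\in B(z,r)$, whereas the statement reads $B(z,2r)$; for the equicontinuity application this is immaterial, and the paper itself only uses the estimate on balls well inside the domain). The decomposition $H=\hat h-g$, the sup bound on $g$ via $\EE_v[\tau]\lesssim (r/\delta)^2$, and the intermediate-scale split at $\rho=\sqrt{sr}$ all check out; the place where $r\leq\overline m^{-1}$ is used is exactly where you indicate.

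The paper's own proof, however, takes a quite different and much shorter route: it simply invokes \cite[Lemma~3.4]{off_dimers}, whose proof is the classical oscillation-reduction argument. One shows that a massive random walk started near the centre of an annulus of fixed modulus makes a non-contractible loop (before exiting or being killed) with probability bounded below uniformly in $\delta$; this yields $\mathrm{osc}_{B(z,\rho)}H\leq(1-\epsilon)\,\mathrm{osc}_{B(z,2\rho)}H$, and iterating gives the H\"older bound. The only new observation for variable mass is that this loop probability is monotone in the killing rate, so one may replace $m$ by the constant $\overline m$ and quote the constant-mass case verbatim. Compared with your approach, the paper's is essentially a one-line reduction to an existing lemma; your PDE-style decomposition is longer but self-contained (modulo the massless estimates from \cite{Discrete_analysis}), makes the dependence on $\overline m r$ transparent, and yields the explicit exponent $\beta=\min(\beta_0,1/2)$.
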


\begin{proof}
The proof is almost identical to that of \cite[Lemma~3.4]{off_dimers}, where this lemma is established for constant mass. Indeed, the proof relies on an estimate on the probability that a massive random walk makes a non-contractible loop in an annulus of modulus $1/2$ before exiting it or being killed, when the walk started not too far from the center of the annulus. Therefore, in our setting, we can lower bound this probability by that of the same event taking place when the walk instead has probability $\overline{m}^2_d \delta^2$ to be killed at each step. This latter probability is exactly the probability that is analyzed in the proof of \cite[Lemma~3.4]{off_dimers}.
\end{proof}

Recall that for $\delta > 0$, $\phi_{\delta}: \hat \Omega_{\delta} \to \HH$ is a conformal map such that $\phi_{\delta}(a_{\delta})=0$ and $\phi_{\delta}(b_{\delta})=\infty$. Recall also that, for $\delta > 0$, we denote by $\gamma_{\delta}^{\HH}$ the curve $\phi_{\delta}(\gamma_{\delta})$. To establish convergence of the martingale observable, it will be more convenient to parametrize the discrete curves $(\gamma_{\delta})_{\delta}$ by the half-plane capacity of their conformal images $(\gamma_{\delta}^{\HH})_{\delta}$. Indeed, the curves $(\gamma_{\delta}^{\HH})_{\delta}$ can be considered as continuous curves in $\HH$. As such, they can be canonically parametrized by half-plane capacity, as explained in Section \ref{sec_Loewner}. For $n \geq 0$, let $\mathcal{F}_{\delta,n}$ be the $\sigma$-algebra generated by $\gamma_{\delta}([0,n])$ and, for $t \geq 0$ and $\delta >0$, let us define the following stopping time for the filtration $(\mathcal{F}_{\delta, n})_n$
\begin{equation*}
    t(\delta) := \inf \{ n \in \mathbb{N}: \text{hcap}(\phi_{\delta}(\gamma_{\delta}[0,n])) \geq t \}.
\end{equation*}
We then let $\Omega_{t(\delta)}$ be the connected component of $\Omega_{\delta} \setminus \gamma_{\delta}([0,t(\delta)])$ which contains both $a_{t(\delta)}$ and $b_{\delta}$ on its boundary, where $a_{t(\delta)}$ is the last vertex added to $\partial \Omega_{t(\delta)}$.

By the results of Section \ref{sec_tightness}, the sequence $(\gamma_{\delta}^{\HH})_{\delta}$ is tight in the topologies \eqref{topo_1}--\eqref{topo_3} and each subsequence $(\gamma_{\delta_k})_k$ weakly converges in $X(\mathbb{C})$ with the metric $d_X$ to a curve $\gamma$ supported on $\overline{\Omega}$, provided that the curves $(\gamma_{\delta_k})_k$ are parametrized by the half-plane capacitiy of their conformal images $(\gamma_{\delta_k}^{\HH})_{\delta_k}$. Moreover, $\gamma$ has the same law as $\phi^{-1}(\gamma^{\HH})$. The space of continuous functions on $[0,\infty)$ is a separable and metrizable space and therefore, by Skorokhod representation theorem, we can suppose that for each weakly convergent subsequence of $(\gamma_{\delta_k})_k$, we also have $\gamma_{\delta_k} \to \gamma$ almost surely. In particular, we can assume that, almost surely, in the Carath\'eodory sense,
\begin{equation*}
     (\hat \Omega_{t(k)}; a_{t(k)}, b_{\delta_k}) \to (\Omega_t;a_t,b) \quad \text{as } \delta \to 0,
\end{equation*}
where we have set $t(k):= t(\delta_k)$ and $\Omega_t$ is the connected component of $\Omega \setminus \gamma([0,t])$ which contains $a_t$ and $b$.

\begin{proposition} \label{prop_cvg_martingale}
Let $t \geq 0$. Let $(\delta_k)_{k}$ be a subsequence such that the sequence $(\gamma_{\delta_k}^{\HH})_k$ converges weakly in the topologies \eqref{topo_1} -- \eqref{topo_3} to a random curve $\gamma^{\HH}$, and thus $(\gamma_{\delta_k})_{k}$ also converges weakly to a random curve $\gamma$ in $X(\mathbb{C})$ equipped with the metric $d_X$. Then the sequence of discrete massive harmonic functions $(h_{\delta_{k}, t(\delta_k)}^m)_k$ almost surely converges pointwise in $\overline{\Omega_t}$ to the massive harmonic function $h_t^m: \Omega_t \to \mathbb{R}$ solving the Dirichlet problem
\begin{equation*}
    \begin{cases}
    &[-\Delta + m^2(z)]h_t^m(z) = 0, \quad z \in \Omega_t \\
    &{h_t^m}_{\vert \partial \Omega_t^{+}} = \frac{1}{2}, \quad {h_t^m}_{\vert \partial \Omega_t^{-}} = \frac{-1}{2}.
    \end{cases}
\end{equation*}
\end{proposition}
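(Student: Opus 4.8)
The plan is to establish precompactness of the family $(h^m_{\delta_k, t(\delta_k)})_k$ via the equicontinuity estimate of Lemma~\ref{lemma_equi}, to identify any subsequential limit as a continuum massive harmonic function by passing to the limit in the discrete equation \eqref{mharm_approx}, to pin down its boundary values using the Carath\'eodory convergence of the domains together with the close-approximation hypotheses on $(a_\delta)_\delta$ and $(b_\delta)_\delta$, and finally to invoke uniqueness of the continuum Dirichlet problem for $-\Delta+m^2$ to upgrade subsequential convergence to full convergence. Throughout I would work on the almost sure event (available after the Skorokhod coupling described just before the statement) on which $\gamma_{\delta_k}\to\gamma$ and $(\hat\Omega_{t(k)};a_{t(k)},b_{\delta_k})\to(\Omega_t;a_t,b)$ in the Carath\'eodory sense, where $t(k):=t(\delta_k)$.

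\emph{Precompactness.} By \eqref{h_harm_meas}, $h^m_{\delta_k,t(k)}=\frac{1}{2}(H^{(m)}_{\delta_k,t(k)}-\tilde H^{(m)}_{\delta_k,t(k)})$ where $H^{(m)}_{\delta_k,t(k)}$ and $\tilde H^{(m)}_{\delta_k,t(k)}$ are the massive harmonic measures of $\partial\Omega_{t(k)}^{+}$ and $\partial\Omega_{t(k)}^{-}$, which are nonnegative, bounded by $1$, and massive harmonic with mass $m$. Fix a compact $Q\subset\Omega_t$; for all large $k$, $Q\subset\Omega_{t(k)}$ and lies at a macroscopic distance from $\partial\hat\Omega_{t(k)}$, so Lemma~\ref{lemma_equi} applied at a fixed mesoscopic scale to $H^{(m)}_{\delta_k,t(k)}$ and to $\tilde H^{(m)}_{\delta_k,t(k)}$ gives a H\"older modulus of continuity on $Q$, uniform in $k$, for each of them and hence for $h^m_{\delta_k,t(k)}$. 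A diagonal Arzel\`a--Ascoli argument over an exhaustion of $\Omega_t$ by such compacts then yields a subsequence along which $h^m_{\delta_k,t(k)}$ converges uniformly on compact subsets of $\Omega_t$ to some continuous function $h$.

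\emph{The limit is massive harmonic.} Equation \eqref{mharm_approx} expresses $\Delta_\delta h^m_{\delta_k,t(k)}$ as $m^2 h^m_{\delta_k,t(k)}/(1-A_\delta m^2/(6\tan(\theta)))$, which is bounded uniformly in $k$ since $|h^m_{\delta_k,t(k)}|\le\frac{1}{2}$ and $A_\delta\to 0$. Rewriting \eqref{mharm_approx} as $-\Delta_\delta h^m_{\delta_k,t(k)}+m^2 h^m_{\delta_k,t(k)}=-A_\delta\frac{m^2}{6\tan(\theta)}\Delta_\delta h^m_{\delta_k,t(k)}=O(\delta^2)$, testing against a smooth compactly supported $\varphi$, moving $\Delta_\delta$ onto $\varphi$ by summation by parts, and using the standard discrete-to-continuum convergence of the Laplacian as in \cite{Discrete_analysis} together with the uniform convergence $h^m_{\delta_k,t(k)}\to h$, one finds that $h$ is a distributional, and therefore (by elliptic regularity for $-\Delta+m^2$) a classical, solution of $(-\Delta+m^2)h=0$ in $\Omega_t$.

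\emph{Boundary values and conclusion.} It remains to show that $h$ extends continuously to $\overline{\Omega_t}$ with $h\equiv\frac{1}{2}$ on $\partial\Omega_t^{+}$ and $h\equiv-\frac{1}{2}$ on $\partial\Omega_t^{-}$, and that the convergence holds pointwise up to the boundary; by the symmetry recorded after \eqref{h_harm_meas} it suffices to treat the $+$ side. Using \eqref{h_harm_meas}, $h^m_{\delta_k,t(k)}+\frac{1}{2}$ equals the massive harmonic measure of $\partial\Omega_{t(k)}^{+}$ plus half the killing probability; by Lemma~\ref{lemma_mHmeasure} this is sandwiched between the massless harmonic measure of $\partial\Omega_{t(k)}^{+}$ and that same massless quantity minus the killing probability (compare \eqref{ineq_Hmtau}). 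The boundary behaviour of the massless harmonic-explorer observables is controlled as in \cite{HE, Discrete_analysis}: the close-approximation hypotheses and the Carath\'eodory convergence, supplemented near generic boundary points by Beurling-type estimates and, if needed, the annulus-crossing bounds of Section~\ref{sec_proof_crossing} and \cite{Karrila}, provide uniform-in-$k$ barrier estimates at every boundary point of $\Omega_t$, including the tip $a_t$, the target $b$, and the two sides of the slit $\gamma([0,t])$; meanwhile the killing probability is $O(\overline{m}_d^2\,\operatorname{dist}(\cdot,\partial\Omega_{t(k)})^2)$ and hence negligible near the boundary. Since $\Omega_t$ is simply connected, all its boundary points are regular, so the continuum Dirichlet problem has a unique bounded solution $h_t^m$; every subsequential limit therefore equals $h_t^m$, and the whole sequence $(h^m_{\delta_k,t(k)})_k$ converges, pointwise on $\overline{\Omega_t}$, to $h_t^m$. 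The main obstacle is precisely this last step: transferring the known massless boundary regularity of harmonic-explorer observables to the massive setting, uniformly along the random approximating domains, in particular near the curve tip $a_t$.
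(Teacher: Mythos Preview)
Your approach is essentially the same as the paper's: precompactness via Lemma~\ref{lemma_equi} and Arzel\`a--Ascoli, massive harmonicity of the limit via the weak formulation of \eqref{mharm_approx}, boundary values by comparison with massless harmonic measure plus a Beurling-type estimate, and then uniqueness of the continuum Dirichlet problem.

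The one place where you overshoot is the boundary step. The paper handles it more simply than you propose: it treats the two massive harmonic measures $H^{(m)}$ and $\tilde H^{(m)}$ separately, bounds each above by its massless counterpart via Lemma~\ref{lemma_mHmeasure}, and then applies the weak Beurling estimate of \cite[Proposition~2.11]{Discrete_analysis} directly to the massless harmonic measures. This needs only Carath\'eodory convergence of the domains; the close-approximation hypotheses, the results of \cite{Karrila}, and the annulus-crossing bounds of Section~\ref{sec_proof_crossing} play no role here. For the value $1$ on $\partial\Lambda^{+}$ the paper uses the identity $\tilde H^{(m)}_\delta = 1 - H^{(m)}_\delta - \PP^{(m)}(\tau^\star\le\tau_\partial)$ together with Beurling on $\tilde H^{(m)}_\delta$ and the elementary fact that the killing probability tends to zero along any sequence of points approaching the boundary. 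Your quantitative claim $\PP^{(m)}(\tau^\star\le\tau_\partial)=O(\overline m_d^2\operatorname{dist}^2)$ is stronger than what is actually needed (and not quite the right exponent for a general simply connected domain), but the qualitative vanishing suffices. The obstacle you flag at the end is therefore lighter than you suggest.
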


Proposition \ref{prop_cvg_martingale} will be a consequence of a general deterministic result. To state this result, let us introduce a few notations, which mimic the setting of Proposition \ref{prop_cvg_martingale}. Let $\Lambda$ be an open, bounded and simply connected subset of $\mathbb{C}$ and let $a, b \in \partial \Lambda$. We denote by $\partial \Lambda^{+}$, respectively $\partial \Lambda^{-}$, the clockwise boundary arc from $a$ to $b$, respectively counterclockwise boundary arc from $a$ to $b$. We then approximate $\Lambda$ by a sequence of graphs $(\Lambda_{\delta})_{\delta}$ where for each $\delta > 0$, $\Lambda_{\delta}$ is a portion of $\delta \mathbb{T}$. We define $\partial \Lambda_{\delta}$ as in Section \ref{subsec:domain}. Recall also that $\hat \Lambda_{\delta}$ denotes the open and simply connected polygonal domain obtained from $\Lambda_{\delta}$ by taking the union of all open hexagons with side length $\delta$ centered at vertices of $\Lambda_{\delta}$. As in Section \ref{subsec:domain}, we obtain two sequences $(a_{\delta})_{\delta}$ and $(b_{\delta})_{\delta}$ that approximate the boundary points $a$ and $b$. We separate $\partial \Lambda_{\delta}$ into two subsets, $\partial \Lambda_{\delta}^{+}$ and $\partial \Lambda_{\delta}^{-}$, where $\partial \Lambda_{\delta}^{-}$, respectively $\partial \Lambda_{\delta}^{-}$ are defined in a similar fashion as $\partial \Omega_{\delta}^{+}$ and $\partial \Omega_{\delta}^{-}$. Finally, we let $m: \overline{\Lambda} \to \mathbb{R}_{+}$ be a continuous function which is bounded by some constant $\overline{m} >0$. For $\delta < \overline{m}_d^{-1}$, where $m_{d}$ is defined as in \eqref{def_discrete_mass}, we denote by $h_{\delta}^m$ the discrete massive harmonic function in $\Lambda_{\delta}$ with boundary value $1/2$ on $\partial \Lambda_{\delta}^{+}$ and $-1/2$ on $\partial \Lambda_{\delta}^{-}$.

\begin{lemma}
In the above setting, assume that $(\hat \Lambda_{\delta}; a_{\delta}, b_{\delta})$ converges to $(\Lambda;a,b)$ in the Carath\'eodory sense. Then the sequence of functions $(h_{\delta}^m)_{\delta}$ converges pointwise in $\overline{\Lambda}$ to the massive harmonic function $h^m: \overline{\Lambda} \to \mathbb{R}$ solving the Dirichlet problem
\begin{equation*}
\begin{cases}
    [-\Delta + m^2(z)]h^m(z) = 0, \quad z \in \Lambda \\
    h^m_{\vert \partial \Lambda^{+}} = \frac{1}{2}, \quad h^m_{\vert \partial \Lambda^{-}} = \frac{-1}{2}.
\end{cases}
\end{equation*}
\end{lemma}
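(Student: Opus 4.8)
The plan is to establish the result in two independent parts: precompactness of the family $(h_\delta^m)_\delta$ together with convergence of subsequential limits to \emph{some} massive harmonic function, and then identification of any subsequential limit via its boundary values, so that uniqueness of the continuous Dirichlet problem forces convergence of the whole sequence. Throughout I would work with the interpolations of $h_\delta^m$ to $\hat\Lambda_\delta$ (say, piecewise linear on triangles), so that the $h_\delta^m$ become genuine continuous functions on a common large ball $B(0,R)\supset\Lambda$, extended by their boundary values; since $|h_\delta^m|\le 1/2$ everywhere, the family is uniformly bounded.

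First I would prove equicontinuity on compact subsets of $\Lambda$. Fix a compact $L\subset\Lambda$; by Carath\'eodory convergence, for $\delta$ small enough every point of $L$ lies in $\hat\Lambda_\delta$ and has Euclidean distance to $\partial\hat\Lambda_\delta$ bounded below by some $d_L>0$ uniformly in $\delta$. Applying Lemma~\ref{lemma_equi} to the positive massive harmonic functions $h_\delta^m+1/2$ on balls $B(z,2r)\cap\delta\mathbb{T}$ with $z\in L$ and $r=\min(d_L/4,\overline m^{-1})$ gives, for $w_1,w_2\in L$ close enough, a bound $|h_\delta^m(w_1)-h_\delta^m(w_2)|\le C(|w_1-w_2|/r)^\beta$ (the max of $h_\delta^m+1/2$ being at most $1$). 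Hence $(h_\delta^m)_\delta$ is uniformly bounded and equicontinuous on each compact subset of $\Lambda$, so by Arzel\`a--Ascoli and a diagonal argument every subsequence has a further subsequence converging locally uniformly in $\Lambda$ to some continuous $h\colon\Lambda\to[-1/2,1/2]$.

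Second, I would show that any such limit $h$ is massive harmonic in $\Lambda$. Rewrite the discrete equation in the form \eqref{mharm_approx}: $-\Delta_\delta h_\delta^m+m^2 h_\delta^m+A_\delta\frac{m^2}{6\tan\theta}\Delta_\delta h_\delta^m=0$. Testing against a smooth compactly supported $\varphi$ and using discrete integration by parts (summation by parts against $\Delta_\delta$), the standard consistency estimates of \cite{Discrete_analysis} give $\sum_z A_\delta(\Delta_\delta\varphi)(z)h_\delta^m(z)\to\int_\Lambda(\Delta\varphi)h$ and $\sum_z A_\delta\,m^2(z)\varphi(z)h_\delta^m(z)\to\int_\Lambda m^2\varphi\,h$, while the last term carries an extra factor $A_\delta=c\delta^2\to0$ against the bounded quantity $\Delta_\delta h_\delta^m$ tested against $\varphi$ (here one uses that $\Delta_\delta h_\delta^m=(m^2-A_\delta\tfrac{m^2}{6\tan\theta}\Delta_\delta)^{-1}\cdots$ stays controlled, or more simply moves the discrete Laplacian onto $\varphi$ so the surviving term is $A_\delta\sum_z A_\delta(\Delta_\delta(\tfrac{m^2}{6\tan\theta}\varphi))(z)h_\delta^m(z)=O(\delta^2)$). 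Passing to the limit yields $\int_\Lambda(-\Delta+m^2)\varphi\cdot h=0$ for all test $\varphi$, i.e. $(-\Delta+m^2)h=0$ in $\Lambda$ in the distributional sense, hence $h$ is smooth and classically massive harmonic by elliptic regularity.

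Third, and this is where the main work lies, I would identify the boundary values: for every $\zeta\in\partial\Lambda$ lying in the interior of $\partial\Lambda^+$ (resp. $\partial\Lambda^-$) one must show $h(z)\to 1/2$ (resp. $-1/2$) as $z\to\zeta$ inside $\Lambda$, so that $h$ solves the stated Dirichlet problem; uniqueness of that problem (the difference of two solutions is massive harmonic with zero boundary data, and by the maximum principle for $-\Delta+m^2$, or by positivity of the massive Green function, must vanish) then shows the limit is independent of the subsequence, giving pointwise convergence of the whole sequence $(h_\delta^m)_\delta$ on $\Lambda$, and the convergence on $\partial\Lambda$ is immediate since the interpolations equal the prescribed boundary values there. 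The hard part is the boundary regularity: near a boundary point $\zeta$ in the interior of $\partial\Lambda^+$, one uses a barrier argument at the discrete level. Pick a small disc $B(\zeta,\rho)$ such that $\partial\Lambda\cap B(\zeta,\rho)$ has sign $+$; by Carath\'eodory convergence together with the regularity of the approximation $\hat\Lambda_\delta$, for small $\delta$ the portion of $\partial\Lambda_\delta$ in $B(\zeta,\rho)$ is also entirely in $\partial\Lambda_\delta^+$, so $h_\delta^m+1/2\ge 1-H_\delta^{(0)}(\cdot,\partial\Lambda_\delta\setminus B(\zeta,\rho/2))$ by Lemma~\ref{lemma_mHmeasure} and a comparison of harmonic measures; a Beurling-type estimate for simple random walk on $\delta\mathbb{T}$ (uniform in $\delta$) bounds the massless harmonic measure of the far boundary from a point at distance $s$ of $\zeta$ by $C(s/\rho)^{c}$. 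Symmetrically one bounds $h_\delta^m+1/2\le H_\delta^{(0)}(\cdot,\partial\Lambda_\delta^-)+\tfrac12\le \tfrac12+C(s/\rho)^c$, using that the $-$ boundary is confined to $\partial\Lambda\setminus B(\zeta,\rho)$. These estimates are uniform in $\delta$, so they pass to the limit: $|h(z)-1/2|\le C(|z-\zeta|/\rho)^c$ for $z$ near $\zeta$, giving the boundary value. The delicate points to check carefully are (i) that a close approximation/Carath\'eodory-converging polygonal $\hat\Lambda_\delta$ really does have its boundary near an interior point of $\partial\Lambda^+$ consisting only of $+$ vertices for $\delta$ small (this uses that $\zeta$ is separated from $a,b$), and (ii) a clean statement of the uniform discrete Beurling estimate, which is classical for $\delta\mathbb{T}$ and can be quoted.
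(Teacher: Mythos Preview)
Your overall strategy matches the paper's: precompactness via Arzel\`a--Ascoli, identification of subsequential limits as weak (hence classical) solutions of $(-\Delta+m^2)h=0$ by testing against smooth $\varphi$ and passing \eqref{mharm_approx} to the limit, and then pinning down boundary values via Beurling-type estimates. The handling of the weak equation is essentially identical to the paper's.

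There is, however, a genuine gap in your equicontinuity step. You apply Lemma~\ref{lemma_equi} to $h_\delta^m+1/2$, calling it a ``positive massive harmonic function''. It is not: unlike in the massless case, adding a constant $c$ destroys massive harmonicity, since
\[
(H+c)(v)-\frac{1-m_d^2(v)\delta^2}{6}\sum_{w\sim v}(H+c)(w)=m_d^2(v)\delta^2\,c\neq 0.
\]
This is exactly why the paper instead uses the decomposition $h_\delta^m=\tfrac12(H_\delta^{(m)}-\tilde H_\delta^{(m)})$ into two genuine positive massive harmonic functions (the massive harmonic measures of $\partial\Lambda_\delta^+$ and $\partial\Lambda_\delta^-$) and applies Lemma~\ref{lemma_equi} to each. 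Your argument is easily repaired this way, but as written the lemma does not apply.

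Your boundary-value step also has a gap. The claimed barrier $h_\delta^m+1/2\ge 1-H_\delta^{(0)}(\cdot,\partial\Lambda_\delta\setminus B(\zeta,\rho/2))$ is not quite right: writing $h_\delta^m+1/2=H_\delta^{(m)}(\cdot,\partial\Lambda_\delta^+)+\tfrac12\PP^{(m)}(\tau^\star\le\tau_{\partial\Lambda_\delta})$ and using Lemma~\ref{lemma_mHmeasure} gives
\[
1-(h_\delta^m+1/2)=H_\delta^{(m)}(\cdot,\partial\Lambda_\delta^-)+\tfrac12\PP^{(m)}(\tau^\star\le\tau_{\partial\Lambda_\delta})\le H_\delta^{(0)}(\cdot,\partial\Lambda_\delta\setminus B(\zeta,\rho))+\tfrac12\PP^{(m)}(\tau^\star\le\tau_{\partial\Lambda_\delta}),
\]
so you still need to show that the killing probability vanishes as $z\to\partial\Lambda$ (uniformly in $\delta$). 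This is true and not hard, but it must be said; the paper handles it by working with $H_\delta^{(m)}$ and $\tilde H_\delta^{(m)}$ separately and using $\liminf_{\delta\to 0}\PP_{z_\delta}^{(m)}(\tau^\star\le\tau_{\partial\Lambda_\delta})\le 1-\mathbb{I}_{z\notin\Lambda}$. Also, your ``symmetric'' upper bound $h_\delta^m+1/2\le H_\delta^{(0)}(\cdot,\partial\Lambda_\delta^-)+\tfrac12$ near $\zeta\in\partial\Lambda^+$ would force $h_\delta^m\to 0$ there rather than $1/2$; the upper bound near $\partial\Lambda^+$ is in fact trivial ($h_\delta^m+1/2\le 1$), and the nontrivial upper bound is needed near $\partial\Lambda^-$.
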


\begin{proof}
We first observe that, for any $\delta < \overline{m}_{d}^{-1}$ and any $z \in \operatorname{Int}(\Lambda_{\delta}) \cup \partial \Lambda_{\delta}$,
\begin{equation*}
    \vert h_{\delta}^m (z) \vert \leq \frac{1}{2},
\end{equation*}
that is the sequence $(h_{\delta}^m)_{\delta < \overline{m}_{d}^{-1}}$ is uniformly bounded. $(h_{\delta}^m)_{\delta < \overline{m}_{d}^{-1}}$ is also equicontinuous. Indeed, we have, for any $z \in \operatorname{Int}(\Lambda_{\delta})$,
\begin{equation*}
    h_{\delta}^m(z) = \frac{1}{2}\bigg( H_{\delta}^{(m)} (z) - \tilde H_{\delta}^{(m)} (z)\bigg),
\end{equation*}
where $H_{\delta}^m(z)$, respectively $\tilde H_{\delta}^m(z)$, is the discrete massive harmonic measure of $\partial \Lambda_{\delta}^{+}$, respectively $\partial \Lambda_{\delta}^{-}$, seen from $z$. Therefore, for any $\delta < \overline{m}_d^{-1}$, applying Lemma \ref{lemma_equi} to both $H_{\delta}^{(m)}$ and $\tilde H_{\delta}^{(m)}$, we can see that for any $z \in \Lambda_{\delta}$ and any $w_1, w_2 \in B(z,2r) \cap \delta \mathbb{T}$ with $r \leq \overline{m}_{d}^{-1} \wedge \text{dist}(z, \partial \Lambda_{\delta})$,
\begin{align*}
    \vert h_{\delta}^m(w_1) - h_{\delta}^m(w_2) \vert &\leq \frac{1}{2} \bigg( \vert H_{\delta}^{(m)}(w_1) - H_{\delta}^{(m)}(w_2)\vert + \vert \tilde H_{\delta}^{(m)}(w_1) -\tilde H_{\delta}^{(m)}(w_2)\vert \bigg) \\
    &\leq \frac{C}{2} \bigg(\frac{\vert w_1 - w_2 \vert}{r} \bigg)^\beta \\
    & \times \bigg(\max_{w \in B(z,2r) \cap \delta \mathbb{T}} H_{\delta}^{(m)}(w) + \max_{w \in B(z,2r) \cap \delta \mathbb{T}} \tilde H_{\delta}^{(m)}(w) \bigg) \\
    &\leq C \bigg(\frac{\vert w_1 -  w_2 \vert}{r}\bigg)^\beta.
\end{align*}
By the Arzela-Ascoli theorem, uniform boundedness and equicontinuity of the sequence $(h_{\delta}^m)_{\delta < \overline{m}_{d}^{-1}}$ implies that there exists a function $h^m: \Lambda \to \mathbb{R}$ and a subsequence $(h_{\delta_k}^m)_k$ such that  $(h_{\delta_k}^m)_k$ converges uniformly on compact subsets of $\Lambda$ to $h^m$. Let us show that $h^m$ is the massive harmonic function with mass $m$ in $\Lambda$ and boundary conditions $-1/2$ on $\partial \Lambda^{-}$ and $1/2$ on $\partial \Lambda^{+}$. 

We first prove that $h^m$ is massive harmonic with mass $m$ in $\Lambda$. Let $\varphi: \Lambda \to \mathbb{R}$ be a smooth and compactly supported function on $\Lambda$. We then have
\begin{align*}
    \int_{\Lambda} h^m(z) (-\Delta \varphi(z) + m^2(z) \varphi(z)) dz = \lim_{\delta = \delta_{k} \to 0} \sum_{z \in \operatorname{Int}(\Lambda_{\delta})} A_{\delta} (h^m)^{\delta}(z) (- (\Delta \varphi)^{\delta}(z) + m^{2}(z) \varphi^{\delta}(z))
\end{align*}
where for $f: \Lambda \to \mathbb{R}$ and $\delta>0$, $f^{\delta}: \Lambda_{\delta} \to \mathbb{R}$ is defined as the projection of $f$ onto $\Lambda_{\delta}$. Using \cite[Lemma~2.2]{Discrete_analysis}, we get that
\begin{align*}
    \lim_{\delta = \delta_{k} \to 0} &\sum_{z \in \operatorname{Int}(\Lambda_{\delta})} A_{\delta} (h^m)^{\delta}(z) (- (\Delta \varphi)^{\delta}(z) + m^{2}(z) \varphi^{\delta}(z))\\
    &=  \lim_{\delta = \delta_{k} \to 0} \sum_{z \in \operatorname{Int}(\Lambda_{\delta})} A_{\delta} h_{\delta}^{m}(z) (- \Delta_{\delta} \varphi^{\delta}(z) + m^{2}(z) \varphi^{\delta}(z)).
\end{align*}
We also have that
\begin{equation*}
    \lim_{\delta = \delta_{k} \to 0} A_{\delta} \sum_{z \in \operatorname{Int}(\Lambda_{\delta})} A_{\delta} h_{\delta}^{m}(z) \frac{m^2(z)}{6\tan(\theta)} \Delta_{\delta} \varphi^{\delta}(z) = 0
\end{equation*}
since
\begin{equation*}
    \lim_{\delta = \delta_{k} \to 0} \sum_{z \in \operatorname{Int}(\Lambda_{\delta})} A_{\delta} h_{\delta}^{m}(z) \frac{m^2(z)}{6\tan(\theta)} \Delta_{\delta} \varphi^{\delta}(z) = \int_{\Lambda} h^m(z) \frac{m^2(z)}{6\tan(\theta)} \Delta \varphi(z) dz.
\end{equation*}
Therefore, we have that
\begin{align*}
    \int_{\Lambda} h^m(z) (-\Delta \varphi(z) + m^2(z) \varphi(z)) dz = \lim_{\delta = \delta_{k} \to 0} &\sum_{z \in \operatorname{Int}(\Lambda_{\delta})} A_{\delta} h_{\delta}^{m}(z) (- \Delta_{\delta} \varphi^{\delta}(z) + m^{2}(z) \varphi^{\delta}(z))\\
    & + A_{\delta} \sum_{z \in \operatorname{Int}(\Lambda_{\delta})} A_{\delta} h_{\delta}^{m}(z) \frac{m^2(z)}{6\tan(\theta)} \Delta_{\delta} \varphi^{\delta}(z).
\end{align*}
By discrete integration by part, this implies that
\begin{align*}
    \int_{\Lambda} h^m(z) (-\Delta \varphi(z) + m^2(z) \varphi(z)) = \lim_{\delta = \delta_{k} \to 0} \sum_{z \in \operatorname{Int}(\Lambda_{\delta})} & A_{\delta} \varphi^{\delta}(z) \bigg[ -\Delta_{\delta}  h_{\delta}^{m}(z)  + m^{2}(z)  h_{\delta}^{m}(z) \\
    &+ A_{\delta} \frac{m^2(z)}{6\tan(\theta)} \Delta_{\delta} h_{\delta}^{m}(z) \bigg].
\end{align*}
Since $h_{\delta}^{m}$ is discrete massive harmonic with mass $m$, by \eqref{mharm_approx}, the right-hand side is  equal to 0 and thus,
\begin{equation*}
    \int_{\Lambda} h^m(z) (-\Delta \varphi(z) + m^2(z) \varphi(z)) dz = 0.
\end{equation*}
Therefore, $h^m$ is weakly massive harmonic with mass $m$ in $\Lambda$. But this implies that $h^m$ is in fact massive harmonic with mass $m$ in $\Lambda$.

We now want to show that $h^m$ is equal to the massive harmonic function with mass $m$ and boundary conditions $1/2$ on $\partial \Lambda^{+}$ and $-1/2$ on $\partial \Lambda^{-}$. Recall that, for any $0<\delta < \overline{m}_d^{-1}$ and any $z \in \operatorname{Int}(\Lambda_{\delta}) \cup \partial \Lambda_{\delta}$,
\begin{equation} \label{eq_dec_hm}
    h_{\delta}^m(z) = \frac{1}{2} \bigg( H^{(m)}_{\delta}(z) - \tilde H^{(m)}_{\delta}(z) \bigg).
\end{equation}
The same reasoning as above shows that there exist subsequences $(H^{(m)}_{\delta_q})_q$ and $(\tilde H^{(m)}_{\delta_r})_r$ and functions $H^m: \Lambda \to \mathbb{R}$ and $\tilde H^m: \Lambda \to \mathbb{R}$ such that $(H^m_{\delta_q})_q$, respectively $(\tilde H^m_{\delta_r})_r$, converges uniformly on compact subsets of $\Lambda$ to $H^m$, respectively $\tilde H^m$. Moreover, $H^m$ and $\tilde H^m$ are both massive harmonic with mass $m$ in $\Lambda$. Let us show that $H^m$, respectively $\tilde H^m$, is in fact the massive harmonic measure of $\partial \Lambda^{+}$, respectively $\partial \Lambda^{-}$. As the proof is virtually the same for both $H^m$ and $\tilde H^m$, we only detail the arguments for $H^m$.

Observe that by the weak Beurling estimate for (massless) harmonic measure, see e.g. \cite[Proposition~2.11]{Discrete_analysis}, we have, for any $z \in \operatorname{Int}(\Lambda_{\delta_q})$,
\begin{equation*}
    0 \leq H^{(m)}_{\delta_q}(z) \leq \text{const} \bigg( \frac{\text{dist}(z, \partial \Lambda_{\delta_{q}})}{\text{dist}(z, \partial \Lambda_{\delta_{q}}^{+})}\bigg)^{\hat \beta}
\end{equation*}
where the constant and $\hat \beta > 0$ are independent of $\delta_q$. Passing to the limit $\delta_{q} \to 0$, we obtain that, for any $z \in \Lambda$,
\begin{equation*}
    0 \leq H^{(m)}(z) \leq C \bigg( \frac{\text{dist}(z, \partial \Lambda)}{\text{dist}(z, \partial \Lambda^{+})}\bigg)^{\hat \beta}.
\end{equation*}
We can thus conclude that, for $z \in \partial \Lambda^{-}$, $H^m(z)= 0$. Now, recall that, for any $q \in \mathbb{N}$ and $z \in \operatorname{Int}(\Lambda_{\delta_{q}})$,
\begin{equation*}
    \tilde H_{\delta_{q}}^{(m)} = 1 -  H_{\delta_{q}}^{(m)}(z) - \PP_z^{(m)}(\tau^{\star} \leq \tau_{\partial \Lambda_{\delta_{q}}}).
\end{equation*}
Once again, by the weak Beurling estimate for (massless) harmonic measure, we have that, for any $z \in \operatorname{Int}(\Lambda_{\delta_{q}})$,
\begin{equation} \label{Beurling_w}
    0 \leq \tilde H_{\delta_{q}}^{(m)}(z) \leq \text{const} \bigg( \frac{\text{dist}(z, \partial \Lambda_{\delta_{q}})}{\text{dist}(z, \partial \Lambda_{\delta_{q}}^{-})}\bigg)^{\hat \beta}.
\end{equation}
Moreover, if $(z_{\delta_q})_q$ is a sequence of points such that for each $q \in \mathbb{N}$, $z_{\delta_q} \in \delta_q\mathbb{T}$ and $z_{\delta_q} \to z$, then
\begin{equation*}
    \liminf_{\delta = \delta_{q} \to 0}  \PP_{z_\delta}^{(m)}(\tau^{\star} \leq \tau_{\partial \Lambda_{\delta_{q}}}) \leq 1-\mathbb{I}_{z \notin \Lambda}.
\end{equation*}
Taking the $\limsup$ as $\delta_{q} \to 0$, this yields that
\begin{equation*}
    1- H^m(z) - (1-\mathbb{I}_{z \notin \Lambda}) \leq 1-H^m(z)-\liminf_{\delta = \delta_{q} \to 0} \PP_{\delta}^{z}(\tau^{\star} \leq \tau_{\partial \Lambda_{\delta}}) \leq C \bigg( \frac{\text{dist}(z, \partial \Lambda)}{\text{dist}(z, \partial \Lambda^{-})}\bigg)^{\hat \beta}
\end{equation*}
which implies that
\begin{equation*}
    \limsup_{z \to \partial \Lambda^{+}} 1-H^m(z) \leq 0.
\end{equation*}
On the other hand, since for any $\delta< \overline{m}_d^{-1}$ and $z \in \operatorname{Int}(\Lambda_{\delta}) \cup \partial \Lambda_{\delta}$, $H_{\delta}^{m}(z) \leq 1$, we have that
\begin{equation*}
    0 \leq \limsup_{z \to \partial \Lambda^{+}} \lim_{\delta = \delta_{q} \to 0} 1- H_{\delta}^{(m)}(z) = \limsup_{z \to \partial \Lambda^{+}} 1-H^m(z).
\end{equation*}
Therefore, we obtain that $\lim_{z \to \partial \Lambda^{+}} 1-H^m(z) = 0$, which yields that $H^m$ is equal to $1$ on $\partial \Lambda^{+}$.

The above arguments show that the whole sequences $(H_{\delta}^{(m)})$ and $(\tilde H_{\delta}^{(m)})$ converge pointwise to the massive harmonic measure of $\partial \Lambda^{+}$ and $\partial \Lambda^{-}$, respectively. Recalling the decomposition \eqref{eq_dec_hm} of $h_{\delta}^m$, we can thus conclude that $(h_{\delta}^m)_{\delta}$ converges pointwise along any subsequence, and thus converges pointwise, to the function
\begin{equation*}
    z \in \Lambda \mapsto \frac{1}{2} \bigg( H^m(z) - \tilde H^m(z) \bigg).
\end{equation*}
This function is massive harmonic with mass $m$ in $\Lambda$ and has boundary conditions $1/2$ on $\partial \Lambda^{+}$ and $-1/2$ on $\partial \Lambda^{-}$. By uniqueness of such massive harmonic functions, we obtain that the limit $h^m$ of $h_{\delta}^m$ is indeed solution to the Dirichlet problem of the statement of the lemma.
\end{proof}

\section{Characterization of the limiting continuum curve} \label{sec_charac}

Recall that we have a random sequence $(\gamma_{\delta})_{\delta}$ of curves where for each $\delta > 0$, $\gamma_{\delta}$ is distributed according to $\PP_{\delta}^{(\Omega,a,b,m)}$. For each $\delta >0$, we also have a conformal map $\phi_{\delta} \colon \hat \Omega_{\delta} \to \mathbb{H}$ such that $\phi_{\delta}(a_{\delta})=0$ and $\phi_{\delta}(b_{\delta})=\infty$ and we denote by $\gamma_{\delta}^{\HH}$ the curve $\phi_{\delta}(\gamma_{\delta})$. We have shown in Section \ref{sec_tightness} that the sequence $(\gamma_{\delta}^{\HH})_{\delta}$ is tight in the topologies \eqref{topo_1} -- \eqref{topo_3}, which implies that ${(\gamma_{\delta}^{\HH})}_{\delta}$ converges weakly along subsequences in these topologies. If $(\gamma_{\delta_k}^{\HH})_{k}$ is such a convergent subsequence, then its limit $\gamma^{\HH}$ is a random non-self crossing curve in $\HH$ whose time evolution can therefore be described by the Loewner equation, see \eqref{eq_Loewner}. Moreover, in this case, ${(\gamma_{\delta_k})}_k$ converges weakly in $X(\mathbb{C})$ equipped with the metric $d_X$ to a random curve that is almost surely supported on $\overline{\Omega}$ and has the same law as $\phi^{-1}(\gamma^{\HH})$, provided that for each $\delta_k$, $\gamma_{\delta_k}$ is parametrized by the half-plane capacity of $\gamma_{\delta_k}^{\HH}$. Our goal here is to characterize the limits of such subsequences, or equivalently the Loewner chain describing their time evolution: we are going to show that this limiting Loewner chain is characterized by the martingale property of a certain massive harmonic function. Before stating precisely the result, in Section \ref{sec_intro_Mharm}, we introduce a few notations and recall how to express massive harmonic functions in terms of their harmonic counterparts. The characterization of the limiting Loewner chain is then stated in Section \ref{sec_proof_charac} and Section \ref{subsec_Proof} is devoted to its proof. In Section \ref{subsec_ccl}, we reformulate Theorem \ref{theorem_intro} and show how to prove it by combining the results of Section \ref{sec_proof_crossing}, Section \ref{sec_martobs} and Section \ref{sec_proof_charac}.

\subsection{Massive harmonic functions and massive Poisson kernels} \label{sec_intro_Mharm}

Let $\Lambda \subset \mathbb{C}$ be a bounded, open and simply connected domain. The Laplace operator $-\Delta$ in $\Lambda$ with Dirichlet boundary conditions has a unique Green function $G_{\Lambda}$, which is defined as its inverse in the sense of distributions, that is for $z \in \Lambda$, $-\Delta G_{\Lambda}(z,\cdot) = \delta_z(\cdot)$. In what follows, we will be interested in quantities related to the massive Laplace operator $-\Delta +m^2$ in $\Lambda$ with Dirichlet boundary conditions. This operator acts on a function $f \in \mathcal{C}_c^{\infty}(\Lambda)$ as $[-\Delta + m^2(z)]f(z) = -\Delta f(z) + m^2(z)f(z)$. It also has a unique Green function $G_{\Lambda}^m$ defined as its inverse in the sense of distributions, that is, for $z \in \Lambda$, $[-\Delta + m^2(\cdot)]G_{\Lambda}^m(z,\cdot) = \delta_z(\cdot)$. We call $G_{\Lambda}^m$ the massive Green function (with mass $m$ in $\Lambda$). Since for any $z \in \Lambda$, $-\Delta G_{\Lambda}^m(z, \cdot) = \delta_z(\cdot) - m^2(\cdot)G_{\Lambda}^m(z, \cdot)$ (in the sense of distributions), $G_{\Lambda}^{m}$ is related to $G_{\Lambda}$ as follows: for $z, w \in \Lambda$,
\begin{equation} \label{eq_G_Gm}
    G_{\Lambda}^{m}(z,w) = G_{\Lambda}(z,w) - \int_{\Lambda} m^{2}(y) G_{\Lambda}(z,y)G_{\Lambda}^{m}(w,y) dy.
\end{equation}
Indeed, one can check that the right-hand side of this equality is the inverse of $-\Delta + m^2$ in the sense of distributions, which thus establishes \eqref{eq_G_Gm}. Moreover, $G_{\Lambda}^m$ is conformally covariant in the following sense. Let $\phi: \Lambda \to \tilde \Lambda$ be a conformal map and set, for $y \in \tilde \Lambda$, $\tilde m^2(y) = \vert (\phi^{-1})^{'}(y) \vert^2 m^2(\phi^{-1}(y))$. Then, for any $z,w \in \Lambda$,
\begin{equation} \label{cov_Green}
    G_{\Lambda}^m(z,w) = G_{\tilde \Lambda}^{\tilde m}(\phi(z),\phi(w)).
\end{equation}
This equality is a consequence of the conformal covariance of the two-dimensional massive (killed) Brownian motion. Indeed, as in the case of standard Brownian motion, if $A \subset \Lambda$ is an open set, then
\begin{equation*}
    \int_{A} G_{\Lambda}^m(z,w) dw = \EE_{z}^{(m)}\bigg[ \mathbb{I}_{\tau^{\star} > \tau_{\Lambda}} \int_{0}^{\tau_{\Lambda}} \mathbb{I}_{A}(B_t) dt \bigg]
\end{equation*}
where under $\EE_z^{(m)}$, $B$ has the law of a massive Brownian motion with mass $m$ started at $z \in \Lambda$, $\tau_{\Lambda}$ is its first exit time of $\Lambda$ and $\tau^{\star}$ is its killing time.

Using the massive Green function $G_{\Lambda}^{m}$, one can express massive harmonic functions in $\Lambda$ in terms of their harmonic counterparts. More precisely, let $f: \partial \Lambda \to \mathbb{R}$ be a piecewise smooth function with finitely many discontinuity points. Let $h$ be the unique harmonic function in $\Lambda$ with boundary conditions $f$. Let $h^m$ be the unique massive harmonic function in $\Lambda$ with boundary conditions $f$, that is $h^m$ is the unique solution to the boundary value problem
\begin{align*}
    \begin{cases}
    [-\Delta + m^2(z)]u(z) = 0 \quad \text{in $\Lambda$} \\
    u=f \quad \text{on $\partial \Lambda$}.
    \end{cases}
\end{align*}
Then, it is easy to see that, for any $z \in \Lambda$,
\begin{equation} \label{harm_mass}
    h^m(z) = h(z) - \int_{\Lambda} m^{2}(w) h(w)G_{\Lambda}^m(z,w)dw
\end{equation}
Indeed, this follows from the facts that $[-\Delta + m^2(z)]h(z) = m^{2}(z)h(z)$ for $z \in \Lambda$, $h(z)=f(z)$ for $z \in \partial \Lambda$ and that, by definition of $G_\Lambda^m$, the function
\begin{equation*}
    \zeta^m: z \mapsto \int_{\Lambda} m^{2}(w) h(w)G_{\Lambda}^m(z,w)dw
\end{equation*}
is the unique solution to the boundary value problem
\begin{align*}
    \begin{cases}
    [-\Delta +m^2(z)]\zeta^m(z) = m^{2}(z)h(z) \quad \text{in $\Lambda$} \\
    \zeta^m=0 \quad \text{on $\partial \Lambda$}.
    \end{cases}
\end{align*}
Note that $h^m$ can also be rewritten in the form
\begin{equation} \label{harm_mass_alt}
    h^m(z) = h(z) - \int_{\Lambda} m^{2}(w) h^m(w)G_{\Lambda}(z,w)dw.
\end{equation}
Indeed, we have that, using the relation between $G_{\Lambda}$ and $G_{\Lambda}^m$ and Fubini's theorem ($\Lambda$ is bounded by assumption),
\begin{align*}
    \int_{\Lambda} m^2(w)G_{\Lambda}^m(z,w)h(w) dw &= \int_{\Lambda} m^2(w) \bigg[ G_{\Lambda}(z,w) - \int_{\Lambda} m^2(y)G_{\Lambda}(z,y)G_{\Lambda}^m(y,w) dy \bigg]h(w) dw\\
    &= \int_{\Lambda} m^2(w)G_{\Lambda}(z,w)h(w) dw \\
    &- \int_{\Lambda \times \Lambda} m^2(y)m^2(w)G_{\Lambda}(z,y)G_{\Lambda}^m(y,w)h(w) dwdy \\
    &= \int_{\Lambda} m^2(y)G_{\Lambda}(z,y)h(y) dy \\
    &- \int_{\Lambda} m^2(y)G_{\Lambda}(z,y)\int_{\Lambda}m^2(w)G_{\Lambda}^m(y,w) h(w) dw \\
    &= \int_{\Lambda} m^2(y)G_{\Lambda}(z,y) \bigg[ h(y) - \int_{\Lambda}m^2(w)G_{\Lambda}^m(y,w) h(w) dw \bigg] dy \\
    &= \int_{\Lambda} m^2(y)G_{\Lambda}(z,y) h^m(y)dy.
\end{align*}
We will also need a massive object related to the massive Poisson kernel in $\Lambda$. As we will use it only at one given point on the boundary, we find it convenient to introduce it as follows. Assume that $a$ and $b$ are two marked boundary points of $\partial \Lambda$. Let $\phi_{\Lambda}: \Lambda \to \mathbb{H}$ be a conformal map such that $\phi_{\Lambda}(a)=0$ and $\phi_{\Lambda}(b)=\infty$. For $z \in \Lambda$, set
\begin{equation} \label{P_lambda}
    P_{\Lambda}(z) := \frac{1}{\pi}\Im \bigg( \frac{-1}{\phi_{\Lambda}(z)}\bigg).
\end{equation}
Then $P_{\Lambda}(z)=P_{\HH}(\phi_\Lambda(z))$ is the bulk-to-boundary Poisson kernel in $\HH$ evaluated at the bulk point $\phi_{\Lambda}(z)$ and at the boundary point $0$, i.e. $P_{\Lambda}(z)$ is the density at $0$ of the harmonic measure of $\mathbb{R}$ seen from $\phi_{\Lambda}(z)$. Notice that $P_{\Lambda}(z)$ depends on the boundary points $a$ and $b$ but for conciseness, we do not mention explicitly this dependency in the notation. The massive version of $P_{\Lambda}$ is then defined by, for $z \in \Lambda$, 
\begin{equation} \label{Pm_lambda}
    P_{\Lambda}^m(z) := P_{\Lambda}(z) - \int_{\Lambda} m^2(w)P_{\Lambda}(w)G_{\Lambda}^m(z,w)dw .
\end{equation}
Finiteness of the above integral is shown in \cite[Equation~(4.6)]{mLERW}. Observe that by making the change of variable $u=\phi_{\Lambda}(w)$ in this integral and using the conformal covariance property of the massive Green function given by \eqref{cov_Green}, we have that
\begin{equation*}
    P_{\Lambda}^m(z) = P_{\Lambda}(z) - \int_{\HH} \tilde m^2(w)\frac{1}{\pi}\Im \bigg( \frac{-1}{w}\bigg) G_{\HH}^{\tilde m}(\phi_{\Lambda}(z),w)dw .
\end{equation*}
where $\tilde m^2(w) = \vert (\phi_{\Lambda}^{-1})'(w)) \vert^2 m^2(\phi_{\Lambda}^{-1}(w))$. We can thus see that $P_{\Lambda}^m(z)=P_{\HH}^{\tilde m}(\phi_\Lambda(z))$ is the massive bulk-to-boundary Poisson kernel in $\HH$ with mass $\tilde m$ evaluated at the bulk point $\phi_{\Lambda}(z)$ and at the boundary point $0$, i.e. $P_{\Lambda}^m(z)$ is the density at $0$ of the massive harmonic measure with mass $\tilde m$ of $\mathbb{R}$ seen from $\phi_{\Lambda}(z)$. Moreover, using conformal covariance of the Poisson kernel and that of its massive counterpart (for which the mass also changes under conformal maps), one can see that
\begin{equation} \label{ratio_P_Pm}
    \frac{P_{\Lambda}^m(z)}{P_{\Lambda}(z)} = \frac{\mathfrak{P}_{\Lambda}^m(z)}{\mathfrak{P}_{\Lambda}(z)}
\end{equation}
where $\mathfrak{P}_{\Lambda}^m(z)$, respectively $\mathfrak{P}_{\Lambda}(z)$, is the massive, respectively massless, bulk-to-boundary Poisson kernel in $\Lambda$ evaluated at the bulk point $z$ and at the boundary point $a$. In other words, $\mathfrak{P}_{\Lambda}^m(z)$, respectively $\mathfrak{P}_{\Lambda}(z)$, is the density at $a$ of the masseless, respectively massive, harmonic measure of $\partial \Lambda$ seen from $z$. Here, we consider ratios as $P_{\Lambda}(z)$ and $\mathfrak{P}_{\Lambda}(z)$ are related by the multiplicative factor $\vert \phi_{\Lambda}'(a) \vert$ and similarly for $P_{\Lambda}^m(z)$ and $\mathfrak{P}_{\Lambda}^m(z)$. This requires that the conformal map $\phi$ extends as a differentiable function at $a$, which is not necessarily the case. But the above ratios are nevertheless always well-defined.

\subsection{Martingale characterization of massive SLE$_4$} \label{sec_proof_charac}

Let us now state our characterization result. Although we have in mind its application to the characterization of the scaling limit of the massive harmonic explorer, this result holds under fairly general assumptions, that we now describe. Recall the assumptions made on the domain $\Omega$ and the boundary points $a, b \in \partial \Omega$ in Subsection \ref{subsec:domain}. In this setting, as in Subsection \ref{subsec:def_mHE}, we divide the boundary of $\Omega$ into two parts, $\partial \Omega^{+}$ and $\partial \Omega^{-}$, which are the clockwise, respectively counterclockwise, oriented boundary arcs between $a$ and $b$. Let $\phi: \Omega \to \HH$ be a conformal map such that $\phi(a)=0$ and $\phi(b) = \infty$. As before, we also let $m: \Omega \to \mathbb{R}_{+}$ be a continuous function bounded by some constant $\overline{m}>0$. Assume that $(\gamma(t), t \geq 0)$ is a random non-self-crossing curve in $\overline{\Omega}$ with $\gamma(0)=a$ and $\gamma(\infty)=b$. Let $(\phi(\gamma(t)), t \geq 0)$ be its image in $\mathbb{H}$. This is a non-self crossing curve in $\mathbb{H}$ starting at $0$ and targeting $\infty$. We assume that $(\phi(\gamma(t)), t \geq 0)$ is parametrized by half-plane capacity, or else reparametrize it. For $t \geq 0$, we denote by $K_t$ the hull generated by $\phi(\gamma)([0,t])$. $(K_t, t \geq 0)$ is a random locally growing family of hulls generated by a curve and therefore, as explained in Section \ref{sec_Loewner}, its growth can be described using the Loewner equation. In other words, from the family $(K_t, t \geq 0)$, we can construct a random Loewner chain $(g_t, t \geq 0)$ whose time-evolution is described by a random driving function $(W_t, t \geq 0)$ and the Loewner equation \eqref{eq_Loewner}. We set $\Omega_t: = \phi^{-1}(\mathbb{H} \setminus K_t)$ and denote by $\partial \Omega_{t}^{+}$, respectively $\partial \Omega_t^{-}$, the clockwise, respectively counter-clockwise, oriented boundary arc of $\Omega_t$ from $\gamma(t)$ to $b$. For $z \in \Omega$, we also define the (possibly infinite) stopping time
\begin{equation*}
    \tau_z := \inf\{ t \geq 0: \vert g_t(\phi(z)) - W_t \vert = 0\}.
\end{equation*}
$\tau_z$ corresponds to the time at which $\phi(z)$ is swallowed by the hulls $(K_t, t \geq 0)$ and with this definition, for $t \geq 0$, $K_t=\{ w \in \overline{\HH}: \tau_{\phi^{-1}(w)} \leq t\}$.

In what follows, we are going to consider the time-evolution of the massive Green function and of $P_{\Omega}^m$ under the Loewner maps $(f_t)_t$, where for $t \geq 0$, $f_t:=g_t-W_t$. In view of this, we introduce the following notations. We denote by $G_t^{m}$ the massive Green function with mass $m$ in $\Omega_t$, defined as in the discussion around \eqref{eq_G_Gm}. We also define, for $t \geq 0$ and $z \in \Omega_t$,
\begin{equation} \label{def_Pm_t}
    P_t^m(z) := \frac{1}{\pi} \Im \bigg( \frac{-1}{f_t(\phi(z))} \bigg) - \int_{\Omega_t} m^2(w) \frac{1}{\pi} \Im \bigg( \frac{-1}{f_t(\phi(w))} \bigg) G_t^m(z,w) dw.
\end{equation}
Remark that in the notations of Section \ref{sec_intro_Mharm}, $P_t^m(z) = P_{\Omega_t}^m(z)$ and, as already mentioned there (notice that $f_t \circ \phi$ satisfies the assumptions made on the map denoted $\phi_{\Lambda}$ in \eqref{Pm_lambda}), the integral on the right-hand side of the above equality is well-defined. Setting $P_t(z) := \frac{1}{\pi} \Im ( \frac{-1}{f_t(\phi(z))})$, the ratio $P_t^m(z)/P_t(z)$ can be given the same interpretation as in \eqref{ratio_P_Pm}, with the Poisson kernels being evaluated at the bulk point $z$ and at the boundary point $\gamma(t)$, the tip of the curve. Our characterization result then reads as follows.

\begin{proposition} \label{prop_characterization}
Suppose that $\Omega,a,b,(\gamma(t),t\ge 0)$ and $m$ are as described in the previous two paragraphs. For each $t \geq 0$, let $h_t^m: \Omega_t \to \mathbb{R}$ be the massive harmonic function in $\Omega_t$ with mass $m$ and boundary conditions $-1/2$ on $\partial \Omega_t^{-}$ and $1/2$ on $\partial \Omega_t^{+}$ and assume that $(h_t^{m}(z), t \leq \tau_z)$ is a martingale for all $z \in \Omega$. Let $h_t$ be the massless harmonic function in $\Omega_t$ with the same boundary conditions as $h_t^m$ and recall the definition of $P_t^m(z)$ given in \eqref{def_Pm_t}. Then $\gamma$ is distributed as a massive SLE$_4$ curve from $a$ to $b$ in $\Omega$, that is the driving function $(W_t, t \geq 0)$ of $\phi(\gamma)$ in $\HH$ is given by, for $t \geq 0$,
\begin{equation} \label{driving_mSLE}
    W_t = 2B_t - 2\pi \int_{0}^{t}\int_{\Omega_s} m^2(w) P_s^{m}(w)h_s(w)dw ds.
\end{equation}
\end{proposition}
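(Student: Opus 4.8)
The plan is to split the argument into two steps. In the first, I will show that the driving function $(W_t)_{t\ge0}$ of $\phi(\gamma)$ is a continuous semimartingale; in the second, I will apply It\^o's formula to the martingales $(h^m_t(z))_t$ and read off the diffusion and drift coefficients of $W$. Throughout, I write $f_t:=g_t-W_t$ and $Z_t(z):=f_t(\phi(z))\in\HH$ for $t<\tau_z$; by conformal invariance of the Dirichlet problem, the massless harmonic function $h_t$ is given explicitly by $h_t(z)=-\tfrac12+\tfrac1\pi\Im\log Z_t(z)$ (up to the orientation convention, $\partial\Omega_t^+$ corresponding to $(-\infty,0)$ under $f_t\circ\phi$), and by \eqref{harm_mass} we have $h^m_t=h_t-\mathcal{G}^m_t h_t$, where $\mathcal{G}^m_t\psi(z):=\int_{\Omega_t}m^2(w)\psi(w)G^m_t(z,w)\,dw$; note that $\psi\mapsto\psi-\mathcal{G}^m_t\psi$ sends a massless harmonic function to the massive harmonic function with the same boundary behaviour.

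For the semimartingale step I would use the identity \eqref{harm_mass_alt}, rewritten as $h_t(z)=h^m_t(z)+\int_{\Omega_t}m^2(w)h^m_t(w)G_{\Omega_t}(z,w)\,dw$. The first term is, by hypothesis, a bounded martingale. In the integral term, $t\mapsto G_{\Omega_t}(z,w)$ is nonincreasing (monotonicity of Green's functions in the domain) and in fact $C^1$ in $t$ with $\partial_t G_{\Omega_t}(z,w)=-2\pi P_t(z)P_t(w)$ by the classical Loewner/Hadamard formula, the moving-boundary term vanishing because $G_{\Omega_t}(z,\cdot)$ vanishes on $\partial\Omega_t$; hence, by an integration by parts in $t$ together with a (conditional) Fubini, this term splits as a finite-variation process plus the martingale $\int_0^t\!\int_{\Omega_s}m^2(w)G_{\Omega_s}(z,w)\,dh^m_s(w)\,dw$. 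Thus $h_t(z)$ is a semimartingale for every fixed $z$. Since $g_t(\phi(z))$ has finite variation in $t$, one may then invert the relation $\Im\log Z_t(z)=\pi\big(h_t(z)+\tfrac12\big)$ to get
\[
W_t=\Re g_t(\phi(z))-\Im g_t(\phi(z))\,\cot\!\big(\pi(h_t(z)+\tfrac12)\big),\qquad t<\tau_z,
\]
which exhibits $W$ as a semimartingale on $[0,\tau_z)$; letting $\phi(z)$ run to $\infty$ along the imaginary axis, so that $\tau_z\to\infty$ by the half-plane-capacity parametrization, shows $W$ is a continuous semimartingale on $[0,\infty)$.

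Now write $dW_t=\sigma_t\,dB_t+b_t\,dt$ (absolute continuity of $\langle W\rangle$ will be forced by the computation). It\^o's formula applied to $h_t(z)$, using $dZ_t(z)=\tfrac{2}{Z_t(z)}\,dt-dW_t$ and $d\langle Z(z)\rangle_t=\sigma_t^2\,dt$, gives drift $\tfrac{2-\sigma_t^2/2}{\pi}\,\mathsf Q_t(z)+b_t\,P_t(z)$ and martingale part $\sigma_t P_t(z)\,dB_t$, where $\mathsf Q_t(z):=\Im\big(1/Z_t(z)^2\big)$. For $R_t(z):=\mathcal{G}^m_t h_t(z)$ I need: (i) the massive variational formula $\partial_t G^m_t(z,w)=-2\pi P^m_t(z)P^m_t(w)$, which I would obtain by differentiating \eqref{eq_G_Gm} in the growing domain $\Omega_t$ (the moving-boundary term again vanishing) and solving the resulting linear integral equation for $\partial_t G^m_t$, using \eqref{def_Pm_t} together with the Fubini identity $\int m^2 P^m_t\,G_{\Omega_t}=\int m^2 P_t\,G^m_t$ (the same manipulation as after \eqref{harm_mass_alt}); and (ii) a stochastic Fubini theorem allowing differentiation under the integral, which then yields $dR_t(z)=\mathcal{G}^m_t(dh_t(\cdot))(z)-2\pi P^m_t(z)\big(\int_{\Omega_t}m^2(w)h_t(w)P^m_t(w)\,dw\big)\,dt$. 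Subtracting and using that $\psi\mapsto\psi-\mathcal{G}^m_t\psi$ carries $\mathsf Q_t$ to its massive version $\mathsf Q^m_t$ and $P_t$ to $P^m_t$, the drift of $h^m_t(z)$ equals
\[
\frac{2-\sigma_t^2/2}{\pi}\,\mathsf Q^m_t(z)+\Big(b_t+2\pi\!\int_{\Omega_t}m^2(w)h_t(w)P^m_t(w)\,dw\Big)P^m_t(z).
\]
Since $h^m_t(z)$ is a martingale this vanishes for every $z\in\Omega_t$, and $\mathsf Q^m_t$ and $P^m_t$ are linearly independent because they have singularities of different order (a double and a simple pole, respectively) at the tip $\gamma(t)$. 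Hence $\sigma_t\equiv 2$ and $b_t=-2\pi\int_{\Omega_t}m^2(w)h_t(w)P^m_t(w)\,dw$, which is precisely \eqref{driving_mSLE}.

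The hard part is the analytic input of the second step: proving the massive Hadamard-type formula for $\partial_t G^m_t$, and rigorously justifying the differentiation under the integral and the stochastic Fubini for $R_t(z)$ (as well as, in the first step, the passage to $t$-differentiation of the integral in \eqref{harm_mass_alt}). This rests on estimates for $G^m_t$ near the tip and near $\partial\Omega_t$ --- in particular on the finiteness and continuity in $(t,\omega)$ of integrals such as $\mathcal{G}^m_t\mathsf Q_t$, which follow from the slit geometry at $\gamma(t)$ and a refinement of the finiteness estimate \cite[Eq.~(4.6)]{mLERW} --- and is where the ``technicalities due to the presence of a mass'' are concentrated.
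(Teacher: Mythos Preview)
Your two-step strategy (semimartingale, then It\^o computation to identify coefficients) is the same as the paper's, and your semimartingale argument via \eqref{harm_mass_alt} and the cotangent inversion matches the paper's Claim~\ref{claim_semimartingale} almost verbatim.

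The genuine gap is in the second step. You compute $dR_t(z)$ by stochastic Fubini, pulling the $dM_t$-integration inside the $dw$-integral. But at this point you know \emph{nothing} about the local martingale part $M$ of $W$ beyond continuity, so the hypotheses of any stochastic Fubini theorem cannot be verified; the paper makes exactly this point at the start of the proof of Claim~\ref{claim_SDE}. Your parenthetical ``absolute continuity of $\langle W\rangle$ will be forced by the computation'' compounds the issue: writing $dW_t=\sigma_t\,dB_t+b_t\,dt$ presupposes that $\langle M\rangle$ and $V$ have densities, which is precisely what must be proved. The paper's workaround is to rewrite $h^m_t(z)$ as $\int_{-\infty}^{W_t}P_{\HH}^{m_t}(x,g_t(\phi(z)))\,dx$ plus terms whose $t$-dependence is only through $g_t$ and $G_t^m$. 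This pushes all the $W_t$-dependence into a \emph{limit of integration}, so the It\^o contribution in $dM_t$, $dV_t$ and $d\langle M\rangle_t$ comes from the fundamental theorem of calculus in $x$ at $x=W_t$, not from a stochastic Fubini; the remaining $t$-derivatives (Loewner equation for $g_t$, massive Hadamard formula for $G_t^m$) are deterministic. This yields the SDE for $h^m_t(z)$ with $d\langle M\rangle_t$ and $dV_t$ appearing as honest Stieltjes differentials.

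A secondary point: your separation of coefficients via ``$\mathsf Q^m_t$ and $P^m_t$ are linearly independent at the tip'' is correct in spirit but is not immediate when the ``coefficients'' are signed measures in $s$ rather than functions. The paper makes this rigorous by evaluating the vanishing-drift identity \eqref{eq_0mart} along $z_n=\phi^{-1}(in)$, rescaling by $n$, and passing to the limit (using $nP_s(z_n)\to1$, $nQ_s(z_n)\to0$ uniformly on $[0,t]$, together with Lemma~\ref{lemma_controlPm} to keep $P^m_s/P_s$ bounded away from~$0$); this produces the relation $d\tilde V_s=-\mathcal{Q}^m_s(b)\,dA_s$, which, substituted back into \eqref{eq_0mart} for generic $z$, forces $A\equiv0$ and then $\tilde V\equiv0$.
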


In the course of the proof of Proposition \ref{prop_characterization}, we will repeatedly use the following massive version of the Hadamard's formula.

\begin{lemma} \label{lemma_massive_Hadamard}
Under the same assumptions on  $\Omega,a,b,(\gamma(t),t\ge 0)$ and $m$ as in Proposition \ref{prop_characterization}, for each $z, w \in \Omega$, the function $G_t^m(z,w)$ is differentiable in $t$, until the first time that either $z \notin \Omega_t$ or $w \notin \Omega_t$. Its differential is given by
\begin{equation*}
    \partial_t G_t^m(z,w) = -2\pi P_t^m(z)P_t^m(w),
\end{equation*}
where $P_t^m(z)$ and $P_t^m(w)$ are given by \eqref{def_Pm_t}.
\end{lemma}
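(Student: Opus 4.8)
The plan is to deduce the massive Hadamard formula from the classical (massless) one via the identity \eqref{eq_G_Gm}, and then to solve the integral equation that results from differentiating that identity in $t$. The point is that \eqref{eq_G_Gm} expresses $G_t^m$ through $G_t$ and $G_t^m$ itself, and differentiating it will produce, for fixed $w$, an equation of exactly the shape of \eqref{harm_mass_alt} for the unknown $u_t(z):=\partial_t G_t^m(z,w)$; the right-hand side of that equation turns out to be $-2\pi P_t^m(w)$ times a massless harmonic function whose boundary values are those of $P_t$, whence $u_t = -2\pi P_t^m(w)P_t^m$.

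First I would record the massless Hadamard formula. By conformal invariance of the two-dimensional Green function, $G_t(z,w) = G_{\HH}(g_t(\phi(z)),g_t(\phi(w)))$, and since $G_{\HH}$ is invariant under a common horizontal translation of its two arguments, $G_t(z,w)$ depends on the driving function only through $g_t$; in particular it is differentiable in $t$ (with $\partial_t g_t(\zeta)=2/(g_t(\zeta)-W_t)=2/f_t(\zeta)$) even though $W_t$ carries a Brownian part and no pointwise drift is available. Differentiating $G_{\HH}(\zeta,\xi)=\frac{1}{2\pi}\log\bigl|(\zeta-\bar\xi)/(\zeta-\xi)\bigr|$ along this flow gives, after a short computation, $\partial_t G_t(z,w) = -2\pi P_t(z)P_t(w)$ with $P_t(z)=\frac1\pi\Im(-1/f_t(\phi(z)))$ as in \eqref{def_Pm_t}; this is classical, see e.g. \cite[Chapter~4]{book_Lawler}.

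Next I would differentiate \eqref{eq_G_Gm} with $\Lambda=\Omega_t$, written as $G_t^m(z,w)=G_t(z,w)-\int_{\Omega_t}m^2(y)G_t(z,y)G_t^m(y,w)\,dy$ using the symmetry of $G_t^m$. Since the integrand vanishes as $y\to\partial\Omega_t$ (both $G_t(z,\cdot)$ and $G_t^m(\cdot,w)$ do), the contribution of the growing hull to the $t$-derivative of the integral is zero and one may differentiate under the integral sign. Substituting $\partial_t G_t(z,\cdot)=-2\pi P_t(z)P_t(\cdot)$ and using $\int_{\Omega_t}m^2(y)P_t(y)G_t^m(y,w)\,dy = P_t(w)-P_t^m(w)$ from \eqref{def_Pm_t}, the $-2\pi P_t(z)P_t(w)$ terms cancel and, writing $u_t(z):=\partial_t G_t^m(z,w)$, one is left with
\[ u_t(z) = -2\pi\,P_t^m(w)\,P_t(z) - \int_{\Omega_t} m^2(y)\,G_t(z,y)\,u_t(y)\,dy . \]
Now the same Fubini argument that yields \eqref{harm_mass_alt}, applied to $P_t^m$ (which has the form \eqref{harm_mass} by definition \eqref{Pm_lambda}), gives $P_t^m(z) = P_t(z) - \int_{\Omega_t} m^2(y)G_t(z,y)P_t^m(y)\,dy$, so that $-2\pi P_t^m(w)P_t^m(z)$ solves the displayed integral equation as well. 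Uniqueness of its solutions — applying $-\Delta$ shows a difference of two solutions is massive harmonic in $\Omega_t$, and it has zero boundary values because $G_t(z,\cdot)$ vanishes on $\partial\Omega_t$ — then forces $u_t(z) = -2\pi P_t^m(z)P_t^m(w)$, which is the claim.

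The hard part will be the analytic bookkeeping that legitimises "differentiate under the integral sign, the boundary flux vanishes'', together with the differentiability of $t\mapsto G_t^m$ itself, which is asserted in the statement. I would obtain both simultaneously: expand $G_t^m$ in the resolvent (Neumann) series $\sum_{k\ge 0}(-1)^k (G_t m^2)^k G_t$, which converges because $\overline m$ and $\operatorname{diam}(\Omega)$ are finite, and differentiate term by term, using the known $C^1$-dependence of $G_t$ on $t$ (massless Hadamard) to control each term in an $L^1$-type norm, uniformly on compact time intervals away from the swallowing times $\tau_z,\tau_w$. The uniform decay of $G_t^m(\cdot,w)$ near $\partial\Omega_t$ needed to kill the flux term follows from the elementary bound $0\le G_t^m\le G_t$ together with the massless Beurling estimate, in the same spirit as the boundary estimates of Section \ref{sec_martobs}. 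Once this regularity is established, the formal computation above is rigorous.
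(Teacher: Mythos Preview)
Your argument is correct. The paper's own proof is simply a citation to \cite[Lemma~4.7]{mLERW} together with the remark that the constant-mass argument there adapts to a bounded continuous mass, so your proposal supplies what the paper leaves implicit. The route you take---differentiating the resolvent identity \eqref{eq_G_Gm}, reducing to an integral equation of the shape \eqref{harm_mass_alt}, and identifying its unique solution with $-2\pi P_t^m(w)P_t^m$---is in fact the natural way to carry out that adaptation, and is presumably close in spirit to what \cite{mLERW} does. Two small remarks: first, the symmetry $G_t^m(y,w)=G_t^m(w,y)$ that you invoke does hold (the operator $-\Delta+m^2(\cdot)$ is self-adjoint even for position-dependent $m$), despite the paper's cautious aside in the proof of Lemma~\ref{lemma_abscont_SLE}; second, to pass from \eqref{Pm_lambda} to the alternative form $P_t^m(z)=P_t(z)-\int m^2 G_t(z,\cdot)P_t^m$ you need the Fubini step behind \eqref{harm_mass_alt} for the unbounded function $P_t$, which is justified by the integrability of $\int_{\Omega_t}P_t(w)\,dw$ recorded in \cite[Corollary~4.6]{mLERW}. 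Your outline for the analytic bookkeeping (Neumann series, term-by-term differentiation, Beurling-type boundary decay) is the right plan.
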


\begin{proof}
When the mass $m$ is constant, the result is shown in \cite[Lemma~4.7]{mLERW}. The arguments can be straightforwardly adapted to the case of a bounded and continuous mass function.
\end{proof}

Before turning to the proof of Proposition \ref{prop_characterization}, we state a preliminary lemma which will allow us to control the ration $P_t^m(w)/P_t(w)$ uniformly in $t$ and $w$.

\begin{lemma} \label{lemma_controlPm}
Let $R>0, \overline{m}>0$ be such that $\Omega \subset B(0,R)$, and $m^2\le \overline{m}^2$. Then almost surely, for any $t \geq 0$ and any $w \in \Omega_t$,
\begin{equation*}
    \frac{P_{t}^m(w)}{P_{t}(w)} \geq \exp(-c_0\overline{m}^2R^2)
\end{equation*}
where $c_0 > 0$ is an absolute constant.
\end{lemma}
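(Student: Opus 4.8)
The plan is to exploit the representation \eqref{Pm_lambda}/\eqref{def_Pm_t} of $P_t^m$ in terms of $P_t$ and the massive Green function, and to interpret everything probabilistically via a killed Brownian motion. Fix $t\ge 0$ and $w\in\Omega_t$. From \eqref{def_Pm_t} we have
\begin{equation*}
    \frac{P_t^m(w)}{P_t(w)} = 1 - \frac{1}{P_t(w)}\int_{\Omega_t} m^2(y)\,P_t(y)\,G_t^m(w,y)\,dy,
\end{equation*}
so the claim amounts to an upper bound of the form $c_0\overline{m}^2R^2$ (or rather, its exponentiated version) on the subtracted term. The natural route is to recognize $\int_{\Omega_t} P_t(y)G_t^m(w,y)\,dy$ — and more generally $\int_{\Omega_t} P_t(y)G_t(w,y)\,dy$ — as an expectation over a Brownian path started at $w$, killed on exiting $\Omega_t$ (and, in the massive case, also at an independent exponential-type time). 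Concretely, I would write $P_t(y)$ as a bulk-to-boundary Poisson kernel at the image point $\gamma(t)$ and use the identity $\int_{\Omega_t} P_t(y) G_t(w,y)\,dy$ relating it to a derivative/limit of $\int_{\Omega_t}\mathfrak P(y)G_t(w,y)\,dy$, which is harmonic in $w$; cleaner still, one can first prove everything in $\HH$ and transfer by conformal covariance \eqref{cov_Green}, \eqref{ratio_P_Pm}, as is done for the analogous estimate in \cite{mLERW}.

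The key steps, in order, would be: (i) By \eqref{ratio_P_Pm} and conformal covariance, reduce to bounding $P_{\HH}^{\tilde m}(\zeta)/P_{\HH}(\zeta)$ from below for $\zeta=f_t(\phi(w))\in\HH$, where $\tilde m^2$ is the pushed-forward mass, which is supported on $f_t(\phi(\Omega_t))\subset\HH$ and satisfies $\int_{\HH}\tilde m^2(u)\,(\text{something})\,du$ controlled by $\overline m^2 R^2$ because $\Omega\subset B(0,R)$ and area is a conformal covariant paired against $m^2$. (ii) Use the probabilistic representation: $P_{\HH}^{\tilde m}(\zeta)$ is the density at $0$ of the massive harmonic measure of $\mathbb R$ seen from $\zeta$, so $P_{\HH}^{\tilde m}(\zeta)/P_{\HH}(\zeta) = \EE_\zeta[\,\mathbb I_{\tau^\star>\tau_{\HH}}\mid B_{\tau_\HH}\text{ near }0\,]$-type quantity; more robustly, from \eqref{Pm_lambda},
\begin{equation*}
    1-\frac{P_\HH^{\tilde m}(\zeta)}{P_\HH(\zeta)} = \frac{1}{P_\HH(\zeta)}\int_\HH \tilde m^2(u) P_\HH(u) G_\HH^{\tilde m}(\zeta,u)\,du \le \frac{1}{P_\HH(\zeta)}\int_\HH \tilde m^2(u) P_\HH(u) G_\HH(\zeta,u)\,du,
\end{equation*}
using $0\le G_\HH^{\tilde m}\le G_\HH$. (iii) Recognize $\int_\HH P_\HH(u)G_\HH(\zeta,u)\,du$ in terms of the Brownian motion started at $\zeta$: it is (up to the normalization by $P_\HH(\zeta)$) the expected occupation measure against the weight $P_\HH(u)$, i.e. $\EE_\zeta[\int_0^{\tau_\HH} P_\HH(B_s)\,ds]$, and the ratio $\frac{1}{P_\HH(\zeta)}\EE_\zeta[\cdots]$ is, by the $h$-transform with $h=P_\HH(\cdot)$ harmonic, the expected time spent by the corresponding conditioned (Doob-transformed) process — but since $\tilde m^2$ is bounded by $\overline m^2$ times the (conformal) density and supported where the pre-image lies in $B(0,R)$, the integral $\int \tilde m^2(u)\,(\text{area element paired via }G_\HH)$ collapses to at most $c\overline m^2 R^2$. (iv) Finally invoke $1-x\ge e^{-c_0 x}$ for $x$ in a bounded range (or handle large $x$ trivially since then the bound $e^{-c_0\overline m^2R^2}$ is tiny and $P_t^m\ge 0$ suffices — note $P_t^m\ge0$ since it is a massive harmonic measure density), giving $P_t^m(w)/P_t(w)\ge e^{-c_0\overline m^2R^2}$ with $c_0$ absolute.

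The main obstacle I anticipate is step (iii): making rigorous and uniform (in $t$ and $w$, hence in the conformal image $f_t\circ\phi$ which can be wild near the tip and near swallowed points) the estimate that $\frac{1}{P_t(w)}\int_{\Omega_t}m^2(y)P_t(y)G_t(w,y)\,dy\le c_0\overline m^2 R^2$. The cleanest way to get uniformity is to avoid the conformal image entirely and argue directly in $\Omega_t$: write $\int_{\Omega_t}P_t(y)G_t(w,y)\,dy = \EE^{(w)}_{\Omega_t}\big[\int_0^{\tau_{\Omega_t}}P_t(B_s)\,ds\big]$ for Brownian motion from $w$, divide by $P_t(w)$, and use that under the Doob $h$-transform by the positive harmonic function $P_t$ this becomes a genuine expected exit time of a diffusion in a domain contained in $B(0,R)$, hence at most $C R^2$; multiplying by $m^2\le\overline m^2$ and using $G_t^m\le G_t$ closes the estimate. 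This last bound on the expected exit time of the $P_t$-transformed process is the crux — it should follow from a comparison of the conditioned process with Brownian motion killed at $\partial\Omega_t$, noting that conditioning to exit near a fixed boundary point only shortens the lifetime in expectation, together with the standard $\EE_w[\tau_{B(0,R)}]\le C R^2$; this is exactly the computation carried out for constant mass in \cite{mLERW}, and I would cite and adapt it.
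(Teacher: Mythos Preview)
Your approach differs substantially from the paper's and contains a genuine gap in step (iv). The paper's proof is two lines: first, by the comparison/maximum principle for the operators $-\Delta+m^2$ and $-\Delta+\overline m^2$ (both applied to nonnegative functions with the same boundary data), one has $P_t^m(w)\ge P_t^{\overline m}(w)$, reducing immediately to constant mass; second, the constant-mass bound $P_t^{\overline m}(w)/P_t(w)\ge \exp(-c_0\overline m^2 R^2)$ is cited verbatim from \cite[Equation~(4.10)]{mLERW} (proved there at the discrete level with $\delta$-uniform constants and passed to the limit). You never isolate this comparison step, which is what makes the paper's argument so short.

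More importantly, your step (iv) does not work as written. From the additive estimate $1-P_t^m/P_t\le C\overline m^2R^2$ you only get $P_t^m/P_t\ge 1-C\overline m^2R^2$, which is vacuous once $C\overline m^2R^2\ge 1$. Your proposed fix --- ``$P_t^m\ge 0$ suffices since $e^{-c_0\overline m^2R^2}$ is tiny'' --- is false: $P_t^m\ge 0$ gives only $P_t^m/P_t\ge 0$, not any strictly positive lower bound, and the lemma demands a strictly positive one uniform in $t,w$. The exponential form is genuinely multiplicative and cannot be recovered from an additive occupation-time bound alone. The right route, which you actually gesture at in step (ii) before abandoning it, is the survival-probability identity $P_t^m(w)/P_t(w)=\EE^h_w\big[\exp\big(-\int_0^{\tau}m^2(B_s)\,ds\big)\big]$ under the $P_t$-Doob transform, together with a uniform bound of the form $\PP^h_w(\tau\le cR^2)\ge p_0>0$ (or exponential tails for $\tau$ at scale $R^2$) for the conditioned process in a domain contained in $B(0,R)$; this yields $\EE^h_w[e^{-\overline m^2\tau}]\ge p_0\,e^{-c\overline m^2R^2}$, and the constant $p_0$ can be absorbed into the exponent for $\overline m^2R^2$ bounded away from zero while your additive bound handles the small regime. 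Alternatively, and more economically, just do what the paper does: compare to constant mass and cite \cite{mLERW}.
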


\begin{proof}
We first observe that, almost surely, for any $t \geq 0$ and any $w \in \Omega_t$,
\begin{equation*}
    \frac{P_{t}^m(w)}{P_{t}(w)} \geq \frac{P_{t}^{\overline{m}}(w)}{P_{t}(w)}.
\end{equation*}
This follows from the fact that $P_t^m(w)$ is a non-negative massive harmonic function in $\Omega_t$ with mass $m \leq \overline{m}$ while $P_t^{\overline{m}}$ is a non-negative massive harmonic function in $\Omega_t$ with $\overline{m}$ and both have the same boundary values (in the distributional sense). One can then use \cite[Equation~(4.10)]{mLERW} to obtain the lower bound
\begin{equation*}
    \frac{P_{t}^{\overline{m}}(w)}{P_{t}(w)} \geq \exp(-c_0\overline{m}^2R^2)
\end{equation*}
where $c_0 > 0$ is an absolute constant. In \cite{mLERW}, this inequality is first shown for the discrete counterpart of the ratio $P_{t}^{\overline{m}}(w)/P_{t}(w)$ on the square grid of meshsize $\delta >0$ and, since the lower bound is uniform in $\delta > 0$, the inequality in the continuum follows from convergence of the discrete ratio to the continuum one. This convergence holds provided that the discrete domains converge to $\Omega_t$ in the Carath\'eodory topology. We emphasize that the proof in \cite{mLERW} thus does not rely on the fact that the dynamics is that of the massive loop-erased random walk.
\end{proof}

With this lemma in hand, let us now turn to the proof of Proposition \ref{prop_characterization}.

\subsection{Proof of Proposition \ref{prop_characterization}} \label{subsec_Proof}

We prove Proposition \ref{prop_characterization} through a sequence of claims, that we now state and will prove in turn.

\begin{claim}\label{claim_semimartingale}
The driving function $(W_t, t \geq 0)$ of $\phi(\gamma)$ is a semi-martingale. It can therefore be decomposed as $W_t = M_t + V_t$ where $(M_t, t \geq 0)$ is a local martingale and $(V_t, t \geq 0)$ is a process with bounded variations.
\end{claim}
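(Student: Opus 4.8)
The plan is to show that $(W_t)_{t\ge 0}$ is a semimartingale by exploiting the assumed martingale property of $h_t^m(z)$ together with the Loewner dynamics, following the classical strategy used to characterize chordal SLE$_4$ via the GFF-martingale observable (as in \cite{Dub_SLE, IG_1}), but being careful about the massive corrections. The key observation is that $h_t^m(z)$ can be written, via \eqref{harm_mass}, as $h_t(z) - \int_{\Omega_t} m^2(w)h_t(w)G_t^m(z,w)\,dw$, where $h_t$ is the massless harmonic function with the same boundary data. The massless part $h_t(z)$ has an explicit expression in terms of $f_t = g_t - W_t$: it equals $\tfrac{1}{\pi}\arg(f_t(\phi(z))) - \tfrac12$ (up to the correct normalization so that it is $\pm 1/2$ on the two boundary arcs), so its dependence on $W_t$ is completely transparent. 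The massive correction term involves $G_t^m$, which evolves by the massive Hadamard formula of Lemma \ref{lemma_massive_Hadamard}, and $h_t(w)$, which again depends on $W_t$ explicitly.

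First I would fix an interior point $z_0 \in \Omega$ (or rather its image $\phi(z_0) \in \HH$) in a suitable position — say with $\Re(\phi(z_0))$ chosen so that $h_t(z_0)$ is a non-degenerate function of $W_t$ near time $0$ — and consider the process $t \mapsto h_t^m(z_0)$ up to a stopping time $\tau$ that keeps $z_0$ well inside $\Omega_t$ and keeps $g_t(\phi(z_0)) - W_t$ bounded away from $0$ and $\infty$. On the event before $\tau$, the map $W \mapsto h^m$ built from the Loewner flow is smooth in $W$ and in $t$ in an appropriate sense, so that $h_t^m(z_0)$ is a semimartingale in $t$ \emph{provided} $W_t$ is; conversely, since $\partial h_t^m(z_0)/\partial W_t \neq 0$ (this derivative is controlled from below using that the massless Poisson-type derivative is nonzero and the massive correction is small, bounded via Lemma \ref{lemma_controlPm}), one can locally invert and express $W_t$ as a smooth function of $h_t^m(z_0)$ and $t$. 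Since $h_t^m(z_0)$ is a martingale by hypothesis, hence a semimartingale, and $t$ has bounded variation, $W_t$ is a semimartingale up to $\tau$. Letting $\tau$ exhaust $[0,\infty)$ (the stopping times can be chosen to increase to $\infty$, or at least to the swallowing time of $z_0$, and one varies $z_0$ to cover all of $[0,\infty)$) gives the claim on all of $[0,\infty)$, and the Doob--Meyer / Bichteler--Dellacherie decomposition then yields $W_t = M_t + V_t$.

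The main obstacle I anticipate is making rigorous the regularity of the map $W \mapsto h_t^m(z_0)$ together with the non-vanishing of its $W$-derivative, uniformly on the relevant stopping-time intervals, in the presence of the mass. For the massless part this is standard: $h_t(z_0) = \tfrac1\pi\arg\!\big(g_t(\phi(z_0)) - W_t\big) - \tfrac12$ is manifestly smooth with $\partial_{W}h_t = \tfrac1\pi\,\Im\big(1/(g_t(\phi(z_0))-W_t)\big) = P_t(z_0) > 0$ away from the swallowing time. For the massive correction $\int_{\Omega_t} m^2(w)h_t(w)G_t^m(z_0,w)\,dw$ one must differentiate under the integral sign in $W$ and in $t$; the $t$-derivative of $G_t^m$ is handled by Lemma \ref{lemma_massive_Hadamard}, the $t$-derivative of $h_t(w)$ and the motion of the domain $\Omega_t$ by the Loewner equation, and the $W$-derivative of both $h_t(w)$ and $G_t^m$ requires an analogue of Hadamard's formula with respect to the driving-point variable — all of which are finite because $m$ is bounded, $\Omega$ is bounded, and the Poisson kernels $P_t^m/P_t$ are bounded above and below by Lemma \ref{lemma_controlPm}. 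Once these estimates are in place, the implicit function theorem applied pathwise (or a direct Itô-type argument once semimartingality is known) gives the decomposition. A secondary technical point is to argue that the derivative bound is uniform enough that the inversion is valid on a full neighborhood of each time, so that the semimartingale property propagates along the whole curve; this is where the choice of $z_0$ and the exhausting sequence of stopping times must be made with care.
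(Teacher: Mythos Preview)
Your proposal has a genuine gap at the inversion step. The process $h_t^m(z_0)$ is \emph{not} a function of $(t,W_t)$ alone: through $g_t$ and through $G_t^m$ it depends on the entire path $(W_s)_{s\le t}$. So there is no map $(t,W)\mapsto h^m$ to which you can apply an implicit function theorem pathwise, and the sentence ``express $W_t$ as a smooth function of $h_t^m(z_0)$ and $t$'' does not have a meaning as stated. Your parenthetical ``or a direct It\^o-type argument once semimartingality is known'' is exactly the circularity one must avoid: to compute the $W$-derivative of the massive correction term (which involves $G_t^m$ and $h_t(w)$ for varying $w$) you would need It\^o's formula, which presupposes that $W$ is a semimartingale.

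The paper sidesteps this by first reducing to the \emph{massless} observable. Using the alternative representation \eqref{harm_mass_alt}, one writes
\[
h_t(z)=h_t^m(z)+\int_{\Omega} m^2(w)\,G_t(z,w)\,h_t^m(w)\,dw,
\]
with the \emph{massless} Green function $G_t$. The point is that $t\mapsto G_t(z,w)$ is monotone (hence of bounded variation) by the ordinary Hadamard formula, with no semimartingale assumption on $W$ needed; since $h_t^m(w)$ is a martingale by hypothesis, each product $G_t(z,w)h_t^m(w)$ is a semimartingale, and so is the integral. Thus $h_t(z)$ is a semimartingale. Now the massless observable \emph{does} admit an explicit algebraic inversion: writing $f_t(\phi(z))=X_t+iY_t$, one has $Y_t$ of bounded variation (it is $\Im g_t(\phi(z))$) and $X_t=Y_t/\tan(\pi(h_t(z)+\tfrac12))$, so $X_t$ is a semimartingale; finally $W_t=-X_t+X_0+\int_0^t \Re(2/f_s(\phi(z)))\,ds$ gives the claim. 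The missing idea in your approach is precisely this detour through $h_t$ via the massless $G_t$, which decouples the bounded-variation part from any a priori knowledge of $W$.
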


In view of Claim \ref{claim_semimartingale}, in order to prove Proposition \ref{prop_characterization}, we must identify the local martingale $(M_t, t\geq 0)$ and the process $(V_t, t\geq 0)$. To do this, we rely on the assumption that for each $z \in \Omega$, the process $(h_t^m(z), 0 \leq t \leq \tau_z)$ is a martingale. Indeed, by computing its Ito derivative and using its martingale property, we will obtain equations satisfied by the process $(V_t, t \geq 0)$ and the quadratic variation $(\langle M \rangle_t, t \geq 0)$ of $(M_t, t\geq 0)$ that will uniquely determine $(V_t, t \geq 0)$ and $(M_t, t \geq 0)$. Let us first compute the Ito derivative of $(h_t^m(z), 0 \leq t \leq \tau_z)$.

\begin{claim} \label{claim_SDE}
For each $z \in \Omega$, the process 
\begin{align*}
    Q_t^m(z) = \frac{1}{\pi} \Im \bigg( \frac{-1}{(g_t(\phi(z))-W_t)^2} \bigg) - \int_{\Omega_t} m^2(w) G_t^m(z,w) \frac{1}{\pi} \Im \bigg( \frac{-1}{(g_t(\phi(w))-W_t)^2} \bigg) dw, \quad t \leq \tau_{z},
\end{align*} is well defined, and the process $(h_t^m(z), t \leq \tau_z)$ satisfies the SDE
\begin{align} \label{SDE_h}
    dh_t^m(z) = &P_t^m(z)dM_t + P_t^m(z) dV_t + \frac{1}{2}Q_t^{m}(z)d\langle M \rangle_t -2Q^m_t(z)dt \nonumber \\
    &+ 2\pi P_t^m(z) \int_{\Omega_t}m^2(w)P_t^m(w)h_t(w) dw dt, \quad t \leq \tau_z.
\end{align}
\end{claim}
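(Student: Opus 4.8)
The starting point is the representation \eqref{harm_mass} in $\Omega_t$ (a bounded simply connected domain, with the $\pm\tfrac12$ data piecewise constant): $h_t^m(z) = h_t(z) - I_t(z)$, where
\[
 I_t(z) := \int_{\Omega_t} m^2(w)\, h_t(w)\, G_t^m(z,w)\, dw .
\]
The plan is to compute the It\^o differentials of $h_t(z)$ and of $I_t(z)$ separately, then recombine, recognising $P_t^m(z)$ and $Q_t^m(z)$ through their defining formulas. Throughout I use Claim \ref{claim_semimartingale} to write $W_t = M_t + V_t$, so that $f_t := g_t - W_t$ obeys $df_t(\zeta) = \tfrac{2}{f_t(\zeta)}\,dt - dM_t - dV_t$, and I abbreviate $P_t(z) := \tfrac1\pi\Im(-1/f_t(\phi(z)))$ and $Q_t(z) := \tfrac1\pi\Im(-1/f_t(\phi(z))^2)$ for the massless counterparts of $P_t^m(z),Q_t^m(z)$, so that by \eqref{def_Pm_t} and the definition of $Q_t^m$ in the statement one has $P_t^m(z) = P_t(z) - \int_{\Omega_t} m^2(w)P_t(w)G_t^m(z,w)\,dw$ and $Q_t^m(z) = Q_t(z) - \int_{\Omega_t} m^2(w)Q_t(w)G_t^m(z,w)\,dw$.

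First I would treat the massless observable. Writing $h_t(z) = F(f_t(\phi(z)))$ with $F$ the bounded harmonic function on $\HH$ carrying the $\pm\tfrac12$ boundary data to the appropriate sides (which, with the orientation of $\phi$ fixed by $\phi(a)=0,\phi(b)=\infty$, is $F(w) = -\tfrac12 + \tfrac1\pi\arg w$), It\^o's formula along $f_t(\phi(z))$ together with the elementary identities $-\partial_x F(w) = \tfrac1\pi\Im(-1/w)$, $\partial_x^2 F(w) = \tfrac1\pi\Im(-1/w^2)$ and $2\big(\partial_x F(w)\Re(1/w) + \partial_y F(w)\Im(1/w)\big) = -2\tfrac1\pi\Im(-1/w^2)$ (with $x=\Re w$, $y=\Im w$; note $\Im f_t$ is of bounded variation, so there is no second-order $y$-term) yields
\[
 dh_t(z) = P_t(z)\,dM_t + P_t(z)\,dV_t + \tfrac12 Q_t(z)\,d\langle M\rangle_t - 2 Q_t(z)\,dt , \qquad t \le \tau_z ,
\]
which is exactly the target SDE \eqref{SDE_h} with all masses erased --- the classical computation underlying the martingale characterisation of SLE$_4$.

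Next I would differentiate $I_t(z)$. The two inputs are Lemma \ref{lemma_massive_Hadamard}, by which $t\mapsto G_t^m(z,w)$ is (nonincreasing, hence) of bounded variation with $\partial_t G_t^m(z,w) = -2\pi P_t^m(z)P_t^m(w)$ --- so it has no martingale part and no cross-variation with $h_t(w)$ --- and the first-step SDE applied to $h_t(w)$ for each $w\in\Omega_t$. Differentiating under the integral sign (a stochastic Fubini) and discarding the contribution of the shrinking domain --- which vanishes because $G_t^m(z,\cdot)\equiv 0$ on $\partial\Omega_t$ while the region removed over $[t,t+dt]$ has area $O(dt)$ on which $G_t^m(z,\cdot)=O(\sqrt{dt})$ --- gives
\[
 dI_t(z) = \widetilde P_t\,(dM_t + dV_t) + \tfrac12\widetilde Q_t\,d\langle M\rangle_t - 2\widetilde Q_t\,dt - 2\pi P_t^m(z)\,\widetilde H_t\,dt ,
\]
where $\widetilde P_t = \int_{\Omega_t}m^2(w)P_t(w)G_t^m(z,w)\,dw$, $\widetilde Q_t = \int_{\Omega_t}m^2(w)Q_t(w)G_t^m(z,w)\,dw$ and $\widetilde H_t = \int_{\Omega_t}m^2(w)h_t(w)P_t^m(w)\,dw$. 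Subtracting, $dh_t^m(z) = dh_t(z) - dI_t(z)$, and using $P_t(z)-\widetilde P_t = P_t^m(z)$ and $Q_t(z)-\widetilde Q_t = Q_t^m(z)$, the $dM_t$ and $dV_t$ coefficients become $P_t^m(z)$, the $d\langle M\rangle_t$ coefficient becomes $\tfrac12 Q_t^m(z)$, and the $dt$ coefficient becomes $-2Q_t^m(z) + 2\pi P_t^m(z)\int_{\Omega_t}m^2(w)P_t^m(w)h_t(w)\,dw$, which is precisely \eqref{SDE_h}. Well-definedness of $Q_t^m(z)$ (equivalently finiteness of $\widetilde Q_t$) is checked separately: near the tip $\gamma(t)$ the Poisson-kernel-squared $Q_t(w)$ blows up, but $G_t^m(z,w)$ vanishes there at least linearly in the distance to $\partial\Omega_t$, and a short computation (in the $\HH$-picture, in polar coordinates around $0$) shows the resulting integrand is integrable --- an estimate in the spirit of \cite[Eq.~(4.6)]{mLERW}.

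The main obstacle is the rigorous justification of the second step: the interchange of the stochastic differential with $\int_{\Omega_t}(\cdot)\,dw$, which requires enough uniform integrability of the integrands against $d\langle M\rangle_t$ and $dt$; the control of the boundary contribution as $\Omega_t$ shrinks; and the handling of the apparent singularities of $P_t(w)$ and $Q_t(w)$ at $\gamma(t)$, where the decay of $G_t^m(z,\cdot)$ must be quantified against them. One also has to ensure the identity holds on $[0,\tau_z)$ --- localising along stopping times $\tau_z\wedge T_n$ if necessary --- with all a priori estimates made uniform over compacts via $\Omega\subset B(0,R)$ and $m\le\overline m$ (in the style of Lemma \ref{lemma_controlPm}).
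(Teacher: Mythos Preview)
Your decomposition $h_t^m(z)=h_t(z)-I_t(z)$ and the computation of $dh_t(z)$ are correct, and you correctly identify that subtracting $dI_t(z)$ would produce exactly \eqref{SDE_h}. The problem is the step you yourself flag as ``the main obstacle'': differentiating $I_t(z)$ under the integral sign requires a stochastic Fubini theorem to pull the $dM_t$-integral through $\int_{\Omega_t}(\cdot)\,dw$. The paper explicitly addresses this and rejects it: at this point in the argument nothing is known about the local martingale $(M_t)$ beyond its existence, so the moment or integrability conditions needed for stochastic Fubini cannot be verified. Your proposed bounds on $\widetilde P_t,\widetilde Q_t$ control the integrands in $w$, but the hypothesis of stochastic Fubini also involves integrability against $d\langle M\rangle_t$, and $\langle M\rangle_t$ is unknown (indeed, identifying $\langle M\rangle_t=4t$ is the whole point of the subsequent Claims \ref{claim_independence} and \ref{claim_MV}).

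The paper's workaround is to avoid stochastic Fubini altogether by rewriting $h_t^m(z)$ so that the stochastic dependence enters only through a \emph{scalar} argument to which one can apply the ordinary It\^o formula. Concretely, one writes $\tfrac1\pi\arg(f_t(\phi(\cdot))) = \int_{-\infty}^{0} P_{\HH}(x,f_t(\phi(\cdot)))\,dx$, applies deterministic Fubini, and after the change of variable $v=x+W_t$ obtains
\[
 h_t^m(z)=\int_{-\infty}^{W_t} P_{\HH}^{m_t}(v,g_t(\phi(z)))\,dv - \tfrac12 + \tfrac12\int_{\Omega_t}m^2(w)G_t^m(z,w)\,dw,
\]
where $P_{\HH}^{m_t}$ is a massive Poisson kernel in $\HH$ with mass $m_t$ depending on $t$ only through the deterministic Loewner flow. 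Now the $dW_t$-contribution comes purely from differentiating with respect to the \emph{upper limit} of a deterministic integral, giving $P_{\HH}^{m_t}(W_t,g_t(\phi(z)))=P_t^m(z)$ directly; the $d\langle M\rangle_t$-term arises from a second $x$-derivative of the same integrand (where one can differentiate under the $w$-integral by ordinary dominated convergence, since no stochastic integral is involved); and all remaining $t$-dependence (through $g_t$ and $G_t^m$) is of bounded variation and handled by Hadamard and the Loewner equation. In short, the trick is to trade the spatial integral carrying the stochastic part for a real-line integral whose upper limit is $W_t$, so that It\^o's formula applies without any stochastic interchange of integrals.
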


Since by assumption, for any $z \in \Omega$, $(h_t^m(z), 0 \leq t \leq \tau_z)$ is a martingale, we can deduce from Claim \ref{claim_SDE} that, almost surely, for any $t \geq 0$ and $z \in \Omega_t$,
\begin{equation} \label{eq_0mart}
    \int_{0}^{t} P_s^m(z)\bigg[ dV_s + 2\pi \int_{\Omega_s} m^2(w)P_s^m(w)h_s^m(w) dw ds \bigg] + \int_{0}^{t} Q_s^m(z) \bigg[\frac{1}{2}d\langle M \rangle_t -2dt \bigg] = 0.
\end{equation}
To identify $(V_t, t \geq 0)$ and $(\langle M \rangle_t, t \geq 0)$, we will use this equality evaluated at a well-chosen sequence of points and then take a limit. The next claim establishes the existence of this (subsequential) limit. 

\begin{claim}\label{claim_independence} Set
\begin{equation} \label{def_tildeV_A}
    \tilde V_t = V_t + 2\pi \int_{0}^{t} \int_{\Omega_s}m^2(w)P_s^m(w)h_s(w) dw ds \quad \text{and} 
    \quad A_t = \frac{1}{2} \langle M \rangle_t - 2t. 
\end{equation}
Fix $t > 0$ and consider the sequence of points $(z_n)_n = (\phi^{-1}(in))_n$. Then, almost surely, there exists a subsequence $(n(k))_k$ such that 
\begin{equation}\label{limPmQm}
     \mathcal{P}_{s}^{m}(b) := \lim_{k \to \infty} \frac{P_s^m(z_{n(k)})}{P_s(z_{n(k)})} \quad \text{ and } \quad \mathcal{Q}_{s}^{m}(b) := \lim_{n \to \infty} \frac{Q_s^m(z_{n( k)})}{P_s^m(z_{n(k)})} 
\end{equation}
exist, and moreover, 
\begin{align} \label{lim_SDE}
    \lim_{k \to \infty} \int_{0}^{t} n(k)P_{s}^{m}(z_{n( k)}) d\tilde V_{s} + \int_{0}^{t} n(k) Q_{s}^{m}(z_{n( k)}) d A_{s}
    = \int_{0}^{t} \mathcal{P}_{s}^{m}(b) d\tilde V_{s} + \int_{0}^{t} \mathcal{P}_{s}^{m}(b) \mathcal{Q}_{s}^{m}(b) d A_{s}.
\end{align}
\end{claim}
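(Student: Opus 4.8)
The plan is to extract the limit in two stages: first handle the pointwise ratios $P_s^m(z_n)/P_s(z_n)$ and $Q_s^m(z_n)/P_s^m(z_n)$, and then upgrade to convergence of the time integrals via dominated convergence. For the first stage, I would note that $z_n=\phi^{-1}(in)$ means $\phi(z_n)=in\to\infty$ vertically in $\HH$. The massless Poisson kernel $P_s(z_n)=\frac{1}{\pi}\Im(-1/f_s(\phi(z_n)))$ behaves, as $n\to\infty$, like $\frac{1}{\pi}\Im(-1/f_s(in))\sim \frac{1}{\pi n}$ up to the derivative factor coming from $f_s$ at $\infty$ (recall $f_s(z)=z-W_s+o(1)$ at $\infty$), so $nP_s(z_n)\to \frac1\pi$ and more generally $nP_s(z_n)$ converges to a finite positive limit uniformly for $s\in[0,t]$. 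Since $P_s^m(z_n)/P_s(z_n)\in[\exp(-c_0\overline m^2R^2),1]$ by Lemma \ref{lemma_controlPm} and the trivial upper bound, a diagonal/subsequence argument gives a subsequence $(n(k))$ along which, for a countable dense set of $s$, the ratio converges; joint continuity in $s$ of $P_s^m(z_n)/P_s(z_n)$ (which follows from continuity of the Loewner flow and of the massive Green function in the domain, cf. Lemma \ref{lemma_massive_Hadamard}) then promotes this to convergence for all $s\in[0,t]$, defining $\mathcal P_s^m(b)$. The same reasoning, using the explicit formula for $Q_s^m(z)$ and the expansion $\frac{1}{\pi}\Im(-1/(g_s(\phi(z_n))-W_s)^2)\sim \frac{1}{\pi n^2}$, shows $nQ_s^m(z_n)$ stays bounded and, after passing to a further subsequence, $Q_s^m(z_{n(k)})/P_s^m(z_{n(k)})$ converges to a finite limit $\mathcal Q_s^m(b)$.

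For the second stage, I would rewrite the integrands as $n(k)P_s^m(z_{n(k)}) = \big(n(k)P_s(z_{n(k)})\big)\cdot\frac{P_s^m(z_{n(k)})}{P_s(z_{n(k)})}$ and $n(k)Q_s^m(z_{n(k)}) = \big(n(k)P_s^m(z_{n(k)})\big)\cdot\frac{Q_s^m(z_{n(k)})}{P_s^m(z_{n(k)})}$. Each of these factors converges pointwise in $s$ along the subsequence, and each is bounded uniformly in $s\in[0,t]$ and $k$ by the estimates above (using $\Omega\subset B(0,R)$ and $m^2\le\overline m^2$ to bound the integral terms in $P_s^m$ and $Q_s^m$). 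Since $\tilde V$ has bounded variation on $[0,t]$ (it is $V$ plus a Lipschitz-in-$t$ term because the integrand $\int_{\Omega_s}m^2P_s^mh_s$ is bounded, using again $m^2\le\overline m^2$, $|h_s|\le 1/2$, and the finiteness of $\int_{\Omega_s}m^2P_s^m$) and $A$ likewise has bounded variation (it is $\frac12\langle M\rangle$ minus a linear term), I can apply the dominated convergence theorem for Lebesgue--Stieltjes integrals against the finite total-variation measures $|d\tilde V_s|$ and $|dA_s|$ on $[0,t]$. This yields exactly the claimed identity \eqref{lim_SDE}, with the limit of $n(k)P_s^m(z_{n(k)})$ absorbed into the notation $\mathcal P_s^m(b)$ (after rescaling so that $nP_s(z_n)\to 1$, which one may arrange, or simply carrying the constant).

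The main obstacle I anticipate is the measurability and uniform-control bookkeeping needed to make the subsequence extraction work \emph{simultaneously} for all $s\in[0,t]$ on a single almost-sure event: the naive approach gives, for each fixed $s$, an $s$-dependent subsequence, and one must use equicontinuity in $s$ (not just boundedness) of the ratios to diagonalize to an $s$-uniform subsequence. Establishing that equicontinuity rigorously requires quantitative control on how $P_s^m$, $Q_s^m$ and $G_s^m$ vary as the domain $\Omega_s$ evolves under the Loewner flow — essentially a modulus-of-continuity statement that should follow from Lemma \ref{lemma_massive_Hadamard} together with standard estimates on the half-plane capacity parametrization, but which needs to be spelled out. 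A secondary technical point is justifying that the ``$o(1)$'' terms in the expansions of $f_s$ and $g_s$ near $\infty$ are uniform in $s\in[0,t]$; this is where one uses that $W_s$ and the hulls $K_s$ stay bounded on the compact time interval $[0,t]$.
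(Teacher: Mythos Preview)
Your approach is essentially the same as the paper's: both extract a subsequence by bounding the ratios $P_s^m(z_n)/P_s(z_n)\in[\exp(-c_0\overline m^2R^2),1]$ and $Q_s^m(z_n)/P_s^m(z_n)$ (the latter via the estimate you allude to, which in the paper is \cite[Equation~(4.7)]{mLERW}), and then pass to the limit in the Stieltjes integrals by dominated convergence against the total-variation measures of $\tilde V$ and $A$, using the factorizations $n(k)P_s^m(z_{n(k)})=n(k)P_s(z_{n(k)})\cdot\frac{P_s^m}{P_s}$ and $n(k)Q_s^m(z_{n(k)})=n(k)P_s^m(z_{n(k)})\cdot\frac{Q_s^m}{P_s^m}$ together with the uniform-in-$s$ limits $nP_s(z_n)\to\mathrm{const}$ and $nQ_s(z_n)\to 0$.

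Where you differ is in being more explicit about the point you flag as the ``main obstacle'': extracting a \emph{single} subsequence along which the ratios converge for \emph{every} $s\in[0,t]$. The paper simply asserts this from uniform boundedness, without spelling out the diagonal/Arzel\`a--Ascoli step. Your proposed route (diagonal on a countable dense set of times, then upgrade via continuity in $s$ coming from Lemma~\ref{lemma_massive_Hadamard} and the Loewner flow) is exactly what is needed to make that assertion rigorous, and your remark about uniformity of the $o(1)$ terms in the expansion of $f_s,g_s$ near $\infty$ (via boundedness of $W_s$ and $K_s$ on $[0,t]$) is the right justification for the uniform convergence of $nP_s(z_n)$ and $nQ_s(z_n)$. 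In short, your proof sketch is correct and slightly more scrupulous than the paper's on the one genuinely delicate point; the normalization constant ($1/\pi$ versus $1$) is immaterial and, as you note, can simply be carried through.
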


Finally, based on Claim \ref{claim_independence}, we will be able to identify $(M_t, t \geq 0)$ and $(V_t, t \geq 0)$.

\begin{claim}\label{claim_MV}
$(M_s)_{s\ge 0}$ has the law of $2$ times a standard Brownian motion, and with probability one,
\begin{equation*}
V_s = -2\pi \int_{0}^{s} \int_{\Omega_r} m^2(w)P_r^m(w)h_r(w)dwdr \quad 
\end{equation*}
for all $s\ge 0$.
\end{claim}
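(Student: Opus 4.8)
The plan is to exploit the freedom in Claim \ref{claim_independence}: the identity \eqref{eq_0mart}, after the substitution \eqref{def_tildeV_A}, reads $\int_0^t P_s^m(z)\,d\tilde V_s + \int_0^t Q_s^m(z)\,dA_s = 0$ for every $z$ and every $t$, and dividing through by $P_s^m(z)$ (legitimate since $P_s^m$ is strictly positive, by Lemma \ref{lemma_controlPm}) and passing to the subsequential limit along $z_{n(k)}=\phi^{-1}(in(k))$ gives, by \eqref{lim_SDE},
\begin{equation*}
    \int_0^t d\tilde V_s + \int_0^t \mathcal{Q}_s^m(b)\,dA_s = 0 \qquad \text{for all } t\ge 0,\ \text{a.s.}
\end{equation*}
So first I would record this scalar identity and note that it forces $d\tilde V_s = -\mathcal{Q}_s^m(b)\,dA_s$ as signed measures on $[0,\infty)$. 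The key structural point is then that $\tilde V$ is a finite-variation \emph{continuous} process while $A_s = \tfrac12\langle M\rangle_s - 2s$ is also continuous and of finite variation, and the whole content of the claim is that both are identically zero.

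The mechanism to separate them is that $(M_t)$ is a local martingale, so the martingale part and the finite-variation part of the semimartingale $W_t = M_t + V_t$ are unique (Doob--Meyer / uniqueness of the canonical decomposition of a continuous semimartingale). Concretely, I would argue as follows. We have shown $\tilde V_s = -\int_0^s \mathcal{Q}_r^m(b)\,dA_r$, i.e. $\tilde V$ is absolutely continuous with respect to the (signed) measure $dA$. But $A_s = \tfrac12\langle M\rangle_s - 2s$ is a process of bounded variation which is \emph{adapted} and such that $A$ has a density with respect to Lebesgue measure wherever $d\langle M\rangle$ does; more importantly, $\langle M\rangle$ is nondecreasing, so the only way the identity $d\tilde V_s + \mathcal Q_s^m(b)\,dA_s=0$ can be consistent with $W=M+V$ being the canonical decomposition of a semimartingale is to pin down $\langle M\rangle$. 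Here is the cleanest route: plug the SDE for $h_t^m(z)$ from Claim \ref{claim_SDE} back in, but now use \emph{two} different test points $z, z'$. Subtracting the two resulting copies of \eqref{eq_0mart} and dividing by the (nonzero) difference of the $P$-coefficients isolates the bracket $\tfrac12 d\langle M\rangle_s - 2\,ds$ with a coefficient that is the difference of two ratios $Q/P$ evaluated at $z$ and $z'$; choosing the points so that these ratios are distinct (which holds generically, since $Q_t^m/P_t^m$ is a nonconstant function of the bulk point — it behaves like $\Im(f_t(\phi(z))^{-2})/\Im(f_t(\phi(z))^{-1})\to$ something $z$-dependent) forces $\tfrac12 d\langle M\rangle_s - 2\,ds = 0$, hence $\langle M\rangle_s = 4s$, and then $A_s\equiv 0$, and then $\tilde V_s \equiv 0$, which is exactly the two assertions of the claim once we recall $\tilde V$'s definition and that $W_t = M_t + V_t$. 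Finally $\langle M\rangle_s=4s$ together with Lévy's characterization (applied to the continuous local martingale $M$) gives that $M$ is $2$ times a standard Brownian motion.

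I expect the main obstacle to be the justification that the coefficient multiplying $dA_s$ can genuinely be made nonzero — i.e. that for (Lebesgue-a.e.) $s$ one can find bulk points whose ratios $Q_s^m(z)/P_s^m(z)$ differ, uniformly enough to divide. One must be careful that this is a statement about the limiting quantities $\mathcal{P}_s^m(b),\mathcal{Q}_s^m(b)$ along the specific sequence $\phi^{-1}(in)$ as well: if $\mathcal{Q}_s^m(b)$ along that one sequence happened to be exactly the constant $4/\mathcal P$ needed to kill $dA_s$, the two-point trick above is the remedy, but it requires re-running Claim \ref{claim_independence}'s compactness argument with a second family of points and checking the limits still exist and still satisfy \eqref{lim_SDE}; this bookkeeping, rather than any deep idea, is where the work lies. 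A secondary technical point is interchanging the limit $k\to\infty$ with the stochastic/Stieltjes integrals in \eqref{lim_SDE} near the (random) swallowing times $\tau_{z_{n(k)}}$, but since we fixed $t$ and the points $\phi^{-1}(in)$ escape to $\infty$ in $\HH$, we have $\tau_{z_{n(k)}}>t$ for all large $k$, so this is harmless; I would state this observation explicitly before taking limits.
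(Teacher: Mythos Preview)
Your overall strategy matches the paper's: obtain $d\tilde V_s = -\mathcal{Q}_s^m(b)\,dA_s$ from Claim~\ref{claim_independence}, then use \eqref{eq_0mart} at bulk points to force $A\equiv 0$, hence $\tilde V\equiv 0$, and conclude by L\'evy's characterisation. The detour through Doob--Meyer is a red herring (both $\tilde V$ and $A$ are already finite-variation, so uniqueness of the canonical decomposition of $W$ gives you nothing new here).

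Where your execution is imprecise is the ``subtract two copies of \eqref{eq_0mart} and divide by the difference of the $P$-coefficients'' step. These are Stieltjes integrals with $s$-dependent integrands, so there is no meaningful division; likewise ``dividing through by $P_s^m(z)$'' in your first display is a gloss (what actually happens is that the identity holds on every subinterval, so the signed measure $\mathcal P_s^m(b)\big(d\tilde V_s+\mathcal Q_s^m(b)\,dA_s\big)$ vanishes, and positivity of $\mathcal P_s^m(b)$ then kills the bracket). The paper's route avoids this and is simpler than your two-point scheme: once $d\tilde V_s=-\mathcal Q_s^m(b)\,dA_s$ is established as an identity of signed measures, substitute it back into \eqref{eq_0mart} at a \emph{single} fixed $z\in\Omega_t$ to obtain
\[
\int_{r_1}^{r_2}\big(Q_s^m(z)-P_s^m(z)\,\mathcal Q_s^m(b)\big)\,dA_s=0\qquad\text{for all }r_1<r_2.
\]
Since $z\mapsto Q_s^m(z)/P_s^m(z)$ is nonconstant (already at the massless level $Q_s/P_s$ depends nontrivially on $z$), varying $z$ forces $dA_s=0$. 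There is no need to rerun Claim~\ref{claim_independence} with a second limiting family; a fixed bulk point does the job and sidesteps the bookkeeping you were worried about.
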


\begin{proof}[Proof of Proposition \ref{prop_characterization}]
This follows immediately from Claims \ref{claim_semimartingale} and \ref{claim_MV}.
\end{proof}

\begin{proof}[Proof of Claim \ref{claim_semimartingale}]
For $t \geq 0$, let us first relate the massive harmonic function $h_t^m$ to the massless harmonic function $h_t$. As explained in \eqref{harm_mass}, for $z \in \Omega_t$, we have
\begin{equation} \label{hm_with0}
    h_t^m(z) = h_t(z) - \int_{\Omega_t} m^2(w)G_t^m(z,w)h_t(w) dw 
\end{equation}
where $h_t$ can be written explicitly as
\begin{equation*}
    h_t(z) = \frac{1}{\pi}\text{arg}(g_t(\phi(z))-W_t) - \frac{1}{2}= \frac{1}{\pi}\text{arg}(f_t(\phi(z))) - \frac{1}{2}.
\end{equation*}
Using the representation \eqref{harm_mass_alt} of massive harmonic functions, we have the equality
\begin{equation*}
    h_t^m(z) = h_t(z) - \int_{\Omega_t} m^2(w)G_t(z,w)h_t^m(w) dw
\end{equation*}
from which we deduce that
\begin{equation*}
    h_t(z) = h_t^m(z) + \int_{\Omega_t} m^2(w)G_t(z,w)h_t^m(w) dw.
\end{equation*}
Moreover, since $G_t$ is almost surely equal to zero outside $\Omega_t\subset \Omega$, this yields that for $t \geq 0$ and $z \in \Omega_t$,
\begin{equation} \label{eq_ht_semimart}
     h_t(z) = h_t^m(z) + \int_{\Omega} m^2(w)G_t(z,w)h_t^m(w) dw.
\end{equation}
By the (non-massive) Hadamard formula, almost surely, for any $w \in \Omega_t$, $\partial_t G_t(z,w) = -2\pi P_t(z) P_t(w)$ and therefore, almost surely, for any $ w \in \Omega_t$, the function $s \mapsto G_s(z,w)$ is decreasing on $[0,t]$. Since $(h_s^m(w), 0 \leq s \leq t)$ is a martingale by assumption, we deduce from this that, for any $w \in \Omega_t$, $(G_s(z,w)h_s^m(w), 0 \leq s \leq t)$ is a semi-martingale. This implies that the process 
\begin{equation*}
    \bigg(\int_{\Omega} m^2(w)G_t(z,w)h_t^m(w) dw, \, 0 \leq t \leq \tau_z \bigg)
\end{equation*}
is a semi-martingale as well. Again, since $(h_t^m(z), 0 \leq t \leq \tau_z)$ is a martingale by assumption, the equality \eqref{eq_ht_semimart} then shows that for each $z \in \Omega_t$, $(h_t(z), 0 \leq t \leq \tau_z)$ is a semi-martingale. Now, writing $f_t(z)=X_t(z)+iY_t(z)$, we have
\begin{align*}
    & Y_t(z) - Y_0 = \int_{0}^{t} \Im \bigg( \frac{2}{f_s(z)} \bigg) ds \quad \text{and} \\
    & X_t(z) - X_0 = \frac{Y_t(z)}{\tan(\pi(h_t(z)+1/2))}.
\end{align*}
The process $(Y_t(z), t \geq 0)$ has bounded variations since $Y_t(z)=\Im(f_t(z))=\Im(g_t(z))$. As we have just established that $(h_t(z), 0 \leq t \leq \tau_z)$ is a semi-martingale, this in turn implies that the process $(X_t(z), 0 \leq t \leq \tau_z)$ is a semi-martingale. Writing
\begin{equation*}
    W_t = -X_t(z) + X_0 + \int_{0}^{t} \Re \bigg( \frac{2}{f_s(z)}\bigg) ds,
\end{equation*}
then shows that $(W_t, t \geq 0)$ is a semi-martingale.
\end{proof}

\begin{proof}[Proof of Claim \ref{claim_MV} given Claims \ref{claim_SDE} and \ref{claim_independence}]
Let us deduce from Claim \ref{claim_MV} and Claim \ref{claim_SDE} that the processes $(\tilde V_s, s \leq t)$ and $(A_s, s \leq t)$ defined in \eqref{def_tildeV_A} are both $0$. We will then explain why this yields Claim \ref{claim_MV}. From the equality \eqref{eq_0mart}, we obtain that almost surely, for any $k \in \mathbb{N}$,
\begin{equation*}
    \int_{0}^{t} n(k) P_s^m(z_{n(k)}) d \tilde V_s + \int_{0}^{t} n(k)Q_s^m(z_{n(k)}) dA_s = 0,
\end{equation*}
where $(n(k))_k$ is the subsequence obtained in Claim \ref{claim_independence}. Together with \eqref{lim_SDE}, this implies that, almost surely,
\begin{equation} \label{eq_Ps_b0}
    \int_{0}^{t} \mathcal{P}_s^{m}(b)\big[d\tilde V_s +\mathcal{Q}_{s}^{m}(b)dA_s \big] = 0.
\end{equation}
The process $(\mathcal{P}_s^{m}(b), s \leq t)$ is almost surely strictly positive on $[0,t]$ due to Lemma \ref{lemma_controlPm}. Therefore, we obtain from \eqref{eq_Ps_b0} that, almost surely, for all $r_1, r_2 \in [0,t]$ and any measurable function $f: [0,t] \to \mathbb{R}$,
\begin{equation} \label{eq_Vs_As}
    \int_{r_1}^{r_2} f(s) d \tilde V_s = - \int_{r_1}^{r_2} f(s) \mathcal{Q}_s^{m}(b) dA_s.
\end{equation}
This equality applied to the function $f(s) = P_s^m(z)$ for some $z \in \Omega_t$ together with \eqref{eq_0mart} yields that for any $z \in \Omega_t$, almost surely, for all $r_1, r_2 \in [0,t]$,
\begin{equation*}
    \int_{r_1}^{r_2} P_s^m(z) \mathcal{Q}_{s}^{m}(b)dA_s = \int_{r_1}^{r_2} Q_s^m(z) dA_s.
\end{equation*}
This implies that $A_s = 0$ for all $s \in [0,t]$. From the equality \eqref{eq_Vs_As}, we then conclude that $\tilde V_s = 0$ for all $s \in [0,t]$ as well. In view of the definitions of $(\tilde V_s, 0 \leq s \leq t)$ and $(A_s, 0 \leq s \leq t)$ given in \eqref{def_tildeV_A}, this yields that, almost surely, for $s \in [0,t]$,
\begin{align*}
    &V_s = -2\pi \int_{0}^{s} \int_{\Omega_r} m^2(w)P_r^m(w)h_r(w)dwdr \quad \text{and}\\
    &\langle M \rangle_s = 4s.
\end{align*}
Since $M_0=0$ and $(M_t, t \geq 0)$ is a continuous process, by L\'evy's characterization of Brownian motion, this implies that $M_t=2B_t$, where $(B_t, t \geq 0)$ is a standard one-dimensional Brownian motion. Therefore, we can conclude that
\begin{equation*}
    W_t = 2B_t - 2\pi \int_{0}^{t} \int_{\Omega_s} m^2(w) P_s^m(w)h_s(w)dwds
\end{equation*}
which is the statement of Claim \ref{claim_MV}.
\end{proof}

\begin{proof}[Proof of Claim \ref{claim_independence} given Claim \ref{claim_SDE}]
Let us first show that there almost surely exists a subsequence $(n(k))_k$ such that \eqref{limPmQm} holds. In order to do so, we are going to first prove that the sequence
\begin{equation*}
    \bigg( \frac{P_s^m(z_n)}{P_s(z_n)} \bigg)_{n \in \mathbb{N}}
\end{equation*}
is almost surely bounded, which implies that there almost surely exists a subsequence $(n(p))_p$ such that $((P_s^m(z_{n(p)})/P_s(z_{n(p)}))_p$ converges. We will then show that the subsequence
\begin{equation*}
     \bigg( \frac{Q_s^m(z_{n(p)})}{P_s^m(z_{n(p)})}\bigg)_{p \in \mathbb{N}}
\end{equation*}
is almost surely bounded. It will follow from this that there almost surely exists a subsequence $(n(k))_k$ of $(n(p))_p$ such that $((Q_s^m(z_{n(k)})/P_s^m(z_{n(k)}))_k$ converges, and thus that \eqref{limPmQm} holds.

The almost sure boundedness of the sequence $(P_s^m(z_n)/P_s(z_n))_n$ simply follows from the fact that, almost surely, for any $s \in [0,t]$ and any $n \in \mathbb{N}$,
\begin{equation*}
    \frac{P_s^m(z_n)}{P_s(z_n)} \leq 1.
\end{equation*}
We thus obtain the almost sure existence of a subsequence $(n(p))_p$ along which $(P_s^m(z_n)/P_s(z_n))_n$ almost surely converges. Let us denote by $\mathcal{P}_s^m(b)$ the limit as $p \to \infty$ of $(P_s^m(z_{n(p)})/P_s(z_{n(p)}))_p$. Note that due to Lemma \ref{lemma_controlPm}, almost surely, for any $s \in [0,t]$, $\mathcal{P}_s^m(b)$ is strictly positive. Let us now show that $((Q_s^m(z_{n(p)})/P_s^m(z_{n(p)}))_p$ is almost surely bounded. By Lemma \ref{lemma_controlPm} and \cite[Equation~(4.7)]{mLERW}, almost surely, for any $p$ and any $s \in [0,t]$,
\begin{align*}
    \frac{\vert Q_s^m(z_{n(p))}) \vert}{P_s^m(z_{n(p)})} &\leq \exp(c_0\overline{m}^2R^2) \frac{\vert Q_s^m(z_{n(p)}) \vert}{P_s(z_{n(p)})}\\
    &\leq \exp(c_0\overline{m}^2R^2) \times \bigg( \frac{\vert Q_s(z_{n(p)}) \vert}{P_s(z_{n(p)})} + \frac{1}{P_s(z_{n(p)})} \int_{\Omega_s} m^2(w) \vert Q_s(w) \vert G_s^m(z_{n(p)},w) dw \bigg)\\
    &\leq \exp(c_0\overline{m}^2R^2) \times \bigg(\frac{\vert Q_s(z_{n(p)}) \vert}{P_s(z_{n(p)})} + C\overline{m}^2\int_{\Omega_s}P_s(w) dw \\
    &+C\overline{m}^2\frac{\vert Q_s(z_{n(p)}) \vert}{P_s(z_{n(p)})} \int_{\Omega_s} G_s(z_{n(p)},w) dw
    + C\overline{m}^2\text{vol}(\Omega_s)\frac{\vert Q_s(z_{n(p)}) \vert}{P_s(z_{n(p)})} \bigg)
\end{align*}
where $C>0$ is an absolute (non-random) constant and almost surely, for any $s \in [0,t]$, $\int_{\Omega_s} P_s(w) dw$ is finite by \cite[Corollary~4.6(i)]{mLERW}. Observe that almost surely,
\begin{equation} \label{lim_QP}
    \lim_{n \to \infty} \frac{Q_s(z_n)}{P_s(z_n)} = 0
\end{equation}
and the convergence is almost surely uniform on the interval $[0,t]$. Let $\eps > 0$. It follows from \eqref{lim_QP} that there almost surely exists $K > 0$ such that for any $p \geq K$ and any $s \in [0,t]$, $\frac{\vert Q_s(z_{n(p)}) \vert}{P_s(z_{n(p)})} \leq \eps$. Moreover, almost surely, for any $s \in [0,t]$, $\text{vol}(\Omega_s) \leq \text{vol}(\Omega)$ and almost surely, for any $p$ and any $s \in [0,t]$,
\begin{equation*}
    \int_{\Omega_s} G_s(z_{n(p)}, w) dw \leq \int_{\Omega} G_0(z_{n(p)},w) dw \leq \tilde C \operatorname{diam}(\Omega)^2,
\end{equation*}
where $\tilde C > 0$ is a (non-random) constant. Furthermore, the function $s \mapsto \int_{\Omega_s} P_s(w) dw$ is almost surely continuous on $[0,t]$ and therefore has an almost sure maximum $M(t)$ on $[0,t]$. This maximum is almost surely non-negative since $P_s$ is almost surely non-negative. Hence, we have that, almost surely, for any $p \geq K$ and any $s \in [0,t]$,
\begin{equation} \label{eq_QmP_final}
    \frac{\vert Q_s^m(z_{n(p)}) \vert}{P_s^m(z_{n(p)})} \leq \exp(c_0\overline{m}^2R^2)(\eps + C\overline{m}^2M(t) + C \tilde C \overline{m}^2 \operatorname{diam}(\Omega)^2+C\overline{m}^2\eps \text{vol}(\Omega)).
\end{equation}
This shows that almost surely, for any $s \in [0,t]$, $( Q_s^m(z_{n(p)}) / P_s^m(z_{n(p)}))_p$ is a bounded sequence. Therefore, there almost surely exists a subsequence $(n(k))_{k}$ such that for any $s \in [0,t]$, the limit as $ k \to \infty$ of $ Q_s^m(z_{n(k)})/P_s^m(z_{n(k))})$ exists. For $s \in [0,t]$, we denote this limit by $\mathcal{Q}_s^m(b)$. We have thus establish \eqref{limPmQm}.

To show that the limit on the right-hand side of \eqref{lim_SDE} exists and is such that the equality \eqref{lim_SDE} holds, we use the dominated convergence theorem. We first establish that, almost surely,
\begin{equation*}
    \lim_{k \to \infty} \int_{0}^{t} n(k)P_s^m(z_{n(k)}) d\tilde V_s = \int_{0}^{t} \mathcal{P}_s^m(b) d\tilde V_s
\end{equation*}
where $\mathcal{P}_s^m(b)$ is given by \eqref{limPmQm} and $(\tilde V_s, s \geq 0)$ is as defined in \eqref{def_tildeV_A}. The process $(\tilde V_s, 0 \leq s \leq t)$ is a process of bounded variations. It can thus be decomposed as $\tilde V_s = \tilde V_s^{+} - \tilde V_s^{-}$ where $\mu^{+}([0,s)) = \tilde V_s^{+}$ and $\mu^{-}([0,s)) = \tilde V_s^{-}$ are non-negative measures. We first observe that, almost surely,
\begin{equation} \label{cvg_P0n}
    \lim_{n \to \infty} nP_s(z_n) = 1
\end{equation}
and the convergence is almost surely uniform on the interval $[0,t]$. Let $\epsilon > 0$. The previous observation implies that there almost surely exists $K_0 \in \mathbb{N}$ such that almost surely, for any $k \geq K_0$ and all $s \in [0,t]$, $n(k)P_s(z_{n(k)}) \leq 1 +\eps$. Therefore, almost surely, for any $k \geq K_0$ and any $s \in [0,t]$,
\begin{equation*}
    n(k)P_{s}^m(z_{n(k)}) = n(k)P_{s}(z_{n(k)})\frac{P_{s}^m(z_{n(k)})}{P_{s}(z_{n(k)})} \leq 1 +\eps
\end{equation*}
where in the last inequality, we also used the fact that, almost surely, for any $s \in [0,t]$ and $w \in \Omega_t$, $P_s^m(w) \leq P_s(w)$. The right-hand side of the above inequality is integrable with respect to $d\tilde V_s^{+}$ and $d\tilde V_s^{-}$. Therefore, by the dominated convergence theorem, almost surely,
\begin{align*}
    & \lim_{k \to \infty} \int_{0}^{t}  n(k)P_{s}^m(z_{n(k)}) d\tilde V_s^{+} = \int_{0}^{t} \mathcal{P}_s^{m}(b) d\tilde V_s^{+} \quad \text{and}\\
    & \lim_{k \to \infty} \int_{0}^{t}  n(k)P_{s}^m(z_n(k)) d\tilde V_s^{-} = \int_{0}^{t} \mathcal{P}_s^{m}(b) d\tilde V_s^{-},
\end{align*}
which shows that, almost surely,
\begin{equation*}
    \lim_{k \to \infty} \int_{0}^{t}  n(k)P_{s}^m(z_{n(k)}) d\tilde V_s = \int_{0}^{t} \mathcal{P}_s^{m}(b) d\tilde V_s.
\end{equation*}
Let us now show that, almost surely,
\begin{equation*}
    \lim_{k \to \infty} \int_{0}^{t}  n(k)Q_{s}^m(z_{n(k)}) d A_s = \int_{0}^{t} \mathcal{Q}_s^{m}(b) \mathcal{P}_s^m(b) d A_s
\end{equation*}
where $\mathcal{Q}_s^m(b)$ is given by \eqref{limPmQm} and $(A_s, s \geq 0)$ is as defined in \eqref{def_tildeV_A}. As before, we decompose the process $(A_s, 0 \leq s \leq t)$ as $A_s = A_s^{+} - A_s^{-}$ in order to apply the dominated convergence theorem with respect to $dA_s^{+}$ and $dA_s^{-}$. We first observe that, almost surely,
\begin{equation*}
    \lim_{n \to \infty} nQ_s(z_n) = 0
\end{equation*}
and the convergence is almost surely uniform on the interval $[0,t]$. Let $\eps > 0$. The previous observation, together with the uniform convergence \eqref{cvg_P0n}, implies that there almost surely exists $K_1 \in \mathbb{N}$ such that almost surely for any $k \geq K_1$ and any $s \in [0,t]$, $n(k)\vert Q_s(z_{n(k)}) \vert \leq \eps$ and $n(k)P_s(z_{n(k)}) \leq 1+\eps$. As above, using \cite[Equation~(4.7)]{mLERW}, we then obtain that, almost surely, for any $k \geq K_1$ and any $s \in [0,t]$,
\begin{align*}
    n(k)\vert Q_s^m(z_{n(k)}) \vert &\leq n(k) \vert Q_s(z_{n(k)}) \vert + n(k) \int_{\Omega_s} m^{2}(w) \vert Q_s(w) \vert G_s^m(z_{n(k)},w) dw \\
    &\leq \eps + n(k) C\overline{m}^2 \int_{\Omega_s} P_s(z_{n(k)}) P_s(w) dw + C \overline{m}^2 \int_{\Omega_s} n(k) \vert Q_s(z_{n(k)}) \vert G_s(z_{n(k)}, w) dw \\
    &+ C\overline{m}^2n(k)\vert Q_s(z_{n(k)}) \vert \text{vol}(\Omega_s) \\
    &\leq \eps + C\overline{m}^2(1+\eps) \int_{\Omega_s}P_s(w) dw +C\overline{m}^2\eps \int_{\Omega_s} G_s(z_{n(k)},w) dw + C\overline{m}^2 \eps \text{vol}(\Omega_s) \\
    &\leq \eps + C\overline{m}^2(1+\eps) M(t) +C \tilde C \overline{m}^2\eps \operatorname{diam}(\Omega)^2 + C\overline{m}^2 \eps \text{vol}(\Omega)
\end{align*}
where $C, \tilde C >0$ are (non-random) constants and $M(t)$ is defined as \eqref{eq_QmP_final}. The right-hand side of this inequality is integrable on $[0,t]$ with respect to $dA_s^{+}$ and $dA_s^{-}$. Therefore, by the dominated convergence theorem, almost surely,
\begin{align*}
    & \lim_{k \to \infty} \int_{0}^{t} n(k)Q_{s}^{m}(z_{n(k)}) dA_s^{+} =  \int_{0}^{t} \mathcal{Q}_s^{m}(b) \mathcal{P}_s^m(b)dA_s^{+} \quad \text{and} \\
    & \lim_{k \to \infty} \int_{0}^{t} n(k)Q_s^{m}(z_{n(k)}) dA_s^{-} =  \int_{0}^{t} \mathcal{Q}_s^{m}(b) \mathcal{P}_s^m(b)dA_s^{-}
\end{align*}
which shows that, almost surely,
\begin{equation*}
    \lim_{k \to \infty} \int_{0}^{t} n(k)Q_s^{m}(z_{n(k)}) dA_s =  \int_{0}^{t} \mathcal{P}_s^m(b) \mathcal{Q}_s^{m}(b) dA_s.
\end{equation*}
Above, we have also used the decomposition
\begin{equation*}
    n(k)Q_s^m(z_{n(k)}) = n(k)P_s(z_{n(k)}) \times \frac{P_s^m(z_{n(k)})}{P_s(z_{n(k)})} \times \frac{Q_s^m(z_{n(k)})}{P_s^m(z_{n(k)})}
\end{equation*}
and the fact that the three factors in this product almost surely converge to $1$, $\mathcal{P}_s^m(b)$ and $\mathcal{Q}_s^m(b)$, respectively, as $k \to \infty$.
\end{proof}

\begin{proof}[Proof of Claim \ref{claim_SDE}]
Recall the equality \eqref{hm_with0} relating $h_t^m(z)$ to $h_t(z)$ for $z \in \Omega$ and $t \leq \tau_z$. In view of this equality, a natural strategy to compute the Ito derivative of $h_t^m(z)$ would be to first apply Ito's lemma to the product $G_t^m(z,w)h_t(w)$ and then use the stochastic Fubini theorem to switch the stochastic integral and the integral over $\Omega$. However, as we have a priori no information of the local martingale $(M_t, t \geq 0)$, checking that the conditions of the stochastic Fubini theorem hold is not possible. We therefore follow a different strategy. Note that if absolute continuity of the limiting curve with respect to SLE$_4$ could be established from the discrete, then one may be able to use the stochastic Fubini theorem to compute the Ito derivative of $h_t^m$, as in the case of massive loop-erased random walk and massive SLE$_2$ \cite{mLERW}.

Let us first use the equality \eqref{hm_with0} to express $h_t^m$ in terms of the bulk-to-boundary Poisson kernel $P_{\HH}: \mathbb{R} \times \HH \to \mathbb{R}_{+}$ in $\HH$ given by $P_{\HH}(x,z) = (1/\pi)\Im(-1/(z-x))$. As explained around \eqref{P_lambda}, $P_{\HH}(x,z)$ is the density at $x$ of the harmonic measure of $\mathbb{R}$ seen from $z$ and, with the notation of \eqref{P_lambda}, $P_{\HH}(0,z) = P_{\HH}(z)$. Using the fact that
\begin{equation*}
    \frac{1}{\pi} \text{arg}(f_t(z)) = \int_{-\infty}^{0} P_{\HH}(x, f_t(\phi(z))) dx,
\end{equation*}
we have that, for $z \in \Omega$ and $t \leq \tau_z$,
\begin{equation*}
    h_t^m(z) = \int_{-\infty}^{0} P_{\mathbb{H}} (x, f_t(\phi(z))) dx -\frac{1}{2} - \int_{\Omega_t} m^2(w) G_t^m(z,w) \bigg[ \int_{-\infty}^{0} P_{\mathbb{H}} (x, f_t(\phi(w))) dx - \frac{1}{2} \bigg] dw.
\end{equation*}
By Fubini-Tonelli theorem, since the function $(w,x) \mapsto m(w)^2 G_t^m(z,w) P_{\HH}(x,f_t(\phi(w)))$ is non-negative, we can switch the integral over $\Omega_t$ and $(-\infty, 0)$. This yields that
\begin{equation*}
    \int_{\Omega_t} m^2(w) G_t^m(z,w)  \int_{-\infty}^{0} P_{\mathbb{H}} (x, f_t(\phi(w))) dx dw = \int_{-\infty}^{0} \int_{\Omega_t} m^2(w) G_t^m(z,w) P_{\HH}(x,f_t(\phi(w))) dw dx.
\end{equation*}
Notice that the integral $\int_{\Omega_t}m(w)^2 G_t^m(z,w) P_{\HH}(x,f_t(\phi(w))) dw$ is finite since the only divergence is at $z=w$ where the integrand is bounded from above by a multiple of the (massless) Green function $G_{0}(z,w)$. Therefore, we obtain that
\begin{align}
    h_t^m(z) = &\int_{-\infty}^{0} \bigg[ P_{\HH}(x,f_t(\phi(z))) - \int_{\Omega_t}m^2(w)G_t^m(z,w) P_{\HH}(x,f_t(\phi(w))) dw \bigg] dx \label{hm_P_var}\\
    & -\frac{1}{2} + \frac{1}{2} \int_{\Omega_t} m^2(w) G_t^m(z,w) dw.
\end{align}
We now make the change of variable $u = g_t(\phi(w))$ in the first integral over $\Omega_t$. By conformal covariance of the massive Green function stated in \eqref{cov_Green}, we have, for any $z, w \in \Omega_t$,
\begin{equation*}
    G_t^m(z,w) = G_{\HH}^{m_t}(g_t(\phi(z)), g_t(\phi(w)))
\end{equation*}
where $G_{\HH}^{m_t}$ is the massive Green function in $\HH$ with mass $m_t$ given by, for $u \in \HH$,
\begin{equation} \label{def_mt}
    m_t(u)^2 = \vert ((g_t \circ \phi)^{-1})'(u) \vert^2 m((g_t \circ \phi)^{-1}(u))^2.
\end{equation}
Going back to the equality \eqref{hm_P_var} for $h_t^m(z)$, we see that the changes of variables $u = g_t(\phi(w))$ and $v=x+W_t$ in the integral \eqref{hm_P_var} yield that
\begin{align} \label{eq_Pmt}
    h_t^m(z) = &\int_{-\infty}^{W_t} P_{\HH}^{m_t}(v, g_t(\phi(z))) dv \\
    & -\frac{1}{2} + \frac{1}{2} \int_{\Omega_t} m^2(w) G_t^m(z,w) dw.
\end{align}
where we have set for $z \in \Omega_t$ and $x \in \mathbb{R}$,
\begin{equation} \label{def_Pm_upper}
    P_{\HH}^{m_t}(x, g_t(\phi(z))) := P_{\HH}(x, g_t(\phi(z))) - \int_{\HH} m_t^2(w) G_{\HH}^{m_t}(g_t(\phi(z)),w) P_{\HH}(x,w) dw.
\end{equation}
Observe that by using the same changes of variable as above in the definition of $P_t^m$, we obtain that
\begin{equation*}
    P_t^m(z) = P_{\HH}(W_t,g_t(\phi(z))) - \int_{\HH} m_t^2(w) G_{\HH}^{m_t}(g_t(\phi(z)),w) P_{\HH}(W_t, w) dw.
\end{equation*}
Next, we want to compute the It\^o derivative of $h_t^m(z)$ using the expression \eqref{eq_Pmt} and It\^o's lemma. Let us first write the result and then explain how each term arises. The It\^o derivative of $h_t^m(z)$ reads (we will justify the appearance of each term below):
\begin{align}
    dh_t^m(z) = &P_t(z) dM_t - \int_{\Omega_t}m^2(w)P_t(w)G_t^m(z,w) dw dM_t \label{term_Mt}\\
    &+ P_t(z) dV_t - \int_{\Omega_t}m^2(w)P_t(w)G_t^m(z,w) dw dV_t \label{term_Vt} \\
    &+ \frac{1}{2}Q_t(z) d\langle M \rangle_s -  \frac{1}{2} \int_{\Omega_t}m^2(w)Q_t(w)G_t^m(z,w) dw d\langle M \rangle_s \label{term_bracket} \\
    & - 2Q_t(z) dt -2 \int_{\Omega_t}m^2(w)Q_t(w)G_t^m(z,w) dw dt \label{term_gt} \\
    &+ 2\pi P_t(z) \int_{\Omega_t}m^2(w)P_t^m(w)h_t(w)dwdt. \label{term_extra}
\end{align}
where we have set, for $z \in \Omega_t$,
\begin{equation*}
    Q_t(z) = \frac{1}{\pi} \Im \bigg( \frac{-1}{(g_t(\phi(z))-W_t)^2} \bigg).
\end{equation*}
Let us start by explaining where the term \eqref{term_Mt} comes from. Using \eqref{def_Pm_upper}, we can write
\begin{align}
    \int_{-\infty}^{W_t} P_{\HH}^{m_t}(x, g_t(\phi(z))) dx = &\int_{-\infty}^{W_t} P_{\HH}(x, g_t(\phi(z))) dx \label{hm_summand_1} \\
    & - \int_{-\infty}^{W_t} \int_{\HH} m_t^2(w) G_{\HH}^{m_t}(g_t(\phi(z)),w) P_{\HH}(x,w) dw dx. \label{hm_summand_2}
\end{align}
Observe that the function
\begin{equation*}
    y \in \mathbb{R} \mapsto \int_{-\infty}^{y} P_{\HH}^{m_t}(x, g_t(\phi(z))) dx
\end{equation*}
is differentiable and its derivative at $y \in \mathbb{R}$ is
\begin{equation*}
    P_{\HH}^{m_t}(y, g_t(\phi(z))).
\end{equation*}
To evaluate this derivative at $y=W_t$ in order to compute the term depending on $dM_t$ in the stochastic derivative of $h_t^m$, we must be slightly careful since both $W_t$ and the integrand in \eqref{hm_summand_1}--\eqref{hm_summand_2} depend on $t$. Quite straightforwardly, the term \eqref{hm_summand_1} gives rise to the term $P_t(z) dM_t$, while the term \eqref{hm_summand_2} gives rise to the other term depending on $dM_t$ in \eqref{term_Mt}. Indeed, observe that from the expression of $m_t$ given in \eqref{def_mt}, no terms depending on $dM_t$ arise from $m_t$. Similarly, the derivative of $G_{\HH}^{m_t}(g_t(\phi(z)),w)$ in \eqref{hm_summand_2} does not yield any term depending on $dM_t$. Indeed, by the massive Hadamard formula of Lemma \ref{lemma_massive_Hadamard}, we have that
\begin{equation*}
    \partial_t G_{\HH}^{m_t}(u,v) = \partial_t G_t^m((g_t \circ \phi)^{-1}(u), (g_t \circ \phi)^{-1}(v)) -2\pi P_t^m(((g_t \circ \phi)^{-1}(u)) P_t^m(((g_t \circ \phi)^{-1}(v)) dt
\end{equation*}
Therefore, going back to \eqref{hm_summand_1}--\eqref{hm_summand_2}, we can conclude that the term depending on $dM_t$ in the It\^o derivative of $h_t^m(z)$ is
\begin{equation*}
    P_{\HH}^{m_t}(W_t, g_t(\phi(z)))dM_t = P_{\HH}(W_t, g_t(\phi(z))) dM_t - \bigg[ \int_{\HH} m_t^2(w) G_{\HH}^{m_t}(g_t(\phi(z)),w) P_{\HH}(W_t,w) dw \bigg] dM_t
\end{equation*}
which can also be rewritten as
\begin{equation*}
    P_{t}(z) dM_t - \bigg[ \int_{\Omega_t} m^2(w) G_{t}^{m}(z,w) P_{t}(w) dw \bigg] dM_t = P_t^m(z) dM_t.
\end{equation*}
The term \eqref{term_Vt} in the stochastic derivative of $h_t^m$ arises for exactly the same reasons as the term \eqref{term_Mt}. Let us now explain where the quadratic variation term \eqref{term_bracket} comes from. To compute it, we see that we must take the derivative of the function
\begin{equation} \label{int_Pm_X}
    x \in \mathbb{R} \mapsto \Im \bigg( \frac{-1}{g_t(\phi(z)) - x} \bigg) - \int_{\Omega_t} m^2(w) G_t^m(z,w) \Im \bigg( \frac{-1}{g_t(\phi(w)) - x} \bigg) dw
\end{equation}
and evaluate it at $x=W_t$. The above function is indeed differentiable and its derivative at $x \in \mathbb{R}$ is
\begin{equation} \label{int_Qm_X}
    \Im \bigg( \frac{-1}{(g_t(\phi(z)) - x)^2} \bigg) - \int_{\Omega_t} m^2(w) G_t^m(z,w) \Im \bigg( \frac{-1}{(g_t(\phi(w)) - x)^2} \bigg) dw
\end{equation}
Indeed, the integral
\begin{equation*}
    \int_{\Omega_t} m^2(w) G_t^m(z,w) \bigg \vert \Im \bigg( \frac{-1}{(g_t(\phi(w)) - x)^2} \bigg) \bigg \vert dw
\end{equation*}
is finite since the only divergence is at $z=w$ where the integral is bounded from above by a multiple of the Green function $G_0(z,w)$. One can thus differentiate under the integral sign, which yields the expression \eqref{int_Qm_X} for the derivative of the function \eqref{int_Pm_X}. Once again, to evaluate the derivative of this function at $x=W_t$, we must be careful since both $W_t$ and the integrand depend on $t$. However, for the same reasons as above, no term depending on $d\langle M \rangle_t$ arise from the integrand and therefore, we obtain that the quadratic variation term in the It\^o derivative of $h_t^m$ is
\begin{equation*}
    \frac{1}{2}\bigg[ \Im \bigg( \frac{-1}{(g_t(\phi(z)) - W_t)^2} \bigg) - \int_{\Omega_t} m^2(w) G_t^m(z,w) \Im \bigg( \frac{-1}{(g_t(\phi(w)) - W_t)^2} \bigg) dw \bigg] d\langle M \rangle_t,
\end{equation*}
which, using the definition of $Q_t$, is exactly the term \eqref{term_bracket}. Let us now turn to the terms \eqref{term_gt} and \eqref{term_extra}. These terms come from the time-derivative of $g_t$, whose expression is given by the Loewner equation, and of $G_t^m$, which can be computed using the massive Hadamard formula of Lemma \ref{lemma_massive_Hadamard}. Writing
\begin{equation*}
    h_t^m(z) = \frac{1}{\pi}\text{arg}(g_t(\phi(z))-W_t) -\frac{1}{2} - \int_{\Omega_t} m^2(w)G_t(z,w) \bigg( \frac{1}{\pi}\text{arg}(g_t(\phi(w))-W_t) -\frac{1}{2} \bigg) dw,
\end{equation*}
we can see that the first summand in the term \eqref{term_gt} simply comes from the first term in the above expression for $h_t^m(z)$.  As for the second summand in \eqref{term_gt} and the term \eqref{term_extra}, we observe that by the Loewner equation and the massive Hadamard formula of Lemma \ref{lemma_massive_Hadamard}, we have that, for $z,w \in \Omega_t$,
\begin{equation*}
    \partial_t \big( G_t^{m}(z,w)h_t(w)\big) = -2\pi P_t^m(z)P_t^m(w)h_t(w) + G_t^m(z,w)Q_t(w).
\end{equation*}
Moreover, the integral $\int_{\Omega_t}P_t^m(w)h_t(w)dw$ is well-defined since $ \vert h_t(w) \vert$ is bounded from above by $\frac{1}{2}$ and the integral $\int_{\Omega_t}P_t^m(w)dw$ is finite by \cite[Corollary~4.6(i)]{mLERW}, see also \cite[Remark~4.3]{mLERW}. We have also seen above that the integral $\int_{\Omega_t} G_t^m(z,w)Q_t(w) dw$ is well-defined. Therefore, using the fact that $G_t^{m}(z,w)=0$ for $w \notin \Omega_t$, we obtain that
\begin{align*}
    \partial_t \bigg( \int_{\Omega_t} m^{2}(w) G_t^{m}(z,w)h_t(w) dw \bigg) &= \partial_t \bigg( \int_{\Omega} m^2(w) G_t^{m}(z,w)h_t(w) dw \bigg)\\
    &= -2\pi P_t^m(z) \int_{\Omega_t} m^2(w) P_t^m(w)h_t(w) dw  \\
    &+ \int_{\Omega_t} m^2(w)G_t^m(z,w)Q_t(w)dw,
\end{align*}
which exactly corresponds to the second summand of \eqref{term_gt} and the term \eqref{term_extra}. Moreover, inspecting the above arguments, one can see that no other terms arise in the Ito derivative of $h_t^m(z)$. Therefore, from the definition of $Q_t^m(z)$ in the statement of Claim \ref{claim_SDE}, we see that we have obtained the desired SDE for $h_t^m(z)$ and the proof of Claim \ref{claim_SDE} is complete.
\end{proof}

\subsection{Reformulation and proof of Theorem \ref{theorem_intro}} \label{subsec_ccl}

To conclude, let us now give a rigorous formulation of Theorem \ref{theorem_intro} and show how to combine the results of the previous sections to prove it.

\begin{theorem} \label{theorem_final}
Let $(\Omega_{\delta}, a_{\delta}, b_{\delta})_{\delta}$ be a sequence of subgraphs of $\delta \mathbb{T}$ with two marked boundary points $a_{\delta}$ and $b_{\delta}$. Assign a sign to boundary vertices of $\Omega_{\delta}$ in the fashion described in Section \ref{subsec:def_mHE}. Assume that $(\hat \Omega_{\delta}, a_{\delta}, b_{\delta})$ converges in the Carath\'eodory topology to $(\Omega, a, b)$, where $\Omega \subset \mathbb{C}$ and $a,b \in \partial \Omega$ satisfy the assumptions of Section \ref{subsec:domain}. Let $m: \Omega \to \mathbb{R}_{+}$ be a bounded and continuous function. For $\delta >0$, assign a mass to each edge of $\Omega_{\delta}$ in the manner described in Section \ref{subsec:def_mHE}. Let $(\gamma_{\delta})_{\delta}$ be a sequence of random paths distributed according to $(\PP_{\delta}^{(\Omega,a,b,m)})_{\delta}$.

Then, in the notations of Section \ref{sec_annulus_cross}, the sequence $(\gamma_{\delta}^{\HH})_{\delta}$ converges weakly to a random curve $\gamma^{\HH}$ in the topologies \eqref{topo_1} -- \eqref{topo_3}. The driving function of $\gamma^{\HH}$ parametrized by half-plane capacity satisfies the SDE
\begin{equation} \label{SDE_ccl}
    dW_t = 2dB_t - 2\pi \bigg( \int_{\Omega_t} m^2(w)P_t^m(w)h_t(w)dw \bigg) dt, \quad W_0=0,
\end{equation}
where $(B_t, t \geq 0)$ is a standard one-dimensional Brownian motion and where $P_t^m$ and $h_t$ have been defined in Section \ref{sec_proof_charac}. This SDE has a unique weak solution whose law is absolutely continuous with respect to $(2B_t, t \geq 0)$.

Moreover, provided that for each $\delta > 0$, $\gamma_{\delta}$ is parametrized by the half-plane capacity of $\gamma_{\delta}^{\HH}$, the sequence $(\gamma_{\delta})_{\delta}$ converges weakly in $X(\mathbb{C})$ equipped with the metric $d_X$ to a random curve $\gamma$ that is almost surely supported on $\overline{\Omega}$ and has the same law as $\phi_{\Omega}^{-1}(\gamma^{\HH})$. This implies in particular that $\gamma$ is absolutely continuous with respect to SLE$_4$ in $\Omega$ from $a$ to $b$.
\end{theorem}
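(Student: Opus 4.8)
The proof assembles the three main ingredients of Sections~\ref{sec_tightness}, \ref{sec_martobs} and \ref{sec_charac}, together with a soft argument for the well-posedness of the limiting SDE. First, Proposition~\ref{prop_crossing} shows that the family $(\PP_{\delta}^{(\Omega,a,b,m)})_{\delta<\overline{m}_d^{-1}}$ satisfies Condition~\hyperref[condG3]{G.3}, so by \cite[Theorem~1.7]{Smirnov} the laws of $(\gamma_{\delta}^{\HH})_{\delta}$ are tight in the topologies \eqref{topo_1}--\eqref{topo_3}, and by \cite[Corollary~1.8]{Smirnov} and \cite[Theorem~4.2]{Karrila} the laws of $(\gamma_{\delta})_{\delta}$ are tight in $X(\mathbb{C})$, with $\phi_{\Omega}^{-1}(\gamma^{\HH})$ being the weak limit of $(\gamma_{\delta_k})_k$ along any subsequence $(\delta_k)_k$ for which $\gamma_{\delta_k}^{\HH}$ converges. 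Fix such a subsequence, with weak limits $\gamma^{\HH}$ and $\gamma$; passing to the Skorokhod representation, we may assume the convergence is almost sure, so that, almost surely and for every $t\ge 0$, $(\hat\Omega_{t(\delta_k)};a_{t(\delta_k)},b_{\delta_k})\to(\Omega_t;a_t,b)$ in the Carath\'eodory sense, where $\Omega_t=\Omega\setminus K_t$, $K_t$ is the hull of $\phi(\gamma)([0,t])$ and $a_t=\gamma(t)$. In particular $\gamma$ is non-self-crossing, its evolution is governed by the Loewner equation with a continuous driving function $W$, and the hypotheses of Proposition~\ref{prop_characterization} are met.

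Second, I would upgrade the discrete martingale property of Proposition~\ref{prop_martingale_obs} to the continuum. For each $\delta$, $(h_{\delta,n}^m(v))_n$ is a martingale for $(\mathcal{F}_{\delta,n})_n$; reparametrizing time by the half-plane capacity of $\gamma_{\delta}^{\HH}$ and applying the optional stopping theorem, $(h_{\delta,t(\delta)}^m(z))_{t}$ is a martingale for the reparametrized filtration, for every vertex $z$, and the bound $|h_{\delta,n}^m|\le\tfrac12$ yields uniform integrability for free. Fix $z\in\Omega$ and $0\le s<t$: on the event $\{t<\tau_z\}$ one has $z\in\Omega_{t(\delta_k)}$ for $k$ large, and Proposition~\ref{prop_cvg_martingale} gives $h_{\delta_k,t(\delta_k)}^m(z)\to h_t^m(z)$ almost surely, where $h_t^m$ is the massive harmonic function in $\Omega_t$ with boundary data $-\tfrac12$ on $\partial\Omega_t^-$ and $\tfrac12$ on $\partial\Omega_t^+$. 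Combining the almost sure convergence, the uniform bound, and the discrete martingale identity tested against bounded continuous functionals of the curve up to time $s$, one concludes that $(h_t^m(z),\,0\le t\le\tau_z)$ is a martingale for the filtration generated by $\gamma$, for every $z\in\Omega$. Proposition~\ref{prop_characterization} then identifies the driving function $W$ of $\phi(\gamma)$ as the weak solution of \eqref{driving_mSLE}, which is precisely \eqref{SDE_ccl}.

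Third, I would establish well-posedness of \eqref{SDE_ccl} and absolute continuity with respect to $2B_t$. The drift $D_t:=-2\pi\int_{\Omega_t}m^2(w)P_t^m(w)h_t(w)\,dw$ is bounded uniformly in $t$ and in the path: indeed $|h_t|\le\tfrac12$, $m^2\le\overline{m}^2$, $P_t^m(w)\le P_t(w)$ by comparison of massive and massless harmonic functions, and $\int_{\Omega_t}P_t(w)\,dw<\infty$ with a bound depending only on $\operatorname{diam}(\Omega)$ (using monotonicity of $P_t$ in $t$ and \cite[Corollary~4.6(i)]{mLERW}; Lemma~\ref{lemma_controlPm} is also available). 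With a bounded drift, Novikov's condition holds trivially, so Girsanov's theorem produces a unique weak solution to \eqref{SDE_ccl} whose law on driving functions is mutually absolutely continuous with that of $2B_t$; transferring this through the correspondence between driving functions and Loewner chains yields absolute continuity of $\gamma$ with respect to SLE$_4$ in $\Omega$ from $a$ to $b$. Finally, since \eqref{SDE_ccl} has a \emph{unique} weak solution, the law of $\gamma^{\HH}$ is the same for every convergent subsequence, and a standard subsequence argument promotes the subsequential convergence to convergence of the full sequence $(\gamma_{\delta}^{\HH})_{\delta}$, hence of $(\gamma_{\delta})_{\delta}$, completing the proof.

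The main obstacle is the second step: matching the intrinsic step-count parametrization of the discrete explorer with the half-plane-capacity parametrization of its conformal image, and controlling the observable near the tip $\gamma(t)$ and near the swallowing time $\tau_z$. Concretely, one must verify that the random time change $t\mapsto t(\delta_k)$ is well-behaved in the limit, so that the reparametrized discrete martingales converge to $h_t^m(z)$ itself rather than to a time-changed version, and that the Carath\'eodory convergence of the slit domains is strong enough, uniformly on compact time intervals, to justify passing the conditional-expectation identity to the limit. These are exactly the points that Proposition~\ref{prop_cvg_martingale} and the boundary estimates of \cite{Karrila} are designed to handle, but combining them cleanly with the optional stopping theorem requires care.
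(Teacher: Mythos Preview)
Your proposal follows essentially the same three-step structure as the paper's own proof: tightness via Proposition~\ref{prop_crossing} and the Kemppainen--Smirnov/Karrila machinery, passage to the limit of the martingale observable via Proposition~\ref{prop_cvg_martingale} (after Skorokhod), and identification of the driving function via Proposition~\ref{prop_characterization}, followed by well-posedness of the SDE to conclude full-sequence convergence.

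The one substantive difference is in the third step. You argue that the drift $D_t$ is \emph{uniformly bounded} in $t$ and in the path, citing ``monotonicity of $P_t$ in $t$''. That monotonicity is not correct: $P_t(w)$ depends both on the shrinking domain $\Omega_t$ and on the moving tip $\gamma(t)$, and it is not monotone in general (indeed, in the proof of Claim~\ref{claim_independence} the paper only uses that $s\mapsto\int_{\Omega_s}P_s(w)\,dw$ is continuous on compacts and bounds it by a \emph{random} maximum $M(t)$). The paper's route (Lemma~\ref{lemma_abscont_SLE}) avoids this issue entirely: rather than bounding $D_t$ pointwise, it bounds $\int_0^\infty |F_t^m|^2\,dt$ deterministically by writing $P_t^m(z)P_t^m(w)=-\tfrac{1}{2\pi}\partial_t G_t^m(z,w)$ via the massive Hadamard formula (Lemma~\ref{lemma_massive_Hadamard}) and telescoping the time integral to $\int_{\Omega\times\Omega}G_\Omega(z,w)\,dz\,dw<\infty$. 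This gives Novikov's criterion directly, without any claim on $\sup_t|D_t|$. Apart from this point, your argument and the paper's coincide.
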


\begin{proof}
By the results of Section \ref{sec_proof_crossing}, the sequence $(\gamma_{\delta}^{\HH})_{\delta}$ is tight in the topologies \eqref{topo_1} -- \eqref{topo_3}. Provided that for each $\delta > 0$, $\gamma_{\delta}$ is parametrized by the half-plane capacity of $\gamma_{\delta}^{\HH}$, this implies that $(\gamma_{\delta})_{\delta}$ is tight as well, in the space $X(\mathbb{C})$ equipped with the metric $d_X$. Let $(\gamma_{\delta_k})_k$ be a convergent subsequence and denote by $\gamma$ its weak limit. By Proposition \ref{prop_cvg_martingale}, for each $t \geq 0$, the corresponding subsequence of discrete massive harmonic functions $(h_{\delta_k, t(\delta_k)}^m)_k$ almost surely converges pointwise to $h_t^m$. Moreover, for each $\delta_k$ and any $v \in \operatorname{Int}(\Omega_{\delta_k}) \cup \partial \Omega_{\delta_k}$, $(h_{\delta_k, n}^m(v), n \geq 0)$ is a martingale for the filtration $(\mathcal{F}_{\delta_k,n})_n$. Therefore, we obtain that for any $z \in \Omega$, $(h_t^m(z), 0 \leq t \leq \tau_z)$ is a martingale for the filtration generated by $\gamma$. To conclude, we use the characterization result of Section \ref{sec_proof_charac}. Indeed, since for any subsequential limit $\gamma$ of $(\gamma_{\delta})_{\delta}$ and any $z \in \Omega$, $(h_t^m(z), 0 \leq t \leq \tau_z)$ is a martingale, Proposition \ref{prop_characterization} implies that the driving function of all subsequential limits of $(\gamma_{\delta}^{\HH})_{\delta}$ satisfies the SDE \eqref{SDE_ccl}. The fact that this SDE has a unique weak solution whose law is absolutely continuous with respect to $(2B_t, t \geq 0)$ is shown below in Lemma \ref{lemma_abscont_SLE}. The last part of the statement of Theorem \ref{theorem_final} is a consequence of \cite[Corollary~1.8]{Smirnov} and \cite[Theorem~4.2]{Karrila}. 
\end{proof}

\section{Massive Gaussian free field and massive SLE$_4$: level line coupling} \label{sec_coupling}

\subsection{Absolute continuity of massive SLE$_4$ with respect to SLE$_4$ and conformal covariance of massive SLE$_4$} \label{subsec_abs_cov}

Absolute continuity of the massive harmonic explorer with respect to the (non-massive) harmonic explorer is not straightforward to see at the discrete level, which explains why establishing tightness is more involved than in the case of massive loop-erased random walk \cite{mLERW}. However, in the continuum, absolute continuity of massive SLE$_4$ with respect to SLE$_4$ is easily shown, as pointed out by Makarov and Smirnov in \cite{off_SLE}. Here, we prove this fact for space-dependent mass, following the sketch of proof given in \cite[Section~3.2]{off_SLE} when the mass is constant. This implies in particular that massive SLE$_4$ shares many geometric properties of SLE$_4$.

\begin{lemma} \label{lemma_abscont_SLE}
    Let $\Omega \subset \mathbb{C}$ be a bounded, open and simply connected domain with two marked boundary points $a, b \in \partial \Omega$. Let $\alpha > 0$. With the same notations as in Section \ref{sec_proof_charac}, there exists a unique weak solution to the stochastic differential equation
    \begin{equation*}
        dW_t = 2dB_t - F_t^m dt, \quad W_0=0,
    \end{equation*}
    where, for $t \geq 0$,
    \begin{equation*}
        F_t^m := \alpha \int_{\Omega_t} m^2(w)P_t^m(w)h_t(w) dw.
    \end{equation*}
    This solution is absolutely continuous with respect to $(2B_t, t \geq 0)$. This implies that the massive SLE$_4$ Loewner chain $(f_t^m)_t$ with mass $m$ from $a$ to $b$ in $\Omega$ as defined in \eqref{driving_mSLE} is absolutely continuous with respect to the SLE$_4$ Loewner chain $(f_t)_t$ from $a$ to $b$ in $\Omega$. In particular, the hulls of massive SLE$_4$ are almost surely generated by a simple continuous curve $\gamma$ of Hausdorff dimension $3/2$. Moreover, $\gamma$ almost surely reaches its target point $b$ and does not intersect $\partial \Omega$, except at its endpoints.
\end{lemma}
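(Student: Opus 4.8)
The plan is to realise the massive SLE$_4$ driving function as a Girsanov tilt of $(2B_t,t\ge 0)$, so that local absolute continuity — and with it existence and weak uniqueness of a solution — comes essentially for free once the drift is controlled. Work on the canonical space of continuous paths $W$ with $W_0=0$, let $\mathbb P_0$ be the law of $(2B_t)$, and note that $t\mapsto F_t^m=F_t^m(W|_{[0,t]})$ is a progressively measurable functional: it is built from the Loewner chain $(f_t)_t$ driven by $W$, the massless harmonic function $h_t$ and the massive Poisson kernel $P_t^m$, all continuous functionals of $W|_{[0,t]}$. The decisive input is a deterministic bound $\sup_{t\ge 0}|F_t^m|\le C(\Omega,\overline m,\alpha)<\infty$ (or, if one prefers to be cautious, such a bound on each interval $[0,T]$, obtained from a.s.\ finiteness together with continuity in $t$ and compactness of $[0,T]$, exactly as $M(t)$ is produced in the proof of Claim~\ref{claim_independence}). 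Granting this, set $D_t=\exp\big(-\tfrac14\int_0^tF_s^m\,dW_s-\tfrac18\int_0^t(F_s^m)^2\,ds\big)$; boundedness of $F^m$ on $[0,T]$ makes Novikov's criterion hold, so $(D_t)_{t\le T}$ is a uniformly integrable martingale, $\mathbb Q^{(T)}:=D_T\,\mathbb P_0$ defines a consistent family, and under the resulting measure $\mathbb Q$ the coordinate process solves \eqref{driving_mSLE}. Weak uniqueness follows since the inverse tilt turns any weak solution into a Brownian motion with a determined law, and by construction $\mathbb Q|_{\mathcal F_t}\ll\mathbb P_0|_{\mathcal F_t}$ for every $t$.

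The bulk of the work is thus the drift estimate, and it rests on three facts: $|h_t|\le\tfrac12$ by the maximum principle for $-\Delta$; $0\le P_t^m\le P_t$, which is immediate from \eqref{def_Pm_t} because $P_t,G_t^m,m^2\ge 0$; and the uniform bound $\int_{\Omega_t}P_t(w)\,dw\le C(\Omega)$, which follows from \cite[Corollary~4.6(i)]{mLERW} — whose proof uses only $\Omega_t\subset\Omega\subset B(0,R)$, so the estimate does not deteriorate along the hulls. Together these give $|F_t^m|\le\alpha\int_{\Omega_t}m^2 P_t^m|h_t|\,dw\le\tfrac{\alpha\overline m^2}{2}\int_{\Omega_t}P_t(w)\,dw\le\tfrac{\alpha\overline m^2}{2}C(\Omega)$.

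Finally, one transfers the geometric properties. Because the Loewner chain is a deterministic measurable functional of its driving function, local absolute continuity of $W$ with respect to $2B$ upgrades to local absolute continuity of the massive SLE$_4$ Loewner chain $(f_t^m)_t$ with respect to the SLE$_4$ chain $(f_t)_t$. The statements ``the hulls up to time $T$ are generated by a simple continuous curve'', ``$\gamma((0,T])\cap\partial\Omega=\emptyset$'', and ``$\dim_H(\gamma[0,T])=3/2$'' are, for each fixed $T$, determined by the curve up to time $T$ and hence $\mathcal F_T$-measurable, so they hold $\mathbb Q$-a.s.\ since they hold a.s.\ for SLE$_4$ (see \cite[Chapter~5]{book_SLE}); taking a countable union over $T\in\mathbb N$ gives the corresponding statements on $[0,\infty)$, with $\dim_H(\gamma[0,\infty))=\sup_n\dim_H(\gamma[0,n])=3/2$. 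The one property not covered this way is transience (``$\gamma$ reaches $b$''), since $\{\gamma(t)\to b\}$ is not $\mathcal F_T$-measurable for any finite $T$; here one instead combines the continuum massive domain Markov property of $\gamma$ (conditionally on $\gamma([0,T])$ the future is massive SLE$_4$ in $\Omega_T$ from $\gamma(T)$ to $b$) with the Rohde--Schramm transience mechanism, using a Beurling-type estimate in the bounded domains $\Omega_t$ — comparable to the SLE$_4$ one by local absolute continuity on the relevant finite time windows — to show that the curve re-enters a fixed macroscopic neighbourhood far from $b$ only finitely often, by Borel--Cantelli.

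I expect the drift bound to be the main obstacle: everything hinges on controlling $\int_{\Omega_t}P_t(w)\,dw$ uniformly over the (possibly degenerating, slit) domains $\Omega_t$, which is exactly where the geometry of $\Omega$ and the behaviour of the conformal maps near the slit tip $\gamma(t)$ enter, and on checking that the relevant mLERW estimate is genuinely uniform rather than merely finite for each $t$. The transience step is a secondary difficulty, but a real one, since it cannot be obtained from bare absolute continuity and needs the domain Markov property in an essential way.
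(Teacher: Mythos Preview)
Your overall architecture (Girsanov/Novikov) matches the paper's, but the paper executes the key estimate differently, and the difference matters. Rather than bounding $\sup_t|F_t^m|$, the paper bounds $\int_0^\infty (F_t^m)^2\,dt$ directly and \emph{deterministically}. After writing
\[
(F_t^m)^2 \;\le\; C(\alpha,\overline m)\int_{\Omega_t\times\Omega_t} P_t^m(z)P_t^m(w)\,dz\,dw,
\]
the paper integrates in $t$, applies Fubini, and uses the massive Hadamard formula $\partial_t G_t^m(z,w)=-2\pi P_t^m(z)P_t^m(w)$ (Lemma~\ref{lemma_massive_Hadamard}) to telescope:
\[
\int_0^\infty (F_t^m)^2\,dt \;\le\; \frac{C}{2\pi}\int_{\Omega\times\Omega}\bigl(G_\Omega^m(z,w)-G_\infty^m(z,w)\bigr)\,dz\,dw \;\le\; \frac{C}{2\pi}\int_{\Omega\times\Omega} G_\Omega(z,w)\,dz\,dw \;<\;\infty.
\]
This is a nonrandom bound valid on the whole of $[0,\infty)$, so Novikov holds globally and one obtains absolute continuity of $\mathbb Q$ with respect to $\mathbb P_0$ on $\mathcal F_\infty$, not merely on each $\mathcal F_T$.

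This buys exactly the two things you struggle with. First, it sidesteps the question of whether $\int_{\Omega_t}P_t(w)\,dw$ is uniformly bounded over the slit domains $\Omega_t$: the paper never needs this (and indeed, elsewhere in the paper --- proof of Claim~\ref{claim_independence} --- only a random, $t$-dependent maximum $M(t)$ is used for that integral, with no claim of a deterministic uniform bound). Second, global absolute continuity makes transience immediate: $\{\gamma(t)\to b\}$ is $\mathcal F_\infty$-measurable and holds $\mathbb P_0$-a.s.\ for SLE$_4$, hence $\mathbb Q$-a.s.\ as well, with no need for a domain Markov/Borel--Cantelli argument. Your route, even granting the uniform drift bound, gives only $\int_0^T(F_t^m)^2\,dt\le C^2T$ and thus only local absolute continuity, which is why you are forced into the extra work on transience. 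The Hadamard telescoping is the one-line idea you are missing.
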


\begin{proof}
We adapt the strategy outlined in \cite{off_SLE}, taking into account the fact that here the mass is position-dependent and therefore, a priori, $G^m(z,w) \neq G^m(w,z)$. To show existence, uniqueness and absolute continuity with respect to $(2B_t, t \geq 0)$ of the solution to the above SDE, we apply Novikov's criterion to the drift term $F_t^m$. Since the mass function $m$ and the harmonic function $h_t$ are both almost surely bounded for any $t \geq 0$, we have that, almost surely, for any $t \geq 0$,
\begin{equation*}
    \vert F_t^m \vert \leq \alpha M \int_{\Omega_t} P_t(z) dz,
\end{equation*}
for some (non-random) constant $M >0$ depending on $\overline{m}^2$. We recall that the integral on the right-hand side is almost surely finite by \cite[Corollary~4.6]{mLERW}. Using the Fubini-Tonelli theorem and the massive Hadamard formula of Lemma \ref{lemma_massive_Hadamard}, the above inequality yields that, almost surely,
\begin{align*}
    \int_{0}^{\infty} \vert F_t^m \vert^2 dt &\leq \alpha M^2 \int_{0}^{\infty} \int_{\Omega_t \times \Omega_t} P_t^m(z)P_t^m(w) dzdw dt \\
    &= \alpha M^2 \int_{\Omega \times \Omega} \int_{0}^{\infty} \mathbb{I}_{z \in \Omega_t} \mathbb{I}_{w \in \Omega_t} P_t^m(z)P_t^m(w) dzdw \\
    &= (-2\pi)\alpha M^2 \int_{\Omega \times \Omega} \int_{0}^{\infty} \mathbb{I}_{z \in \Omega_t} \mathbb{I}_{w \in \Omega_t} \partial_t G_t^m(z,w) dzdw dt \displaybreak[1] \\
    &= \tilde M \int_{\Omega \times \Omega} G_{\Omega}^{m}(z,w) dzdw - \int_{\Omega_{\infty} \times \Omega_{\infty}} G_{\infty}^{m}(z,w) dzdw \\
    &\leq \tilde M \int_{\Omega \times \Omega} G_{\Omega}^{m}(z,w) dzdw \\
    &\leq \tilde M \int_{\Omega \times \Omega} G_{\Omega}(z,w) dzdw.
\end{align*}
Since $\Omega$ is bounded, the last (non-random) integral on the right-hand side is finite. Therefore, we obtain that
\begin{equation*}
    \EE\bigg[\exp \bigg( \frac{1}{2} \int_{0}^{\infty} \vert F_t \vert^2 dt \bigg)\bigg] \leq \exp(C)
\end{equation*}
for some constant $C >0$ depending on $\alpha$, $m$ and $\Omega$. This shows that Novikov's criterion holds and therefore that there exists a unique weak solution to the SDE
\begin{equation*}
    dW_t = 2dB_t - F_t^m dt, \quad W_0=0,
\end{equation*}
which is absolutely continuous with respect to $(2B_t, t \geq 0)$. The rest of the statement of Lemma \ref{lemma_abscont_SLE} follows from the corresponding properties of SLE$_4$, see e.g. \cite[Chapter~5]{book_SLE}.
\end{proof}

The next lemma shows that massive SLE$_4$ is conformally covariant. Note that as a consequence of this result, one can extend the definition of massive SLE$_4$ and its absolute continuity with respect to SLE$_4$ to the case of unbounded domains provided that the mass is inherited from a bounded domain via conformal mapping.

\begin{lemma} \label{lemma_conformal_cov}
Let $\Omega \subset \mathbb{C}$ be a bounded, open and simply connected domain with two marked boundary points $a, b \in \partial \Omega$. Let $\varphi: \Omega \to \tilde \Omega$ be a conformal map such that $\varphi(a)=\tilde a$ and $\varphi(b)= \tilde b$, where $\tilde a, \tilde b \in \partial \tilde \Omega$. Let $m:\Omega \to \mathbb{R}_{+}$ be a bounded and continuous function. If $\gamma$ has the law of a massive SLE$_4$ curve in $\Omega$ from $a$ to $b$ with mass $m$, then $\varphi(\gamma)$ has the law of a massive SLE$_4$ in $\tilde \Omega$ from $\tilde a$ to $\tilde b$ with mass $\tilde m: \tilde \Omega \to \mathbb{R}_{+}$ given by, for $\tilde w \in \tilde \Omega$,
\begin{equation} \label{def_mass_cov}
    \tilde m^2(\tilde w) = \vert (\varphi^{-1})'(\tilde w) \vert^2 m^{2}(\varphi^{-1}(\tilde w)).
\end{equation}
In other words, massive SLE$_4$ is conformally covariant.
\end{lemma}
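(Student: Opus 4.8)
The plan is to deduce conformal covariance from the martingale characterization of Proposition~\ref{prop_characterization}, the only analytic input being the conformal covariance \eqref{cov_Green} of the massive Green function. First I would observe that a massive SLE$_4$ curve $\gamma$ in $\Omega$ from $a$ to $b$ with mass $m$ itself satisfies the hypothesis of Proposition~\ref{prop_characterization}: its driving function is by definition $W_t=2B_t-2\pi\int_0^t\int_{\Omega_s}m^2(w)P_s^m(w)h_s(w)\,dw\,ds$, hence a continuous semimartingale with $M_t=2B_t$ and $dV_t=-2\pi\int_{\Omega_t}m^2(w)P_t^m(w)h_t(w)\,dw\,dt$, so the It\^o computation of Claim~\ref{claim_SDE} applies (it uses only that $W$ is a semimartingale) and, substituting these $M$ and $V$ into \eqref{SDE_h}, collapses to $dh^m_t(z)=2P^m_t(z)\,dB_t$; since $|h^m_t(z)|\le\tfrac12$, this bounded local martingale is a true martingale, for every $z\in\Omega$ and $t\le\tau_z$.

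Next I would check that the relevant objects push forward correctly under $\varphi$. By Lemma~\ref{lemma_abscont_SLE}, $\gamma$ meets $\partial\Omega$ only at $a$ and $b$, so extending $\varphi$ to a homeomorphism between the prime-end compactifications of $\Omega$ and $\tilde\Omega$ makes $\tilde\gamma:=\varphi(\gamma)$ a well-defined non-self-crossing curve in $\overline{\tilde\Omega}$ from $\tilde a$ to $\tilde b$, with $\varphi(\Omega_t)=\tilde\Omega\setminus\tilde\gamma([0,t])=:\widetilde\Omega_t$. The pointwise identity $\Delta(u\circ\varphi^{-1})=|(\varphi^{-1})'|^2\,(\Delta u)\circ\varphi^{-1}$ shows that $[-\Delta+\tilde m^2](u\circ\varphi^{-1})=|(\varphi^{-1})'|^2\,([-\Delta+m^2]u)\circ\varphi^{-1}$ with $\tilde m$ given by \eqref{def_mass_cov}; since $\varphi$ is orientation-preserving it maps $\partial\Omega_t^{\pm}$ onto $\widetilde\Omega_t^{\pm}$ (the clockwise/counterclockwise arcs from $\tilde\gamma(t)$ to $\tilde b$ together with the left/right sides of $\tilde\gamma([0,t])$). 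Hence, by uniqueness of massive harmonic functions with prescribed boundary data, $h^m_t\circ\varphi^{-1}$ is exactly the massive harmonic function $\widetilde h^{\tilde m}_t$ in $\widetilde\Omega_t$ with mass $\tilde m$ and boundary values $\mp\tfrac12$ on $\widetilde\Omega_t^{\mp}$; equivalently this follows from the representation \eqref{harm_mass} together with \eqref{cov_Green}. The same covariance gives $G^m_t\circ(\varphi^{-1}\times\varphi^{-1})=\widetilde G^{\tilde m}_t$, and, using also the conformal covariance of the (massive) bulk-to-boundary Poisson kernel recorded in Section~\ref{sec_intro_Mharm}, $P^m_t\circ\varphi^{-1}=\widetilde P^{\tilde m}_t$ and $h_t\circ\varphi^{-1}=\widetilde h_t$.

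It then remains to transfer the martingale property to $\tilde\gamma$ and invoke Proposition~\ref{prop_characterization} in $\tilde\Omega$. The filtrations generated by $\gamma$ and by $\tilde\gamma$ coincide. Writing $\tilde\phi:\tilde\Omega\to\HH$ for a conformal map with $\tilde\phi(\tilde a)=0$ and $\tilde\phi(\tilde b)=\infty$, the composition $\tilde\phi\circ\varphi\circ\phi^{-1}$ is a conformal automorphism of $\HH$ fixing $0$ and $\infty$, hence of the form $z\mapsto\lambda z$ for some $\lambda>0$; consequently $\tilde\phi(\tilde\gamma([0,t]))=\lambda\,\phi(\gamma([0,t]))$ has half-plane capacity $2\lambda^2 t$, so reparametrizing $\tilde\gamma$ by half-plane capacity is merely the deterministic change of time $s=\lambda^2 t$ (and $\tau_{\varphi(z)}=\lambda^2\tau_z$), under which martingales stay martingales. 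Thus $(\widetilde h^{\tilde m}_s(\tilde z),\,s\le\tau_{\tilde z})$ is a martingale for every $\tilde z\in\tilde\Omega$, and — when $\tilde\Omega$ is bounded, so that $\tilde m$ is a bounded continuous function and $(\tilde\Omega,\tilde a,\tilde b)$ satisfies the standing assumptions of Section~\ref{subsec:domain} — Proposition~\ref{prop_characterization} yields that the driving function of $\tilde\phi(\tilde\gamma)$ solves \eqref{driving_mSLE} with the tilded data, i.e. $\varphi(\gamma)$ is a massive SLE$_4$ in $\tilde\Omega$ from $\tilde a$ to $\tilde b$ with mass $\tilde m$. For unbounded $\tilde\Omega$ the statement is to be read as the definition of massive SLE$_4$ with conformally inherited mass, so nothing further is needed.

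The step I expect to require the most care is the second one: verifying that $\varphi$ extends so that the sided boundary data, the arcs $\partial\Omega_t^{\pm}$, and the swallowing times $\tau_z$ all correspond correctly under $\varphi$, and that the integrals defining $P^m_t$ and $\widetilde P^{\tilde m}_t$ remain finite after the change of variables $w\mapsto\varphi(w)$ — all of which is handled by \eqref{cov_Green} and the finiteness statements already established in Section~\ref{sec_intro_Mharm}. There is essentially no new SLE input: once Step~1 and the observation that $\tilde\phi\circ\varphi\circ\phi^{-1}$ is a dilation are in hand, the lemma reduces to Proposition~\ref{prop_characterization}.
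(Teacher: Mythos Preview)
Your proof is correct in spirit but takes a genuinely different route from the paper. The paper's argument is a direct change of variable in the drift: writing $P_t^m(z)$ via \eqref{Pm_lambda}, substituting $w=\varphi^{-1}(\tilde w)$ and using \eqref{cov_Green} gives $P_t^m(z)=P_{\tilde\Omega_t}^{\tilde m}(\varphi(z))$, and then conformal invariance of $h_t$ yields
\[
\int_{\Omega_t} m^2(w)P_t^m(w)h_t(w)\,dw=\int_{\tilde\Omega_t}\tilde m^2(\tilde w)P_{\tilde\Omega_t}^{\tilde m}(\tilde w)\tilde h_t(\tilde w)\,d\tilde w.
\]
Since $\phi\circ\varphi^{-1}$ is an admissible uniformizer of $\tilde\Omega$, the driving SDE of $\phi(\gamma)$ is literally the SDE defining massive SLE$_4$ in $(\tilde\Omega,\tilde a,\tilde b)$ with mass $\tilde m$, with no appeal to Proposition~\ref{prop_characterization} at all. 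Your approach instead runs the It\^o computation of Claim~\ref{claim_SDE} forwards to see that $h^m_t(z)$ is a martingale under massive SLE$_4$, pushes this martingale forward by $\varphi$, and then applies the martingale characterization in $\tilde\Omega$. This is a nice conceptual reformulation, and it shows that conformal covariance is really a corollary of the fact that the characterizing observable $h^m_t$ is itself conformally covariant.

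There is, however, one slip. You assert that ``when $\tilde\Omega$ is bounded, $\tilde m$ is a bounded continuous function'', but this is false in general: for a conformal map $\varphi$ between bounded simply connected domains, $|(\varphi^{-1})'|$ can blow up at the boundary (think of a target domain with an inward slit or cusp), so $\tilde m$ need not be bounded even when $\tilde\Omega$ is. Proposition~\ref{prop_characterization} genuinely uses the bound $m\le\overline m$ (via Lemma~\ref{lemma_controlPm} and the estimates from \cite{mLERW}), so your Step~3 only goes through when $\tilde m$ happens to be bounded. The fix is the same as for unbounded $\tilde\Omega$: when $\tilde m$ is unbounded, massive SLE$_4$ with mass $\tilde m$ is not defined a priori in the paper, and the lemma must be read as a definition in that case too --- which the paper's direct change-of-variable argument supplies without distinguishing cases. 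So your argument is complete once you replace ``$\tilde\Omega$ bounded'' by ``$\tilde m$ bounded'' as the dividing line between the proof case and the definitional case.
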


\begin{proof}
This simply follows from a change of variable in the drift term of massive SLE$_4$ in $\Omega$ from $a$ to $b$ with mass $m$. Recall that $\phi$ is a conformal map from $\Omega$ to $\HH$ such that $\phi(a)=0$ and $\phi(b)=\infty$. We have that
\begin{align*}
    P_t^m(z) = \frac{1}{\pi}\Im \bigg( \frac{-1}{f_t(\phi(z))}\bigg) - \int_{\Omega_t} m^2(w) G_t^m(z,w) \frac{1}{\pi}\Im \bigg( \frac{-1}{f_t(\phi(w))}\bigg) dw.
\end{align*}
Setting $z = \varphi^{-1}(\tilde z)$ and $w = \varphi^{-1}(\tilde w)$ and using the conformal covariance of the massive Green function (see \eqref{cov_Green}), we then obtain that
\begin{equation*}
    P_t^m(z) = \frac{1}{\pi}\Im \bigg( \frac{-1}{f_t((\phi \circ \varphi^{-1})(\tilde z))}\bigg) - \int_{\tilde \Omega_t} \tilde m^2(\tilde w) \tilde G_t^{\tilde m}(\tilde z, \tilde w) \frac{1}{\pi}\Im \bigg( \frac{-1}{f_t((\phi \circ \varphi^{-1})(\tilde w))}\bigg) d\tilde w
\end{equation*}
where $\tilde m$ is as in \eqref{def_mass_cov} and $\tilde G_t^{\tilde m}$ denotes the massive Green function with mass $\tilde m$ in $\tilde \Omega_{t} = \varphi(\Omega_t)$. In the notation of Section \ref{sec_intro_Mharm}, the right-hand side of the above equality is equal to $P_{\tilde \Omega_t}^{\tilde m}(\tilde z)$ since the map $\phi \circ \varphi^{-1}$ is a conformal map from $\tilde \Omega$ to $\HH$ such that $(\phi \circ \varphi^{-1})(\tilde a)=0$ and $(\phi \circ \varphi^{-1})(\tilde b)=\infty$. By conformal invariance of harmonic functions, this then yields that
\begin{equation} \label{drift_tilde}
    \int_{\Omega_t} m^2(w)P_t^m(w)h_t(w) dw = \int_{\tilde \Omega_t} \tilde m^2(\tilde w)P_{\tilde \Omega_t}^{\tilde m}(\tilde w)\tilde h_t(\tilde w) d\tilde w
\end{equation}
where $\tilde h_t$ is the harmonic function in $\tilde \Omega_t = \tilde \Omega \setminus \varphi(\gamma([0,t]))$ with boundary values $-1/2$ on the counter-clockwise oriented boundary arc $(\tilde a \tilde b)$ and the right side of $\varphi(\gamma([0,t]))$ and $+1/2$ on the clockwise oriented boundary arc $(\tilde a \tilde b)$ and the left side of $\varphi(\gamma([0,t]))$. The right-hand side of \eqref{drift_tilde} is exactly the drift term in the driving function of $\phi \circ \varphi^{-1}(\tilde \gamma)$ if $\tilde \gamma$ has the law of massive SLE$_4$ in $\tilde \Omega$ from $\tilde a$ to $\tilde b$ with mass $\tilde m$. This thus shows that $\varphi(\gamma)$ has indeed the law of massive SLE$_4$ in $\tilde \Omega$ from $\tilde a$ to $\tilde b$ with mass $\tilde m$.
\end{proof}

\subsection{Coupling of the massive Gaussian free field and massive SLE$_4$}

In this section, we show the existence of a coupling between a massive GFF and a massive SLE$_4$ curve stated in the introduction as Theorem \ref{coupling_intro}. This result is shown for space-dependent mass, so let us first define the massive GFF in this case. The definition is very similar to that in the constant mass case.

Let $\Omega \subset \mathbb{C}$ be an open, bounded and simply connected domain and let $a, b \in \partial \Omega$ be two boundary points. Fix a conformal map $\phi: \Omega \to \mathbb{H}$ such that $\phi(a)=0$ and $\phi(b)=\infty$. Let $m: \Omega \to \mathbb{R}_{+}$ be a continuous function bounded by some constant $\overline{m} > 0$ in $\Omega$. A massive GFF $\Gamma$ in $\Omega$ with mass $m$ and Dirichlet boundary conditions is a centered Gaussian process indexed by $\mathcal{C}_c^{\infty}(\Omega)$ with covariance given by, for $f,g \in \mathcal{C}_c^{\infty}(\Omega)$,
\begin{equation*}
    \EE[(\Gamma,f)(\Gamma,g)] = \int_{\Omega \times \Omega} f(z)G_{\Omega}^{m}(z,w)g(w) dz dw.
\end{equation*}
Above, as before, $G_{\Omega}^{m}$ is the massive Green function in $\Omega$ with mass $m$ or, in other words, this is the inverse in the sense of distributions of the operator $-\Delta + m^2$ with Dirichlet boundary conditions in $\Omega$. The massive GFF $\Gamma$ is absolutely continuous with respect to the massless GFF, with Radon-Nikodym derivative
\begin{equation*}
    \frac{1}{Z} \exp \bigg( -\frac{1}{2}\int_{\Omega} m^2(z) :\Gamma^{0}(z)^2: dz \bigg)
\end{equation*}
where $:\Gamma^{0}(z)^2:$ denotes the Wick-ordered square of the massless GFF $\Gamma^0$ and $Z$ is a normalization constant chosen so that the expectation of this random variable is one. Finiteness of the exponential term is established in \cite[Lemma~3.5]{mGFF_cont} when the mass $m$ is constant but the proof can easily be adapted to the case of a non-constant mass by viewing $m^2(z)dz$ as the volume form $\exp(\log(m^2(z)))dz$.

With these definitions in hand, we can now state the existence of a coupling between a massive GFF and a massive SLE$_4$ curve. In the statement below, we stress that the domain $\Omega$ satisfies the assumptions introduced at the beginning of this subsection, so that in particular $\Omega$ is bounded.

\begin{lemma} \label{lemma_coupling}
    Set $\lambda=\sqrt{\pi/8}$ and let $\gamma$ be a massive SLE$_4$ curve from $a$ to $b$ in $\Omega$ with drift term, for $t \geq 0$,
    \begin{equation*}
        F_t^m = 2\sqrt{2\pi} \int_{\Omega_t} m^2(w) P_t^m(w) \bigg(\frac{1}{\sqrt{2\pi}}\text{arg}(f_t(\phi(z)))-\lambda \bigg) dw.
    \end{equation*} 
    There exists a coupling $(\Gamma, \gamma)$ where $\Gamma$ is a massive GFF with mass $m$ in $\Omega$ such that the following domain Markov property is satisfied. Assume that $\tau$ is an almost surely finite stopping time for the filtration generated by $\gamma$ and define the following massive harmonic function 
    \begin{equation*}
        \eta_{\tau}^m : z \in \Omega_{\tau} \mapsto  \frac{1}{\sqrt{2\pi}} \text{arg}(f_{\tau}(\phi(z))) - \lambda - \int_{\Omega_{\tau}} m^2(w) G_{\tau}^m(z,w) \bigg(\frac{1}{\sqrt{2\pi}} \text{arg}(f_{\tau}(\phi(z))) - \lambda \bigg) dw.
    \end{equation*}
    In other words, $\eta_{\tau}^m$ is the unique massive harmonic function with mass $m$ in $\Omega \setminus \gamma([0,\tau])$ that has boundary conditions $-\lambda$ on the counter-clockwise oriented boundary arc $(ab)$ and the right side of $\gamma([0,\tau])$ and $\lambda$ on the clockwise oriented boundary arc $(ab)$ and the left side of $\gamma([0,\tau])$. Then, given $\gamma([0,\tau])$, the conditional law of $\Gamma+\eta_0^m$ restricted to $\Omega_{\tau}$ is that of the sum of a massive GFF in $\Omega_{\tau}$ with mass $m$ and Dirichlet boundary conditions plus the function $\eta_{\tau}^m$.
\end{lemma}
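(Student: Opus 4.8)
The plan is to build the coupling by sampling the curve first. Let $\gamma$ be a massive SLE$_4$ curve from $a$ to $b$ in $\Omega$ with the prescribed drift $F_t^m$; by Lemma~\ref{lemma_abscont_SLE} this exists, is absolutely continuous with respect to SLE$_4$, and is almost surely a simple curve that reaches $b$ and meets $\partial\Omega$ only at $a$ and $b$. Indeed, since the massive harmonic extension is linear, $\eta_t^m = 2\lambda\, h_t^m$, where $h_t^m$ is the massive harmonic function with boundary values $\pm 1/2$ from Section~\ref{sec_proof_charac}, and likewise the massless function $z\mapsto\frac{1}{\sqrt{2\pi}}\arg(f_t(\phi(z)))-\lambda$ equals $2\lambda\, h_t$; a short computation gives $4\sqrt{2\pi}\,\lambda = 2\pi$, so $F_t^m$ coincides with the drift of \eqref{driving_mSLE} and $\gamma$ is the massive SLE$_4$ of Theorem~\ref{theorem_final}. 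Then, conditionally on the whole curve $\gamma$, I would sample an independent zero-boundary massive GFF $\Gamma_\infty^0$ in the (at $t=\infty$ disconnected) domain $\Omega_\infty := \Omega\setminus\gamma$ and set $\Phi := \Gamma_\infty^0 + \eta_\infty^m$, so that $\Gamma := \Phi - \eta_0^m$ is a candidate massive GFF with Dirichlet boundary conditions (this is a well-defined random element of $H^{-1}(\Omega)$ since $\gamma$ is Lebesgue-null). It then remains to show that $\Gamma$ is indeed a massive GFF in $\Omega$ with mass $m$ and Dirichlet boundary conditions, and that the stated domain Markov property holds.

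The analytic core is the following, to be established for each fixed $\rho\in\mathcal{C}_c^\infty(\Omega)$; write $h_t(\rho) := \int_{\Omega_t}\eta_t^m(z)\rho(z)\,dz$ and $V_t(\rho) := \int_{\Omega_t\times\Omega_t}\rho(z)\,G_t^m(z,w)\,\rho(w)\,dz\,dw$ (extending $G_t^m$ by zero; since $\gamma$ is Lebesgue-null these integrals are unchanged when taken over $\Omega$). First, $(h_t(\rho))_{t\ge 0}$ should be a bounded continuous martingale for the filtration $(\mathcal{F}_t)_t$ generated by $\gamma$. To get this I would apply the Itô formula \eqref{SDE_h} of Claim~\ref{claim_SDE} — a pure Itô computation valid for any Loewner chain with semimartingale driving function, here $W_t = 2B_t - \int_0^t F_s^m\,ds$ so that $M_t=2B_t$, $d\langle M\rangle_t = 4\,dt$ and $dV_t = -F_t^m\,dt$. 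All the drift terms in \eqref{SDE_h} cancel precisely because of the choice of $F_t^m$, leaving $dh_t^m(z) = 2P_t^m(z)\,dB_t$, hence $d\eta_t^m(z) = 4\lambda\,P_t^m(z)\,dB_t$ for $t<\tau_z$. A stochastic Fubini argument then yields $dh_t(\rho) = \big(4\lambda\int_{\Omega_t}P_t^m(z)\rho(z)\,dz\big)dB_t$; the interchange is legitimate here (unlike in Claim~\ref{claim_SDE}) because the martingale part of $W$ is explicit, using $\int_{\Omega_t}P_t^m \le \int_{\Omega_t}P_t < \infty$ from \cite[Corollary~4.6]{mLERW} and the compact support of $\rho$, and boundedness of $h_t(\rho)$ follows from $|\eta_t^m|\le\lambda$ (killed Brownian motion representation). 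Second, $h_t(\rho)^2 + V_t(\rho)$ should be a martingale: integrating the massive Hadamard formula (Lemma~\ref{lemma_massive_Hadamard}) against $\rho(z)\rho(w)$ gives $\partial_t V_t(\rho) = -2\pi\big(\int_{\Omega_t}P_t^m(z)\rho(z)\,dz\big)^2$, while $d\langle h(\rho)\rangle_t = 16\lambda^2\big(\int_{\Omega_t}P_t^m\rho\big)^2\,dt$, and $16\lambda^2 = 2\pi$ — exactly the identity that pins down $\lambda = \sqrt{\pi/8}$ — so $d\langle h(\rho)\rangle_t = -dV_t(\rho)$.

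Given these two facts, the key device is the complex process $N_t^\rho := \exp\!\big(i\,h_t(\rho) - \tfrac12 V_t(\rho)\big)$: by Itô's formula $dN_t^\rho = i N_t^\rho\, dh_t(\rho)$, so $N^\rho$ is a continuous martingale bounded in modulus by $1$ (as $V_t(\rho)\ge 0$). Thus, for every almost surely finite stopping time $\tau$ — and for the limit $t\to\infty$, where $h_t(\rho)\to h_\infty(\rho)$ and $V_t(\rho)\downarrow V_\infty(\rho)$ — optional stopping gives $\EE[N_\infty^\rho\mid\mathcal{F}_\tau] = N_\tau^\rho$. On the other hand, by construction $(\Phi,\rho)$ conditionally on $\gamma$ is Gaussian with mean $h_\infty(\rho)=(\eta_\infty^m,\rho)$ and variance $V_\infty(\rho)=\int\!\int\rho\,G_{\Omega_\infty}^m\rho$, so $\EE[e^{i(\Phi,\rho)}\mid\gamma] = N_\infty^\rho$, whence $\EE[e^{i(\Phi,\rho)}\mid\mathcal{F}_\tau] = \exp\!\big(i\,h_\tau(\rho) - \tfrac12 V_\tau(\rho)\big)$ by the tower property. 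Taking $\tau=0$ and letting $\rho$ range over $\mathcal{C}_c^\infty(\Omega)$ (and over linear combinations $\sum_j\xi_j\rho_j$) identifies the law of $\Phi$ as that of a massive GFF with boundary data $\eta_0^m$, i.e.\ $\Gamma$ is a massive GFF with Dirichlet boundary conditions. For general almost surely finite $\tau$ and $\rho$ supported in $\Omega_\tau$, one has $h_\tau(\rho) = (\eta_\tau^m,\rho)$ and $V_\tau(\rho) = \int\!\int\rho\,G_{\Omega_\tau}^m\rho$, so the displayed conditional characteristic function is exactly that of $(\Gamma_{\Omega_\tau}^0 + \eta_\tau^m,\rho)$ for a zero-boundary massive GFF $\Gamma_{\Omega_\tau}^0$ in $\Omega_\tau$; since $\eta_\tau^m$ is $\mathcal{F}_\tau$-measurable, running over all $\rho$ compactly supported in $\Omega_\tau$ (exhausting $\Omega_\tau$ by compacts) gives the claimed conditional law of $\Phi|_{\Omega_\tau}$ given $\gamma([0,\tau])$ and the conditional independence of the zero-boundary part from $\eta_\tau^m$.

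I expect the main obstacle to be two-fold. The conceptual one is the passage from "$(\Phi,\rho)$ Gaussian conditionally on the \emph{entire} curve" to "Gaussian with the correct conditional parameters given $\mathcal{F}_\tau$", which is exactly what the exponential martingale $N^\rho$ accomplishes and is the structural heart of the Schramm--Sheffield-type argument; once $h_t(\rho)$ and $h_t(\rho)^2+V_t(\rho)$ are known to be martingales this is quick, so the real work is the Itô computation (largely done in Claim~\ref{claim_SDE}) and the cancellation of drifts, which depends on having the precise drift $F_t^m$. The technical one is the bookkeeping: justifying the stochastic Fubini interchange and differentiation under the integral sign in $\partial_t V_t(\rho)$ (routine given the integrability of the massive Poisson kernel and Green function in \cite{mLERW} and the compact support of $\rho$, and easier than in Claim~\ref{claim_SDE} because $M_t=2B_t$ is explicit), the random support issue for $\Omega_\tau$ (compact exhaustion), and the continuity/convergence in $t$ of $\eta_t^m$, $G_t^m$ and $V_t(\rho)$ underlying the $t\to\infty$ limits (from Carathéodory convergence of $\Omega_t$ along $\gamma$ plus dominated convergence, using $|\eta_t^m|\le\lambda$). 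Finally, the extension to conformal images, and thence to suitable unbounded domains, would follow by combining conformal covariance of the massive GFF with Lemma~\ref{lemma_conformal_cov}.
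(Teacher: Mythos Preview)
Your proposal is correct and follows essentially the same strategy as the paper's proof: sample the curve, then a zero-boundary massive GFF in the complement, and verify the law via the martingale property of $(\eta_t^m,\rho)$ together with the massive Hadamard formula identifying its quadratic variation as $-dV_t(\rho)$. The only cosmetic difference is that you package the final identification via the complex exponential martingale $N_t^\rho$ (characteristic functions), whereas the paper uses the real Laplace transform $\EE[\exp(-\mu(\Gamma,\varphi))]$; both are equivalent here, and your more explicit treatment of stochastic Fubini and the $t\to\infty$ limits simply spells out what the paper leaves implicit.
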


Let us point out that
 \begin{equation*}
        F_t^m = 2\pi \int_{\Omega_t} m^2(w) P_t^m(w) \bigg(\frac{1}{\pi}\text{arg}(f_t(\phi(z)))-\frac12\bigg) dw=2\pi \int_{\Omega_t} m^2(w) P_t^m(w) h_t(w) \, dw,
\end{equation*} 
that is, $F_t^m$ is the same drift term as the one appearing in Lemma \ref{lemma_abscont_SLE} with $\alpha=2\pi$. We have chosen to write $F_t^m$ in this form to emphasize that it can be expressed in terms of the harmonic function
\begin{equation*}
    z \in \Omega_t \mapsto \bigg(\frac{1}{\sqrt{2\pi}}\text{arg}(f_t(\phi(z)))-\lambda \bigg).
\end{equation*}
This function corresponds to the harmonic function appearing in the coupling between a (massless) GFF in $\Omega$ and a SLE$_4$ curve in $\Omega$ from $a$ to $b$. The function $\eta_t^m$ of Lemma \ref{lemma_coupling} is its massive version.

Moreover, we also note that by conformal covariance of massive SLE$_4$, see Lemma \ref{lemma_conformal_cov}, and of the massive GFF (which follows from the conformal covariance of the massive Green function \eqref{cov_Green}), Lemma \ref{lemma_coupling} can be extended to unbounded domains with appropriate mass functions. These mass functions are of the form \eqref{def_mass_cov}, that is are inherited from a bounded domain via conformal mapping.

\begin{proof}[Proof of Lemma \ref{lemma_coupling}]
    The proof goes along the same lines as in the massless case, see e.g. \cite[Proposition~2.2.7]{Wu_level}. Let $\eta$ be the continuous harmonic function in $\Omega$ with boundary conditions $-\lambda$ on the boundary arc $(ab)$ oriented clockwise and $\lambda$ on the boundary arc $(ba)$ oriented clockwise. More explicitly, for $z \in \Omega$,
    \begin{equation*}
        \eta(z) = \frac{1}{\sqrt{2\pi}} \text{arg}(\phi(z)) - \lambda.
    \end{equation*}
    For $t \geq 0$ and $z$ such that $\tau_z > t$, set
    \begin{equation*}
        \eta_t(z) = \frac{1}{\sqrt{2\pi}} \text{arg}(f_t(\phi(z))) - \lambda. 
    \end{equation*}
    Let $\eta^{m}$ be the massive harmonic function in $\Omega$ with the same boundary values as $\eta$, that is
    \begin{equation*}
        \eta^{m} = \eta(z) - \int_{\Omega} m^2(w) \eta(w) G_{\Omega}^{m}(z,w) dw.
    \end{equation*}
    For $t \geq 0$, let $\eta_t^{m}$ be the massive harmonic function in $\Omega_t$ with boundary values $-\lambda$ on the left side of $\gamma([0,t])$ and the clockwise-oriented boundary arc $(ab)$ and $\lambda$ on the right side of $\gamma([0,t])$ and the clockwise-oriented boundary arc $(ba)$. That is, for $z \in \Omega$ such that $\tau_z > t$,
    \begin{equation*}
        \eta_t^{m}(z) = \eta_t(z) - \int_{\Omega_t} m^2(w) \eta_t(w) G_{t}^{m}(z,w) dw. 
    \end{equation*}
    Fix $z \in \Omega$ and let us show that $t \mapsto \eta_t^{m}(z)$ is a continuous martingale until the possibly infinite stopping time $\tau_z$. Indeed, by the computations done in the proof of Claim \ref{claim_SDE}, $\eta_t^{m}(z)$ satisfies the SDE
    \begin{equation} \label{SDE_mharm_GFF}
        d\eta_t^{m}(z) = \sqrt{\frac{\pi}{2}} P_t^{m}(z) 2dB_t = \sqrt{2\pi} P_t^m(z)dB_t.
    \end{equation}
    Therefore, $t \mapsto \eta_t^{m}(z)$ is a local martingale. But since $t \mapsto \eta_t^{m}(z)$ is almost surely bounded uniformly over $t$ by $\lambda$, this is in fact a continuous martingale. 
    
    Next, let us show that for $z, w \in \Omega$, $t \mapsto \eta_t^{m}(z) \eta_t^{m}(w) + G_{t}^{m}(z,w)$ is a continuous martingale until the first time that either $\tau_z \leq t$ or $\tau_w \leq t$, This essentially follows from the massive Hadamard formula of Lemma \ref{lemma_massive_Hadamard} which, together with the SDE \eqref{SDE_mharm_GFF}, implies that
    \begin{equation*}
        d\langle \eta^{m}(z), \eta^{m}(w) \rangle_t = -\partial_t G_t^{m}(z,w).
    \end{equation*}
    Therefore, $t \mapsto \eta_t^{m}(z) \eta_t^{m}(w) + G_{t}^{m}(z,w)$ is a local martingale until the first time that either $\tau_z \leq t$ or $\tau_w \leq t$, Moreover, $\eta_t^{m}(w)$ and $\eta_t^{m}(w)$ are continuous and uniformly bounded over $z,w$ by $\lambda$ and $G_t^{m}$ is non-increasing in $t$. Thus, $t \mapsto \eta_t^{m}(z) \eta_t^{m}(w) + G_{t}^{m}(z,w)$ is a continuous martingale until the first time that either $\tau_z \leq t$ or $\tau_w \leq t$,

    Now, let $\varphi \in \mathcal{C}_c^{\infty}(\Omega)$ and define, for $t \geq 0$,
    \begin{equation*}
        E_t^m(\varphi) = \int_{\Omega_t \times \Omega_t} \varphi(z) G_t^{m}(z,w) \varphi(w) dz dw.
    \end{equation*}
    We want to show that $(\eta_t^{m},\varphi)$ is a continuous martingale with quadratic variation
    \begin{equation} \label{quadvar_int}
        d\langle (\eta^{m}, \varphi) \rangle_t = -dE_t^m(\varphi).
    \end{equation}
    Since $\eta_t^{m}(z)$ is a continuous martingale and is bounded uniformly over in $\Omega$ by $\lambda$, by Fubini's theorem, the integral $(\eta_t^{m}, \varphi)$ is also a bounded and continuous martingale. To show that its quadratic variation is given by \eqref{quadvar_int}, it suffices to show that $(\eta_t^{m}, \varphi)^2 + E_t^m(\varphi)$ is a martingale. We observe that
    \begin{equation*}
        (\eta_t^{m}, \varphi)^2 + E_t^m(\varphi) = \int_{\Omega_t \times \Omega_t} \varphi(z) \varphi(w) [\eta_t^{m}(z) \eta_t^{m}(w) + G_t^{m}(z,w)] dz dw.
    \end{equation*}
    We have already shown that $\eta_t^{m}(z) \eta_t^{m}(w) + G_t^{m}(z,w)$ is a continuous martingale and that $\eta_t^{m}(z)$, $\eta_t^{m}(w)$ are bounded uniformly over $z,w$. Moreover, $G_t^m(z,w)$ is non-negative and non-increasing in $t$. Therefore, we can apply Fubini's theorem, which yields that $(\eta_t^{m}, \varphi)^2 + E_t^m(\varphi)$ is a continuous martingale.

    It now remains to construct a coupling that satisfies the domain Markov property. For $z \in \Omega$, define
    \begin{equation*}
        \eta_{\infty}^{m} := \lim_{t \to \infty} \eta_{t}^{m}(z).
    \end{equation*}
    This limit exists almost surely for fixed $z$ since $\eta_{t}^{m}(z)$ is a bounded martingale. For $z, w \in \Omega$ and $\varphi \in \mathcal{C}_c^{\infty}(\Omega)$ non-negative, define also
    \begin{equation*}
        G_{\infty}^{m}(z,w) := \lim_{t \to \infty} G_t^{m}(z,w), \quad E_{\infty}^m(\varphi) := \lim_{t \to \infty} E_t^m(\varphi).
    \end{equation*}
    These limits exist because $G_t^{m}(z,w)$ and $E_t^m(\varphi)$ are both non-negative. Let $h$ be a massive GFF in $\Omega_{\infty}$ with mass $m$ and boundary conditions given by $\eta_{\infty}^m - \eta_0^m$. Then, for any $\varphi \in \mathcal{C}_c^{\infty}(\Omega)$ which is non-negative and any $\mu \geq 0$, we have
    \begin{align*}
        \EE[\exp(-\mu(\Gamma, \varphi))] &= \EE[\EE[\exp(-\mu(\Gamma, \varphi)) \vert K_{\infty}]] \\
        &= \EE \bigg[ \exp \bigg(- (\eta_{\infty}^m - \eta_0^m, \varphi) -\frac{\mu^2}{2} E_{\infty}^m(\varphi) \bigg)\bigg] \\
        &= \exp \bigg( -\frac{\mu^2}{2} E_0^m(\varphi)\bigg) \EE \bigg[ \exp \bigg(- (\eta_{\infty}^m - \eta_0^m, \varphi) -\frac{\mu^2}{2} (E_{\infty}^m(\varphi) - E_0^m(\varphi) \bigg)\bigg] \\
        &= \exp \bigg( -\frac{\mu^2}{2} E_0^m(\varphi)\bigg).
    \end{align*}
    The last equality holds because $(\eta_t^{m}, \varphi)$ is a continuous and bounded martingale with mean $(\eta_0^{m}, \varphi)$ and quadratic variation $E_0^m(\varphi) - E_t^m(\varphi)$. Finally, the coupling $(\Gamma, \gamma)$ satisfies the domain Markov property since for any function $\varphi \in \mathcal{C}_{c}^{\infty}(\Omega)$, the conditional law of $(\Gamma+{\eta_0^m}_{\vert \Omega_\tau}, \varphi)$ given $\gamma([0,\tau])$ is that of a Gaussian random variable with mean $(\eta_{\tau}^m, \varphi)$ and variance $E_{\tau}^m(\varphi)$.
\end{proof}

\begin{remark}
We observe that using exactly the same arguments as in the proof of Proposition \ref{lemma_coupling}, one can show that a massive GFF with appropriate boundary conditions can be coupled to a massive version of SLE$_4(\rho)$, where the drift is exactly the same as that of massive SLE$_4$ except that the harmonic function $h_t$ has different boundary conditions.
\end{remark}

\bibliography{main_arxiv}

\begin{thebibliography}{25}
\providecommand{\natexlab}[1]{#1}
\providecommand{\url}[1]{\texttt{#1}}
\expandafter\ifx\csname urlstyle\endcsname\relax
  \providecommand{\doi}[1]{doi: #1}\else
  \providecommand{\doi}{doi: \begingroup \urlstyle{rm}\Url}\fi

\bibitem[Bauer et~al.(2009)Bauer, Bernard, and Cantini]{BBC}
Michel Bauer, Denis Bernard, and Luigi Cantini.
\newblock Off-critical {SLE}(2) and {SLE}(4): a field theory approach.
\newblock \emph{Journal of Statistical Mechanics: Theory and Experiment}, 7,
  2009.

\bibitem[Berestycki and Haunschmid-Sibitz(2022)]{off_dimers}
Nathana{\"{e}}l Berestycki and Levi Haunschmid-Sibitz.
\newblock Near-critical dimers and massive {SLE}, 2022.

\bibitem[Chelkak and Smirnov(2011)]{Discrete_analysis}
Dmitry Chelkak and Stanislav Smirnov.
\newblock Discrete complex analysis on isoradial graphs.
\newblock \emph{Advances in Mathematics}, 228:\penalty0 1590--1630, 2011.

\bibitem[Chelkak and Wan(2021)]{mLERW}
Dmitry Chelkak and Yijun Wan.
\newblock On the convergence of massive loop-erased random walks to massive
  {SLE}(2) curve.
\newblock \emph{Electronic Journal of Probability}, 26, 2021.

\bibitem[Chelkak et~al.(2014)Chelkak, Duminil-Copin, Hongler, Kemppainen, and
  Smirnov]{Ising}
Dmitry Chelkak, Hugo Duminil-Copin, Cl{\'{e}}ment Hongler, Antti Kemppainen,
  and Stanislav Smirnov.
\newblock Convergence of {I}sing interfaces to {S}chramm's {SLE} curves.
\newblock \emph{Comptes Rendus Math{\'{e}}matique}, 352:\penalty0 157--161,
  2014.

\bibitem[Dub{\'{e}}dat(2009)]{Dub_SLE}
Julien Dub{\'{e}}dat.
\newblock {SLE} and the free field: partition functions and couplings.
\newblock \emph{Journal of the American Mathematical Society}, 22:\penalty0
  995--1054, 2009.

\bibitem[Garban et~al.(2018)Garban, Pete, and Schramm]{near_percolation}
Christophe Garban, G{\'{a}}bor Pete, and Oded Schramm.
\newblock The scaling limits of near-critical and dynamical percolation.
\newblock \emph{Journal of the European Mathematical Society}, 20:\penalty0
  1195--1268, 2018.

\bibitem[Karrila(2023)]{Karrila}
Alex Karrila.
\newblock Limits of conformal images and conformal images of limits for planar
  random curves.
\newblock \emph{L'Enseignement Math{\'{e}}matique}, 2023.

\bibitem[Kemppainen(2017)]{book_SLE}
Antti Kemppainen.
\newblock \emph{Schramm–Loewner Evolution}.
\newblock SpringerBriefs in Mathematical Physics. Springer, 2017.

\bibitem[Kemppainen and Smirnov(2017)]{Smirnov}
Antti Kemppainen and Stanislav Smirnov.
\newblock Random curves, scaling limits and {L}oewner evolutions.
\newblock \emph{Annals of Probability}, 45:\penalty0 698--779, 2017.

\bibitem[Lacoin et~al.(2017)Lacoin, Rhodes, and Vargas]{mGFF_cont}
Hubert Lacoin, R{\'{e}}mi Rhodes, and Vincent Vargas.
\newblock Semiclassical limit of {L}iouville field theory.
\newblock \emph{Journal of Functional Analysis}, 273:\penalty0 875--916, 2017.

\bibitem[Lawler et~al.(2004)Lawler, Schramm, and Werner]{LERW}
Greg~F. Lawler, Oded Schramm, and Wendelin Werner.
\newblock Conformal invariance of planar loop-erased random walks and uniform
  spanning trees.
\newblock \emph{Annals of Probability}, 32:\penalty0 939--995, 2004.

\bibitem[Lawler(2005)]{book_Lawler}
Gregory~F. Lawler.
\newblock \emph{Conformally Invariant Processes in the Plane}.
\newblock Mathematical Surveys and Monographs. American Mathematical Society,
  2005.

\bibitem[Makarov and Smirnov(2009)]{off_SLE}
Nikolai Makarov and Stanislav Smirnov.
\newblock Off-critical lattice models and massive {SLE}s, 2009.

\bibitem[Marshall and Rohde(2005)]{slit_Loewner}
Donald~E. Marshall and Steffen Rohde.
\newblock The {L}oewner differential equation and slit mappings.
\newblock \emph{Journal of the American Mathematical Society}, 18:\penalty0
  763--778, 2005.

\bibitem[Miller and Sheffield(2016)]{IG_1}
Jason Miller and Scott Sheffield.
\newblock Imaginary geometry {I}: interacting {SLE}s.
\newblock \emph{Probability Theory and Related Fields}, 164:\penalty0 553--705,
  2016.

\bibitem[Nolin and Werner(2009)]{Nolin}
Pierre Nolin and Wendelin Werner.
\newblock Asymmetry of near-critical percolation interfaces.
\newblock \emph{Journal of the American Mathematical Society}, 22:\penalty0
  797--819, 2009.

\bibitem[Park(2022)]{massive_FK}
S.C. Park.
\newblock Convergence of fermionic observables in the massive planar
  {FK}-{I}sing model.
\newblock \emph{Communications in Mathematical Physics}, 396:\penalty0
  1071--1133, 2022.

\bibitem[Pommerenke(1992)]{Pommeranke}
Christian Pommerenke.
\newblock \emph{Boundary Behaviour of Conformal Maps}.
\newblock Grundlehren der mathematischen Wissenschaften. Springer, 1992.

\bibitem[Schramm(2000)]{Schramm_SLE}
Oded Schramm.
\newblock Scaling limits of loop-erased random walks and uniform spanning
  trees.
\newblock \emph{Israel Journal of Mathematics}, 118:\penalty0 22--288, 2000.

\bibitem[Schramm and Sheffield(2005)]{HE}
Oded Schramm and Scott Sheffield.
\newblock Harmonic explorer and its convergence to {SLE}$_4$.
\newblock \emph{Annals of Probability}, 33:\penalty0 2127--2148, 2005.

\bibitem[Schramm and Sheffield(2009)]{DGFF}
Oded Schramm and Scott Sheffield.
\newblock Contour lines of the two-dimensional discrete {G}aussian free field.
\newblock \emph{Acta Mathematica}, 202:\penalty0 21--137, 2009.

\bibitem[Shao()]{Ch_mHE}
Chengyang Shao.
\newblock On the convergence of massive harmonic explorers to m{SLE}(4) curves.
\newblock \textit{Available at
  \url{https://sites.google.com/view/shao-chy-homepage/research}}.

\bibitem[Smirnov(2001)]{percolation}
Stanislav Smirnov.
\newblock Critical percolation in the plane: conformal invariance, {C}ardy's
  formula, scaling limits.
\newblock \emph{Comptes Rendus de l'Acad{\'{e}}mie des Sciences}, 333:\penalty0
  239--244, 2001.

\bibitem[Wang and Wu(2017)]{Wu_level}
Menglu Wang and Hao Wu.
\newblock {L}evel {L}ines of {G}aussian {F}ree {F}ield i: {Z}ero-{B}oundary
  {GFF}.
\newblock \emph{Stochastic processes and their applications}, 127:\penalty0
  1045--1124, 2017.

\end{thebibliography}

\end{document}